\newcounter{comments}
\newenvironment{displaycomment}{\begin{list}{}{\rightmargin=1cm\leftmargin=1cm}\item\sf\begin{small}}{\end{small}\end{list}}
\newcommand{\C}{\mathbb{C}}
\newcommand{\R}{\mathbb{R}}
\newcommand{\Z}{\mathbb{Z}}
\newcommand{\mb}[1]{\mathbb{#1}}
\newcommand{\mc}[1]{\mathcal{#1}}
\newcommand*{\defeq}{\mathrel{\vcenter{\baselineskip0.5ex \lineskiplimit0pt
                        \hbox{\scriptsize.}\hbox{\scriptsize.}}}%
        =}
\newcommand{\ph}{\varphi}
\newcommand{\eps}{\varepsilon}
\newcommand{\SO}{\operatorname{SO}}
\newcommand{\Spin}{\operatorname{Spin}}
\newcommand{\U}{\operatorname{U}}
\newcommand{\Imp}{\operatorname{Imp}}
\newcommand{\End}{\operatorname{End}}
\newcommand{\Cl}{\operatorname{Cl}}
\newcommand{\lie}[1]{\mathfrak{#1}}
\newcommand{\pr}{\operatorname{pr}}
\newcommand{\trace}{\operatorname{trace}}
\newcommand{\lact}{\triangleright}
\newcommand{\1}{\mathds{1}}
\newcommand{\Dom}{\operatorname{Dom}}
\renewcommand{\O}{\operatorname{O}}
\renewcommand{\d}{\operatorname{d}}
\def\ttimes#1#2{\hspace{-0.15em}\tensor[_{#1}]{\times}{_{#2}}}
\def\res{\mathrm{\it{res}}}
\def\quot#1{,,#1``}
\def\cstar{C$^{*}\!$}
\theoremstyle{definition}
\newtheorem{definition}{Definition}[section]
\newtheorem{remark}[definition]{Remark}
\newtheorem{example}[definition]{Example}
\theoremstyle{plain}
\newtheorem{theorem}[definition]{Theorem}
\newtheorem{proposition}[definition]{Proposition}
\newtheorem{lemma}[definition]{Lemma}
\setlist{topsep=-0.2em,itemsep=0em}
\crefname{lemma}{Lemma}{Lemmas}
\crefname{section}{Section}{Sections}
\crefname{theorem}{Theorem}{Theorems}
\crefname{example}{Example}{Examples}
\crefname{remark}{Remark}{Remarks}
\crefname{proposition}{Proposition}{Propositions}
\crefname{definition}{Definition}{Definitions}
\crefname{appendix}{Appendix}{Appendices}
\crefname{enumi}{\unskip}{\unskip}
\crefname{equation}{\unskip}{\unskip}
\title{Fusion of implementers for spinors on the circle}
\author{Peter Kristel and Konrad Waldorf}
\date{}
\begin{document}

\maketitle

\begin{abstract}
We consider the  space of odd spinors on the circle, and a decomposition into spinors supported on either the top or on the bottom half of the circle. If an operator preserves this decomposition, and acts on the bottom half in the same way as a second operator acts on the top half, then the fusion of both operators is a third operator acting on the top half like the first, and on the bottom half like the second. 
Fusion restricts to the Banach-Lie group of  restricted orthogonal operators, which  supports a  central extension of implementers on a Fock space. In this article, we construct a lift of  fusion  to this central extension. Our construction  uses Tomita-Takesaki theory for the Clifford-von Neumann algebras of the decomposed space of spinors. Our  motivation  is to obtain an operator-algebraic model for the basic central extension of the loop group of the spin group, on which the fusion of implementers induces a fusion product in the sense considered in the context of  transgression and string geometry. In upcoming work we will use this model  to construct a fusion product on a spinor bundle on the loop space of a string manifold, completing a construction proposed by Stolz and Teichner.    

\medskip

\noindent
MSC 2010:  Primary 53C27, 22E66, 22D25; Secondary 30H20, 47C15, 81R10
\end{abstract}

     \setlength{\parskip}{0ex}
     \begingroup
     \tableofcontents
     \endgroup
     \setlength{\parskip}{1.5ex}


\section{Introduction}

In a survey article \cite{stolz3} Stolz and Teichner outline the construction of a spinor bundle on the loop space $LM$ of a smooth manifold $M$, equipped with an action of a certain Clifford-von Neumann algebra, and equipped with a so-called \emph{fusion product}. 
The fusion product  establishes an isomorphism between the Connes fusion of the fibres of the spinor bundle over two loops with a common segment, with the fibre over the loop where the common segment is deleted. Stolz and Teichner's construction is supposed to work when $M$ is a string manifold, i.e., $\frac{1}{2}p_1(M)=0$, and is  supposed to involve the choice of a string structure.
The bigger context of their  work is to find an appropriate framework for the Dirac operator on $LM$ postulated by Witten \cite{witten2}. 
A suitable index theory of this operator is expected to solve a number of open problems. For instance, it might offer  an index theorem relating the index of the Dirac operator to the Witten genus, and, it  might come with a version of the Schr\"odinger-Lichnerowicz formula and imply a bound for the scalar curvature of $LM$ or the Ricci curvature of $M$.

This paper is the first of three papers whose goal is a complete and fully rigorous construction of a spinor bundle on $LM$, equipped with  a Clifford action and a fusion product. The starting point of our work is a new concept of a string structure on $M$, which was not available when \cite{stolz3} was written. This new concept is based on Killingback's spin structures on loop spaces \cite{killingback1} and additionally equipped with a version of a fusion product, see \cite{Waldorfb}. The concept is defined relative to an arbitrary model of the basic central extension
\begin{equation}
\label{eq:intro:ce}
\U(1) \to \widetilde{L\Spin}(d) \to L\Spin(d)
\end{equation}
of the loop group of $\Spin(d)$ as a  so-called \emph{fusion extension}, where $d$ is the dimension of $M$. A fusion extension of a loop group $LG$ is a central extension $\U(1) \to \widetilde{LG} \to LG$ equipped with a multiplicative fusion product, in, roughly-speaking, the same sense as described above. More precisely, consider three paths $\beta_1$, $\beta_2$, and $\beta_3$ in $G$, all sharing a common initial point and a common end point, as sketched below. 
\begin{figure}[h]
\begin{equation*}
        \begin{tikzpicture}[scale=1]
        \node at (-0.3,1) {\small$\beta_1$};
        \node at (0.7,1) {\small$\beta_2$};
        \node at (2.3,1) {\small$\beta_3$};
        \path[->,in=270,out=0] (1,0) edge  (2,1);
        \path[-,out=90,in=0] (2,1) edge  (1,2);
        \path[->,out=180,in=270] (1,0) edge  (0,1);
        \path[-,out=90,in=180] (0,1) edge  (1,2);
        \path[->] (1,0) edge  (1,1);
        \path[-] (1,1) edge  (1,2);
        \end{tikzpicture}
\end{equation*}
\end{figure}
Then, form loops $\beta_i\cup \beta_j \in LG$ by first following $\beta_i$ and then going back along the reverse of $\beta_j$. The rule $(\beta_1\cup\beta_2,\beta_2\cup\beta_3)\mapsto \beta_1\cup\beta_3$ implements the idea of deleting the common segment $\beta_2$ of the two loops $\beta_1\cup\beta_2$ and $\beta_2\cup\beta_3$, resulting in the loop $\beta_1\cup\beta_3$. 
Now, a  multiplicative fusion product is an associative product
\begin{equation*}
\widetilde{LG}_{\beta_1 \cup \beta_2}  \times \widetilde{LG}_{\beta_2\cup\beta_3} \to \widetilde{LG}_{\beta_1 \cup \beta_3}
\end{equation*}
which is multiplicative with respect to the group structure of $\widetilde{LG}$ (and satisfies a further regularity condition, see \cref{def:fusion product}). A general treatment of  fusion extensions may be found in \cite{Waldorfc}. Many different models for the central extension \cref{eq:intro:ce} are known; for instance, the Mickelsson model \cite{mickelsson1}, which is related to conformal field theory, or the transgression of the basic gerbe over $\Spin(d)$, which is related to higher-categorical geometry on the group $\Spin(d)$ \cite{waldorf5}.  A fusion product can be constructed explicitly in both of these models \cite{Waldorfc}, but we do not know of any construction on other models.  However, neither model comes equipped with a representation suited to our goal of constructing spinor bundles.

In the present paper, we construct a new, operator-algebraic model for the central extension \cref{eq:intro:ce} as a fusion extension, of which the main novelty is the fusion product.  Our model comes naturally equipped with a representation on a  Fock space $\mc{F}$, known as the free fermions.  This representation is the infinite-dimensional analog of the spin representation, and  will be used for the construction of the spinor bundle, in a rather straightforward way: the spin structure on $LM$ that underlies our concept of a string structure lifts the structure group of $LM$ from $L\Spin(d)$ to the basic central extension \cref{eq:intro:ce}; then we simply take the associated bundle with canonical fibre the Fock space $\mathcal{F}$. More interestingly, we will show in third second paper that the fusion product on our operator-algebraic model, together with the fusion product that is part of the string structure, combine into a fusion product on the spinor bundle, as anticipated by Stolz and Teichner in \cite{stolz3}.    

Our operator-algebraic model for the central extension \cref{eq:intro:ce} is obtained from a fairly well-known construction starting with the real Hilbert space $V$ of odd $d$-dimensional spinors on the circle, which possesses a canonical Lagrangian subspace consisting of spinors  that extend to anti-holomorphic functions on the disk; see  \cref{sec:oddspinors} for details. Associated to this structure is a Clifford algebra $\Cl(V)$ with a unitary representation, the Fock space $\mathcal{F}$. 
An orthogonal operator $g\in \O(V)$ induces a so-called Bogoliubov automorphism $\theta_g$ of $\Cl(V)$, and a unitary operator $U\in \U(\mathcal{F})$ is said to implement $g$ if $\theta_g(a) = UaU^{*}$ as elements of $\mc{B}(\mathcal{F})$. The group of all implementers forms a central extension
\begin{equation}
\label{intro:implementers}
\U(1) \to \Imp(V) \to \O_{\res}(V)
\end{equation}
of Banach-Lie groups, where $\O_{\res}(V)$ is the subgroup of $\O(V)$ consisting of implementable operators.   There is a Fr\'echet-Lie group  homomorphism $L\SO(d) \to \O_{\res}(V)$ producing operators that act pointwise on spinors. The pullback of the central extension \cref{intro:implementers} to $L\SO(d)$ and then further to $L\Spin(d)$ is our operator-algebraic model for the basic central extension \cref{eq:intro:ce}. The  central extension \cref{intro:implementers} has been studied extensively by Araki \cite{Ara85}, Neeb \cite{Ne02}, Ottesen \cite{Ott95}, and others, and  we use many of their results. Yet we found it necessary to clarify various aspects, in particular related to the Lie group structure  on the group of implementers. All this is comprised in  \cref{sec:implementers}, with some generalities  moved to an \cref{app:BanachCentral}. One result we derive and use later is that $\widetilde{L\Spin}(d)$ acts by \emph{even} operators with respect to the natural $\Z_2$-grading on $\mathcal{F}$; this appears as \cref{LSpinEven}.

The next part of the present paper is devoted to the construction of the fusion product, which requires a number of preliminary results described in \cref{sec:freefermions}. We study the splitting $V=V_{-} \oplus V_{+}$ into spinors supported on the bottom and on the top half of the circle, respectively. We construct a reflection operator that exchanges $V_{-}$ with $V_{+}$ and induces a Lie group homomorphism $\tau: \O_{\res}(V) \to \O_{\res}(V)$. 
Our first   result here is the construction of a  lift of $\tau$ to the central extension $\Imp(V)$; see \cref{thm:KappaTau,lem:KappaIsSmooth}. Our next results concern the Clifford algebras $\Cl(V_{\pm})$, their induced representations on $\mathcal{F}$, and their completions to von Neumann algebras. The main point here is the identification of the modular conjugation operator $J$ of  Tomita-Takesaki theory for $\Cl(V_{-})''$. It has  been computed  by Wassermann  \cite{wassermann98}, Henriques \cite{Hen14}, and Jannssens \cite{janssens13}, and we have reproduced that computation in \cref{sec:ModularConjugation}  for convenience. 
The computation relates $J$ directly to the reflection operator $\tau$, see \cref{thm:modular}. We use it in order to prove a result, \cref{lem:ImpCommute}, about the commutativity of two subgroups of $\Imp(V)$, namely the subgroups  $\Imp_{0}(V)_{-}$ and $\Imp_{0}(V)_{+}$ of even implementers of orthogonal transformations fixing $V_{+}$ and $V_{-}$ pointwise, respectively.

The construction of the fusion product is then described in \cref{sec:FusionOnLSpin}, starting with a quick introduction to the theory of fusion products on central extensions of loop groups. We introduce a new general method of constructing multiplicative fusion products, via a concept we call \emph{fusion factorization}, see \cref{def:fusfac} and \cref{thm:fusfromfusfac}.
This method  can be applied to a class of central extensions that we call \emph{admissible} (\cref{def:admissible}). Admissibility is related to the commutativity of two subgroups, which in case of the basic central extension \cref{eq:intro:ce} are essentially the subgroups    $\Imp_{0}(V)_{-}$ and $\Imp_{0}(V)_{+}$ mentioned above; from this we conclude that \cref{eq:intro:ce} is admissible (\cref{prop:univcead}). The core of this article is the construction of a canonical fusion factorization for the central extension  \cref{eq:intro:ce}, which boils down to the problem of trivializing the pullback of the central extension \cref{intro:implementers} along the map $P\Spin(d) \to L\Spin(d) \to L\SO(d) \to \O_{\res}(V)$.
Here, $P\Spin(d)$ is a space of paths in $\Spin(d)$, and the first of these maps is $\beta \mapsto \beta \cup\beta$; hence the image in $\O_{\res}(V)$  lies in the fixed points of the reflection $\tau$ studied before. In a first step, we use our lift of $\tau$  to $\Imp(V)$ in order to reduce the central extension to one by $\Z_2$. In a second step, we trivialize the resulting double covering using a uniqueness result for standard forms of the von Neumann algebra $\Cl(V_{-})''$, which we apply to two cones in $\mathcal{F}$ that differ by an implementer in the reduced extension. 

We also address the problem of putting an appropriate connection on $\widetilde{L\Spin}(d)$, considered as a principal $\U(1)$-bundle. Such connections are used in string geometry in order to include differential-geometric information, for instance, string connections.
While the  connection itself is straightforward to define, we require a certain compatibility condition with the fusion product. We give a general criterion  for a fusion factorization, which guarantees the compatibility between the induced fusion product and a connection, see \cref{def:fusfaccomp,prop:fuscompconn}. We prove that this criterion is indeed satisfied in our case, and conclude in \cref{prop:compresult} that our construction satisfy the compatibility condition.

In the last part of this paper, \cref{sec:stringgeometry}, we compare our operator-algebraic model of the fusion extension \cref{eq:intro:ce} with the model obtained by transgression of the basic gerbe, which is commonly used in string geometry. Since both models realize the basic central extension of $L\Spin(d)$, it is clear that they are abstractly isomorphic as central extensions. In fact, we prove in \cref{th:comparison} that they are canonically isomorphic as fusion extensions with connection, i.e. the isomorphism can be chosen such that it preserves the fusion products and the connections, and it is characterized uniquely by these properties.  This result will be the starting point for our second paper: it allows  to use our operator-algebraic model for the central extension \cref{eq:intro:ce} for the purposes of string geometry.

\paragraph{Acknowledgements. }
We would like to thank Alan Carey, Bas Jannssens, Matthias Ludewig, Danny Stevenson, Stephan Stolz, and Peter Teichner for helpful discussions.
Moreover, we are grateful to a referee for pointing out a gap in our previous construction of the Banach-Lie group structure on $\Imp(V)$.
This project was funded by the German Research Foundation (DFG) under project code  WA 3300/1-1.

\section{Odd spinors on the circle}

\label{sec:oddspinors}

We discuss a concrete and simple model for the odd spinor bundle on the circle and its sections.

\subsection{The odd spinor bundle on the circle}

We equip the disk $D^2\subset \C$ and the circle $S^1=\partial D^2$ with induced orientations and  metrics. We have $\mathrm{SO}(1)=\{1\}$ and define $\mathrm{Spin}(1) \defeq \mathbb{Z}_2=\{\pm1\}$. Thus, a spin structure on $S^{1}$ is the same as a principal $\mathbb{Z}_2$-bundle over $S^1$, i.e., a double cover.

There are, up to isomorphism, only two spin structures on $S^{1}$, namely, the connected double cover, called \emph{odd}, and the non-connected double cover, called \emph{even}. We will be interested in the odd spin structure, for which we write $\Spin(S^{1})$, and which we realize as the submanifold
        \begin{equation*}
                \Spin(S^{1}) \defeq \{ (e^{i \ph}, \pm e^{i\ph/2}) \in S^{1} \times S^{1} \mid \ph \in \R \},
        \end{equation*}
        equipped with the projection  onto the first component. The $\mathbb{Z}_{2}$-action is on the second component.
        The \emph{odd spinor bundle} $\mathbb{S}$ is the associated complex line bundle
        \begin{equation*}
                \mathbb{S} \defeq \Spin(S^{1}) \times_{\mathbb{Z}_{2}} \C.
        \end{equation*}


\begin{remark}
This definition of the odd spinor bundle fits in the general theory of spin structures, as $\C$ is a model for the Clifford algebra of $\R$. 
\end{remark}

Next we  show that the bundles $\Spin(S^{1})$ and $\mathbb{S}$ can be related to appropriate bundles over the disk $D^{2}$, see \cite{LM89} and the Mathoverflow question \cite{MO12} for a motivation of this discussion. 
        We define $\Spin(2)$ to be the group $\SO(2)$ equipped with the map
        \begin{align*}
        \Spin(2) \defeq \SO(2) \rightarrow \SO(2) \;,\; 
        A  \mapsto A^{2}.
        \end{align*}
Since all principal bundles over $D^2$ are trivializable, we may use the trivial bundle as a model for both the oriented orthonormal frame bundle $\SO(D^2)$ and the (unique) spin structure $\Spin(D^2)$, with the projection map given by
\begin{equation*}
 \Spin(D^2) \defeq D^2 \times \SO(2) \to D^2 \times \SO(2) \defeq \SO(D^2) \;,\; (z,A) \mapsto (z,A^2) \text{.}
\end{equation*}

We write $R_{\varphi} \in \SO(2)$ for the rotation of the plane by an angle $\ph$.
The inclusion $S^1 \subset D^2$ and the isomorphism $S^1 \cong \SO(2)$ induce an embedding 
\begin{align*}
        \iota: \Spin(S^{1}) \rightarrow \Spin(D^{2}) \;,\;
        (e^{i\ph},\pm e^{i\ph/2}) \mapsto (e^{i\ph},  \pm R_{\ph/2})\text{,}
\end{align*}
which yields a commutative square:
\begin{equation*}
        \begin{tikzpicture}[scale=1.5]
        \node (A) at (0,1) {$\Spin(S^{1})$};
        \node (B) at (2,1) {$\Spin(D^{2})$};
        \node (C) at (0,0) {$S^{1}$};
        \node (D) at (2,0) {$D^{2}$};
        \path[->,font=\scriptsize]
        (A) edge node[above]{$\iota$} (B)
        (A) edge node[right]{} (C)
        (B) edge node[right]{} (D)
        (C) edge node[above]{} (D);
        \end{tikzpicture}
\end{equation*}
In this sense, the spin structure on the disk restricts to the odd spin structure on the circle.

Next, we consider the complex line bundle         \begin{equation*}
        \mathbb{D} = \Spin(D^{2}) \times_{\SO(2)} \C.
        \end{equation*}
over $D^2$, by letting $A\in \SO(2)$ act on $\Spin(D^2)=D^2 \times \SO(2)$ by multiplication, and on $\lambda\in \C$ via $(A, \lambda) \mapsto A \triangleright \lambda \defeq A^{-1} \lambda$. This action was chosen such that we obtain a minus sign in the exponent in \cref{eq:trivS}, which in turn determines the identification between the spaces $L^{2}(S^{1})$ and $L^{2}_{-2\pi}$ later on.
The bundle $\mathbb{D}$ is not the spinor bundle of the disk, but it has the following interesting property:

\begin{lemma}
\label{lem:restrictionD}
The restriction of $\mathbb{D}$ to $S^1$ is the odd spinor bundle $\mathbb{S}$.
\end{lemma}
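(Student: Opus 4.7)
The plan is to exhibit an explicit bundle isomorphism $\Phi: \mathbb{S} \to \mathbb{D}|_{S^1}$ induced by the embedding $\iota$, and verify that it is well-defined (equivariance), then a fibrewise isomorphism.

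First I would observe that $\iota: \Spin(S^1) \to \Spin(D^2)|_{S^1}$ realizes $\Spin(S^1)$ as a principal $\mathbb{Z}_2$-subbundle of the restricted frame-analogue bundle $\Spin(D^2)|_{S^1} = S^1 \times \SO(2)$, where $\mathbb{Z}_2$ is viewed as the subgroup $\{I, R_\pi\} \subset \SO(2)$ (this is precisely the kernel of $\SO(2) \to \SO(2)$, $A \mapsto A^2$, consistent with $\Spin(1) \hookrightarrow \Spin(2)$). Concretely, the nontrivial element of $\mathbb{Z}_2$ acts on $\Spin(S^1)$ by $(e^{i\varphi}, \pm e^{i\varphi/2}) \mapsto (e^{i\varphi}, \mp e^{i\varphi/2})$, and under $\iota$ this corresponds to right multiplication by $R_\pi$ on the second coordinate of $\Spin(D^2)$, since $R_{\varphi/2}\cdot R_\pi = R_{\varphi/2+\pi}$ and $e^{i(\varphi/2+\pi)} = -e^{i\varphi/2}$.

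Next I would define
\begin{equation*}
\Phi: \Spin(S^1) \times_{\mathbb{Z}_2} \mathbb{C} \longrightarrow \Spin(D^2)|_{S^1} \times_{\SO(2)} \mathbb{C}, \qquad [p,\lambda] \longmapsto [\iota(p),\lambda],
\end{equation*}
where the implicit $\mathbb{Z}_2$-action on $\mathbb{C}$ in the definition of $\mathbb{S}$ is by $\pm 1$. Well-definedness reduces to checking compatibility of the two equivalence relations: starting from $[p\cdot(-1),\lambda]$, the image is $[\iota(p)\cdot R_\pi, \lambda] = [\iota(p), R_\pi \triangleright \lambda] = [\iota(p), R_\pi^{-1}\lambda] = [\iota(p),-\lambda]$, which matches $\Phi([p,-\lambda])$ under the relation in $\mathbb{S}$. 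Here I use the convention $A\triangleright \lambda = A^{-1}\lambda$ stipulated in the definition of $\mathbb{D}$.

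Finally, $\Phi$ is fibrewise a linear bijection: at $e^{i\varphi}\in S^1$, choosing the representative $(e^{i\varphi}, e^{i\varphi/2})$ of the fibre of $\Spin(S^1)$ identifies the fibre of $\mathbb{S}$ with $\mathbb{C}$ via $\lambda \leftrightarrow [(e^{i\varphi},e^{i\varphi/2}),\lambda]$, and the corresponding representative $(e^{i\varphi}, R_{\varphi/2})$ of $\Spin(D^2)|_{S^1}$ identifies the fibre of $\mathbb{D}|_{S^1}$ with $\mathbb{C}$ via $\lambda \leftrightarrow [(e^{i\varphi},R_{\varphi/2}),\lambda]$; $\Phi$ is the identity in these trivializations. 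Smoothness of $\Phi$ and $\Phi^{-1}$ follows from smoothness of $\iota$ and the standard descent for associated bundles. The only place one must be careful is the equivariance check, where the chosen conventions for the $\SO(2)$-action on $\mathbb{C}$ and for $\iota$ have to match sign-wise; this is exactly the motivation given for the convention $A \triangleright \lambda = A^{-1}\lambda$ and presents no real obstacle.
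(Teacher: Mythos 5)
Your proof is correct and follows essentially the same route as the paper: the paper's proof simply asserts that $\iota\times\mathrm{id}_{\C}$ descends to a fibrewise injective (hence bijective) vector bundle morphism $\mathbb{S}\to\mathbb{D}$, and your argument spells out exactly the equivariance check (identifying $-1\in\Z_2$ with $R_\pi\in\SO(2)$ and verifying the sign against the convention $A\triangleright\lambda=A^{-1}\lambda$) that the paper leaves as "easy to check". No gaps.
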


\begin{proof}
It is easy to check that
        \begin{equation*}
        \iota\times\mathrm{id}_{\mathbb{C}}: \Spin(S^{1}) \times \C \rightarrow \Spin(D^{2}) \times \C
        \end{equation*}
descends to  a vector bundle morphism $\mathbb{S} \rightarrow \mathbb{D}$ over the inclusion $S^1 \subset D^2$. 
%
%
It has trivial kernel in each fibre; hence, it induces an isomorphism. 
\end{proof}

Since $\mathbb{D}$ is trivializable, a consequence of \cref{lem:restrictionD} is that $\mathbb{S}$ is trivializable, too. A trivialization of $\mathbb{D}$
may be given by the following pair of inverse maps:
\begin{align*}
D^2 \times \C &\to \mathbb{D} &\mathbb{D} &\to D^2 \times \C
\\
(z,\lambda) &\to [(z,\1),\lambda]_{\SO(2)}
&[(z,A),\lambda]_{\SO(2)} &\mapsto (z,A^{-1}\lambda)
\end{align*}
The induced trivialization of $\mathbb{S}$ is then given by
        \begin{align}
        \label{eq:trivS}
                \mathbb{S}  \rightarrow S^{1} \times \C \;,\; 
                [(e^{i\ph},  \pm e^{i\ph/2}), \lambda]_{\mathbb{Z}_{2}}  \mapsto (e^{i\ph}, \pm e^{- i\ph/2} \lambda).
        \end{align}

\subsection{Smooth and square-integrable odd spinors}

\label{sec:SectionsOfSpinorBundle}

We consider the space of smooth sections of $\mathbb{S}$, which we denote by $\Gamma(\mathbb{S})$.  Because the spinor bundle $\mathbb{S}$ is trivializable, and we have the trivialization \cref{eq:trivS}, we have a canonical isomorphism
\begin{equation*}
t: \Gamma(\mathbb{S}) \to C^{\infty}(S^1,\C)\text{.}
\end{equation*}
However, it turns out to be more natural to identify $\Gamma(\mathbb{S})$ with the space $C^{\infty}_{-2\pi}=C^{\infty}_{-2\pi}(\R,\C)$  of $2 \pi$ anti-periodic smooth maps from the real numbers into the complex numbers. The reason is that in \cref{ex:mainexample,sec:freefermions} we will equip the space $\Gamma(\mathbb{S})$ with a real structure that is easily described when $\Gamma(\mathbb{S})$ is identified with $C^{\infty}_{-2 \pi}$, but takes  an awkward form on $C^{\infty}(S^{1},\mathbb{C})$.

\begin{lemma}
        If $f \in C^{\infty}_{-2\pi}$, then the map
        \begin{align*}
                \R \rightarrow \mathbb{S} \;, \; 
                \ph \mapsto [(e^{i \ph}, e^{i \ph/2}), f(\ph)]_{\mathbb{Z}_{2}}
        \end{align*}
        descends along $\R \to S^1:\varphi\mapsto e^{i\varphi}$ to a smooth section $\sigma_{f}:S^{1} \rightarrow \mathbb{S}$. Furthermore, the map
        \begin{align*}
                \sigma: C^{\infty}_{-2\pi} \rightarrow \Gamma(\mathbb{S}) \, , \,
                f \mapsto \sigma_{f},
        \end{align*}
        is an isomorphism.
\end{lemma}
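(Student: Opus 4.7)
The plan is first to verify that the formula $\varphi \mapsto [(e^{i\varphi},e^{i\varphi/2}),f(\varphi)]_{\Z_2}$ genuinely descends through the quotient map $\R \to S^1$, then to reduce the bijectivity statement to a plain sign-twist identification $C^{\infty}_{-2\pi} \cong C^{\infty}(S^1,\C)$ via the trivialization \cref{eq:trivS}.

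For well-definedness, I would replace $\varphi$ by $\varphi + 2\pi$ and compare. On the one hand, $(e^{i(\varphi+2\pi)},e^{i(\varphi+2\pi)/2}) = (e^{i\varphi},-e^{i\varphi/2})$ in $\Spin(S^1)$, i.e. the two lifts differ by the nontrivial deck transformation. On the other hand, $f(\varphi+2\pi) = -f(\varphi)$ by the anti-periodicity of $f$. Since the $\Z_2$-action on $\Spin(S^1)\times \C$ simultaneously flips the second component of the spin element and the complex scalar, these two sign flips cancel, giving
\begin{equation*}
[(e^{i\varphi},-e^{i\varphi/2}),-f(\varphi)]_{\Z_2} = [(e^{i\varphi},e^{i\varphi/2}),f(\varphi)]_{\Z_2}.
\end{equation*}
So the map is invariant under $\varphi \mapsto \varphi + 2\pi$ and descends to a well-defined $\sigma_f : S^1 \to \mathbb{S}$. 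Smoothness is local and follows from smoothness of the map on $\R$ together with the fact that $\R \to S^1$ is a smooth covering.

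For the bijectivity of $\sigma$, the cleanest route is to identify it with the composition $\sigma = t^{-1} \circ \Phi$, where $t$ is the trivialization isomorphism from \cref{eq:trivS} and $\Phi \colon C^{\infty}_{-2\pi} \to C^{\infty}(S^1,\C)$ is given by $\Phi(f)(e^{i\varphi}) \defeq e^{-i\varphi/2}f(\varphi)$. A direct check using \cref{eq:trivS} shows $t(\sigma_f(e^{i\varphi})) = (e^{i\varphi},e^{-i\varphi/2}f(\varphi))$, so the factorization holds. The only substantive verification is that $\Phi$ is a well-defined linear bijection: the right-hand side is $2\pi$-periodic in $\varphi$ because the sign $e^{-i(\varphi+2\pi)/2} = -e^{-i\varphi/2}$ cancels the anti-periodicity sign of $f$, and the two-sided inverse is $g \mapsto \bigl(\varphi \mapsto e^{i\varphi/2}g(e^{i\varphi})\bigr)$, which lies in $C^{\infty}_{-2\pi}$ by the analogous sign computation.

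I do not expect a real obstacle here; the proof is essentially bookkeeping of three signs (the deck transformation on $\Spin(S^1)$, the scalar action $\lambda \mapsto -\lambda$ on the $\Z_2$-associated bundle, and the anti-periodicity of $f$) that have been set up exactly so as to cancel in pairs. The only point worth pausing on is to make sure that the $\Z_2$-action on $\C$ used to form $\mathbb{S}$ is scalar multiplication by $-1$ (the standard Clifford-module action of $\Spin(1) = \{\pm 1\}$), since this is what produces the second sign used above.
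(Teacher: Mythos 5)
Your proposal is correct. The well-definedness check (the deck transformation on $\Spin(S^{1})$ and the anti-periodicity sign of $f$ cancelling in the $\mathbb{Z}_{2}$-quotient) is exactly the computation in the paper. For bijectivity the routes differ slightly: the paper constructs the inverse of $\sigma$ directly, by observing that any smooth section $\sigma\colon S^{1}\to\mathbb{S}$ has a unique representative of the form $[(e^{i\varphi},e^{i\varphi/2}),f(\varphi)]_{\mathbb{Z}_{2}}$ in each fibre, and then checking that the resulting $f$ is smooth and $2\pi$-anti-periodic with $\sigma=\sigma_{f}$; you instead factor $\sigma=t^{-1}\circ\Phi$ through the trivialization \cref{eq:trivS} and verify that the explicit sign-twist $\Phi(f)(e^{i\varphi})=e^{-i\varphi/2}f(\varphi)$ is a bijection $C^{\infty}_{-2\pi}\to C^{\infty}(S^{1},\C)$. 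Your version leans on $t$ already being known to be an isomorphism (which the paper has via \cref{lem:restrictionD}), and in effect proves the paper's subsequent remark about $\tilde\sigma=t\circ\sigma$ first and deduces the lemma from it; the paper's version is self-contained at the level of the associated bundle. Both are complete, and your closing caveat about the $\mathbb{Z}_{2}$-action on $\C$ being scalar multiplication by $-1$ is indeed the convention in force.
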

\begin{proof}
        It suffices to show that, for all $\ph \in \R$, it holds that $\sigma_{f}(\ph) = \sigma_{f}(\ph + 2 \pi)$. Indeed,
        \begin{align*}
        \sigma_{f}(\ph) &= [(e^{i \ph}, e^{i \ph/2}), f(\ph)]_{\mathbb{Z}_{2}} \\&= [(e^{i\ph},-e^{i \ph/2}), -f(\ph)]_{\mathbb{Z}_{2}} \\
        &= [(e^{i(\ph+2 \pi)},e^{i (\ph+2\pi)/2}), f(\ph+2 \pi)]_{\mathbb{Z}_{2}} 
\\&= \sigma_{f}(\ph + 2 \pi).
        \end{align*}
        Let us now consider the converse; to that end, let $\sigma:S^{1} \rightarrow \mathbb{S}$ be a smooth section. There exists a unique  function $f:\R \rightarrow \C$ such that, for all $\ph \in \R$,
        \begin{equation*}
        \sigma(e^{i\ph}) = [(e^{i\ph},e^{i \ph/2}),f(\ph)]_{\mathbb{Z}_{2}}.
        \end{equation*}
        It is not hard to see that $f$ is smooth and satisfies $f(\varphi) = - f(\varphi +  2\pi)$; finally, one observes that $\sigma = \sigma_{f}$.
\end{proof}

\begin{remark}
The composition of the isomorphisms $\sigma$ and $t$ results in an isomorphism
\begin{equation*}
\tilde\sigma \defeq  t \circ \sigma : C^{\infty}_{-2\pi} \to C^{\infty}(S^1,\C)\text{,}
\end{equation*}
such that $(\tilde\sigma f)(e^{i \ph}) = e^{-i \ph/2} f(\ph)$, for all $f\in C^{\infty}_{-2\pi}$ and $\ph \in \R$.
\end{remark}

All three vector spaces ($\Gamma(\mathbb{S})$, $C^{\infty}(S^1,\C)$, and $C^{\infty}_{-2\pi}$) are equipped with $L^2$ inner products, for instance
\begin{equation*}
(f,g) \mapsto \langle f, g \rangle \defeq  \int_{0}^{2 \pi} f(\ph) \overline{g(\ph)} \text{d} \ph, \quad f,g \in C^{\infty}_{-2 \pi}.
\end{equation*}
The canonical isomorphisms $\sigma$, $t$, and $\tilde\sigma$ are isometries.
We denote by $L^2_{-2\pi}$ the Hilbert completion of $C^{\infty}_{-2\pi}$, by $L^2(S^1)$ the Hilbert completion of $C^{\infty}(S^1,\C)$, and by $L^2(\mathbb{S}) $ the Hilbert completion of $\Gamma(\mathbb{S})$. The isomorphisms $\sigma$, $t$, and $\tilde \sigma$ extend to isometric isomorphisms
\begin{equation*}
L^2_{-2\pi}  \cong L^2(\mathbb{S})\cong L^2(S^1)\text{.}
\end{equation*}
The Hilbert space $L^2_{-2\pi}$ has a basis  $\{ \xi_{n} \}_{n \in \mathbb{Z}}$ given by
$\xi_{n}(\ph) = e^{- i\left(n + \frac{1}{2}\right)\ph}$, which is identified under $\tilde\sigma$ with the standard basis  $\{z^{-n-1} \}_{n \in \mathbb{Z}}$ of $L^{2}(S^{1})$.

%

\section{Implementers on Fock spaces}

\label{sec:implementers}

In this section we describe the theory of implementable operators on Fock spaces. Most of the results in this section are well-known; unfortunately, there exist many variations of the basic setting, and many competing conventions, and we have not been able to find a consistent treatment of all aspects of the theory we will need later.

Throughout this section, we let $V$ be a complex Hilbert space equipped with a real structure $\alpha$, i.e.~an  anti-unitary map $\alpha : V \rightarrow V$ with the property that $\alpha^{2} = \1$. Using the real structure we equip $V$ with a non-degenerate symmetric complex bilinear form
 $b: V \times V \rightarrow \C$ defined by
        \begin{equation*}
        b(v,w) = \langle v, \alpha(w) \rangle.
        \end{equation*} 
Our discussion in the present \cref{sec:implementers} will be fairly general; later we will restrict to the situation described below as \cref{ex:mainexample}, and variations thereof.

\subsection{Lagrangians in  Hilbert spaces}

\begin{definition}
        A linear subspace $L \subset V$ is called \emph{Lagrangian} if it is a closed $b$-isotropic subspace, such that $V$ decomposes as $V = L \oplus \alpha(L)$.
\end{definition}

\begin{example}
\label{ex:mainexample}
We consider the Hilbert space  $V \defeq  L^2(\mathbb{S})\otimes \C^{d}$, where $\C^{d}$ is equipped with the standard inner product. Under the identification $L^2(\mathbb{S}) \cong L^2_{-2\pi}$, this Hilbert space has an orthonormal basis indexed by two numbers, $n \in \mathbb{N}$ and $j = 1,...,d$, namely, the functions
\begin{equation*}\label{eq:BasisForV}
        \xi_{n,j}(t) = e^{-i(n+\frac{1}{2})t} \otimes e_{j}, \quad t \in \R
\end{equation*}
where $\{e_{j}\}_{j=1,...,d}$ is the standard basis for $\C^{d}$ (see Section \ref{sec:SectionsOfSpinorBundle}). We consider the real structure $\alpha$ on $V$  defined as the complex anti-linear extension of the map
\begin{equation*}
        \alpha(\xi_{n,j})(t) \defeq  e^{i(n + \frac{1}{2})t} \otimes e_{j} = \xi_{-(n +1),j}(t).
\end{equation*}
In other words, $\alpha$ is pointwise complex conjugation.
We consider the   Lagrangian $L \subset V$  defined to be the closed complex linear span
\begin{equation*}
    L\defeq     \langle \xi_{n,j} \mid n \geqslant 0, j=1,...,d \rangle.
\end{equation*}
We remark that the corresponding subspace of $L^2(S^1) \otimes \C^{d}$ spanned by $\{ \tilde\sigma (\xi_{n}) \}_{n \in \mathbb{N}_{0}} = \{z^{-n-1} \}_{n \in \mathbb{N}_{0}}$ is the $L^2$-closure of the subspace consisting of functions on the circle which extend to anti-holomorphic functions on the disk that vanish at zero.
In precisely that sense, we consider the Lagrangian of spinors on the circle that extend to anti-holomorphic functions on the disk.

\end{example}

We proceed with the general theory of Lagrangians in Hilbert spaces. We denote by $\U(V)$ the usual unitary group of $V$, i.e., 
        \begin{equation*}
        \U(V) \defeq  \{ T \in \text{Gl}(V) \mid T^{*} \langle \cdot, \cdot \rangle = \langle \cdot, \cdot \rangle \}\text{.}
        \end{equation*}
The \emph{orthogonal group} $\O(V)$ is the subgroup of $\U(V)$ of transformations that commute with the real structure $\alpha$, i.e.,
        \begin{equation*}
        \O(V) \defeq  \{ T \in \U(V) \mid \alpha T = T \alpha \}.
        \end{equation*}
It is important to note that if $L \subset V$ is a Lagrangian, and $T \in \O(V)$, then $T(L)$ is Lagrangian; this property does not generally hold for $T \in \U(V)$.

       A \emph{unitary structure} on $V$ is a map $\mc{J} \in \O(V)$ with the property that $\mc{J}^{2} = -\1$, see \cite[Chapter 2, Section 1]{PR95}.
If $L$ is a subspace of $V$, let us write $P_{L}$ for the orthogonal projection onto $L$, and $P_{L}^{\perp}$ for the orthogonal projection onto the orthogonal complement of $L$. A straightforward verification \cite[Chapter 2, Section 1]{PR95} shows the following.

\begin{lemma}\label{lem:LagrangiansVsUnitary}
        There is a one-to-one correspondence between unitary structures $\mc{J} \in \O(V)$ and Lagrangian subspaces $L \subset V$, established by the  assignments
\begin{equation*}
L \mapsto \mc{J}_{L} \defeq  i(P_{L} - P_{L}^{\perp})
\quad\text{ and }\quad
\mc{J} \mapsto \ker(\mc{J} - i \1)\text{.}
\end{equation*}
\end{lemma}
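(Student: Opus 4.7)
The plan is to verify that each of the two maps is well-defined, then check that they are mutually inverse.

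First, for the direction $L \mapsto \mc{J}_L$, I would begin by upgrading the isotropy of $L$ to the statement $\alpha(L) = L^{\perp}$. Indeed, isotropy means $\langle v, \alpha w\rangle = b(v,w) = 0$ for all $v,w \in L$, so $\alpha(L) \subset L^{\perp}$; and since $V = L \oplus \alpha(L)$ is a direct sum decomposition (with $\alpha(L)$ closed because $\alpha$ is anti-unitary and $L$ is closed), dimension-counting via $P_L + P_{\alpha(L)} = \1$ forces $\alpha(L) = L^{\perp}$. With this identification, the calculation $\mc{J}_L^2 = -(P_L - P_L^{\perp})^2 = -(P_L + P_L^{\perp}) = -\1$ is immediate. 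Unitarity follows since $P_L - P_L^{\perp}$ is a self-adjoint involution. Finally, $\alpha P_L \alpha^{-1}$ is the orthogonal projection onto $\alpha(L) = L^{\perp}$, so conjugating $\mc{J}_L$ by $\alpha$ (which is anti-linear, hence conjugates $i$ to $-i$) yields $-i(P_L^{\perp} - P_L) = \mc{J}_L$, i.e.\ $\alpha \mc{J}_L = \mc{J}_L \alpha$.

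For the direction $\mc{J} \mapsto L \defeq \ker(\mc{J} - i\1)$, since $\mc{J}^2 = -\1$ and $\mc{J}$ is normal (unitary), the spectrum is $\{\pm i\}$ and $V = V_{+i} \oplus V_{-i}$ is an orthogonal direct sum of eigenspaces, with $L = V_{+i}$. The commutation $\alpha \mc{J} = \mc{J}\alpha$ combined with anti-linearity of $\alpha$ gives $\mc{J}(\alpha v) = \alpha(\mc{J}v) = \alpha(iv) = -i\alpha v$ for $v \in V_{+i}$, so $\alpha(L) = V_{-i} = L^{\perp}$. Then $L$ is closed, and for $v,w \in L$ we have $b(v,w) = \langle v, \alpha w\rangle \in \langle V_{+i}, V_{-i}\rangle = 0$, so $L$ is Lagrangian.

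To verify the two assignments are inverse, note first that if $L$ is Lagrangian and $v \in V$ is decomposed as $v = v_1 + v_2$ with $v_1 \in L$, $v_2 \in L^{\perp}$, then $\mc{J}_L v = iv_1 - iv_2$, and $\mc{J}_L v = iv$ holds iff $v_2 = 0$, i.e.\ iff $v \in L$; hence $\ker(\mc{J}_L - i\1) = L$. Conversely, given a unitary structure $\mc{J}$ with $L = \ker(\mc{J} - i\1)$, the eigenspace decomposition $V = L \oplus L^{\perp}$ gives $\mc{J} = iP_L - iP_L^{\perp} = \mc{J}_L$.

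The only genuine subtlety is the step identifying $\alpha(L)$ with $L^{\perp}$ in the first direction (and the mirror step in the second direction), which cleanly ties the isotropy condition to the orthogonal decomposition and makes every other verification routine; the rest is straight computation with projections.
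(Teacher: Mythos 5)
Your proof is correct and supplies precisely the ``straightforward verification'' that the paper defers to \cite[Chapter 2, Section 1]{PR95}; the key step of upgrading isotropy to $\alpha(L)=L^{\perp}$ (and, in the reverse direction, identifying $\alpha(L)$ with the $(-i)$-eigenspace $L^{\perp}$) is exactly what makes the remaining projection computations routine. The only cosmetic point is that ``dimension-counting'' is the wrong phrase in infinite dimensions: what your identity $P_{L}+P_{\alpha(L)}=\1$ actually gives is that any $v\in L^{\perp}$ decomposes as $v=v_{1}+v_{2}$ with $v_{1}\in L$ and $v_{2}\in\alpha(L)\subseteq L^{\perp}$, so $v_{1}\in L\cap L^{\perp}=\{0\}$ and hence $v\in\alpha(L)$.
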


\subsection{Clifford algebras}\label{sec:CliffAndFock-GeneralTheory}

If $A$ is a unital \cstar-algebra, then a complex linear map $f: V \rightarrow A$ is called  \emph{Clifford map} if it satisfies the properties
        \begin{equation*}
        f(v)f(w) + f(w)f(v) = 2 b(v,w) \1
        \quad\text{ and }\quad
                f(v)^{*} = f(\alpha(v))
        \end{equation*}
        for all $v,w\in V$.
We note that every Clifford map is bounded and injective. 
The Clifford \cstar-algebra is the \cstar-algebra through which any Clifford map factors:

\begin{definition}
        A \emph{Clifford \cstar-algebra}  of  $V$  is a unital \cstar-algebra $\Cl(V)$ equipped with a 
Clifford map $\iota : V \rightarrow \Cl(V)$, such that for every unital \cstar-algebra $A$ and every Clifford map $f: V \rightarrow A$   there exists a unique unital isometric $*$-homomorphism $\Cl(f)$ such that the  following diagram commutes: 
        \begin{equation*}
                \begin{tikzpicture}[scale=1.5]
                        \node (A) at (0,1) {$V$};
                        \node (B) at (1,1) {$A$};
                        \node (C) at (0,0) {$\Cl(V)$};
                \path[->,font=\scriptsize]
                        (A) edge node[above]{$f$} (B)
                        (A) edge node[left]{$\iota$} (C)
                        (C) edge node[right]{$\Cl(f)$} (B);
                \end{tikzpicture}
        \end{equation*}
\end{definition}

The Clifford \cstar-algebra $\Cl(V)$ is unique up to unique unital $*$-isomorphism. Its existence is proved in \cite[Sections 1.1 \& 1.2]{PR95}. The construction is based on the purely algebraic Clifford algebra, which is then completed to a \cstar-algebra under a certain norm.
We will need two properties that can easily be deduced from the universal property. The first is that the subspace $\iota(V) \subseteq \Cl(V)$ generates the  Clifford algebra as a \cstar-algebra. The second concerns the complex conjugate Hilbert space, which we denote by  $\overline{V}$, and which we consider equipped with the \emph{same} real structure $\alpha$.  Then,  there is a unique anti-linear unital   $*$-isomorphism $\Cl(V) \rightarrow \Cl(\overline{V})$ fixing $V$ pointwise.

In the following we will fix a Clifford \cstar-algebra $\Cl(V)$ and its Clifford map $\iota: V \to \Cl(V)$. 
If $g \in \O(V)$, then the map $\iota \circ g: V \rightarrow \Cl(V)$ is a Clifford map, and hence there is a unique unital isometric $\ast$-homomorphism $\theta_{g}: \Cl(V) \rightarrow \Cl(V)$, such that the following diagram commutes:
\begin{equation*}
        \begin{tikzpicture}[scale=1.3]
                \node (A) at (0,1) {$V$};
                \node (B) at (2,1) {$V$};
                \node (C) at (0,0) {$\Cl(V)$};
                \node (D) at (2,0) {$\Cl(V)$};
                \path[->,font=\scriptsize]
                (A) edge node[above]{$g$} (B)
                (A) edge (C)
                (B) edge (D)
                (C) edge node[below]{$\theta_{g}$} (D);
        \end{tikzpicture}
\end{equation*}
If $g,g' \in \O(V)$, then we have $\theta_{g}\theta_{g'} = \theta_{gg'}$; 
moreover,  $\theta_{\1}=\mathrm{id}_{\Cl(V)}$.
It follows that $\theta$  is a group homomorphism $\theta: \O(V) \rightarrow \operatorname{Aut}(\Cl(V))$ into the group of unital $*$-automorphisms of $\Cl(V)$. The automorphism $\theta_{g}$ is called the \emph{Bogoliubov automorphism} associated to $g$. Since $\theta_g$ restricts to $g$ on $V$, the homomorphism $\theta$ is injective; in other words, it is a faithful representation of $\O(V)$ on $\Cl(V)$ by unital $*$-automorphisms.

\begin{remark}\label{rem:EvenOddCliffordDeco}
The Bogoliubov automorphism $\theta_{-\1}$ is involutive and hence induces a $\mathbb{Z}_2$-grading on $\Cl(V)$, see \cite[p. 27]{PR95}. The image of $\iota:V \to \Cl(V)$ is  odd. If $A$ is a $\mathbb{Z}_2$-graded \cstar-algebra and the image of a Clifford map $f:V \to A$ is odd, then  the induced $\ast$-homomorphism $\Cl(f): \Cl(V) \to A$ is even, i.e., it preserves the gradings. 
\end{remark}

\subsection{Fock spaces}\label{sec:FockSpaces}

Let $L \subseteq V$ be a Lagrangian subspace. We define the \emph{Fock space} $\mathcal{F}_L$ to be the Hilbert completion of the algebraic exterior algebra
\begin{equation*}
\Lambda L\defeq         \bigoplus_{n=0}^{\infty} \Lambda^{n} L.
\end{equation*}
We equip $\mathcal{F}_L$ with an action of $\Cl(V)$ as follows. Let $(v,w) \in L \oplus \alpha(L) = V$. We use the bilinear form $b: V \times V \rightarrow \C$ to identify $\alpha(L)$ with the dual of $L$, denoted by $L^{*}$, and write $w^{*} \in L^{*}$ for the element corresponding to $w \in \alpha(L)$. Now we write $a(w^{*}): \mathcal{F}_L \rightarrow \mathcal{F}_L$ for contraction with $w^{*}$ and we write $c(v): \mathcal{F}_L \rightarrow \mathcal{F}_L$ for left multiplication. One may now verify that the assignment $(v,w) \mapsto c(v) + a(w^{*})$ is a Clifford map $V \rightarrow \mathcal{B}(\mathcal{F}_L)$.
This means that this map extends to a unital isometric $\ast$-homomorphism $\pi_L:\Cl(V) \rightarrow \mathcal{B}(\mathcal{F}_L)$.
We shall adopt the notation $a \lact v \defeq  \pi_{L} (a)(v)$ for $a \in \Cl(V)$ and $v \in \mc{F}$.
Finally, one may verify that the vector $\Omega \defeq  1 \in \Lambda^{0}L$ is annihilated by $\alpha(L) \subseteq V \subseteq \Cl(V)$. 
In the terminology of \cite{PR95} this means that $\Omega$ is a \emph{vacuum vector} for this representation. 
A basic result (\cite[Theorem 2.4.2]{PR95}) is the following. 

\begin{proposition}\label{lem:FockIrreducible}
        The Fock space  $\mathcal{F}_L$ is an irreducible $\Cl(V)$-representation.
\end{proposition}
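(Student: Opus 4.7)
The plan is to establish irreducibility via the classical two-step argument: show that the vacuum $\Omega$ is cyclic, and that it is (up to scalar) the unique vacuum vector in $\mathcal{F}_L$. From these two facts, irreducibility follows by a standard commutant argument.

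First I would verify cyclicity. Since $\Lambda L$ is by construction dense in $\mathcal{F}_L$, it suffices to show that every decomposable vector $v_1 \wedge \cdots \wedge v_n$ with $v_i \in L$ lies in $\pi_L(\Cl(V))\Omega$. This is immediate from the definition of the action: $c(v_i)$ is left exterior multiplication, and $\Omega = 1 \in \Lambda^0 L$ is annihilated by the contractions $a(w^*)$ for $w \in \alpha(L)$, so
\begin{equation*}
\pi_L(\iota(v_1)\cdots \iota(v_n)) \Omega = c(v_1)\cdots c(v_n) \Omega = v_1 \wedge \cdots \wedge v_n.
\end{equation*}
Hence $\pi_L(\Cl(V))\Omega \supseteq \Lambda L$, which is dense.

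Next I would prove uniqueness of the vacuum, namely that any $\psi \in \mathcal{F}_L$ satisfying $\pi_L(\iota(w))\psi = 0$ for all $w \in \alpha(L)$ must be a scalar multiple of $\Omega$. Decompose $\psi = \sum_{n \geq 0} \psi_n$ with $\psi_n \in \Lambda^n L$ (the sum convergent in $\mathcal{F}_L$). Since on a vector $(0,w) \in L \oplus \alpha(L) = V$ the operator $\pi_L(\iota(0,w)) = a(w^*)$ is homogeneous of degree $-1$, the vacuum condition forces $a(w^*)\psi_n = 0$ for every $n$ and every $w \in \alpha(L)$. For $n \geq 1$, this says that $\psi_n \in \Lambda^n L$ has vanishing contraction against every element of $L^*$, which under the nondegenerate pairing induced by $b$ implies $\psi_n = 0$. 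Thus $\psi = \psi_0 \in \C\,\Omega$.

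Finally I would combine these two facts. Let $M \subseteq \mathcal{F}_L$ be a closed $\pi_L(\Cl(V))$-invariant subspace and let $P$ denote the orthogonal projection onto $M$. Invariance of $M$ under the $\ast$-algebra $\pi_L(\Cl(V))$ means $P$ commutes with $\pi_L(\Cl(V))$. Set $\xi \defeq P\Omega$. Then for every $w \in \alpha(L)$,
\begin{equation*}
a(w^*)\xi = a(w^*) P \Omega = P\, a(w^*)\Omega = 0,
\end{equation*}
so $\xi$ is a vacuum vector, and by the previous step $\xi = \lambda \Omega$ for some $\lambda \in \C$. If $\lambda \neq 0$, then $\Omega \in M$, so by cyclicity $M = \mathcal{F}_L$. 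If $\lambda = 0$, then $\Omega \perp M$, and for any $a \in \Cl(V)$ and $\eta \in M$ we have $\langle \pi_L(a)\Omega, \eta\rangle = \langle \Omega, \pi_L(a^*)\eta\rangle = 0$ because $\pi_L(a^*)\eta \in M$; cyclicity then forces $M = 0$.

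No step here is really a serious obstacle, since the core content sits inside the finite-dimensional exterior algebra: the genuine computational ingredient is just the degree argument together with the nondegeneracy of the $L$/$\alpha(L)$ pairing provided by $b$. The only thing to watch is that the commutation $[P, a(w^*)] = 0$ used above really comes from the full $\ast$-invariance of $M$ under $\pi_L(\Cl(V))$, which is exactly the hypothesis of being a subrepresentation.
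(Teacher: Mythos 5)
Your proof is correct. Note that the paper does not actually prove this statement: it is quoted as a known result with a citation to \cite[Theorem~2.4.2]{PR95}, so there is no in-paper argument to compare against. What you give is the standard argument (cyclicity of $\Omega$, uniqueness of the vacuum via the degree~$-1$ homogeneity of the annihilators, and the projection/commutant step), and it is essentially the proof found in the cited reference. All the steps check out, including the two points that actually need care and that you address: the annihilation operators $a(w^{*})$ lie in $\pi_L(\Cl(V))$, so they commute with the projection onto an invariant subspace; and as $w$ ranges over $\alpha(L)$, the functionals $w^{*}=\langle\,\cdot\,,\alpha(w)\rangle$ exhaust the continuous dual of $L$, which is what forces $\psi_n=0$ for $n\geqslant 1$ in the completed exterior powers.
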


As a corollary to \cref{lem:FockIrreducible} we have that the von Neumann algebra $\Cl(V)'' \subseteq \mathcal{B}(\mathcal{F}_L)$ is in fact equal to $\mathcal{B}(\mathcal{F}_L)$, and hence a  factor of type I. This is because irreducibility of $\Cl(V)$ means that $\Cl(V)' = \C \1$ and hence that $\Cl(V)'' = \mc{B}(\mc{F}_L)$.

\begin{remark}\label{rem:EvenOddFockDeco}
        Just like the Clifford algebra, the Fock space $\mathcal{F}_L$ is $\mathbb{Z}_{2}$-graded with graded components the completions of even and odd exterior products of $L$.
The grading on $\mathcal{F}_L$ induces a grading on $\mathcal{B}(\mathcal{F}_L)$, for which the image of $V \to \mathcal{B}(\mathcal{F}_L)$ is odd. Hence, the representation $\pi_{L}:\Cl(V) \rightarrow \mathcal{B}(\mc{F}_{L})$ preserves this grading (see \cref{rem:EvenOddCliffordDeco}).
\end{remark}

\begin{remark}\label{rem:RealVSComplexHilbert}
        Our definition of the Clifford algebra and its representation on  Fock space is consistent with \cite{Ara85} and \cite{BLJ02}. However, there are some competing conventions. 
For example, one might start with a complex Hilbert space $H$, which will play the role of our Lagrangian $L$.
In this case, the Fock space will be the completion of $\Lambda H$, see for example \cite{Ott95} and \cite{Ne09}. Information on the relationship between these approaches is given in Chapter 2.6 of \cite{Ott95}.
In \cite{PR95} yet another approach is taken. There, it is assumed that $V$ is a real Hilbert space, equipped with a unitary structure $\mc{J}: V \rightarrow V$. In this case, $V \otimes \C$ is a complex Hilbert space, naturally equipped with both a real structure and a unitary structure, which puts us in the setting we have described so far. In $\cite{PR95}$, the real Hilbert space is then equipped with a complex structure by setting $iv \defeq  \mc{J}(v)$ for all $v \in V$, and one writes $V_{\mc{J}}$ for the complex Hilbert space obtained in this way. There is then an isometric isomorphism $V_{\mc{J}} \rightarrow L_{\mc{J}}, v \mapsto 2^{-1/2}(v - i \mc{J}v)$, see \cite{PR95} Section 2.1.
\end{remark}

\subsection{Implementable operators}

\label{sec:Implementers}

We now fix  a Lagrangian subspace $L \subseteq V$. We write $\mc F = \mathcal{F}_L$ for its Fock space, and we consider $\Cl(V) \subset \mathcal{B}(\mathcal{F})$ via the faithful representation $\pi_L$. Further, we write  $\mc{J}=\mc{J}_L$ for the unitary structure corresponding to $L$, see \cref{lem:LagrangiansVsUnitary}.
Given an element $g \in \O(V)$, one might wonder if there exists a unitary operator $U \in \U(\mc{F})$, such that the  equation \begin{equation*}
\theta_{g}(a) = UaU^{*}
\end{equation*}
in $\mc{B}(\mc{F})$ holds for all $a \in \Cl(V)$. In this case the  operator $g\in \O(V)$ is said to be \emph{implementable} in $\mc{F}$, and the unitary operator $U$ is called an \emph{implementer} that  \emph{implements} $g$. We recall below the criterion for $g\in \O(V)$ to be implementable and then discuss the structure of the set of all implementers.

For a bounded operator $A\in \mathcal{B}(V)$ we write  $\|A \|$ for the usual operator norm,  
and $\|A\|_2$ for the Hilbert-Schmidt norm, i.e. $\| A \|_{2} \defeq  \trace \left( A^{*} A \right)$. 
We recall that $A$ is called a \emph{Hilbert-Schmidt operator} if $\|A\|_2$ is finite. We have the following important result, see \cite[Theorem 3.3.5]{PR95} or \cite[Theorem 6.3]{Ara85}.

\begin{theorem}\label{thm:Implentable}
An orthogonal operator $g\in \O(V)$ is implementable if and only if $[g, \mc{J}]$ is a Hilbert-Schmidt operator.
\end{theorem}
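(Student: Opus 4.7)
The plan is to translate both sides of the theorem into properties of the block decomposition of $g$ with respect to the polarization $V = L \oplus \alpha(L)$, and then to construct an implementer explicitly in the sufficiency direction while extracting an obstruction from the necessity direction. Since $L$ is $b$-isotropic and $V = L \oplus \alpha(L)$, one has $\alpha(L) = L^{\perp}$, so $\mc{J} = i(2P_{L} - \1)$, and a direct calculation gives
\begin{equation*}
[g,\mc{J}] = 2i\bigl(P_{L}\, g\, P_{L}^{\perp} - P_{L}^{\perp}\, g\, P_{L}\bigr).
\end{equation*}
Hence the Hilbert--Schmidt condition on $[g,\mc{J}]$ is equivalent to the off-diagonal blocks of $g$ with respect to $V = L \oplus L^{\perp}$ being Hilbert--Schmidt. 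Writing $g$ in block form with diagonal block $A = P_{L}\, g|_{L}$ and off-diagonal block $B = P_{L}\, g|_{L^{\perp}}$, the identity $\alpha g = g\alpha$ forces the other off-diagonal block to be the $\alpha$-conjugate of $B$, so it suffices to control $B$.

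For the necessity direction, assume $U \in \U(\mc{F})$ implements $g$ and consider the transported vacuum $\Omega_{g} \defeq U\Omega$. Since $\iota(w) \lact \Omega = 0$ for all $w \in \alpha(L)$, conjugation by $U$ yields $\iota(g(w)) \lact \Omega_{g} = 0$ for all such $w$. Splitting $g(w) = Bw + Dw$ with $Dw \in \alpha(L)$ and expanding $\Omega_{g} = \sum_{n} \psi_{n}$ with $\psi_{n} \in \Lambda^{n} L$, this vacuum condition becomes a two-step recursion determining $\psi_{n+2}$ from $\psi_{n}$ via an antisymmetric bilinear form on $L$ built from $B$ and an inverse of the diagonal block $D$. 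Square-summability of $\Omega_{g}$ in $\mc{F}$ then forces this form, and hence $B$, to be Hilbert--Schmidt.

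For the sufficiency direction, I would construct $U$ by first prescribing its value on the vacuum. Reducing to the case where $D$ is invertible (via polar decomposition of $g$, or a density argument exploiting continuity in $\O_{\mathrm{res}}(V)$), set
\begin{equation*}
\Omega_{g} \defeq c_{g} \cdot \exp\!\Bigl(\tfrac{1}{2} \sum_{i,j} T_{ij}\, e_{i} \wedge e_{j}\Bigr) \lact \Omega,
\end{equation*}
where $T \in L \otimes L$ is the antisymmetric element corresponding to $BD^{-1}$ under the identification $\alpha(L) \cong L^{*}$ coming from $b$, $\{e_{i}\}$ is an orthonormal basis of $L$, and $c_{g}$ is a normalization. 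The Hilbert--Schmidt hypothesis ensures that $T$ is Hilbert--Schmidt, so the exponential converges in $\mc{F}$ to a vector of finite norm controlled by a convergent infinite Fredholm determinant in $T^{*}T$. One then verifies directly that $\iota(g(w)) \lact \Omega_{g} = 0$ for every $w \in \alpha(L)$, so $\Omega_{g}$ is a vacuum vector for the representation $\pi_{L} \circ \theta_{g}$. By irreducibility of the Fock representation (\cref{lem:FockIrreducible}) and the uniqueness (up to unitary equivalence) of cyclic vacuum representations, the assignment $a \lact \Omega \mapsto \theta_{g}(a) \lact \Omega_{g}$ extends to a unitary $U$ on $\mc{F}$ implementing $g$.

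The main obstacle is the analytic content of the sufficiency direction: showing that the squeezed-vacuum construction defines a genuine vector of finite norm in $\mc{F}$, and that the resulting map is actually \emph{unitary} rather than merely an isometry on a dense subspace. Both points hinge on the convergence of the infinite Fredholm determinant in $T^{*}T$, whose convergence is precisely the Hilbert--Schmidt condition. Handling the case where the diagonal block fails to be invertible — which can occur when $g$ mixes $L$ and $L^{\perp}$ severely — requires a separate reduction, such as polar decomposition or approximation by operators with invertible diagonal block. Once these analytic points are in place, the algebraic intertwining identity $\theta_{g}(a) = UaU^{*}$ follows essentially formally from the construction of $U$ on the cyclic orbit of $\Omega$.
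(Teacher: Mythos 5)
The paper does not prove this theorem itself --- it is quoted from \cite[Theorem 3.3.5]{PR95} and \cite[Theorem 6.3]{Ara85} --- and your outline is essentially the standard proof found in those references: the identity $[g,\mc{J}]=2i(P_{L}gP_{L}^{\perp}-P_{L}^{\perp}gP_{L})$ reduces everything to the off-diagonal blocks (cf.\ \cref{rem:DecomposeWRTL+alphaL}), necessity is extracted from square-summability of the transported vacuum, and sufficiency is the squeezed-vacuum construction $\exp(\tfrac12\hat T)\lact\Omega$ followed by uniqueness of the vacuum representation and irreducibility (\cref{lem:FockIrreducible}). So the strategy is the right one.

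Two steps need repair. The reduction to an invertible diagonal block cannot be done ``by a density argument exploiting continuity in $\O_{\res}(V)$'': nothing at this stage guarantees that implementability passes to limits (such stability is downstream of the theorem, via the group structure on $\Imp(V)$), and the polar decomposition of $g$ is trivial since $g$ is already unitary. The correct reduction is algebraic: writing $g$ in block form as in \cref{rem:DecomposeWRTL+alphaL}, unitarity gives $d^{*}d=\1-b^{*}b$, so under the Hilbert--Schmidt hypothesis $d$ is Fredholm and $\ker d\subset\alpha(L)$ is finite-dimensional; one then factors $g=g_{0}g_{1}$ with $g_{0}$ an orthogonal operator differing from $\1$ only on a finite-dimensional subspace (exchanging $\ker d$ with a subspace of $L$) and $g_{1}$ having invertible diagonal block; $g_{0}$ is implemented explicitly by a finite product of Clifford generators, and implementers multiply. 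Relatedly, in the necessity direction your recursion tacitly assumes $\psi_{0}\neq 0$; for $g$ in the non-identity component of $\O_{\res}(V)$ the implementer is odd, all even components of $U\Omega$ vanish, and the recursion must be seeded from the lowest nonvanishing component after splitting off $\ker d$, whose finite-dimensionality must there be deduced from the existence of $U\Omega$ rather than from the Hilbert--Schmidt condition. Finally, unitarity of the densely defined intertwiner is not a determinant issue: isometry follows from the equality of the quasi-free states $\omega$ and $\omega_{g}\circ\theta_{g}$, and surjectivity from cyclicity of $\Omega_{g}$, i.e.\ from irreducibility. With these repairs your argument coincides with the one the paper cites.
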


Suppose $g,h \in \O(V)$ are implementable. Since the Hilbert-Schmidt operators form an ideal in the algebra of bounded operators, the identity
$[gh, \mc{J}] = g[h, \mc{J}] + [g, \mc{J}]h$
implies that $gh\in \O(V)$ is again implementable. Similarly, the equation
$[g^{-1}, \mc{J}] = - g^{-1}[g,\mc{J}]g^{-1}$
tells us that if $g$ is implementable, then $g^{-1}$ is implementable. Thus, the implementable operators form a subgroup, which we call the \emph{restricted orthogonal group} and denote it by $\O_{\res}(V)$. 

\begin{remark}\label{rem:DecomposeWRTL+alphaL}
        Let $P_{L}: V \rightarrow L$ be the projection onto $L$. We recall from \cref{lem:LagrangiansVsUnitary} that $\mc{J} = i(P_{L} - P_{L}^{\perp})$. Let $A \in \mc{B}(V)$. Then, we may write $A$ in block form with respect to the decomposition $V = L \oplus \alpha(L)$ as
        \begin{equation*}
                A = \begin{pmatrix}
                        a & b \\
                        c & d
                \end{pmatrix}.
        \end{equation*}
        We have the relations
        \begin{align*}
                a &= P_{L} A P_{L}, & b &= P_{L} A P_{L}^{\perp}, \\
                c &= P_{L}^{\perp} A P_{L}, & d &= P_{L}^{\perp}AP_{L}^{\perp}.
        \end{align*}
        From which it follows that the condition that $[A,\mc{J}]$ is Hilbert-Schmidt is equivalent to the statement that both $b$ and $c$ are Hilbert-Schmidt. If $A$ is unitary, then we have that if $b$ is Hilbert-Schmidt, then $c$ is Hilbert-Schmidt and vice-versa.
\end{remark}

We claim that $\O_{\res}(V)$ is a Banach-Lie group with the underlying topology  induced from the so-called $\mc{J}$-norm. 
In the following we will describe the Banach-Lie group structure explicitly.
We let $\mc{B}_{\res}(V)$  be the unital algebra
        \begin{equation*}
                \mc{B}_{\res}(V) \defeq  \{ A \in \mc{B}(V) \mid \| [A, \mc{J}] \|_{2} < \infty \}.
        \end{equation*}
On the algebra $\mathcal{B}_{\res}(V)$, the $\mc{J}$-norm is defined by
        \begin{equation*}
                \| A \|_{\mc{J}} \defeq  \|A \| + \| [\mc{J},A] \|_{2} \text{.}
        \end{equation*}
It is elementary to check that the $\mathcal{J}$-norm turns $\mc{B}_{\res}(V)$ into a Banach algebra.
We see that $\O_{\res}(V) = \mathcal{B}_{\res}(V)^{\times} \cap \mathrm{O}(V)$ is a closed subgroup of the Banach-Lie group $\mathcal{B}_{\res}(V)^{\times}$.  Unlike in finite dimensions, this is not sufficient for being a Banach-Lie group itself, making further considerations necessary. To start with, we equip $\O_{\res}(V) \subset \mathcal{B}_{\res}(V)^{\times}$ with the induced topology, so that it becomes a topological group. We note the following result about this topology, which is part of Theorem 6.3 in \cite{Ara85}.

\begin{proposition}\label{lem:OJTwoComponents}
        The topological group $\O_{\res}(V)$ has two connected components.
\end{proposition}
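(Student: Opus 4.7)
The plan is to exhibit a continuous surjection $\nu:\O_{\res}(V)\to\Z/2$ and to show that each of its level sets is path-connected. For the invariant, I would use the block decomposition from \cref{rem:DecomposeWRTL+alphaL}: for $g\in \O_{\res}(V)$ the block $a=P_L g P_L\big|_L\colon L\to L$ is Fredholm, since the unitarity relation $a^{*}a=\1_L-c^{*}c$ exhibits $a^{*}a$ as a compact perturbation of the identity (the off-diagonal block $c$ being Hilbert–Schmidt). The orthogonality condition $\alpha g=g\alpha$ moreover links $a$ and $d$ by anti-linear conjugation with $\alpha\vert_L$, so $a$ determines $g$ up to Hilbert–Schmidt data. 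I would then set $\nu(g)\defeq \dim\ker a\bmod 2$. Although $\dim\ker$ is only upper semi-continuous for general families of Fredholm operators, for operators arising from $\O_{\res}(V)$ one can show that this parity is locally constant in the $\mc{J}$-norm topology, essentially because the Hilbert–Schmidt piece of the $\mc{J}$-norm controls the relevant compact perturbations of $a$, and for orthogonal (as opposed to unitary) $g$ the parity of $\dim\ker a$ coincides with the $\Z/2$-valued Maslov-type index of the pair $(L,g(L))$ of Lagrangians.

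For the surjectivity of $\nu$ it suffices to exhibit two elements in different fibres. Clearly $\nu(\1)=0$. For a nontrivial class, pick a unit vector $\xi\in L$ and consider the orthogonal transformation $g_0$ that swaps $\xi\leftrightarrow\alpha(\xi)$ and fixes the orthogonal complement in $V$. The off-diagonal blocks of $g_0$ have rank one, hence $g_0\in \O_{\res}(V)$, and the diagonal block $a$ has one-dimensional kernel (spanned by $\xi$); thus $\nu(g_0)=1$.

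For path-connectedness of each fibre, I would proceed in two steps. First, a neighbourhood of the identity is path-connected by means of the exponential map $\exp\colon\lie{o}_{\res}(V)\to\O_{\res}(V)$, giving explicit paths $t\mapsto\exp(tX)$; in particular the identity component $\O_{\res}(V)_0$ is open. Secondly, given $g,g'$ in the same fibre, I would use a spectral cutoff: approximate $g$ in the $\mc{J}$-norm by orthogonal transformations that differ from the identity only on a finite-dimensional $\alpha$-invariant subspace $W\subset V$. Within such $W$ the relevant group is the finite-dimensional $\mathrm{O}(W^{\R})$, whose two components are connected by $\SO$-paths. Stringing the two homotopies together with a path from the cutoff approximant back to $g$ (controlled in the $\mc{J}$-norm) yields the desired path in $\O_{\res}(V)$.

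The main obstacle is the $\mc{J}$-norm analysis underlying both the local constancy of $\nu$ and the spectral-cutoff reduction to finite dimensions: one must show that the perturbations that arise from cutting off the spectrum of $a$ (or, equivalently, from truncating the Hilbert–Schmidt operators $b,c$) can be made arbitrarily small in Hilbert–Schmidt norm while remaining orthogonal, and that the resulting truncated operators sit in a common component of $\mathrm{O}(W^{\R})$. Once this technical estimate is in place, the proposition follows by gluing the local and global pieces together.
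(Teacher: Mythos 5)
Your overall architecture --- a locally constant parity invariant $\nu(g)=\dim\ker\bigl(P_L g P_L|_L\bigr)\bmod 2$, a rank-one example $g_0$ realizing the nontrivial value, and path-connectivity of the two fibres --- is exactly the shape of the standard argument; note that the paper itself offers no proof of \cref{lem:OJTwoComponents} but cites \cite[Theorem 6.3]{Ara85}, so a self-contained argument along your lines would be a genuine addition. The invariant and the representative $g_0$ are correct. However, two of your steps have real gaps. First, the local constancy of $\nu$. Continuity of the Fredholm index only shows that $\dim\ker a-\dim\operatorname{coker}a$ is locally constant, and your stated reason --- that the Hilbert--Schmidt part of the $\mc{J}$-norm ``controls the relevant compact perturbations'' --- does not explain why $\dim\ker a$ itself can only jump by an \emph{even} amount; that is false for general norm-continuous families of index-zero Fredholm operators. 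The extra input must come from the relation $\alpha g=g\alpha$: for instance, one shows that $\nu$ is a group homomorphism $\O_{\res}(V)\to\mathbb{Z}_2$ (so local constancy follows from $\nu\equiv 0$ near $\1$, where $a$ is invertible), or one exhibits an anti-linear/skew structure on $\ker a$ forcing parity stability. As written, the crux of the theorem is asserted rather than proved.

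Second, the reduction to finite dimensions is not merely an omitted estimate --- it fails as stated. Orthogonal operators equal to $\1$ off a finite-dimensional subspace are \emph{not} dense in $\O_{\res}(V)$ for the $\mc{J}$-norm, because its operator-norm summand obstructs this: already $g=-\1$ (or any $g$ commuting with $\mc{J}$ whose spectrum stays away from $1$) is at operator-norm distance $2$ from every such cutoff. Only the off-diagonal, Hilbert--Schmidt blocks $b,c$ of \cref{rem:DecomposeWRTL+alphaL} can be truncated to finite rank with small error; the diagonal block has to be connected to $\1$ by a different mechanism (e.g.\ polar decomposition of $a$ together with norm-connectivity of the unitary group of $L$, or the exponential chart of \cref{lem:ExpIsLocalDiffeo}), while verifying that the whole path stays in $\O_{\res}(V)$ and that the off-diagonal Hilbert--Schmidt norms vary continuously along it. Until these two points are repaired, the proposal is a plausible outline rather than a proof; the economical alternative is to cite \cite[Theorem 6.3]{Ara85}, as the paper does.
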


\begin{remark}\label{rem:equivalentNorms}
        We consider the following norms on $\mc{B}_{\res}(V)$:
        \begin{align*}
        A &\mapsto \| A \| + \| [A, \mc{J}] \|_{2},& \quad A &\mapsto \| P_{L} A P_{L} \| + \| [A, \mc{J}] \|_{2}, \\
        A &\mapsto \| A \| + \| P_{L} A P_{L}^{\perp} \|_{2}, & A &\mapsto \| P_{L} A P_{L} \| + \| P_{L} A P_{L}^{\perp} \|_{2}.
        \end{align*}
        Their restrictions to  $\O_{\res}(V)$ are all equivalent.       We mention this because some sources use one of the other norms to define the topology on $\O_{\res}(V)$.
\end{remark}

Next we construct explicitly a Banach-Lie group structure on $\O_{\res}(V)$. 
As usual, in unital Banach algebras the exponential map
\begin{align*}
                 \exp: \mc{B}_{\res}(V) &\rightarrow \mc{B}_{\res}(V)^{\times}\;,\; A \mapsto e^{A}\defeq \sum_{n=0}^{\infty} \frac{1}{n!}A^{n}
\end{align*}
is smooth and a local diffeomorphism at $0$. 
We define a Banach-Lie algebra $\lie{o}_{\res}(V)$ as the subspace
        \begin{equation*}
                \lie{o}_{\res}(V) \defeq  \{ A \in \mc{B}_{\res}(V) \mid A^{*} = - A\text{ and }[\alpha,A]=0 \},
        \end{equation*}
        equipped with the Lie bracket given by the usual commutator bracket of operators.
It will indeed turn out to be the Lie algebra of $\O_{\res}(V)$.

\begin{lemma}\label{lem:ExpIsLocalDiffeo}
        The exponential map is a local diffeomorphism at $0$ from $\lie{o}_{\res}(V)$ to $\O_{\res}(V)$.
\end{lemma}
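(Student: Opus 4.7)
The plan is to apply the inverse function theorem in the Banach algebra $\mathcal{B}_{\res}(V)$ and then verify that $\exp$ and its local inverse $\log$ respect the closed subspace $\lie{o}_{\res}(V) \subset \mathcal{B}_{\res}(V)$ and the subset $\O_{\res}(V) \subset \mathcal{B}_{\res}(V)^{\times}$. Since $\mathcal{B}_{\res}(V)$ is a Banach algebra under the $\mathcal{J}$-norm, the exponential is everywhere defined by a convergent power series, is smooth, and has differential $\mathrm{id}$ at $0$; hence it restricts to a diffeomorphism from some open $\mathcal{J}$-ball $U$ around $0$ onto an open neighborhood $W$ of $\1$ in $\mathcal{B}_{\res}(V)^{\times}$, with smooth inverse given by the usual logarithmic series $\log$.

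The main step is to verify $\exp(U \cap \lie{o}_{\res}(V)) = W \cap \O_{\res}(V)$. First I would observe that the two maps $A \mapsto A^{*}$ and $A \mapsto \alpha A \alpha^{-1}$ are $\C$-antilinear isometric involutions on $\mathcal{B}_{\res}(V)$ with respect to the $\mathcal{J}$-norm. For the first, one uses $\mathcal{J}^{*} = -\mathcal{J}$ (which follows from $\mathcal{J}^{2} = -\1$ together with unitarity) to get $[\mathcal{J}, A^{*}] = [\mathcal{J}, A]^{*}$, and then the adjoint-invariance of the Hilbert-Schmidt norm yields $\|A^{*}\|_{\mathcal{J}} = \|A\|_{\mathcal{J}}$. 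For the second, one uses that $\mathcal{J} \in \O(V)$ commutes with $\alpha$ to obtain $[\mathcal{J}, \alpha A \alpha^{-1}] = \alpha [\mathcal{J}, A] \alpha^{-1}$, and then the anti-unitarity of $\alpha$ preserves Hilbert-Schmidt norms. In particular $\lie{o}_{\res}(V)$, as the common fixed-point set of the continuous involutions $A \mapsto -A^{*}$ and $A \mapsto \alpha A \alpha^{-1}$, is a closed real Banach subspace of $\mathcal{B}_{\res}(V)$.

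Since $\exp$ and $\log$ are given by power series with real coefficients, they commute with both conjugate-linear involutions: $\exp(A)^{*} = \exp(A^{*})$ and $\alpha\exp(A)\alpha^{-1} = \exp(\alpha A \alpha^{-1})$, with the analogous identities for $\log$ on $W$. For $A \in U \cap \lie{o}_{\res}(V)$, these give $\exp(A)^{*} = \exp(-A) = \exp(A)^{-1}$ and $\alpha \exp(A) \alpha^{-1} = \exp(A)$, so $\exp(A) \in \O(V) \cap \mathcal{B}_{\res}(V)^{\times} = \O_{\res}(V)$. Conversely, for $g \in W \cap \O_{\res}(V)$, applying $\log$ to the relations $g^{*} = g^{-1}$ and $\alpha g = g \alpha$ yields $\log(g)^{*} = -\log(g)$ and $[\alpha, \log(g)] = 0$, so $\log(g) \in \lie{o}_{\res}(V)$. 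Hence $\exp$ restricts to a homeomorphism from $U \cap \lie{o}_{\res}(V)$ onto $W \cap \O_{\res}(V)$, and viewing $\lie{o}_{\res}(V)$ as a Banach space and $\O_{\res}(V)$ with the subspace topology from $\mathcal{B}_{\res}(V)$, this is the asserted local diffeomorphism at $0$. The only mildly technical point is the $\mathcal{J}$-norm continuity of the two involutions---routine Hilbert-Schmidt bookkeeping---so I expect no serious obstacle.
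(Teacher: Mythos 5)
Your proposal is correct and follows essentially the same route as the paper: restrict the local diffeomorphism $\exp\colon \mathcal{B}_{\res}(V)\supset W'\to U'\subset \mathcal{B}_{\res}(V)^{\times}$ to the subspace $\lie{o}_{\res}(V)$ and the subset $\O_{\res}(V)$, and check via the power series for $\exp$ and $\log$ that the conditions $A^{*}=-A$ and $[\alpha,A]=0$ correspond exactly to $g^{*}=g^{-1}$ and $[\alpha,g]=0$. Your explicit verification that the two involutions are $\mathcal{J}$-norm isometries is a detail the paper leaves implicit, but the argument is the same.
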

\begin{proof}
        First, we prove that $\exp (\lie{o}_{\res}(V)) \subseteq \mc{\O}_{\res}(V)$. We have that $A^{*} = -A$ implies that $\exp(A) \in \U(V)$, then  $[\alpha,A]=0$ implies $\exp(A)\in \O_{\res}(V)$. Now, let $W' \subseteq \mc{B}_{\res}(V)$ and $U' \subseteq \mc{B}_{\res}(V)^{\times}$ be open neighbourhoods of $0$ and $1$ respectively with the property that $\exp: W' \rightarrow U'$ is a diffeomorphism. It follows that $W \defeq  W' \cap \lie{o}_{\res}(V)$ is an open neighbourhood of $0 \in \lie{o}_{\res}(V)$ and $U \defeq  U' \cap \O_{\res}(V)$ is an open neighbourhood of $1 \in \O_{\res}(V)$. We claim that $\exp$ maps $W$ diffeomorphically to $U$. Clearly $\exp$ maps $W$ diffeomorphically to $\exp (W)$. It remains to show that $\exp (W) = U = U' \cap \O_{\res}(V)$. The inclusion $\exp(W) \subseteq U' \cap \O_{\res}(V)$ is clear. For the other inclusion it suffices to show  $U' \cap \O_{\res}(V) \subseteq \exp( \lie{o}_{\res}(V))$. Or, in other words, that the logarithm of $A\in U'\cap \O_{\res}(V)$ is in $\lie{o}_{\res}(V)$. This is easily verified using the series expansion of the logarithm around $1$.
\end{proof}

With \cref{lem:ExpIsLocalDiffeo} at hand it is standard to equip $\O_{\res}(V)$ with the structure of a Banach-Lie group with Lie algebra $\lie{o}_{\res}(V)$. 

\subsection{Implementers} 

\label{sec:SmoothStructureOfImp(V)}

We define $\Imp(V)  \subset \U(\mathcal{F})$ to be the set of all implementers of  operators $g\in \O_{\res}(V)$. Suppose that $U \in \Imp(V)$, then since $\theta: \mathrm{O}(V) \to \mathrm{Aut}(\Cl(V)) $ is injective, the operator $g$ that is implemented by $U$ is determined uniquely; in other words, we have a well-defined map 
$q:\mathrm{Imp}(V) \to \O_{\res}(V)$. 
 If $U$ and $U'$ implement operators  $g$ and $g'$, respectively, then $UU'$ implements $gg'$. Likewise, $U^{-1}$ implements $g^{-1}$. Hence, $\mathrm{Imp}(V)$ is a subgroup of $\U(\mathcal{F})$, and $q$ is a group homomorphism. If $U,U'\in \Imp(V)$ implement the same operator $g$, then we have
\begin{equation*}
UaU^{*}=U'aU'^{*}
\end{equation*}
for all $a\in \mathrm{Cl}(V)$, which implies $UU'^{-1} \in \mathrm{Cl}(V)'$ and hence $UU'^{-1}\in \U(1)\1$. Thus, we have a central extension
\begin{equation}
\label{eq:ce}
 \U(1) \to \Imp(V) \to \O_{\res}(V) 
\end{equation}
of groups. Our next goal is to equip $\Imp(V)$ with the structure of a Banach-Lie group, such that \cref{eq:ce} is a central extension of Banach-Lie groups.

For this purpose, we infer the existence of a   local section $\sigma:U \to \Imp(V)$, defined on  an open neighbourhood $U$ of $\1 \in\O_{\res}(V)$ on which the exponential map is injective. We refer to \cite{Ara85,PR95,Ott95,Ne09} for constructions of this section, and recall some steps in the following. 
Let $\mc{L}(\mc{F})$ be the algebra of unbounded skew-symmetric operators on $\mc{F}$, with invariant dense domain equal to the algebraic Fock space $\Lambda L$. We shall outline how to produce for each $A \in \lie{o}_{\res}(V)$ an element $\tilde A  \in \mathcal{L}(\mathcal{F})$, such that $\exp(\tilde A)$ is unitary and implements $\exp(A)$. Then, $\sigma(\exp(A))\defeq \exp(\tilde A)$. In order to define $\tilde A$, we first require the following extension of \cref{rem:DecomposeWRTL+alphaL}, which can be proved by an explicit computation of $[\alpha, A]$.

\begin{lemma}\label{lem:LinearAntiLinearDeco}
        With respect to the decomposition $V = L \oplus \alpha(L)$, an element $A \in \lie{o}_{\res}(V)$ can uniquely be written as
        \begin{equation*}
        A = \begin{pmatrix}
        a & a' \alpha \\
        \alpha a' & \alpha a \alpha
        \end{pmatrix},
        \end{equation*}
        with $a: L \rightarrow L$ a bounded linear skew-symmetric transformation, and $a': L \rightarrow L$ a Hilbert-Schmidt anti-linear skew-symmetric transformation.
\end{lemma}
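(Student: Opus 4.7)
The plan is to exploit the three defining properties of $\lie{o}_{\res}(V)$ — membership in $\mc{B}_{\res}(V)$, skew-adjointness $A^* = -A$, and commutation $[\alpha,A]=0$ — separately. I would begin by writing $A$ in block form $\begin{pmatrix} a & b \\ c & d\end{pmatrix}$ with respect to the decomposition $V = L \oplus \alpha(L)$, noting that this decomposition is in fact \emph{orthogonal}: the Lagrangian $L$ is the $+i$ eigenspace of the unitary $\mc{J}$ from \cref{lem:LagrangiansVsUnitary}, and a short calculation using $[\alpha,\mc{J}]=0$ together with the antilinearity of $\alpha$ identifies $\alpha(L)$ as the $-i$ eigenspace, orthogonal to $L$. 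By \cref{rem:DecomposeWRTL+alphaL}, $A\in\mc{B}_{\res}(V)$ is then equivalent to $b$ and $c$ being Hilbert-Schmidt, and the Hilbert-space adjoint $A^*$ has the usual transpose-star block form.

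Second, I would use $[\alpha,A]=0$ to relate the blocks. Since $\alpha$ is antilinear and exchanges $L$ with $\alpha(L)$ via the mutually inverse antiunitaries $\alpha|_L$ and $\alpha|_{\alpha(L)}$, it is off-diagonal in block form. Writing $\alpha A = A\alpha$ block-wise (tracking antilinearity but using standard matrix-multiplication combinatorics) yields the relations $d = \alpha a \alpha$ and $b = \alpha c \alpha$. I would then set $a' \defeq \alpha\circ c$, interpreted as a map $L \to L$; this is antilinear (linear followed by antilinear) and Hilbert-Schmidt (antiunitary composed with Hilbert-Schmidt). The identities $c = \alpha a'$ and $b = \alpha c\alpha = a'\alpha$, using $\alpha^{2} = \1$, put $A$ into the claimed form.

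Finally, the skew-adjointness $A^* = -A$ yields, in the orthogonal block decomposition, $a^* = -a$ (so $a$ is linear and skew-adjoint), $d^* = -d$ (consistent with $d = \alpha a \alpha$), and $c^* = -b$. Substituting $c = \alpha a'$ and $b = a'\alpha$ into the last equation and invoking the antiunitarity identity $\langle \alpha x, \alpha y\rangle = \overline{\langle x, y\rangle}$ gives
\begin{equation*}
\langle a'v, w\rangle = -\langle a'w, v\rangle \qquad \text{for all } v,w\in L,
\end{equation*}
which is the natural skew-symmetry condition on the antilinear operator $a'$ (equivalently, antisymmetry of the complex bilinear form $(v,w)\mapsto \langle v, a'w\rangle$). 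Uniqueness of the pair $(a,a')$ is automatic from uniqueness of the block decomposition, since $a = P_L A P_L$ and $a'= \alpha\circ P_L^{\perp} A P_L$ are forced by the formulas. The main obstacle is purely notational: bookkeeping the antilinearity of $\alpha$ in the block-matrix manipulations. Once $\alpha|_L$ and $\alpha|_{\alpha(L)}$ are distinguished explicitly (and one remembers that their product in either order is the identity on the respective summand), the entire argument reduces to a direct and routine check.
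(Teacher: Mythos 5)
Your proof is correct and follows exactly the route the paper intends: the paper only remarks that the lemma "can be proved by an explicit computation of $[\alpha,A]$", and your block-matrix computation of $\alpha A = A\alpha$ (yielding $d=\alpha a\alpha$, $b=\alpha c\alpha$, and $a'\defeq\alpha c$), combined with \cref{rem:DecomposeWRTL+alphaL} for the Hilbert--Schmidt property and $A^*=-A$ for the two skew-symmetry conditions, is precisely that computation carried out in full.
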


Given a decomposition of $A$ as in \cref{lem:LinearAntiLinearDeco},
we define a skew-symmetric unbounded operator $\tilde{A}_{0}$ on $\mc{F}$ by
\begin{align*}
        \tilde{A}_{0}: \Lambda^{n} L &\rightarrow \Lambda^{n} L, \\
        f_{1} \wedge ... \wedge f_{n} &\mapsto \sum_{i=1}^{n} f_{1} \wedge ... \wedge af_{i} \wedge ... \wedge f_{n};
\end{align*}
this operator has invariant dense domain $\Lambda L$, see \cite[Section 2.3]{Ott95}. Next, since $a'$ is Hilbert-Schmidt and anti-linear, there exists a unique element $\hat{a}' \in \Lambda^{2} L$, such that
\begin{equation}
\label{lem:notationhat}
        \langle \hat{a}', f_{1} \wedge f_{2} \rangle = \langle a'(f_{1}), f_{2} \rangle,
\end{equation}
for all $f_{1},f_{2} \in L$, see \cite[Lemma D3]{Ne09} or \cite[Section 2.4]{Ott95}. Moreover, the following equation then holds
\begin{equation*}
        \| \hat{a}' \|^{2} = \frac{1}{2} \| a' \|^{2}_{2}.
\end{equation*}
We then obtain a skew-symmetric unbounded operator $\tilde A_{1}$ with invariant dense domain $\Lambda L$ by setting
\begin{align*}
        \tilde A_1: \Lambda^{n} L &\rightarrow \Lambda^{n-2} L \oplus \Lambda^{n+2}L\;,\;           \xi \mapsto \frac{1}{2} \left( \iota_{\hat{a}'} \xi - \hat{a}' \wedge \xi \right)
\end{align*}
where $\iota_{\hat{a}'}$ stands for the adjoint of the map $\xi \mapsto \hat{a}' \wedge \xi$. We now set $\tilde A \defeq  \tilde A_0 + \tilde A_1 \in \mathcal{L}(\mathcal{F})$. It is straightforward to see that $A \mapsto \tilde A$ is linear. That $\exp(\tilde A)$ implements $\exp(A)$ is proved in \cite[page 44]{Ott95}.
This completes our recollection of the construction of $\sigma$. 
We are now in position to state the main result of this subsection.  

\begin{theorem}
\label{theorem:Banachextension}
There exists a unique Banach-Lie group structure on $\Imp(V)$ such that the section $\sigma$ is smooth in an open neighborhood of  $\1$. Moreover, when equipped with this Banach-Lie group structure,  
\begin{equation*}
 \U(1) \to \Imp(V) \to \O_{\res}(V) 
\end{equation*}
is a central extension of Banach-Lie groups.
\end{theorem}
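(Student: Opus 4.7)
The plan is to construct the Banach Lie group structure on $\Imp(V)$ by using the local section $\sigma$ to define a chart around $\1 \in \Imp(V)$, and then to translate this chart around by left multiplication, as is standard for central extensions of Lie groups. More precisely, on a small neighbourhood $W \subseteq \lie{o}_{\res}(V)$ of $0$ on which $\exp$ restricts to a diffeomorphism onto $\exp(W) \subseteq \O_{\res}(V)$, I would consider the map
\begin{equation*}
\Psi : W \times \U(1) \to q^{-1}(\exp(W)), \quad (A, z) \mapsto z \cdot \exp(\tilde A).
\end{equation*}
This is a bijection because any implementer of $\exp(A)$ differs from $\exp(\tilde A)$ only by a $\U(1)$-factor, and one pulls back the Banach manifold structure of $W \times \U(1)$ to produce a chart around $\1 \in \Imp(V)$. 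A chart around a general element $u \in \Imp(V)$ is then obtained by left-translation, $(A, z) \mapsto u \cdot \Psi(A, z)$.

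The remaining task is to verify that this atlas is smoothly compatible and that the group operations are smooth. Both reduce to showing that the cocycle
\begin{equation*}
\omega : \exp(W) \times \exp(W) \to \U(1), \quad \omega(g, h) \defeq \sigma(g) \sigma(h) \sigma(gh)^{-1}
\end{equation*}
is smooth (it takes values in $\U(1)$ because $\sigma(g)\sigma(h)$ and $\sigma(gh)$ both implement $gh$, so their ratio lies in $\Cl(V)' \cap \U(\mc F) = \U(1)\1$). Granted this, the multiplication formula $z \sigma(g) \cdot z' \sigma(h) = z z' \omega(g, h) \sigma(gh)$ and the analogous expression for inversion become smooth in local coordinates, while smoothness of $q$ is immediate from $\Psi$ (it is just projection to the first factor). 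This realises the sequence as a central extension of Banach Lie groups. Alternatively, and more conceptually, one invokes a general criterion such as the one in \cref{app:BanachCentral}, which produces a Banach Lie group structure on any abstract central extension $\U(1)\to G\to H$ of a Banach Lie group $H$ from a local section satisfying an appropriate smoothness property; the explicit map $A \mapsto \exp(\tilde A)$ supplies exactly such a section.

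The main obstacle is the smoothness of $\omega$ (equivalently, of $\sigma$ as a map into $\U(\mc F)$), which requires analysing how the unbounded quadratic fermionic operators $\tilde A = \tilde A_0 + \tilde A_1$ combine under products and under the Baker--Campbell--Hausdorff expansion. One keeps control using that $A \mapsto \tilde A$ is linear and continuous from $\lie{o}_{\res}(V)$ with the $\mc J$-norm into a suitable seminormed space of operators on the common invariant domain $\Lambda L$, together with the identity $\|\hat a'\|^{2} = \tfrac{1}{2}\|a'\|_{2}^{2}$ used in the construction of $\tilde A_1$. Uniqueness is then automatic: any two Banach Lie group structures on $\Imp(V)$ that render $\sigma$ smooth make the identity map $\Imp(V) \to \Imp(V)$ a continuous group homomorphism between Banach Lie groups, hence automatically smooth, so the two structures coincide.
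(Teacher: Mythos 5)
Your overall strategy coincides with the paper's: reduce everything to the smoothness of the $\U(1)$-valued cocycle $f_{\sigma}(g,h)=\sigma(g)\sigma(h)\sigma(gh)^{-1}$ near $(\1,\1)$, and then invoke the general machinery of \cref{app:BanachCentral} (your explicit chart $\Psi$ and left-translation argument is exactly what \cref{lem:LocalSuf,lem:CocycleExtension} formalize, including the reduction of smooth compatibility of charts and of the group operations to smoothness of the cocycle). The uniqueness argument via automatic smoothness of continuous homomorphisms is also fine.

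However, there is a genuine gap at the one step that carries all the analytic content: you identify smoothness of the cocycle as ``the main obstacle'' and propose to attack it by tracking how the unbounded operators $\tilde A=\tilde A_0+\tilde A_1$ combine under products and the Baker--Campbell--Hausdorff expansion, but you do not actually carry this out, and this route is delicate (BCH for unbounded operators on the invariant domain $\Lambda L$, convergence and smooth dependence of the resulting $\U(1)$-phases). The paper sidesteps this entirely: it introduces the vacuum matrix coefficient $\psi_{\sigma}(g,h)=\langle\sigma(g)\sigma(h)\Omega,\Omega\rangle$, whose smoothness near $(\1,\1)$ is a known result of Neeb, notes that $\psi_{\sigma}(\1,\1)\neq 0$, and then reads off $f_{\sigma}(g_1,g_2)=\psi_{\sigma}(g_1,g_2)/\psi_{\sigma}(1,g_1g_2)$ on a suitably shrunk neighbourhood. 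Without this (or an equivalent) device, your proof is incomplete. A further point to correct: your parenthetical claim that smoothness of the cocycle is ``equivalently, smoothness of $\sigma$ as a map into $\U(\mc F)$'' is false --- the paper explicitly remarks that $\sigma$ is \emph{not} continuous into $\U(\mc F)$ with the norm topology; the smoothness one needs is only that of the scalar function $f_{\sigma}$, which is strictly weaker and is precisely why the vacuum-expectation trick works.
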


\begin{remark}
This existence of a smooth structure on $\Imp(V)$ is probably well-known and mentioned, e.g., in \cite{Ara85,Ne09}. However, we have not seen the construction of a Banach-Lie group structure explicitly carried out anywhere; in addition, we later need that our section $\sigma$ is smooth w.r.t.~that structure. This is why we describe the Banach-Lie group structure in detail.
\end{remark}

\begin{proof}[Proof of \cref{theorem:Banachextension}]
We write $\O_{\res; 0}(V)$ for the connected component of the identity of $\O_{\res}(V)$,
and we write $\Imp_{0}(V)$ for the restriction of $\Imp(V)$ to  $\O_{\res; 0}(V)$.
We first prove the result for $\Imp_0(V)$.
We choose an open neighborhood $V \subset U$ of $\1$ such that $V^2 \subset U$. 
We let $f_{\sigma}:V \times V \to \U(1)$ be the 2-cocycle associated to the section $\sigma$ via the formula
\begin{equation*}
  \sigma(g_{1})\sigma(g_{2}) =f_{\sigma}(g_{1},g_{2})\sigma(g_{1}g_{2})\text{.}
\end{equation*}
We prove below that $f_{\sigma}$ is smooth in an open neighborhood of  $(\1,\1)$.
It is a standard result that this implies the desired result for  $\Imp_0(V)$.
For the convenience of the reader we have described a proof of this standard result in Appendix \ref{app:BanachCentral}, see \cref{lem:CocycleExtension}. 

In order to show that $f_{\sigma}$ is smooth in an open neighborhood of $(\1,\1)$, we consider the map 
\begin{align*}
\psi_{\sigma}:U \times U &\to \C
\;,\;  (g,h) \mapsto \left \langle  \sigma(g)\sigma(h)\Omega,\Omega  \right \rangle\text{.}
\end{align*} 
 It is proved in \cite[Section 10]{Ne09} that $\psi_{\sigma}$ is smooth in an open neighborhood  of $(\1,\1)$.
 Since $\psi_{\sigma}$ is non-zero at $(\1,\1)$ and smooth, there exists an open neighborhood $U' \subset U$ of $\1$ such that $\psi_{\sigma}(g,h)\neq 0$ for all $g,h\in U'$.  Let $V'$ be an open neighborhood such that $V'^2\subset U'$. 
We note from the definition of $\sigma$ that $\sigma(\1)=\1$. We now compute, for $g_{1}, g_{2} \in V'$,
\begin{align*}
                f_{\sigma}(g_{1},g_{2})  \psi_{\sigma}(1,g_1g_2)&=f_{\sigma}(g_{1},g_{2}) \langle \sigma(g_{1}g_{2})\Omega,\Omega \rangle \\&= \langle f_{\sigma}(g_{1},g_{2}) \sigma(g_{1}g_{2})\Omega, \Omega \rangle \\&=\langle \sigma(g_{1})\sigma(g_{2})\Omega , \Omega \rangle       \\&=\psi_{\sigma}(g_1,g_2)\text{.}
\end{align*}
        It follows that
        \begin{equation*}
                f_{\sigma}(g_{1},g_{2}) = \frac{\psi_{\sigma}(g_1,g_2)}{  \psi_{\sigma}(1,g_1g_2)};
        \end{equation*}
        hence, we see that $f_{\sigma}$ is smooth on $V' \times V'$. 

        To extend the result from the identity component $\Imp_0(V)$ to the full group $\Imp(V)$ it suffices to prove that, for any $U \in \Imp(V)$, the conjugation map $C_{U}: \Imp(V) \rightarrow \Imp(V), x \mapsto UxU^{-1}$ is smooth in an open $\1$-neighbourhood.
        To see this, apply \cref{lem:LocalSuf} with $G = \Imp(V)$ and $K = \Imp_{0}(V)$.
        In fact, due to \cref{lem:OJTwoComponents}, it suffices to find any element $U_0 \in \Imp(V) \setminus \Imp_0(V)$ such that $C_{U_0}: \Imp(V) \rightarrow \Imp(V)$ is smooth in an open $\1$-neighbourhood, because if $U \in \Imp(V) \setminus \Imp_0(V)$, then $U_0^{-1}U \in \Imp(V)_{0}$ and $C_{U} = C_{U_0}C_{U_0^{-1}U}$ exhibits $C_{U}$ as the composition of smooth maps.

        We complete the proof by finding a suitable $U_0$.
        First, pick an arbitrary unit vector $v \in L$, and set $U_0 = (v, \alpha(v))/\sqrt{2} \in L \oplus \alpha(L)$.
        It is shown in \cite[p.~110]{Ara85} that the operator $E \defeq P_{U_0} - P_{U_0}^{\perp}$ is contained in $\O_{\res}(V)\setminus \O_{\res;0}(V)$, and is implemented by $U_0 \in V \subseteq \Cl(V)$.
        Our goal is now to show that the map $C_{U_0}$ is smooth in an open neighbourhood of $\1$.
        Let $A \in \lie{o}_{\res}(V)$, we then have that $C_{U_0}(\exp(\tilde{A}))=U_0 \exp(\tilde{A}) U_0^{*} = \exp(U_0 \tilde{A} U_0^{*})$ implements $E \exp(A) E^{*} = \exp(EAE^{*})$.
        On the other hand, $\exp(\widetilde{EAE^{*}})$ implements $\exp(EAE^{*})$ as well, thus it follows that 
        \begin{equation*}
              \lambda_A \defeq  \exp(-U_0 \tilde{A} U_0^{*}) \exp(\widetilde{EAE^{*}}) \in \U(1),
        \end{equation*}
        and, additionally, that $\exp(U_0 \tilde{A} U_0^{*})$ and $\exp(\widetilde{EAE^{*}})$ commute. 
We now see that $t \mapsto \exp(t U_0 \tilde{A} U_0^{*})$ and $t \mapsto \exp(t\widetilde{EAE^{*}})$
        are strongly continuous, unitary, one-parameter groups; moreover, they commute, whence
        \begin{equation*}
                t \mapsto \lambda_{A}(t) = \exp(-t U_0\tilde{A}U_0^{*}) \exp(t\widetilde{EAE^{*}})
        \end{equation*}
        is a strongly continuous, unitary, one-parameter group as well.
        Because $\lambda_{A}(t) \in \U(1)$,  there exists an imaginary number $z \in i \R$ such that
$\lambda_{A}(t) = e^{tz}$.
        It follows that the two one-parameter subgroups
        \begin{align*}
                t \mapsto \exp(t(U_0 \tilde{A} U_0^{*}+z  \1)) \quad \text{and} \quad  t\mapsto \exp(t\widetilde{EAE^{*}})
        \end{align*}
        are identical.
        Whence $z\1 = \widetilde{EAE^{*}} - U_0\tilde{A}U_0^{*}$ by Stone's theorem on one-parameter unitary groups.
        Using \cite[Eq.~(2.10)]{Ott95}, i.e.~$\langle \widetilde{EAE^{*}} \Omega, \Omega \rangle = 0$, we now compute
        \begin{equation*}
                z = \langle z \Omega, \Omega \rangle = - \langle U_0 \tilde{A} U_0^{*} \Omega, \Omega \rangle = - \langle \tilde{A} v, v \rangle = -\langle av,v \rangle,
        \end{equation*}
        where $a \defeq P_{L} A P_{L}$.
        Now define the smooth map $z: \lie{o}_{\res}(V) \rightarrow \C, A \mapsto -\langle av, v\rangle$.
        The equation, valid for $A$ in an open neighbourhood of $0 \in \lie{o}_{\res}(V)$,
        \begin{equation*}
                C_{U_0}(\sigma(\exp(A))) = \exp(U_0 \tilde{A} U_0) = \exp( \widetilde{EAE^{*}} - z(A)\1)=\sigma(\exp(EAE^{*}))\exp(z(A))^{-1}
        \end{equation*}
        then shows that $C_{U_0}$ is smooth in a neighbourhood of the identity.
\end{proof}

\begin{remark}
        The section $\sigma$ is not continuous when $\Imp(V) \subseteq \U(\mc{F})$ is equipped with the norm-topology; as a consequence, the inclusion $\Imp(V) \to \U(\mathcal{F})$ is not a homomorphism of Banach-Lie groups. 
\end{remark}

With the Banach-Lie group structure on $\Imp(V)$ at hand, we can now use its Banach-Lie algebra, which is a central extension
\begin{equation*}
 \R \to \mathfrak{imp}(V) \to \mathfrak{o}_{\res}(V)\text{.} 
\end{equation*}
Here, we have identified the Lie algebra of $\U(1)$ with $\R$. 
The section $\sigma$ induces a section $\sigma_{*}:\mathfrak{o}_{\res}(V) \to \mathfrak{imp}(V)$, which in turn determines a Lie algebra 2-cocycle $\omega_{\sigma}: \lie{o}_{\res}(V) \times \lie{o}_{\res}(V) \rightarrow \R$ by
\begin{equation}
\label{eq:2cocycle}
\omega_{\sigma}(X,Y) \defeq  [\sigma_{*}(X),\sigma_{*}(Y)]-\sigma([X,Y])\text{.}
\end{equation}
It was computed  in \cite[Theorem 6.10]{Ara85}, resulting in
\begin{align}
\label{eq:CocycleOnOJ}
\omega_{\sigma}(A_{1},A_{2}) = \frac{1}{8} \trace ( \mc{J}[\mc{J},A_{1}][\mc{J},A_{2}]).
\end{align}
The same cocycle can be described in two further ways. 
If we put $A_{3} \defeq  [A_{1},A_{2}]$ and write 
\begin{equation*}
        A_{i} = \begin{pmatrix}
        a_{i} & b_{i} \\
        c_{i} & d_{i}
        \end{pmatrix}
        \end{equation*}
        with respect to the decomposition $V = L \oplus \alpha(L)$, then it is shown in \cite[Theorem 6.10]{Ara85} that
\begin{equation}
\label{eq:CocycleOnOJ2ndform}
        \omega_{\sigma}(A_{1},A_{2}) = \frac{1}{2} \trace \left( [a_{1},a_{2}] - a_{3} \right).
        \end{equation}
Finally, according to \cite[Theorem 10.2]{Ne09} we may write
\begin{equation*}
        A_{i} = \begin{pmatrix}
        a_{i} & a_{i}' \alpha \\
        \alpha a_{i}' & \alpha a_{i} \alpha
        \end{pmatrix}
\end{equation*}
for the decomposition of $A_{i}$ according to \cref{lem:LinearAntiLinearDeco}, and then obtain
\begin{align}
\label{eq:CocycleOnOJ3rdform}
\omega_{\sigma}(A_{1},A_{2}) =- \frac{1}{2i} \trace([a'_{1},a'_{2}]).
\end{align}
To see that \cref{eq:CocycleOnOJ3rdform} and \cref{eq:CocycleOnOJ} coincide, one may use \cref{lem:LagrangiansVsUnitary,rem:DecomposeWRTL+alphaL}.

Because $\mathfrak{imp}(V)$ is a central extension of $\mathfrak{o}_{\res}(V)$ by $\R$, it follows that, as a vector space, the Banach-Lie algebra of $\Imp(V)$ is $\lie{imp}(V) \defeq  \lie{o}_{\res}(V) \oplus \R$. The bracket is then given by
\begin{equation*}
        [(A_{1},\lambda_{1}),(A_{2},\lambda_{2})] = ([A_{1},A_{2}],\omega_{\sigma}(A_{1},A_{2})),
\end{equation*}
and the norm is given by
\begin{equation*}
        \| (A, \lambda) \|^{2} = \|A\|_{\mc{J}}^{2} + | \lambda |^{2}.
\end{equation*}
The exponential  map $\exp: \lie{imp}(V) \to \Imp(V)$ is $(A,\lambda) \mapsto \exp(\tilde A + i\lambda\1)$. 
We may relate $\lie{imp}(V)$ to the algebra $\mathcal{L}(\mathcal{F})$ used in the definition of the section $\sigma$
by considering the injective linear map
        \begin{align*}
                 \lie{imp}(V) &\rightarrow \mc{L}(\mc{F})\;,\;     (A, \lambda) \mapsto \tilde A + i \lambda \1.
        \end{align*}
That way, the exponential map of $\Imp(V)$ factors through the exponential of $\mathcal{L}(\mathcal{F})$.

%

For later purpose, we consider the unitary representation of the Banach-Lie group  $\Imp(V)$ of implementers on the Fock space $\mathcal{F}$, obtained  from the inclusion $\Imp(V) \subset \U(\mathcal{F})$.

\begin{proposition}
The set of smooth vectors
\begin{equation*}
        \mc{F}^{\infty} \defeq  \{ v \in \mc{F} \mid \Imp(V) \rightarrow \mc{F}, U \mapsto Uv \text{ is smooth} \}
\end{equation*}
contains the algebraic Fock space $\Lambda L$; in particular, $\mathcal{F}^{\infty}$ is dense in $\mc{F}$.
\end{proposition}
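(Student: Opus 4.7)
The plan is to reduce the smoothness claim to uniform convergence of the exponential series applied to $v$, and then to bound that series by direct norm estimates. Because multiplication in the Banach Lie group $\Imp(V)$ is smooth, smoothness of $U \mapsto Uv$ is local at the identity; by \cref{theorem:Banachextension} the exponential map is a local diffeomorphism from a neighbourhood of $0 \in \lie{imp}(V)$ onto a neighbourhood of $\1$. Hence it suffices to show that
\begin{equation*}
\Phi_v : \lie{imp}(V) \to \mc{F}, \qquad (A, \lambda) \mapsto \exp\bigl(\tilde A + i\lambda \1\bigr) v
\end{equation*}
is smooth on a neighbourhood of $0$; I would in fact aim for the stronger statement that $\Phi_v$ is real-analytic there, so that smoothness is automatic.

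The crucial structural point is that $\Lambda L$ is invariant under the operator $\tilde A + i\lambda \1$. By the explicit formulas for $\tilde A_0$ and $\tilde A_1$ given just above and \cref{lem:LinearAntiLinearDeco}, the summand $\tilde A_0$ preserves each $\Lambda^n L$, $\tilde A_1$ shifts the grading by $\pm 2$, and $i\lambda\1$ acts diagonally. Consequently, if $v \in \bigoplus_{n=0}^{N}\Lambda^n L$ then $(\tilde A + i\lambda \1)^k v \in \bigoplus_{n=0}^{N+2k}\Lambda^n L \subset \Lambda L$ for every $k \geq 0$.

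The technical core is a quantitative estimate on $\|(\tilde A + i\lambda \1)^k v\|$. Using the bound $\|\tilde A_0 v\| \leq n \|A\| \|v\|$ for $v \in \Lambda^n L$ (standard for the second quantization of the linear part $a$), the identity $\|\hat a'\|^2 = \tfrac{1}{2}\|a'\|_2^2$ recalled just after the definition of $\hat a'$, and standard estimates for the fermionic pair-creation and contraction operators built from a Hilbert--Schmidt antilinear operator, one obtains a bound of the form
\begin{equation*}
\|\tilde A v\| \leq C(n+1)\|A\|_\mc J\|v\| \qquad \text{for } v \in \Lambda^n L,
\end{equation*}
with $C$ independent of $n$, $A$, and $v$. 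Combined with $\|i\lambda\1 w\|=|\lambda|\|w\|$ and iteration (together with a grade-wise Pythagoras bound to keep constants uniform in $N$), this yields
\begin{equation*}
\bigl\|(\tilde A + i\lambda\1)^k v\bigr\| \leq \prod_{j=0}^{k-1}\bigl(C'(N+2j+1)\|A\|_\mc J + |\lambda|\bigr)\|v\|
\end{equation*}
for $v \in \bigoplus_{n=0}^{N}\Lambda^n L$. After dividing by $k!$, a Stirling-type estimate shows that the right-hand side grows at most exponentially in $k$ with rate depending only on $\|A\|_\mc J$ and $|\lambda|$; choosing $(A,\lambda)$ in a sufficiently small ball about $0 \in \lie{imp}(V)$ therefore yields absolute convergence of the exponential series $\Phi_v(A,\lambda) = \sum_{k=0}^{\infty}\tfrac{1}{k!}(\tilde A + i\lambda \1)^k v$ in $\mc{F}$, uniformly in $(A,\lambda)$. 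Each summand is a continuous $k$-homogeneous polynomial in $(A,\lambda)$ valued in $\mc{F}$, so uniform convergence on an open ball implies real-analyticity of $\Phi_v$ there, and in particular smoothness. The density of $\mc{F}^\infty$ in $\mc{F}$ follows immediately from $\Lambda L \subset \mc{F}^\infty$ together with the density of $\Lambda L$ in its Hilbert completion $\mc{F}$.

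The main obstacle is the bookkeeping of the norm estimate: because $\tilde A_1$ shifts grading, $\tilde A$ is unbounded on $\mc{F}$, and its size as an operator between the relevant graded pieces grows linearly with the maximum degree, which itself grows by $2$ at each iteration. One must verify that the resulting product of $k$ linearly-growing factors is sufficiently controlled by $k!$ to give a positive radius of convergence; once this combinatorial point is handled, the remaining steps are standard Banach-space analyticity.
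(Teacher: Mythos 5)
Your argument is correct in outline, but it takes a genuinely different route from the paper. The paper first shows that the vacuum $\Omega$ is a smooth vector by invoking the abstract criterion of \cite{neebdiffvect} (a vector is smooth as soon as its matrix coefficient $U \mapsto \langle \Omega, U\Omega\rangle$ is smooth near $\1$), which reduces everything to the smoothness of $\psi_{\sigma}$ already established in the proof of \cref{theorem:Banachextension}; it then bootstraps to general $v = a \lact \Omega \in \Lambda L$ by writing $Uv = \theta_{q(U)}(a)\lact U\Omega$ and decomposing this into smooth maps. You instead prove the stronger statement that every $v \in \Lambda L$ is an \emph{analytic} vector, by direct norm estimates on the exponential series in the chart provided by $\exp$. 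Your combinatorics do close: with $\|(\tilde A + i\lambda\1)w\| \leq C(M+1)\|(A,\lambda)\|\,\|w\|$ on $\bigoplus_{n\leq M}\Lambda^n L$ (triangle inequality for the three summands, Pythagoras for the grading-preserving part $\tilde A_0$, boundedness of $\tilde A_1$), one gets $\prod_{j=0}^{k-1}(N+2j+1) \leq 2^k (N+1)^k\, k!$, so the series has radius of convergence at least $(2C(N+1))^{-1}$, which is positive for each fixed $v$ — and that suffices, since smoothness is a pointwise property of $v$. Your approach buys more (real-analyticity of the orbit map in exponential coordinates) at the cost of heavier estimates; the paper's is shorter because the hard analysis is already packaged in the smoothness of $\psi_{\sigma}$.

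One step you should make explicit: you identify the unitary $\exp(\tilde A + i\lambda\1)$ applied to $v$ with the sum of the power series $\sum_k \frac{1}{k!}(\tilde A + i\lambda\1)^k v$. The operator $\tilde A + i\lambda \1$ is unbounded and $\exp$ of it is defined through the skew-adjoint closure, so this identification is not automatic. It does follow from your own estimates: they show that $\Lambda L$ is a dense invariant set of analytic vectors, so Nelson's analytic vector theorem gives essential skew-adjointness on $\Lambda L$ and the equality of $e^{t\overline{\tilde A + i\lambda\1}}v$ with the series for small $t$; a short rescaling argument then covers $t=1$ for $(A,\lambda)$ in a smaller ball. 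Without this remark the chain from "the series converges" to "$\Phi_v$ is the orbit map" is incomplete.
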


\begin{proof}
        We claim that the vacuum vector $\Omega$ is a smooth vector for $\Imp(V)$. A general theorem for unitary representations  \cite[Theorem 7.2]{neebdiffvect} implies  that $\Omega$ is a smooth vector if the map $U \mapsto \langle \Omega, U \Omega \rangle$ is smooth in an open neighbourhood of $\1$. 
By our characterization of the Lie group structure on $\Imp(V)$, the map $U \times \U(1) \to \Imp(V):(g,\lambda) \mapsto \lambda\sigma(g)$  is a local diffeomorphism  at $(\1,1)$. Now, it suffices to show that the map $(g,\lambda) \mapsto \langle \Omega, \lambda g(\sigma) \Omega \rangle$ is smooth in a neighborhood of $(\1,1)$. But the latter expression is equal to $\lambda\ \overline{\psi_{\sigma}(1,g)}$, where $\psi_{\sigma}$ appeared in the proof of \cref{theorem:Banachextension}, and is smooth. This proves the claim.

        Let $v \in \Lambda L$; there exists  $a \in \Cl(V)$ such that $a \lact \Omega = v$. We claim that the map $\psi_v:\Imp(V) \rightarrow \mc{F}$ with $\psi_v(U) \defeq  Uv$ is smooth. We have $U v = U a \lact \Omega = \theta_{g}(a) \lact U \Omega$ where $g \defeq  q(U)$; thus, $\psi_v$ can be decomposed as 
        \begin{equation*}
                \begin{tikzpicture}[scale=1.5]
                \node (A1) at (0,1) {$\Imp(V)$};
                                \node (A2) at (1.75,1) {$\O_{\res}(V) \times \Imp(V)$};
                \node (A3) at (4,1) {$\O_{\res}(V) \times \mc{F}$};
                \node (A4) at (6,1) {$\mc{F}$};
                \node (B1) at (0,0.5) {$U$};
                \node (B2) at (1.75,0.5) {$(g,U)$};
                \node (B3) at (4,0.5) {$(g, U\Omega)$};
                                \node (B4) at (6,.5) {$\theta_{g}(a) \lact U\Omega$};
                \path[|->,font=\scriptsize]
                (B1) edge (B2)
                (B2) edge (B3)
                (B3) edge (B4);
                \path[->,font=\scriptsize]
                                (A1) edge (A2)
                                (A2) edge (A3)
                                (A3) edge (A4);
        \end{tikzpicture}
        \end{equation*}
        The first map is clearly smooth. The second map is smooth because $\Omega$ is a smooth vector. To show that the third map is smooth we argue as follows. Because $v$ is in the \emph{algebraic} Fock space we have that $a$ is generated by a finite number of vectors in $V$. Hence, it suffices to show that the map
        \begin{align*}
                \O_{\res}(V) \times \mc{F} & \rightarrow \mc{F}, \\
                (g,f) & \mapsto \theta_{g}(x) \lact f = g(x) \lact f,
        \end{align*}
        is smooth for all $x \in V \subset \Cl(V)$. In order to see that this is true, we observe that that map
      \begin{equation*}
                \begin{tikzpicture}[scale=1]
                \node (A1) at (0,1) {$\O_{\res}(V)$};
                \node (A2) at (2,1) {$\mc{B}_{\res}(V)$};
                \node (A3) at (4,1) {$V$};
                \node (A4) at (6,1) {$\Cl(V)$};
                \node (A5) at (8,1) {$\mc{B}(\mc{F})$};
                \node (B1) at (0,0.25) {$g$};
                \node (B2) at (1.75,0.25) {$g$};
                \node (B3) at (4,0.25) {$g(x)$};
                \node (B4) at (6,.25) {$g(x)$};
                \node (B5) at (8,.25) {$g(x)$};
                \path[|->,font=\scriptsize]
                (B1) edge (B2)
                (B2) edge (B3)
                (B3) edge (B4)
                (B4) edge (B5);
                \path[->,font=\scriptsize]
                (A1) edge (A2)
                (A2) edge (A3)
                (A3) edge (A4)
                (A4) edge (A5);
       \end{tikzpicture}
       \end{equation*}
                                is smooth; the first part is smooth by definition of the Banach-Lie group structure on $\O_{\res}(V)$, and the remaining map  is a bounded linear map between Banach spaces, and hence smooth.
Finally, the evaluation map $\mc{B}(\mc{F}) \times \mc{F} \rightarrow \mc{F}$ is clearly smooth.
\end{proof}

Finally, we recall from \cref{lem:OJTwoComponents} that the topological group $\O_{\res}(V)$ has two connected components; this implies that $\Imp(V)$ has  two connected components as well. We recall from Remark \ref{rem:EvenOddFockDeco} that the Fock space is graded, hence so is $\Imp(V) \subset \mc{B}(\mc{F})$. We now have the following result, which is \cite[Theorem 6.7]{Ara85} and \cite[Remark 10.8]{Ne09}.
\begin{proposition}\label{lem:Imp1Even}
        All elements of $\Imp(V)$ are homogeneous, and all elements of the connected component of the identity in $\Imp(V)$ are even.
\end{proposition}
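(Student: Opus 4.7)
The plan is to extract from the $\mathbb{Z}_2$-grading on $\mc{F}$ a map $\phi:\Imp(V)\to\{\pm1\}$ that records the parity of an implementer, prove that this map is well-defined on all of $\Imp(V)$ (which is the first claim), and then that it is continuous with $\phi(\1)=1$ (which yields the second).

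First I would introduce the grading operator $\Gamma\in\mc{B}(\mc{F})$, equal to $+\1$ on the closure of $\bigoplus_{n}\Lambda^{2n}L$ and $-\1$ on the closure of $\bigoplus_{n}\Lambda^{2n+1}L$, so that $\Gamma$ is self-adjoint and unitary with $\Gamma^{2}=\1$. Because the image of $\iota:V\to\Cl(V)$ is odd (\cref{rem:EvenOddCliffordDeco}) and $\pi_{L}$ preserves the $\mathbb{Z}_2$-grading (\cref{rem:EvenOddFockDeco}), the operators $\pi_{L}(v)$ for $v\in V$ anticommute with $\Gamma$. Since $V$ generates $\Cl(V)$ as a \cstar-algebra, this upgrades to
\begin{equation*}
\Gamma\,\pi_{L}(a)\,\Gamma^{-1}=\pi_{L}(\theta_{-\1}(a))\qquad\text{for all }a\in\Cl(V),
\end{equation*}
so that $\Gamma$ itself is an implementer of the central element $-\1\in\O_{\res}(V)$.

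Next I would prove homogeneity. Given any $U\in\Imp(V)$ implementing $g\in\O_{\res}(V)$, conjugation by $\Gamma U\Gamma^{-1}$ sends $a\in\Cl(V)$ to $\pi_{L}(\theta_{-\1}\theta_{g}\theta_{-\1}(a))$. Since $-\1$ lies in the center of $\O(V)$, $\theta_{-\1}$ commutes with $\theta_{g}$, and $\theta_{-\1}^{2}=\mathrm{id}$, so $\theta_{-\1}\theta_{g}\theta_{-\1}=\theta_{g}$; hence $\Gamma U\Gamma^{-1}$ implements the same $g$ as $U$. By the uniqueness of implementers up to $\U(1)\1$ there is a scalar $\phi(U)\in\U(1)$ with $\Gamma U\Gamma^{-1}=\phi(U)\,U$. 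Conjugating once more by $\Gamma$ and using $\Gamma^{2}=\1$ yields $U=\phi(U)^{2}U$, hence $\phi(U)\in\{\pm 1\}$; by definition this means $U$ is homogeneous, with parity $\phi(U)$.

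Finally, to show that the identity component of $\Imp(V)$ consists of even elements, I would derive a manifestly continuous formula for $\phi$. Applying the identity $\Gamma U\Gamma^{-1}=\phi(U)U$ to the vacuum $\Omega\in\Lambda^{0}L$ and using $\Gamma\Omega=\Omega$ gives $\Gamma U\Omega=\phi(U)U\Omega$; pairing with $U\Omega$ yields
\begin{equation*}
\phi(U)=\langle U\Omega,\Gamma U\Omega\rangle.
\end{equation*}
Since $\Omega\in\Lambda L\subseteq\mc{F}^{\infty}$ by the preceding proposition, the orbit map $U\mapsto U\Omega$ is smooth, and composing with the bounded operator $\Gamma$ and the continuous inner product shows that $\phi$ is continuous. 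As the target $\{\pm 1\}$ is discrete, $\phi$ is constant on each connected component of $\Imp(V)$; evaluating at $U=\1$ gives $\phi(\1)=\|\Omega\|^{2}=1$, so $\phi\equiv 1$ on the identity component, which is the second assertion. The only nontrivial input is the fact that $\Gamma U\Gamma^{-1}$ implements the same element of $\O_{\res}(V)$ as $U$, which rests on the centrality of $-\1$ together with $\theta_{-\1}^{2}=\mathrm{id}$; everything else is a routine consequence of the smoothness of the orbit through $\Omega$.
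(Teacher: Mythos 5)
Your argument is correct. Note that the paper does not actually prove this proposition; it is quoted from the literature (Araki's Theorem 6.7 and Neeb's Remark 10.8), so your write-up supplies a self-contained proof where the paper only gives citations. The two key observations — that the grading operator $\Gamma$ implements the central element $-\1\in\O_{\res}(V)$, forcing $\Gamma U\Gamma^{-1}=\pm U$ for every implementer $U$, and that the resulting sign $\phi(U)=\langle U\Omega,\Gamma U\Omega\rangle$ is continuous because $\Omega$ is a smooth vector (the proposition immediately preceding this one) — are exactly the standard mechanism behind the cited results, and every step checks out: $-\1\in\O_{\res}(V)$ since $[-\1,\mc{J}]=0$, the extension from $\iota(V)$ to all of $\Cl(V)$ is legitimate because both sides are $*$-homomorphisms agreeing on a generating set, and $\phi(\1)=1$ settles the identity component.
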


\begin{remark}
\label{rem:gradingImpV}
        The central extension $\Imp(V) \rightarrow \O_{\res}(V)$ can be considered in the setting of Fréchet-Lie groups. A topological version of this is considered in \cite{Ara85} and in \cite{Ott95}. In these sources the group $\O_{\res}(V)$ is equipped with the $\mc{J}$-strong topology, which is strictly weaker than the $\mc{J}$-norm topology we have considered. Yet it has two components \cite[Theorem 6.3]{Ara85}. In \cite{Carey1984} the coarsest topology on $\O_{\res}(V)$ is determined in which the projective representation on $\mathcal{F}$ is continuous, but this topology is not the one of any manifold.  We refer to  Chapter 2.4, Theorem 6 in \cite{Ott95}, and  \cite{PR95} pages 109 and 110 for more information on the connection between the different treatments of the groups $\O_{\res}(V)$ and $\Imp(V)$.

\end{remark}

\begin{remark}
\label{re:connectionfromsplitting}
The Lie algebra section $\sigma_{*}: \lie{o}_{\res}(V) \to \lie{imp}(V)$ determines a connection $\nu_{\sigma}$ on the Banach principal $\U(1)$-bundle $\Imp(V) \to \O_{\res}(V)$, whose horizontal subspaces are the left-translates of the image of $\sigma_{*}$. As a 1-form on $\Imp(V)$, it is given by $\nu_{\sigma} \defeq   \theta^{\Imp(V)} - \sigma_{*}(q^{*}\theta^{\O_{\res}(V)})$, where $\theta^{G}$ denotes the left-invariant Maurer-Cartan form on a Lie group $G$.
The curvature of $\nu_{\sigma}$ is $-\textstyle\frac{1}{2}\omega_{\sigma}(\theta \wedge \theta) \in \Omega^2(\O_{\res}(V))$. We will further discuss this connection in \cref{sec:comparison}.
\end{remark}

\subsection{The basic central  extension}

\label{sec:basic}

We consider the Banach-Lie group central extension $\Imp(V) \rightarrow \O_{\res}(V)$ of Section \ref{sec:SmoothStructureOfImp(V)},  with respect to the data specified  in   \cref{ex:mainexample}. That is,    $V = L^{2}(\mb{S})\otimes \C^{d}$,  the real structure $\alpha$ is pointwise complex conjugation,  the Lagrangian $L$ consists of those spinors that extend to anti-holomorphic functions on the disk, and the unitary structure $\mathcal{J}$ corresponds to $L$ under the bijection of \cref{lem:LagrangiansVsUnitary}.  
 
Our goal is to give an operator-algebraic construction of the basic central extension 
\begin{equation*}
U(1) \rightarrow \widetilde{L \Spin}(d) \rightarrow L\Spin(d)
\end{equation*}
of the loop group of $\Spin(d)$.
The existence of such models using implementers on Fock space is well-known, see, e.g. \cite{PS86,Ne02,SW}, but we have not found a complete treatment of all aspects and in our specific setting. 
Before we start, we briefly recall how the group $L\Spin(d) \defeq  C^{\infty}(S^1,\Spin(d))$ can be equipped with the structure of Fréchet-Lie group, see \cite[Section 3.2]{PS86}  for more details. We will then downgrade all Banach-Lie groups to Fr\'echet-Lie groups, and handle all smoothness issues within the Fr\'echet setting. 

The vector space $L \lie{spin}(d)$ is a Fréchet space when equipped with the topology of uniform convergence of functions and all partial derivatives. 
The pointwise exponential $\exp : L \lie{spin}(d) \rightarrow L\Spin(d)$ may then be used to define charts for a Fr\'echet-Lie group structure on $L\Spin(d)$. 
We write $\End(d)$ for the algebra of endomorphism  of $\C^{d}$. 
The algebra $L \End(d) \defeq  C^{\infty}(S^1,\End(d))$ is a Fréchet algebra in the same way as $L\Spin(d)$. It acts through bounded operators  on $V=L^{2}( \mb{S})\otimes \C^{d}$ via pointwise multiplication on $\C^{d}$, which we denote by $m: L\End(d) \to \mathcal{B}(V)$. 

\begin{lemma}\label{thm:MIsSmooth}
        The image of $m$ is contained in the Banach algebra $\mc{B}_{\res}(V)$. Furthermore,  $m$ is a  continuous homomorphism of Fréchet algebras.
\end{lemma}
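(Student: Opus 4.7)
My plan is to reduce everything to a Fourier-analytic computation in the orthonormal basis $\{\xi_{n,j}\}$ of \cref{ex:mainexample}, for which $L=\overline{\mathrm{span}}\{\xi_{n,j}:n\geq 0\}$ and $\alpha(L)=\overline{\mathrm{span}}\{\xi_{n,j}:n<0\}$. Expanding $g(t)=\sum_{k\in\Z}g_{k}e^{ikt}$ with $g_{k}\in\End(d)$, a direct calculation from the definition $\xi_{n,j}(t)=e^{-i(n+1/2)t}\otimes e_{j}$ shows that the matrix of $m(g)$ in this basis has entries
\begin{equation*}
\langle\xi_{n',j'},m(g)\xi_{n,j}\rangle=(g_{n-n'})_{j'j},
\end{equation*}
and the operator norm satisfies $\|m(g)\|\leq\sup_{t\in S^{1}}\|g(t)\|$.

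The heart of the argument is to bound the off-diagonal blocks of $m(g)$ with respect to $V=L\oplus\alpha(L)$, which by \cref{rem:DecomposeWRTL+alphaL} is exactly what is needed to place $m(g)$ in $\mc{B}_{\res}(V)$. A combinatorial count of the pairs $(n,n')$ contributing a fixed Fourier index $k=n-n'$ --- with $n\geq 0$, $n'<0$ giving $k$ pairs when $k\geq 1$, and symmetrically $|k|$ pairs when $k\leq -1$ on the other off-diagonal block --- yields the clean identity
\begin{equation*}
\|[m(g),\mc{J}]\|_{2}^{2}=4\sum_{k\neq 0}|k|\,\|g_{k}\|_{2}^{2},
\end{equation*}
where $\|g_{k}\|_{2}$ denotes the Hilbert-Schmidt norm on $\End(d)$. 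I regard this as the only non-cosmetic step: once it is in hand, both membership in $\mc{B}_{\res}(V)$ and the continuity estimate are immediate.

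For continuity in the Fréchet topology, I would use $|k|\leq 1+k^{2}$ together with Parseval to bound
\begin{equation*}
\sum_{k\neq 0}|k|\,\|g_{k}\|_{2}^{2}\leq \frac{1}{2\pi}\bigl(\|g\|_{L^{2}}^{2}+\|g'\|_{L^{2}}^{2}\bigr),
\end{equation*}
and each $L^{2}$-norm is dominated, up to a constant depending on $d$, by the Fréchet seminorms $\sup_{t}\|g(t)\|$ and $\sup_{t}\|g'(t)\|$. Combined with the operator-norm bound, this controls $\|m(g)\|_{\mc{J}}$ by finitely many Fréchet seminorms on $L\End(d)$, establishing continuity of the linear map $m:L\End(d)\to\mc{B}_{\res}(V)$.

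Finally, that $m$ is an algebra homomorphism is immediate, since pointwise multiplication gives $m(gh)f=(gh)f=m(g)m(h)f$ for all $f\in V$. There is no serious obstacle to the proof; the only step requiring genuine care is the combinatorial identity producing the $|k|$ factor, after which the rapid decay of Fourier coefficients of smooth functions makes the estimates automatic.
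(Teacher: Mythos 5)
Your argument is correct and follows essentially the same route as the paper: expand $g$ in Fourier modes, identify $\|[m(g),\mc{J}]\|_{2}^{2}$ with a weighted sum $\sum_{k}|k|\,\|g_{k}\|_{2}^{2}$ over Fourier coefficients, and dominate that by the $L^{2}$-norm of $g'$ and hence by the seminorm $|g|_{1}$ (plus $|g|_{0}$ for the operator norm). The only difference is that you derive the key Hilbert--Schmidt identity directly from the matrix elements in the basis $\{\xi_{n,j}\}$ --- your combinatorial count of the off-diagonal entries is correct --- whereas the paper quotes it from Pressley--Segal; the discrepancy in the overall constant (your factor $4$) is a matter of convention and immaterial to the estimate.
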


\begin{proof}
        Let $f \in L \End(d)$. Then we may write $f$ as
        \begin{equation*}
        f(z) = \sum_{n\in \mathbb{Z}} A_{n} z^{n}, \quad z \in S^{1},
        \end{equation*}
        where $A_{n}$ are elements of $\End(d)$. In the proof of Theorem 6.3.1 in \cite{PS86} it is shown that we now have
        \begin{equation}\label{eq:MBound2}
        \| [m(f), \mc{J}] \|_{2} = \left( \sum_{n\in \mathbb{Z}} |n| \|A_{n} \|^{2} \right)^{1/2}.
        \end{equation}
        To see that this is finite we proceed as follows. If $f$ is smooth, then its derivative $f'$ is square-integrable, with $L^{2}$-norm given by
        \begin{equation*}
        \| f' \|^{2}_{L^{2}} = \sum_{n \in \mathbb{Z}} n^{2} \| A_{n} \|^{2} \geqslant \sum_{n\in \mathbb{Z}} |n| \|A_{n} \|^{2}.
        \end{equation*}
        This shows that $m(f) \in \mc{B}_{\res}(V)$. Further, because $S^{1}$ is compact, we have
        \begin{equation*}
        | f |_{1} = \sup_{t \in S^{1} } \| f' \| \geqslant \| f' \|_{L^{2}} \geqslant \|[m(f), \mc{J}]\|_{2},
        \end{equation*}
        which will be useful in the next step.
        It is easy to see that $m$ is an algebra homomorphism, and in particular linear. Thus, it remains is to show that $m$ is continuous. A simple calculation shows $\| m(f) \|^{2}\leqslant |f|_{0}$ for arbitrary $f \in L \End(d)$;
        hence,
        \begin{equation*}
        \| m(f) \|_{\mc{J}} = \| m (f) \| + \| [m(f), \mc{J}] \|_{2} \leqslant |f|_{0} + |f|_{1},
        \end{equation*}
        which implies that $m$ is continuous.
\end{proof}
Both $L\End(d)$ and $\mc{B}_{\res}(V)$ are equipped with an exponential map.
We have already investigated the properties of the exponential map on $\mc{B}_{\res}(V)$ in Section \ref{sec:Implementers}; the well-definedness of  the exponential on $L\End(d)$ is a standard fact.
Since $m$ is a  continuous homomorphism of Fr\'echet algebras by \cref{thm:MIsSmooth}, we have a commutative diagram
        \begin{equation*}
        \begin{tikzpicture}[scale=1.3]
        \node (A) at (0,1) {$L \End(d)$};
        \node (B) at (2,1) {$\mc{B}_{\res}(V)$};
        \node (C) at (0,0) {$L \End(d)$};
        \node (D) at (2,0) {$\mc{B}_{\res}(V).$};
        \path[->,font=\scriptsize]
        (A) edge node[above]{$m$} (B)
        (A) edge node[left]{$\exp$} (C)
        (B) edge node[right]{$\exp$} (D)
        (C) edge node[below]{$m$} (D);
        \end{tikzpicture}
        \end{equation*}

One sees readily that $m$ restricts to a map $m: L\SO(d) \rightarrow \O_{\res}(V)$. Using the natural projection map $\Spin(d) \rightarrow \SO(d)$ we obtain a projection map $\pi: L\Spin(d) \rightarrow L\SO(d) $. Let us define $M \defeq  m \circ \pi: L\Spin(d) \rightarrow \O_{\res}(V)$.
\cref{thm:MIsSmooth} now implies:

\begin{proposition}\label{lem:MisSmooth}
        The map $M: L \Spin(d) \rightarrow \O_{\res}(V)$ is homomorphism of Fr\'echet-Lie groups.
\end{proposition}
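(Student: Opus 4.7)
The plan is to reduce the Fr\'echet smoothness of $M$ to the Fr\'echet continuity of $m$ at the Lie algebra level, via the commuting exponential diagram established just before the proposition. That $M=m\circ\pi$ is a group homomorphism is straightforward: $\pi\colon L\Spin(d)\to L\SO(d)$ is the pointwise lift of the covering $\Spin(d)\to\SO(d)$ and hence a Fr\'echet Lie group homomorphism, while the algebra homomorphism $m$ restricts to a group homomorphism $L\SO(d)\to\O_{\res}(V)$: for $f\in L\SO(d)$ we have $m(f)^{*}m(f)=m(f^{*}f)=\1$, and $m(f)$ commutes with $\alpha$ because $f$ takes pointwise real values.

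Then I pass to Lie algebras. The restriction of $m$ to $L\lie{so}(d)$ takes values in $\lie{o}_{\res}(V)$, since for $X\in L\lie{so}(d)$ the pointwise operator $m(X)$ is skew-adjoint and commutes with $\alpha$. By \cref{thm:MIsSmooth}, $m\colon L\End(d)\to\mc{B}_{\res}(V)$ is continuous, so its restriction to the closed Fr\'echet subspace $L\lie{so}(d)\subset L\End(d)$, precomposed with the continuous linear isomorphism $d\pi\colon L\lie{spin}(d)\xrightarrow{\sim} L\lie{so}(d)$, yields a continuous linear map $dM\colon L\lie{spin}(d)\to\lie{o}_{\res}(V)$. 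Continuous linear maps between locally convex vector spaces --- in particular from a Fr\'echet space into a Banach space --- are automatically smooth, so $dM$ is smooth.

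Finally, the Fr\'echet Lie group structure on $L\Spin(d)$ is defined so that the pointwise exponential is a local diffeomorphism at $0$, while \cref{lem:ExpIsLocalDiffeo} provides the same for the exponential $\lie{o}_{\res}(V)\to\O_{\res}(V)$. The commutative diagram displayed just above the proposition, applied to $dM$ in place of $m$, then shows that in a neighbourhood of $\1\in L\Spin(d)$ we have the identification $M=\exp\circ\,dM\circ\exp^{-1}$, a composition of smooth maps. Smoothness at an arbitrary $g\in L\Spin(d)$ follows from the homomorphism property together with smoothness of left translation in both groups. The only subtlety I anticipate is verifying that the Fr\'echet-continuous linear map $dM$ is genuinely Fr\'echet-smooth; but this is automatic for continuous linear maps between locally convex spaces and therefore requires no separate argument.
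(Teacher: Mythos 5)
Your proof is correct and follows essentially the same route as the paper's: reduce to the Lie algebra level, where $dM$ is a continuous linear (hence smooth) map by \cref{thm:MIsSmooth}, and then transport smoothness to the groups via the exponential charts and left translation. No gaps.
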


We may now pull the central extension $\U(1) \to \Imp(V) \rightarrow \O_{\res}(V)$ back along the map $M$ to obtain a central extension
\begin{equation*}
 \U(1) \rightarrow \widetilde{L\Spin}(d) \rightarrow L\Spin(d) 
\end{equation*}
of Fr\'echet-Lie groups,
fitting into a commutative diagram
\begin{equation*}
\begin{tikzpicture}[scale=1.3]
\node (A) at (0,1) {$\widetilde{L \Spin}(d)$};
\node (B) at (2,1) {$\Imp(V)$};
\node (C) at (0,0) {$L \Spin(d)$};
\node (D) at (2,0) {$\O_{\res}(V).$};
\path[->,font=\scriptsize]
(A) edge node[above]{$\widetilde{M}$} (B)
(A) edge (C)
(B) edge (D)
(C) edge node[below]{$M$} (D);
\end{tikzpicture}
\end{equation*}
In particular, we see that  the elements of $\widetilde{L\Spin}(d)$ act through $\widetilde M$ on the Fock space $\mathcal{F}$.
\begin{proposition}\label{LSpinEven}
        As operators on Fock space, all elements of $\widetilde{L\Spin}(d)$ are even.
\end{proposition}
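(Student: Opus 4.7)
The plan is to reduce the statement to the fact, already established in \cref{lem:Imp1Even}, that the identity component of $\Imp(V)$ consists entirely of even operators. The strategy is to show that $\widetilde{M}$ lands in this identity component.

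First, I would argue that $L\Spin(d)$ is connected. This uses the standard fact that $\pi_0(LG) = \pi_1(G)$ for any Lie group $G$, combined with the fact that $\Spin(d)$ is simply connected for $d \geq 3$, which is the setting relevant to this paper (cf.\ the string-manifold motivation discussed in the introduction). Alternatively one can argue directly that any smooth loop in $\Spin(d)$ can be smoothly contracted through loops to the constant loop at the identity, by lifting a null-homotopy of its image $\SO(d)$-loop.

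Next, I would observe that $\widetilde{L\Spin}(d) \to L\Spin(d)$ is a principal $\U(1)$-bundle, hence has connected fibres; together with the connectedness of the base $L\Spin(d)$, this yields that the total space $\widetilde{L\Spin}(d)$ is itself connected. The smooth homomorphism $\widetilde M : \widetilde{L\Spin}(d) \to \Imp(V)$ sends the identity to the identity (the lift of $\1 \in L\Spin(d)$ corresponding to $\1 \in \U(1)$), so by continuity the entire image $\widetilde M(\widetilde{L\Spin}(d))$ lies in the connected component of the identity in $\Imp(V)$.

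Finally, \cref{lem:Imp1Even} states that all elements in the identity component of $\Imp(V)$ are even with respect to the $\Z_2$-grading of $\mc{B}(\mc F)$. Combining with the previous step, every $\widetilde M(\tilde g)$, and therefore every element of $\widetilde{L\Spin}(d)$ regarded as an operator on $\mc F$ via $\widetilde M$, is even. The only potential obstacle is the connectedness of $L\Spin(d)$ in low dimensions, but this is unproblematic in the regime $d \geq 3$ in which the paper operates; alternatively one can phrase the argument by noting that the induced grading homomorphism $\widetilde{L\Spin}(d) \to \Z_2$ is trivial on the central $\U(1)$ and hence factors through $L\Spin(d)$, where it must vanish by connectedness.
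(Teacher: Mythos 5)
Your argument is correct and essentially the same as the paper's: the paper likewise deduces connectedness of $L\Spin(d)$ from the fact that $\Spin(d)$ is connected and simply connected, concludes that $M(L\Spin(d))$ lies in the identity component of $\O_{\res}(V)$, and then invokes \cref{lem:Imp1Even}. Your variant of lifting the connectedness to the total space $\widetilde{L\Spin}(d)$ via the connected $\U(1)$-fibres, rather than working downstairs with $M$, is only a cosmetic difference.
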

\begin{proof}
        Since $\Spin(d)$ is connected and simply connected, $L\Spin(d)$ is connected; hence $M(L\Spin(d))$ is contained in the connected component of the identity of $\O_{\res}(V)$. \cref{lem:Imp1Even} then tells us that the elements of $\widetilde{L\Spin}(d)$ are even.
\end{proof}

In Section \ref{sec:SmoothStructureOfImp(V)} we considered a local section $\sigma$ of 
the projection $\Imp(V) \rightarrow \O_{\res}(V)$, and we considered the corresponding 2-cocycle $\omega_{\sigma}$  of \cref{eq:CocycleOnOJ}. 
        Next, we pull $\omega_{\sigma}$ back to a cocycle on $L \lie{so}(d)$, and from there to a cocycle on $L \lie{spin}(d)$.
\begin{lemma}
\label{th:cocycleonLspin}
        The pullback of the 2-cocycle $\omega_{\sigma}$ on $\lie{o}_{\res}(V)$ to $L\lie{so}(d)$ is given by
        \begin{align*}
        L\lie{so}(d) \times L\lie{so}(d) &\rightarrow \R\;,\;      (f,g) \mapsto -\frac{1}{4\pi i} \int_{0}^{2 \pi} \trace( f(t) g'(t) ) \text{d} t
        \end{align*}
\end{lemma}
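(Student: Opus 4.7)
The plan is to insert $A_i \defeq m(f_i)$ into one of the explicit formulas for $\omega_\sigma$ already derived in the paper, and then evaluate the resulting trace by expanding $f$ and $g$ in Fourier series in the orthonormal basis $\{\xi_{n,j}\}$ of $V$.  I would use \eqref{eq:CocycleOnOJ2ndform}: decomposing $A_i$ with respect to $V = L \oplus \alpha(L)$ into blocks
$\begin{pmatrix} a_i & b_i \\ c_i & d_i \end{pmatrix}$, a short manipulation using $P_L + P_L^{\perp}=\1$ gives $[a_1,a_2]-a_3 = -b_1 c_2 + b_2 c_1$ as trace-class operators on $L$ (each $b_i, c_i$ is Hilbert-Schmidt by \cref{rem:DecomposeWRTL+alphaL}), hence
\[
\omega_\sigma(m(f),m(g)) = \tfrac{1}{2}\bigl(\operatorname{trace}(b_2 c_1) - \operatorname{trace}(b_1 c_2)\bigr).
\]

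Next, expand $f(t) = \sum_{n\in\Z} F_n e^{int}$ and $g(t) = \sum_n G_n e^{int}$ with $F_n,G_n \in \End(d)\otimes\C$ subject to the reality condition $F_{-n}=\overline{F_n}$.  Pointwise multiplication becomes convolution of Fourier coefficients, which in the basis $\xi_{n,j}(t) = e^{-i(n+\frac{1}{2})t}\otimes e_j$ yields the matrix element
\[
\langle m(f)\xi_{n,j},\xi_{n',k}\rangle = (F_{n-n'})_{kj}.
\]
Since $b_i = P_L m(f_i) P_L^{\perp}$ (and analogously for $c_i$), this specializes to give explicit matrix coefficients for the off-diagonal blocks.

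The core computation is the evaluation of the two traces.  Writing out $\operatorname{trace}(b_1 c_2) = \sum_{n\geq 0, j}\langle b_1 c_2\xi_{n,j},\xi_{n,j}\rangle$ and inserting a resolution of the identity on the intermediate $\xi_{n',k}$ (with $n'<0$) produces
\[
\operatorname{trace}(b_1 c_2) = \sum_{\substack{n\geq 0 \\ n'<0}}\sum_{j,k}(G_{n-n'})_{kj}(F_{n'-n})_{jk}.
\]
The substitution $p = n-n' \geq 1$ collapses the inner sum into $\operatorname{trace}_{\C^d}(G_p F_{-p})$, and the crucial observation is that for each fixed $p \geq 1$, the set $\{(n,n') : n\geq 0>n',\, n-n'=p\}$ has exactly $p$ elements.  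Thus
\[
\operatorname{trace}(b_1 c_2) = \sum_{p\geq 1} p\, \operatorname{trace}_{\C^d}(G_p F_{-p}),
\qquad
\operatorname{trace}(b_2 c_1) = \sum_{p\geq 1} p\, \operatorname{trace}_{\C^d}(F_p G_{-p}),
\]
by the symmetric calculation.

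Finally I would compute the integral side: orthogonality of characters gives
\[
\int_0^{2\pi}\operatorname{trace}(f(t)g'(t))\,\d t = 2\pi i\sum_{n\in\Z} n\,\operatorname{trace}_{\C^d}(F_{-n}G_n),
\]
and splitting the sum into $n>0$ and $n<0$ (using $\operatorname{trace}(F_{-p}G_p) = \operatorname{trace}(G_p F_{-p})$ by cyclicity in $\End(d)$) matches exactly the expression for $\omega_\sigma(m(f),m(g))$ obtained above, with the prefactor $-\tfrac{1}{4\pi i}$ falling out.  The combinatorial factor $p$ from counting the triangular region is precisely what produces the derivative $g'(t)$ on the right-hand side; the only real bookkeeping hazard is tracking the $i$-factors through \eqref{eq:CocycleOnOJ2ndform} and the decomposition of $\mc{J}$, which is the main obstacle in making the argument rigorous.
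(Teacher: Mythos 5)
Your proposal is correct and follows essentially the same route as the paper: both start from the block formula \cref{eq:CocycleOnOJ2ndform} and evaluate the trace mode-by-mode in the Fourier basis, with the factor $|n|$ arising from the same count of basis vectors carried across the splitting $L\oplus\alpha(L)$. The only difference is cosmetic --- the paper reduces by bilinearity to single exponentials $f=Xe^{ikt}$, $g=Ye^{imt}$ and compares the actions of $[a_1,a_2]$ and $a_3$ on each subspace $\C^d\cdot z^{q}$, whereas you first rewrite $[a_1,a_2]-a_3=b_2c_1-b_1c_2$ (a nice touch that makes the trace-class property manifest) and then sum over all modes at once.
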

\begin{proof}
        This is essentially Proposition 6.7.1 on page 89 of \cite{PS86}; we present their proof adapted to our notation. 
        Let $f,g \in L\lie{so}(d)$ and let $A_{1},A_{2}$ and $A_{3}$ be the operators corresponding to $f,g$ and $[f,g]$ respectively; and write 
\begin{equation*}
        A_{i} = \begin{pmatrix}
        a_{i} & b_{i} \\
        c_{i} & d_{i}
        \end{pmatrix}
        \end{equation*}
        with respect to the decomposition $V = L \oplus \alpha(L)$. Using the formula \cref{eq:CocycleOnOJ2ndform} for $\omega$,  we need to prove that
        \begin{equation*}
        \frac{1}{2} \trace \left( [a_{1},a_{2}] - a_{3} \right) = - \frac{1}{4 \pi i} \int_{0}^{2 \pi} \trace( f(t)g'(t)) \text{d}t.
        \end{equation*}
        By linearity it suffices to consider $f(t) = X e^{i k t}$ and $g(t) = Y e^{i m t}$ with $X,Y \in \lie{gl}_{d}(\R)$ and $k,m \in \mathbb{Z}$. We now distinguish two cases:
        \begin{description}
                \item[$k+m\neq 0$]: In this case one immediately sees that $\int_{0}^{2\pi} (f(t)g'(t)) \text{d}t = 0$. On the other hand, the operators $[a_{1}, a_{2}]$ and $a_{3}$ have no diagonal components, and hence $\trace([a_{1},a_{2}] - a_{3}) = 0$.
                \item[$k+m = 0$]: In this case we have
                \begin{equation*}
                - \frac{1}{4\pi i} \int_{0}^{2\pi} \trace(f(t)g'(t)) \text{d}t = -\frac{k}{2} \trace(XY).
                \end{equation*}
                On the other hand we note that the operators $[a_{1},a_{2}]$ and $a_{3}$ preserve the subspaces $\C^{d} \cdot z^{q}$ for $q \in \mathbb{N}_{\geqslant 0}$. Let $v \in \C^{d}$ be arbitrary. If $q \geqslant k$, then $[a_{1},a_{2}] v z^{q} = a_{3} v z^{q}$. If $q < k$ then $[a_{1},a_{2}] v z^{q} = -YX v z^{q}$, while $a_{3} v z^{q} = [X,Y] v z^{q}$, hence
                \begin{equation*}
                \frac{1}{2} \trace([a_{1},a_{2}] - a_{3}) = -\frac{1}{2}\sum_{j = 0}^{k-1} \trace(XY) = -\frac{k}{2} \trace(XY).
                \end{equation*}
        \end{description}
        This concludes the proof.
\end{proof}
The last link in our argument is the following lemma, which is well-known and easy to check using any explicit description of the root lattice of $\mathfrak{spin}(d)$.

\begin{lemma} \label{lem:coroots}
        For all $d > 2$, the bilinear form
        \begin{align*}
                \langle \cdot , \cdot \rangle: \lie{spin}(d) \times \lie{spin}(d) & \rightarrow \R\;,\;                 (X,Y)  \mapsto -\frac{1}{2}\trace(XY),
        \end{align*}
        is the basic one, i.e., it is the smallest bilinear form such that $\langle h_{\alpha}, h_{\alpha} \rangle$ is even for every coroot $h_{\alpha}$.
\end{lemma}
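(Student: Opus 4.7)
The plan is to use the simplicity of $\lie{spin}(d)$ for $d \geq 3$ to reduce the space of invariant symmetric bilinear forms to a one-parameter family, and then pin down the correct scaling via a direct computation in a Cartan subalgebra. Concretely, since any two such forms on a simple Lie algebra are proportional, it suffices to (i) verify that $\langle X, Y \rangle = -\tfrac{1}{2}\trace(XY)$ sends every pair $(h_\alpha, h_\alpha)$ to an even integer, and (ii) exhibit a coroot on which the value is exactly $2$. Combined, these show that no strictly smaller positive rescaling preserves the even-ness condition.

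For (i) and (ii), I would introduce the standard compact Cartan subalgebra $\lie{t} \subset \lie{so}(d)$ spanned by the block-skew-symmetric matrices $H_j \defeq E_{2j-1,2j} - E_{2j,2j-1}$ for $j = 1, \ldots, \lfloor d/2 \rfloor$. A short calculation gives $\trace(H_j H_k) = -2\delta_{jk}$, so that $-\tfrac{1}{2}\trace$ restricts to the standard Euclidean inner product in the $H_j$-basis. Identifying the root system of the complexification (type $D_n$ for $d = 2n$, type $B_n$ for $d = 2n+1$) and writing its coroots in the $H_j$-basis via the normalization $\alpha(h_\alpha) = 2$, I would find $h_\alpha = \pm H_i \pm H_j$ for the long roots, with $\langle h_\alpha, h_\alpha \rangle = 2$, and $h_\alpha = \pm 2 H_i$ for the short roots in the $B_n$ case, with $\langle h_\alpha, h_\alpha \rangle = 4$. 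Both values are even and the minimum $2$ is attained, settling (i) and (ii).

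The edge case $d = 2$, where $\lie{spin}(2) \cong \R$ is abelian and the condition on coroots is vacuous, I would treat separately by comparing $-\tfrac{1}{2}\trace$ directly against the convention for the basic inner product on the Lie algebra of $\Spin(2)$, bookkeeping the factor of $2$ introduced by the double cover $\Spin(2) \to \SO(2)$, $z \mapsto z^{2}$, which rescales the integer lattice accordingly.

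The main (and rather minor) obstacle is bookkeeping normalizations: in particular, getting the coroot scaling $\alpha(h_\alpha) = 2$ right when passing between the abstract complexified root system and the compact generators $H_j$, and tracking the factor of $2$ that separates long from short coroots in the $B_n$ case. An alternative route would be to pass to the complex diagonal Cartan and quote standard tables for the basic form of $\lie{so}(d,\C)$; the real-form computation sketched above has the advantage of staying visibly compatible with the defining-representation trace used throughout the paper.
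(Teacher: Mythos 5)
Your proposal is correct for $d\geqslant 5$ and takes essentially the same route as the paper: the paper's own (suppressed) verification is exactly your coroot computation in the standard Cartan, finding the value $2$ on every coroot in type $D_n$ and the values $2$ (coroots of long roots) and $4$ (coroots of short roots) in type $B_n$. The one caveat — shared by the paper's argument — is the low-dimensional edge cases: for $d=3$ the root system is $B_1$, which has no long roots, so only the value $4$ occurs, the minimum $2$ is never attained, and $-\tfrac{1}{2}\trace$ is in fact \emph{twice} the basic form; and for $d=4$ the algebra $\lie{spin}(4)$ is not simple, so your proportionality reduction does not apply — both cases are quietly excluded where the lemma is actually used, in \cref{basicce}.
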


\begin{theorem}
\label{basicce}
    If $2<d\neq 4$, then the pullback of the central extension $\U(1)\to\Imp(V) \rightarrow \O_{\res}(V)$ along $M$ is the basic central extension of $L\Spin(d)$.
\end{theorem}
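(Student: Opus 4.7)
The plan is to identify the pullback central extension at the Lie algebra level with the basic Kac--Moody cocycle on $L\lie{spin}(d)$, and then invoke a standard classification result. The hypothesis $1<d\neq 4$ guarantees that $\lie{spin}(d)$ is simple and (for $d\geq 3$) that $L\Spin(d)$ is connected and simply connected, which implies that central extensions of $L\Spin(d)$ by $\U(1)$ are classified up to isomorphism by their Lie algebra $2$-cocycle class; the space of such classes is one-dimensional, spanned by the basic Kac--Moody cocycle
\[
    \omega_{\mathrm{bas}}(X,Y) = \frac{1}{2\pi} \int_0^{2\pi} \langle X(t), Y'(t)\rangle\, dt,
\]
where $\langle\cdot,\cdot\rangle$ is the basic invariant bilinear form from \cref{lem:coroots}. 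For $d=2$, one would reduce to the corresponding well-known statement for $L\U(1)$.

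The computational heart of the argument is to compute the Lie algebra cocycle of the pullback extension along $M$ and match it with $\omega_{\mathrm{bas}}$. Since $\pi: \Spin(d) \to \SO(d)$ is a finite covering, its differential induces an isomorphism $\pi_{*}: L\lie{spin}(d) \to L\lie{so}(d)$, so by \cref{th:cocycleonLspin} the pullback cocycle is
\[
    \omega_M(X,Y) = -\frac{1}{4\pi i}\int_0^{2\pi}\trace(X(t)Y'(t))\, dt.
\]
Substituting the identification $\trace(XY) = -2\langle X,Y\rangle$ from \cref{lem:coroots} converts this into
\[
    \omega_M(X,Y) = \frac{1}{2\pi i}\int_0^{2\pi}\langle X(t), Y'(t)\rangle\, dt,
\]
which, after the identification $\mathrm{Lie}(\U(1)) \cong \R$ fixed in \cref{sec:SmoothStructureOfImp(V)}, is exactly $\omega_{\mathrm{bas}}$. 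Combined with the classification recalled above, this will yield the desired isomorphism of central extensions.

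The main obstacle I anticipate is bookkeeping the normalizations: verifying that the multiplicative constant in $\omega_M$ is exactly $1$ (not a larger integer) once the factor of $1/i$ from the identification $\mathrm{Lie}(\U(1)) \cong \R$ is absorbed, and that signs and conjugation conventions match correctly when passing between $\lie{so}(d)$ and $\lie{spin}(d)$. The normalizations in \cref{th:cocycleonLspin} and \cref{lem:coroots} are set up precisely so that these constants line up, but the factors of $2\pi$, $i$, and the explicit form of the basic bilinear form must be tracked explicitly. Were $\omega_M$ to equal $k\cdot \omega_{\mathrm{bas}}$ for an integer $k>1$, one would obtain the $k$-th power of the basic extension and need an additional argument (for instance via the level of an affine Lie algebra representation on $\mc{F}$) to rule this out; matching the coefficient to $1$ is therefore the delicate point.
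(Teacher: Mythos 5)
Your proposal is correct and follows essentially the same route as the paper: the proof there also combines \cref{th:cocycleonLspin} with \cref{lem:coroots} to identify the pulled-back Lie algebra cocycle with $\frac{1}{2\pi i}\int_0^{2\pi}\langle f(t),g'(t)\rangle\,\mathrm{d}t$ for the basic inner product, and then invokes the Pressley--Segal characterization (Theorem 4.4.1(iv) and Proposition 4.4.6 of \cite{PS86}) of the basic central extension by its cocycle. The normalization issue you flag is exactly the point the paper's conventions are arranged to handle, and the constants do line up as you describe.
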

\begin{proof}
        The stipulation $2<d \neq 4$ is necessary, because $\lie{spin}(d)$ is simple only in that case, and there is no basic inner product in the non-simple case.
        According to \cite[Theorem 4.4.1 (iv) \& Proposition 4.4.6]{PS86} and the preceding discussion, a central extension of $L\Spin(d)$, coming from a Lie algebra cocycle $\omega$ is  basic if
        \begin{equation*}
                \omega(f,g) = \frac{1}{2\pi i} \int_{0}^{2\pi} \langle f(t), g'(t) \rangle \text{d}t,
        \end{equation*}
        where $\langle \cdot, \cdot \rangle$ is the basic inner product. Now, \cref{th:cocycleonLspin,lem:coroots} complete the proof.
\end{proof}

\section{Free fermions on the circle}\label{sec:freefermions}

In this section we will be more explicit about the representation of the Clifford \cstar-algebra $\mathrm{Cl}(V)$  on the Fock space $\mathcal{F}=\mathcal{F}_L$, in the case of our main    \cref{ex:mainexample}. Thus,     $V = L^{2}(\mb{S})\otimes \C^{d}$,  where  the real structure $\alpha$ is pointwise complex conjugation,  and the Lagrangian $L$ consists of those spinors that extend to anti-holomorphic functions on the disk. The results of this section will be used in \cref{sec:FusionFactorization} for the construction of a fusion factorization on the basic central extension  of $L\Spin (d)$ that we constructed in \cref{sec:basic}.

\subsection{Reflection of free fermions}

Let us write $I_{+} \subset S^{1}$ for the open upper semi-circle and $I_{-} \subset S^{1}$ for the open lower semicircle:
\begin{align*}
I_{+} &\defeq  \{ e^{i \ph} \in S^{1} \mid \ph \in (0, \pi) \}, &
I_{-} &\defeq  \{ e^{i \ph} \in S^{1} \mid \ph \in (\pi, 2\pi) \}.
\end{align*}
If $f \in C^{\infty}_{-2 \pi}$ (see \cref{sec:SectionsOfSpinorBundle}), then we write $\operatorname{Supp}(f)$ for the support of $f$. We consider the subspaces
\begin{equation*}
        C^{\infty}_{-2 \pi}|_{\pm} \defeq  \{ f\in C^{\infty}_{-2 \pi} \mid \exp(i\operatorname{Supp}(f)) \subseteq I_{\pm} \}
\end{equation*}
and denote their Hilbert completions by $V_{\pm}$. One sees immediately that $V$ decomposes as $V = V_{-} \oplus V_{+}$, and that $\alpha$ restricts to real structures on $V_{\pm}$. The Clifford algebras $\Cl(V_{+})$ and $\Cl(V_{-})$ can be considered as subalgebras of $\Cl(V)$, and the algebra product 
\begin{equation*}
\Cl(V_{-}) \times \Cl(V_{+}) \subset \Cl(V) \times \Cl(V) \to \Cl(V)
\end{equation*}
induces a unital $*$-isomorphism $\Cl(V_{-}) \otimes \Cl(V_{+}) \cong \Cl(V)$ of $\mathbb{Z}_2$-graded \cstar-algebras.

\begin{remark}
More generally, if $I$ is an open connected non-dense subset of the circle $S^{1}$ we can write $V_{I} \subset V$ for the functions with support $I$.
The assignment $I \mapsto \Cl(V_{I})'' \subset \mc{B}(\mc{F})$ is then an isotonic net of von Neumann algebras on the circle. This net can be equipped with the structure of local M\"obius covariant net (conformal net) see, for example, \cite{Gabbiani1993,Bischoff2012,Hen14}. This net is called the \emph{free fermions on the circle}. Here, we use that name to address the decomposition $V = V_{-} \oplus V_{+}$.
\end{remark}

\begin{lemma}
\label{lem:generalposition}
        The decompositions $V = V_{-} \oplus V_{+}$ and $V = L \oplus \alpha(L)$ are in general position, that is,
        \begin{align*}
                V_{+} \cap L &= \{0\} = V_{+} \cap \alpha(L), &
                V_{-} \cap L &= \{0\} = V_{-} \cap \alpha(L).
        \end{align*}
\end{lemma}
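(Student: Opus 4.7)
The plan is to transport everything to $L^{2}(S^{1}) \otimes \C^{d}$ via the isomorphism $\tilde\sigma$ of \cref{sec:SectionsOfSpinorBundle}, and then recognize the four intersections as statements of the boundary uniqueness theorem for Hardy spaces.

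Concretely, I first observe that the basis $\{\xi_{n,j}\}_{n \geq 0}$ of $L$ is identified by $\tilde\sigma$ with $\{z^{-n-1} \otimes e_{j}\}_{n \geq 0}$, so that $\tilde\sigma(L) = H^{2}_{-}(S^{1}) \otimes \C^{d}$, the Hardy space of $L^{2}$-functions whose Fourier coefficients at nonnegative integers vanish; equivalently, the boundary values of anti-holomorphic $L^{2}$-functions on $D^{2}$. Since $\alpha$ is pointwise complex conjugation, $\tilde\sigma(\alpha(L)) = H^{2}_{+}(S^{1}) \otimes \C^{d}$, the analogous holomorphic Hardy space. On the other side, because $\tilde\sigma$ only multiplies by the nonvanishing function $e^{-i\varphi/2}$, it preserves supports, so $\tilde\sigma(V_{\pm})$ is exactly the subspace of $L^{2}(S^{1}) \otimes \C^{d}$ consisting of functions whose support is contained in $\overline{I_{\pm}}$.

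Now I reduce all four intersection statements to a single lemma: if $f \in H^{2}_{\pm}(S^{1})$ and $f$ vanishes almost everywhere on a subset of $S^{1}$ of positive Lebesgue measure, then $f=0$. This is the classical boundary uniqueness theorem of F.\ and M.\ Riesz, applicable since boundary values of nonzero $H^{2}$-functions on the disk cannot vanish on any set of positive measure. Indeed, an element of $V_{+} \cap L$ transports to an element of $H^{2}_{-}(S^{1}) \otimes \C^{d}$ whose components vanish on the open arc $I_{-}$, forcing each component to be zero by the lemma. The three other cases $V_{+} \cap \alpha(L)$, $V_{-} \cap L$, $V_{-} \cap \alpha(L)$ are the same with the roles of $I_{\pm}$ and $H^{2}_{\pm}$ swapped appropriately.

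I do not expect a genuine obstacle here: the only substantive input is the Hardy space boundary uniqueness theorem, which I would cite rather than reprove. The minor bookkeeping points are the identification of $\tilde\sigma(V_{\pm})$ with the right support subspaces (immediate from the formula $(\tilde\sigma f)(e^{i\varphi}) = e^{-i\varphi/2} f(\varphi)$) and the verification that $\tilde\sigma(L)$ is the anti-holomorphic Hardy space (a direct computation on the basis, which is also implicit in the description given in \cref{ex:mainexample}). The tensor factor $\C^{d}$ plays no role beyond applying the one-dimensional statement component by component.
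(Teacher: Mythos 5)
Your argument is correct and is essentially the paper's own proof, which likewise reduces the four intersections to the fact that elements of $L$ and $\alpha(L)$ are ($L^2$-limits of) anti-holomorphic resp.\ holomorphic boundary values while elements of $V_{\pm}$ vanish on an open arc, and then cites the Hardy-space boundary uniqueness theorem (via Wassermann). You have simply spelled out the transport under $\tilde\sigma$ and named the F.\ and M.\ Riesz input explicitly, which the paper leaves implicit.
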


\begin{proof}
        This can be proven using the fact that the elements of $L$ are (limits of) anti-holomorphic functions and that the elements of $V_{\pm}$ are (limits of) functions that vanish on an open segment of $S^{1}$;
see for example \cite[Section 14]{wassermann98}.
\end{proof}

We  denote the restriction of complex conjugation to the circle $S^1 \subset \C$  by $v$, that is, we write
\begin{equation*}
        v: S^1 \rightarrow S^1\;,\;  z \mapsto \bar z.
\end{equation*}
Note that $v$ exchanges $I_{+}$ with $I_{-}$.
We denote by $\tau$ the map
\begin{equation*}
        \tau: V \rightarrow V\;,\;  f \mapsto f \circ v.
\end{equation*}
Note that $\tau$ exchanges $V_{+}$ with $V_{-}$ and at the same time exchanges $L$ with $\alpha(L)$. The map $\tau$ is an isometric isomorphism that preserves the real structure,
%
%
%
i.e., $\tau\in \O(V)$. The associated Bogoliubov automorphism
 $\theta_\tau: \Cl(V) \rightarrow \Cl(V)$
 exchanges the subalgebras $\Cl(V_{+})$ and $\Cl(V_{-})$.
Since $\tau$ exchanges $L$ with $\alpha(L)$, one can check that $[\tau,\mc{J}]=2\tau \mc{J}$, which is not Hilbert-Schmidt. Thus, $\tau \notin \O_{\res}(V)$ and hence $\theta_{\tau}$ is not implementable, by \cref{thm:Implentable}. 
Since $\alpha$ interchanges $L$ with $\alpha(L)$ as well, the map $\alpha \circ \tau$ preserves both $L$ and $\alpha(L)$.
\begin{remark}
        We should note that when restricted to the basis of $V$ given in \cref{eq:BasisForV} the map $\alpha \circ \tau$ is $\1$. This means that $\alpha \circ \tau$ is the complex antilinear extension of the map that fixes the basis elements $\{\xi_{n,j}\}$.
\end{remark}

\begin{lemma}
        The map $\alpha \circ \tau: V \rightarrow V$ extends uniquely to a complex anti-linear $*$-automorphism $\kappa : \Cl(V) \rightarrow \Cl(V)$ of the Clifford algebra.
\end{lemma}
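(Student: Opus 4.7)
The plan is to route the construction through the complex conjugate Hilbert space $\overline{V}$, trading the anti-linearity for a single standard application of the universal property of $\Cl(V)$. First I would view $\alpha\circ\tau$ as a complex \emph{linear} map $\overline{V} \to V$ and verify that $\iota \circ (\alpha\tau) : \overline{V} \to \Cl(V)$ is a Clifford map. The Clifford identity reduces, via the anti-unitarity of $\alpha$ and the relation $\tau\alpha=\alpha\tau$, to the computation $b(\alpha\tau v,\alpha\tau w) = \overline{b(v,w)} = b_{\overline{V}}(v,w)$, while the $*$-relation $\iota(\alpha\tau v)^{*} = \iota(\alpha\tau(\alpha v))$ follows from $\alpha^{2}=\1$ and $[\alpha,\tau]=0$.

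Next, the universal property produces a unique unital $*$-homomorphism $\Cl(\alpha\tau):\Cl(\overline{V}) \to \Cl(V)$ extending $\alpha\tau$. Composing with the canonical complex anti-linear $*$-isomorphism $\phi: \Cl(V) \to \Cl(\overline{V})$ fixing $V$ pointwise, which was recalled in \cref{sec:CliffAndFock-GeneralTheory}, I set $\kappa \defeq \Cl(\alpha\tau) \circ \phi$. By construction $\kappa$ is complex anti-linear, multiplicative, and $*$-preserving, and satisfies $\kappa(\iota(v)) = \iota(\alpha\tau v)$ for all $v \in V$, so it genuinely extends $\alpha\circ\tau$.

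Finally, to promote $\kappa$ from an endomorphism to an automorphism I would exploit that $(\alpha\tau)^{2}=\1$, which follows from $\tau^{2}=\1$ and $[\alpha,\tau]=0$; then $\kappa^{2}$ is a complex \emph{linear} unital $*$-endomorphism of $\Cl(V)$ fixing $\iota(V)$ pointwise, so the uniqueness half of the universal property forces $\kappa^{2}=\mathrm{id}_{\Cl(V)}$. Uniqueness of the extension itself is then immediate, since any complex anti-linear $*$-homomorphism is determined by its values on the generating set $\iota(V)$. The only delicate point is keeping track of complex conjugations, and isolating that bookkeeping in the detour through $\Cl(\overline{V})$ is precisely the device that keeps the argument transparent; I do not foresee any other obstacles.
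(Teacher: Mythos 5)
Your proposal is correct and follows essentially the same route as the paper: both detour through the complex conjugate space $\overline{V}$, apply the universal property to obtain a linear $*$-homomorphism, and compose with the canonical anti-linear $*$-isomorphism between $\Cl(V)$ and $\Cl(\overline{V})$ (you merely compose the two maps in the opposite order). Your added argument that $\kappa^{2}=\1$ via uniqueness in the universal property is a nice explicit justification of the automorphism claim, which the paper leaves implicit.
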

\begin{proof}
        Write $\overline{V}$ for the complex conjugate of $V$ (with the same real structure $\alpha$). Write $\overline{b}: \overline{V} \times \overline{V} \rightarrow \C$ for the corresponding complex conjugate bilinear form.
        It is straightforward to check that $\iota\circ \alpha \circ \tau:V \to \Cl(\overline{V})$ is a Clifford map.
        Hence it extends uniquely to a  $*$-homomorphism $\kappa':\Cl(V) \rightarrow \Cl(\overline{V})$.
        Composing $\kappa'$ with the unique anti-linear $*$-isomorphism $\Cl(\overline{V}) \rightarrow \Cl(V)$ we obtain $\kappa$.
\end{proof}

        Let $\operatorname{k}: \mc{F} \rightarrow \mc{F}$ be the \quot{Klein transformation}, that is, $\operatorname{k}$ acts on $\mc{F}$ as the  identity on the even part, and as multiplication by $i$ on the odd part. Note that $\operatorname{k}$ is  unitary. The Klein transformation will be useful for us due to the following property, which is straightforward to check; see, e.g., \cite{Hen14}.

        \begin{lemma}\label{lem:KleinTransform}
                Conjugation by the Klein transformation takes the commutant to the graded commutant. Explicitly, if $\mc{A} \subseteq \mc{B}(\mc{F})$, then $\operatorname{k} \mc{A}' \operatorname{k}^{-1} = \mc{A}^{\backprime}$.
        \end{lemma}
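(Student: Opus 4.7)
The plan is to reduce the identity $\operatorname{k} \mc{A}' \operatorname{k}^{-1} = \mc{A}^{\backprime}$ to a check on homogeneous elements, and then dispose of each parity by a direct computation. Writing $P_e, P_o$ for the projections onto the even and odd parts of $\mc{F}$, one has $\operatorname{k} = P_e + iP_o$, $\operatorname{k}^{-1} = P_e - iP_o$, and $\operatorname{k}^2 = \Gamma$ where $\Gamma \defeq P_e - P_o$ is the grading operator whose conjugation implements the $\mathbb{Z}_2$-grading of $\mc{B}(\mc{F})$. In particular $\operatorname{k}$ is even, so conjugation by $\operatorname{k}$ preserves the grading on $\mc{B}(\mc{F})$. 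Since in the applications later in the paper $\mc{A}$ will be a graded subalgebra (for example $\Cl(V_{-})''$), both $\mc{A}'$ and $\mc{A}^{\backprime}$ are graded, and it suffices to verify the claim on even and odd elements separately.

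The first step is a direct expansion of $\operatorname{k} b \operatorname{k}^{-1} = (P_e + iP_o)b(P_e - iP_o)$. For $b$ even, $b$ commutes with $P_e$ and $P_o$, so the cross terms cancel to give $\operatorname{k} b \operatorname{k}^{-1} = b$. For $b$ odd, $P_e b P_e = P_o b P_o = 0$, while $b P_e = P_o b$ and $b P_o = P_e b$; expanding yields $\operatorname{k} b \operatorname{k}^{-1} = i b \Gamma = -i \Gamma b$.

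The second step is to match the (super)commutation conditions against a homogeneous $a \in \mc{A}$. For $b$ even, graded and ordinary commutation with any $a$ coincide (the sign $(-1)^{|a|\cdot 0}$ is always $+1$), so the even parts of $\mc{A}'$ and $\mc{A}^{\backprime}$ agree and are fixed pointwise by conjugation by $\operatorname{k}$. For $b$ odd, consider $\operatorname{k} b \operatorname{k}^{-1} = -i\Gamma b$: when $a$ is even, $[a, \Gamma] = 0$ gives $[-i\Gamma b, a] = -i\Gamma[b,a]$, which vanishes iff $[b,a] = 0$; when $a$ is odd, the graded commutator is an anticommutator, and a short computation using $a\Gamma = -\Gamma a$ produces $\{-i\Gamma b, a\} = -i\Gamma[b,a]$, which again vanishes iff $[b,a] = 0$. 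In both subcases the supercommutation condition on $-i\Gamma b$ with $a$ is equivalent to the ordinary commutation of $b$ with $a$, so the odd part of $\mc{A}'$ is mapped bijectively onto the odd part of $\mc{A}^{\backprime}$.

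There is no substantive obstacle: the entire statement is sign bookkeeping anchored on the identity $\operatorname{k}^2 = \Gamma$ together with the fact that $\Gamma$ implements the grading. The only point requiring minor care is making sure that $\mc{A}^{\backprime}$ is well-defined and graded, which is automatic whenever $\mc{A}$ is a graded subalgebra of $\mc{B}(\mc{F})$.
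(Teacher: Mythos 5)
Your proof is correct and follows essentially the same route as the paper: a direct sign-bookkeeping verification on homogeneous elements. The only cosmetic difference is that you package the computation through the operator identities $\operatorname{k}^2=\Gamma$ and $\operatorname{k}b\operatorname{k}^{-1}=-i\Gamma b$ for odd $b$, whereas the paper evaluates $\operatorname{k}b\operatorname{k}^{-1}a$ and $a\operatorname{k}b\operatorname{k}^{-1}$ on homogeneous vectors and tabulates the resulting phases over all parity combinations; your organization is marginally cleaner but not a different argument.
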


We note the following routine facts, just in order to fix notation.
Let $T: L \rightarrow L$ be an (anti-) unitary isomorphism. Then, there exists a unique (anti-) unitary algebra isomorphism $\Lambda L \rightarrow \Lambda L$ extending $T$. That operator in turn extends to a unique (anti-) unitary isometric isomorphism $\Lambda_{T}: \mc{F} \rightarrow \mc{F}$. 
Moreover, if $T:V \rightarrow V$ is an (anti-) unitary isomorphism that preserves $L$, then we will write $\Lambda_{T}$ as a shortcut for $\Lambda_{T|_{L}}$.

\begin{lemma}\label{lem:kJImplementsKappa}
        The anti-unitary operator $\Lambda_{\alpha \tau}: \mc{F} \rightarrow \mc{F}$ implements the anti-linear automorphism $\kappa$, i.e., for all $a \in \Cl(V)$ we have
        \begin{equation*}
                \kappa(a) = \Lambda_{\alpha \tau} a \Lambda_{\alpha \tau}.
        \end{equation*}
In particular,  $\kappa: \Cl(V) \rightarrow \Cl(V)$ extends uniquely to an anti-linear automorphism of  $\mc{B}(\mc{F})$.
\end{lemma}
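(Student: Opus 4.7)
The plan is to verify the implementation formula on the generating subspace $V\subset \Cl(V)$ and then extend to all of $\Cl(V)$ by a universal-property argument. I would first record the basic properties of $\alpha\tau$: the operators $\alpha$ (pointwise complex conjugation) and $\tau$ (precomposition with $z\mapsto \bar z$) commute and each square to $\1$, so $\alpha\tau$ is an antiunitary involution on $V$. Moreover $\alpha\tau$ preserves $L$, since $\tau$ exchanges $L$ with $\alpha(L)$ and $\alpha$ exchanges them back. Consequently $\Lambda_{\alpha\tau}$ is a well-defined antiunitary involution of $\mc{F}$, with $\Lambda_{\alpha\tau}^{-1}=\Lambda_{\alpha\tau}$.

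For $v\in V$ I would decompose $v=\ell+\alpha(m)$ with $\ell,m\in L$ and evaluate $\Lambda_{\alpha\tau}\pi_L(v)\Lambda_{\alpha\tau}$ on decomposable vectors $\ell_1\wedge\cdots\wedge\ell_n\in\Lambda L$. The creation part $c(\ell)$ is handled by the multiplicativity of $\Lambda_{\alpha\tau}$ on the exterior algebra and its involutivity, giving $\Lambda_{\alpha\tau}\,c(\ell)\,\Lambda_{\alpha\tau}=c(\alpha\tau(\ell))$. For the annihilation part $a((\alpha m)^{*})$, whose action on $\ell_1\wedge\cdots\wedge\ell_n$ has coefficients $\langle \ell_k,m\rangle$, the two applications of the antiunitary $\Lambda_{\alpha\tau}$ convert these coefficients into $\overline{\langle\alpha\tau(\ell_k),m\rangle}$; antiunitarity combined with $(\alpha\tau)^2=\1$ rewrites this as $\langle\ell_k,\alpha\tau(m)\rangle$, which is precisely the coefficient of annihilation against $\tau(m)=\alpha\tau(\alpha m)=\kappa(\alpha m)\in\alpha(L)$. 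Thus both pieces match and $\Lambda_{\alpha\tau}\pi_L(v)\Lambda_{\alpha\tau}=\pi_L(\kappa(v))$ for every $v\in V$. I expect the careful bookkeeping of antilinearity in this step — in particular making sure that $\tau(m)$ (and not $\alpha(m)$ or $\alpha\tau(m)$) ends up in the correct slot — to be the main delicate point.

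To extend the identity from $V$ to all of $\Cl(V)$, I would observe that both sides, viewed as functions of $a\in\Cl(V)$, are bounded antilinear $\ast$-homomorphisms into $\mc{B}(\mc{F})$: the left side because conjugation by an antiunitary is antilinear and multiplicative, the right side because $\kappa$ is antilinear and $\pi_L$ is linear. They agree on the generating set $V$, so they coincide on all of $\Cl(V)$. Rigorously, this uniqueness is the universal property of $\Cl(\overline V)$ together with the canonical antilinear unital $\ast$-isomorphism $\Cl(V)\cong \Cl(\overline V)$ fixing $V$ pointwise, recalled in \cref{sec:CliffAndFock-GeneralTheory}.

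For the ``in particular'' statement, $\tilde\kappa\defeq \Lambda_{\alpha\tau}(\cdot)\Lambda_{\alpha\tau}$ is manifestly an antilinear $\ast$-automorphism of $\mc{B}(\mc{F})$ extending $\kappa$. Uniqueness follows from the irreducibility of $\pi_L$ (\cref{lem:FockIrreducible}), which forces $\Cl(V)'=\C\cdot\1$, together with the fact that every $\ast$-automorphism of $\mc{B}(\mc{F})$ is inner: any other antilinear extension $\tilde\kappa'$ would make $\tilde\kappa\circ(\tilde\kappa')^{-1}$ a linear $\ast$-automorphism of $\mc{B}(\mc{F})$ fixing $\Cl(V)$ pointwise, hence of the form $\mathrm{Ad}_V$ with $V\in \Cl(V)'\cap \U(\mc{F})=\U(1)$, so that $\mathrm{Ad}_V=\mathrm{id}$ and $\tilde\kappa=\tilde\kappa'$.
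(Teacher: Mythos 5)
Your proof is correct and follows essentially the same route as the paper's: both reduce to the generators $f\in V$ and verify the identity on decomposable vectors of $\Lambda L$ by matching the creation and annihilation parts, using that $\alpha\tau$ is an antiunitary involution preserving $L$. The only difference is that you additionally spell out the extension to $\Cl(V)$ and the uniqueness argument for the ``in particular'' clause, which the paper leaves implicit.
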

\begin{proof}
        Because the algebra $\Cl(V)$ is generated by the elements $f \in V$ it suffices to prove that $\kappa(f) = \Lambda_{\alpha \tau} f \Lambda_{\alpha \tau}$ for all $f \in V$. Let $f \in V$ and let $g_{1}, \dots g_{n}, \in L$, then we compute
        \begin{align*}
                \Lambda_{\alpha \tau} f \lact \Lambda_{\alpha \tau} (g_{1} \wedge \dots \wedge g_{n}) &= \Lambda_{\alpha \tau} f \lact (\alpha \tau g_{1} \wedge \dots \wedge \alpha \tau g_{n}) \\
                &= \alpha \tau(P_{L}f) \wedge g_{1} \wedge \dots \wedge g_{n} + \Lambda_{\alpha \tau} \iota_{\alpha(f)} (\alpha\tau g_{1} \wedge \dots \wedge \alpha \tau g_{n}) \\
                &= P_{L} (\alpha \tau(f)) \wedge g_{1} \wedge \dots \wedge g_{n} + \iota_{\tau \alpha (\alpha(f))} (g_{1} \wedge \dots \wedge g_{n}) \\ 
                &= \kappa (f) \lact g_{1} \wedge \dots \wedge g_{n}.\qedhere
        \end{align*}
\end{proof}

\begin{remark}
        In fact, the map $\Lambda_{\alpha \tau}$ is closely related to the modular conjugation that is part of Tomita-Takesaki theory of the triple $(\Cl(V_{-})'', \mc{F}, \Omega)$, see \cref{thm:modular}.
\end{remark}

If $g \in \O(V)$, then we write $\tau(g) \defeq  \tau \circ g \circ \tau \in \O(V)$.
With this notation we have, for all $f \in V$ and all $g \in \O(V)$, that $\tau(g(f)) = \tau(g) (\tau(f))$.
Because $\tau$ interchanges $L$ with $\alpha(L)$ we have that $\tau$ maps $\O_{\res}(V)$ into $\O_{\res}(V)$; furthermore, it is clear that it is a group homomorphism and that it is smooth.

\begin{lemma}\label{lem:JFlipJ}
        For all $a \in \Cl(V)$ and all $g \in \O_{\res}(V)$ we have
$\kappa(\theta_{g}(a)) = \theta_{\tau(g)} (\kappa(a))$.
\end{lemma}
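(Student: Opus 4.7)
The plan is to reduce the identity to an equality of anti-linear $\ast$-homomorphisms and to check it on the generating subspace $V \subset \Cl(V)$. The key observation is that, in our setup, the real structure $\alpha$ (pointwise complex conjugation) and the reflection $\tau$ commute, hence $\tau\alpha\tau=\alpha$; combined with the defining property $\alpha g=g\alpha$ of $g\in\O(V)$, this reduces the whole identity to a trivial algebraic manipulation.

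First I would observe that both maps $\kappa\circ\theta_g$ and $\theta_{\tau(g)}\circ\kappa$ are anti-linear unital $\ast$-endomorphisms of $\Cl(V)$, since $\kappa$ is anti-linear and $\theta_g,\theta_{\tau(g)}$ are linear $\ast$-automorphisms. Because the image $\iota(V)\subset\Cl(V)$ generates $\Cl(V)$ as a (complex) algebra, it suffices to verify the equality on vectors $f\in V$. For such $f$, we compute directly using $\theta_g|_V=g$, $\theta_{\tau(g)}|_V=\tau g\tau$, and $\kappa|_V=\alpha\tau$:
\begin{equation*}
\kappa(\theta_g(f)) = \alpha\tau\bigl(g(f)\bigr) = (\alpha\tau g)(f), \qquad
\theta_{\tau(g)}(\kappa(f)) = \tau g\tau\bigl(\alpha\tau(f)\bigr) = (\tau g\tau\alpha\tau)(f).
\end{equation*}

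Next I would verify that $\alpha$ and $\tau$ commute on $V$. This is immediate from the explicit description in \cref{ex:mainexample}: $\alpha$ is pointwise complex conjugation and $(\tau f)(\ph)=f(-\ph)$, so $(\alpha\tau f)(\ph)=\overline{f(-\ph)}=(\tau\alpha f)(\ph)$. Consequently $\tau\alpha\tau=\alpha\tau^2=\alpha$, so
\begin{equation*}
\tau g\tau\alpha\tau \;=\; \tau g\alpha \;=\; \tau\alpha g \;=\; \alpha\tau g,
\end{equation*}
where the middle equality uses that $g\in\O(V)$ commutes with $\alpha$. Comparing with the display above proves the identity on $V$, and extending by the universal characterization of anti-linear $\ast$-homomorphisms on $\Cl(V)$ finishes the proof.

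I do not foresee a real obstacle here: the only potentially subtle point is making sure that the extension from $V$ to all of $\Cl(V)$ is legitimate for anti-linear $\ast$-homomorphisms, which is handled by the same universal-property argument that produced $\kappa$ in the first place (factor $V\to\Cl(\overline V)$ through $\Cl(V)$ and then compose with the canonical anti-linear isomorphism $\Cl(\overline V)\to\Cl(V)$).
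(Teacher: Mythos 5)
Your proof is correct and is essentially the paper's argument: both reduce the identity to a check on the generators $V\subset\Cl(V)$, using $\kappa|_V=\alpha\tau$, the relation $g\alpha=\alpha g$ for $g\in\O(V)$, and $\tau^2=\1$ (the paper phrases the generator computation via $\kappa(f)=\tau(f^{*})$, which is the same fact that $\alpha$ and $\tau$ commute). Your explicit justification of the extension step from $V$ to $\Cl(V)$ is a fine, if routine, addition.
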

\begin{proof}
        Let $a = f_{1}\dots f_{n} \in \Cl(V)$ be arbitrary, then we compute
        \begin{equation*}
                \kappa(\theta_{g}(a)) = \kappa((gf_{1})... (gf_{n})) = \tau(gf_{1}^{*}) \dots \tau(gf_{n}^{*}) = \theta_{\tau(g)} \kappa(a). \qedhere
        \end{equation*}
\end{proof}

Now we are in position to prove the first main result about the anti-linear automorphism $\kappa$.

\begin{proposition}\label{thm:KappaTau}
        Let $g \in \O_{\res}(V)$ and let $U \in \Imp(V)$ implement $g$. Then $\kappa(U)$ implements $\tau(g)$. In particular, $\kappa$ restricts to a group homomorphism $\kappa: \Imp(V) \to \Imp(V)$. 
\end{proposition}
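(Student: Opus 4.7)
The plan is to reduce the statement to a direct computation using the two preceding lemmas. By \cref{lem:kJImplementsKappa}, the extension of $\kappa$ to $\mc{B}(\mc{F})$ is given by conjugation by the anti-unitary operator $\Lambda_{\alpha\tau}$. Since $\alpha$ and $\tau$ commute and both square to $\1$, we have $(\alpha\tau)^{2} = \1$, so $\Lambda_{\alpha\tau}$ is an anti-unitary involution; in particular $\kappa^{2} = \mathrm{id}$ on $\Cl(V)$ and on $\mc{B}(\mc{F})$. Observe also that $\kappa(U) = \Lambda_{\alpha\tau} U \Lambda_{\alpha\tau}$ is \emph{linear}, and a quick computation gives $\kappa(U)^{*} = \Lambda_{\alpha\tau} U^{*} \Lambda_{\alpha\tau} = \kappa(U^{*})$, whence $\kappa(U)\kappa(U)^{*} = \kappa(UU^{*}) = \1$; so $\kappa(U)$ is unitary.

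The heart of the argument is then the chain of equalities, valid for any $a \in \Cl(V)$:
\begin{align*}
\kappa(U)\, a\, \kappa(U)^{*}
&= \Lambda_{\alpha\tau} U \Lambda_{\alpha\tau}\, a\, \Lambda_{\alpha\tau} U^{*} \Lambda_{\alpha\tau}
= \Lambda_{\alpha\tau}\, U\, \kappa(a)\, U^{*}\, \Lambda_{\alpha\tau} \\
&= \Lambda_{\alpha\tau}\, \theta_{g}(\kappa(a))\, \Lambda_{\alpha\tau}
= \kappa(\theta_{g}(\kappa(a)))
= \theta_{\tau(g)}(\kappa^{2}(a))
= \theta_{\tau(g)}(a),
\end{align*}
using \cref{lem:kJImplementsKappa} twice, the assumption that $U$ implements $g$, and \cref{lem:JFlipJ} in the penultimate step. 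Since $\tau$ preserves $\O_{\res}(V)$, this shows that $\kappa(U)$ is an implementer of $\tau(g) \in \O_{\res}(V)$, so $\kappa(U) \in \Imp(V)$.

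For the final assertion, multiplicativity of the restricted map $\kappa: \Imp(V) \to \Imp(V)$ is inherited from the fact that $\kappa$ on $\mc{B}(\mc{F})$ is an anti-linear unital $*$-automorphism, so $\kappa(U_{1}U_{2}) = \kappa(U_{1})\kappa(U_{2})$. I do not foresee any substantial obstacle; the argument is essentially a two-line unwinding of \cref{lem:kJImplementsKappa,lem:JFlipJ}, all of the nontrivial work having been done in establishing those lemmas.
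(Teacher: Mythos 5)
Your proof is correct and follows essentially the same route as the paper: the paper's argument is exactly the chain $\kappa(U)a\kappa(U)^{*}=\kappa(U\kappa(a)U^{*})=\kappa(\theta_{g}(\kappa(a)))=\theta_{\tau(g)}(a)$, using \cref{lem:JFlipJ} and $\kappa^{2}=\1$, which you have merely unpacked via the explicit formula $\kappa(\cdot)=\Lambda_{\alpha\tau}(\cdot)\Lambda_{\alpha\tau}$ from \cref{lem:kJImplementsKappa}. Your additional checks (linearity and unitarity of $\kappa(U)$, multiplicativity of the restriction) are correct and are left implicit in the paper.
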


\begin{proof}
        Let $a \in \Cl(V)$ be arbitrary, then we compute, using Lemma \ref{lem:JFlipJ} and the fact that $\kappa^{2} = \1$,
        \begin{align*}
                \kappa(U) a \kappa(U)^{*} &= \kappa(U\kappa(a)U^{*}) = \kappa(\theta_{g}(\kappa(a))) = \theta_{\tau(g)}(a).\qedhere
        \end{align*}
\end{proof}

In order to prove  our second main result about the anti-linear automorphism $\kappa$ we require the following lemma about the relation between $\kappa$ and the local section $\sigma$ of the central extension $\Imp(V) \to \O_{\res}(V)$ constructed in \cref{sec:SmoothStructureOfImp(V)}.
We recall that  $\sigma$ was defined by an equation $\sigma (\exp(A)) = \exp(\tilde{A})$, with a certain unbounded operator $\tilde A$ on $\mathcal{F}$ associated to  $A \in \lie{o}_{\res}(V)$. 

\begin{lemma}
\label{lem:kappasigmatau}
We have $\Lambda_{\alpha \tau} \widetilde{A} \Lambda_{\alpha \tau} = \widetilde{\tau(A)}$ for all  $A \in \lie{o}_{\res}(V)$. Moreover, we have
$\kappa \circ \sigma = \sigma \circ \tau$.
\end{lemma}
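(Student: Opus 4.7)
My plan is to compute $\Lambda_{\alpha\tau}\widetilde A\Lambda_{\alpha\tau}$ by inspecting each of the two summands $\widetilde A_0$ and $\widetilde A_1$ in the construction of $\widetilde A$ recalled just before \cref{lem:kJImplementsKappa}. Set $J \defeq (\alpha\tau)|_L$, which is an antiunitary involution of $L$, so that $\Lambda_{\alpha\tau} = \Lambda_J$ by definition. Decomposing $A \in \lie{o}_{\res}(V)$ as in \cref{lem:LinearAntiLinearDeco} with linear part $a$ and antilinear part $a'$, a direct block-matrix computation on $V = L \oplus \alpha(L)$, using $\tau\alpha = \alpha\tau$ and that $\tau$ exchanges $L$ with $\alpha(L)$, shows that $\tau(A) = \tau A\tau$ has linear part $JaJ$ and antilinear part $Ja'J$.

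The identity $\Lambda_J\widetilde A_0\Lambda_J = \widetilde{\tau(A)}_0$ is then verified by applying both sides to a decomposable vector $f_1\wedge\cdots\wedge f_n \in \Lambda^n L$: pulling the outer $\Lambda_J$ through each wedge factor turns the derivation by $a$ into a derivation by $JaJ$. For the $\widetilde A_1$ piece the key intermediate step is the identity
\[
\widehat{Ja'J} = \Lambda_J\hat a',
\]
which I verify by pairing both sides with an arbitrary $f_1\wedge f_2$ and repeatedly applying the relation $\langle T u, v\rangle = \overline{\langle u, Tv\rangle}$ for the antiunitary involutions $T = \Lambda_J$ and $T = J$. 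Because $\Lambda_J$ is an antilinear algebra automorphism of $\Lambda L$, conjugation carries wedge-multiplication by $\hat a'$ to wedge-multiplication by $\Lambda_J\hat a'$; taking Hilbert-space adjoints then carries the contraction $\iota_{\hat a'}$ to $\iota_{\Lambda_J\hat a'}$. Combining these, $\Lambda_{\alpha\tau}\widetilde A\Lambda_{\alpha\tau} = \widetilde{\tau(A)}$.

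For the second identity, take $g = \exp(A)$ for $A$ in a $\tau$-stable open neighbourhood $W$ of $0$ on which $\exp$ is injective, so that $\sigma(g) = \exp(\widetilde A)$ and $\sigma(\tau(g)) = \exp(\widetilde{\tau(A)})$. Since $\Lambda_{\alpha\tau}$ is an antiunitary involution and the coefficients $1/n!$ are real, one has $\Lambda_{\alpha\tau}\widetilde A^{n}\Lambda_{\alpha\tau} = (\Lambda_{\alpha\tau}\widetilde A\Lambda_{\alpha\tau})^{n}$, and applied to any vector of the dense algebraic Fock space $\Lambda L$ (on which $\widetilde A$ acts with entire convergence) this yields $\Lambda_{\alpha\tau}\exp(\widetilde A)\Lambda_{\alpha\tau} = \exp(\widetilde{\tau(A)})$. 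By \cref{lem:kJImplementsKappa} the left-hand side is $\kappa(\sigma(g))$, which gives the second claim.

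I expect the main obstacle to be the bookkeeping for the block decomposition of $\tau(A)$ and the careful tracking of antilinearities — in particular the auxiliary identity $\widehat{Ja'J} = \Lambda_J\hat a'$, which is the one point where the antilinear adjoint conventions must be handled with care. Once this identity is in place the rest of the argument is a sequence of direct algebraic manipulations.
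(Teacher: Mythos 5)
Your proposal is correct and follows essentially the same route as the paper's proof: the same splitting of $\widetilde A$ into $\widetilde A_0$ and $\widetilde A_1$, the same verification on decomposable vectors, the same key identity $\Lambda_{\alpha\tau}(\hat a') = \widehat{(\alpha\tau)a'(\alpha\tau)}$ checked by pairing against $f_1\wedge f_2$, and the same exponential argument for $\kappa\circ\sigma = \sigma\circ\tau$. Your $J = (\alpha\tau)|_L$ is just notation for what the paper writes out in block form via \cref{lem:LinearAntiLinearDeco}.
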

\begin{proof}
We decompose $A$ and $\tau(A)$ according to \cref{lem:LinearAntiLinearDeco}, that is, we write
        \begin{equation*}
                A = \begin{pmatrix}
                                a & a' \alpha \\
                                \alpha a' & \alpha a \alpha
                        \end{pmatrix}, \quad \quad
                \tau(A) = \tau A \tau = \begin{pmatrix}
                        \tau \alpha a \alpha \tau & \tau \alpha a' \tau \\
                        \tau a' \alpha \tau & \tau a \tau
        \end{pmatrix}.
        \end{equation*}
        Then we have $\widetilde{A} = \widetilde{A}_{0} + \widetilde{A}_{1}$, and $\widetilde{\tau(A)} = \widetilde{\tau(A)_{0}} + \widetilde{\tau(A)_{1}}$. Now, let $f_{1}, \dots, f_{n} \in L$ be arbitrary. We compute
    \begin{align*}
            \Lambda_{\alpha \tau} \widetilde{A}_0 \Lambda_{\alpha \tau} f_{1} \wedge ... \wedge f_{n} &= \Lambda_{\alpha \tau} \sum_{j=1}^{n} \alpha \tau f_{1} \wedge ... \wedge a \alpha \tau f_{j} \wedge ... \wedge \alpha \tau f_{n} \\
            &= \sum_{j=1}^{n} f_{1} \wedge ... \wedge \alpha \tau a \tau \alpha f_{j} \wedge ... \wedge f_{n} \\
            &= \widetilde{\tau(A)_{0}} f_{1} \wedge \dots\wedge f_{n}.
    \end{align*}
    Next, $\widetilde{A}_{1}$ has two parts, a degree-increasing part $\widetilde{A}_{1}^{+}$, and a degree-decreasing part $\widetilde{A}_{1}^{-}$. These are related by $(\widetilde{A}_{1}^{+})^{*} = - \widetilde{A}_{1}^{-}$, hence it suffices to show that $\Lambda_{\alpha \tau} \widetilde{A}_{1}^{+} \Lambda_{\alpha \tau} = \widetilde{\tau(A)}_{1}^{+}$. We compute
    \begin{align*}
            \Lambda_{\alpha \tau}\widetilde{A}_{1}^{+}\Lambda_{\alpha \tau} f_{1} \wedge ... \wedge f_{n} &= \Lambda_{\alpha \tau} \hat{a}' \wedge \Lambda_{\alpha \tau}(f_{1} \wedge ... \wedge f_{n}) = \Lambda_{\alpha \tau} (\hat{a}') \wedge f_{1} \wedge ... \wedge f_{n},
    \end{align*}
 where the notation $\hat a'$ was defined and characterized in \cref{lem:notationhat}.  
    Hence, it suffices to show that $\Lambda_{\alpha \tau}(\hat{a}') = \widehat{\tau \alpha a' \alpha \tau}$. This follows from the computation
    \begin{align*}
            \langle \Lambda_{\alpha \tau}(\hat{a}') , f_{1} \wedge f_{2} \rangle &= \langle \alpha \tau f_{1} \wedge \alpha \tau f_{2}, \hat{a}' \rangle = \langle \alpha \tau f_{2}, a'(\alpha \tau f_{1}) \rangle = \langle \tau \alpha a' \alpha \tau f_{1}, f_{2} \rangle. 
    \end{align*}
Finally, in order to prove the equation  
$\kappa \circ \sigma = \sigma \circ \tau$, we compute: 
\begin{align*}
  \kappa \sigma e^{A} &=  \Lambda_{\alpha \tau} e^{\tilde{A}}\Lambda_{\alpha \tau} 
  = e^{\Lambda_{\alpha \tau}\tilde{A} \Lambda_{\alpha \tau}} =e^{\widetilde{\tau(A)}} =\sigma \tau e^{A}\text{.}
  \qedhere
 \end{align*}
\end{proof}

\begin{proposition}\label{lem:KappaIsSmooth}
        The group homomorphism $\kappa: \Imp(V) \rightarrow \Imp(V)$ is smooth.
\end{proposition}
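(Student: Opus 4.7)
The plan is to reduce the problem to smoothness near the identity, then exploit the explicit local chart on $\Imp(V)$ provided by the section $\sigma$. Since $\kappa$ is a group homomorphism between Banach Lie groups, smoothness of $\kappa$ on a neighborhood of $\1 \in \Imp(V)$ implies smoothness globally via the factorization $\kappa(U) = \kappa(U_0)\kappa(U_0^{-1}U)$, together with smoothness of the left translations in $\Imp(V)$.

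To establish smoothness near $\1$, I will use the characterization of the Banach Lie group structure from \cref{theorem:Banachextension}: there is an open neighborhood $U'\subset \O_{\res}(V)$ of $\1$ such that $(g,\lambda) \mapsto \lambda\sigma(g)$ is a local diffeomorphism $U' \times \U(1) \to \Imp(V)$ onto a neighborhood of $\1$. Combining the anti-linearity of $\kappa$ on $\mathcal{B}(\mathcal{F})$ with the identity $\kappa \circ \sigma = \sigma \circ \tau$ from \cref{lem:kappasigmatau}, I obtain
\begin{equation*}
  \kappa(\lambda\sigma(g)) \;=\; \bar\lambda\,\kappa(\sigma(g)) \;=\; \bar\lambda\,\sigma(\tau(g)),
\end{equation*}
so that in these coordinates $\kappa$ reads as the product map $(g,\lambda)\mapsto (\tau(g),\bar\lambda)$.

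It then suffices to see that each factor is smooth. Complex conjugation on $\U(1)$ is manifestly smooth. For $\tau : \O_{\res}(V) \to \O_{\res}(V)$, the relation $\tau\mc{J}\tau = -\mc{J}$ (which reflects the fact that $\tau$ exchanges $L$ with $\alpha(L)$) yields $[\tau(g),\mc{J}] = -\tau[g,\mc{J}]\tau$. Together with the unitary invariance of the Hilbert--Schmidt norm, this shows that conjugation by $\tau$ preserves the $\mc{J}$-norm, and hence extends to a bounded linear—therefore smooth—self-map of $\mc{B}_{\res}(V)$; its restriction to $\O_{\res}(V)\subset \mc{B}_{\res}(V)^{\times}$ is thus a smooth map of Banach Lie groups.

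I expect no serious obstacle: the crucial input is the compatibility \cref{lem:kappasigmatau}, which trivialises $\kappa$ against the chart of $\Imp(V)$, after which everything reduces to smoothness statements that are either immediate or follow from a short calculation in the $\mc{J}$-norm. The one point requiring care is the anti-linearity of $\kappa$ on $\mathcal{B}(\mathcal{F})$; this is what produces the $\bar\lambda$ rather than $\lambda$ above, and it is why the proposition asserts smoothness of $\kappa$ as a map of real (not complex) Banach manifolds.
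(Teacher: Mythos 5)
Your proof is correct and is essentially the paper's argument: both hinge on \cref{lem:kappasigmatau} to trivialise $\kappa$ in a chart near the identity, the paper using the exponential chart (identifying the derivative as $(A,\lambda)\mapsto(\tau(A),-\lambda)$ on $\lie{imp}(V)$) where you use the equivalent chart $(g,\lambda)\mapsto\lambda\sigma(g)$ (obtaining $(g,\lambda)\mapsto(\tau(g),\bar\lambda)$). Your explicit treatment of the reduction to an identity neighbourhood and of the smoothness of conjugation by $\tau$ on $\O_{\res}(V)$ fills in steps the paper leaves implicit or asserts elsewhere as clear.
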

\begin{proof}
Let $A \in \lie{o}_{\res}(V)$, then we see that $\tau(A) \defeq  \tau A \tau: V \rightarrow V$ is in $\lie{o}_{\res}(V)$ as well.
We consider the real-linear map $\kappa_{\lie{imp}}: \lie{imp}(V) \rightarrow \lie{imp}(V), (A, \lambda) \mapsto (\tau(A), -\lambda)$, which is easily checked to be an isometry, and is hence bounded. 
        Let $(A,\lambda) \in \lie{imp}(V)$.  We have, using \cref{lem:kappasigmatau},
        \begin{align*}
                \kappa \exp(A,\lambda) &= \Lambda_{\alpha \tau} e^{\tilde{A} + i \lambda \1} \Lambda_{\alpha \tau} = e^{\Lambda_{\alpha \tau} \widetilde{A} \Lambda_{\alpha \tau} - i \lambda \1} = e^{\widetilde{\tau(A)} - i \lambda \1} = \exp(\kappa_{\lie{imp}}(A,\lambda)),
        \end{align*}
        this shows that the diagram
        \begin{equation*}
                \begin{tikzpicture}[scale=1.3]
        \node (A) at (0,1) {$\lie{imp}(V)$};
        \node (B) at (2,1) {$\lie{imp}(V)$};
        \node (C) at (0,0) {$\Imp(V)$};
        \node (D) at (2,0) {$\Imp(V)$};
        \path[->,font=\scriptsize]
        (A) edge node[above]{$\kappa_{\lie{imp}}$} (B)
        (A) edge node[left]{$\exp$} (C)
        (B) edge node[right]{$\exp$}(D)
        (C) edge node[below]{$\kappa$} (D);
        \end{tikzpicture}
        \end{equation*}
        commutes;
    which in turn shows that $\kappa$ is smooth and that $\kappa_{\lie{imp}}$ is its derivative.
\end{proof}

\subsection{Tomita-Takesaki theory for the free fermions}\label{sec:TomitaTakesakiFreeFermions}

\label{sec:tomita}
\label{sec:FockSpaceAsStandardForm}

As mentioned before, the free fermions can be extended to a local M\"obius covariant net. Many of the results of this section are inspired by this fact and the theory of such nets \cite[Section II]{Gabbiani1993}.
In general, Tomita-Takesaki theory takes as input a triple $(\mc{A}, \mc{H}, \Omega)$, where the pair $(\mc{A},\mc{H})$ is a von Neumann algebra and $\Omega \in \mc{H}$ is a cyclic and separating vector for the action of $\mc{A}$ on $\mc{H}$. In our current situation,  we consider the von Neumann algebras $\Cl(V_{\pm})''\subseteq \mc{B}(\mc{F})$ generated by $\Cl(V_{+})$ and $\Cl(V_{-})$, respectively. It is well known that these are type III$_{1}$ factors, \cite[Example 4.3.2]{ST04}, \cite[Lemma 2.9]{Gabbiani1993}, \cite[Section 16]{wassermann98}. This statement can also be obtained as  a consequence of the computation of the operator $\Delta^{1/2}$ performed in \cref{sec:ModularConjugation}. We note that these algebras inherit a $\mathbb{Z}_{2}$-grading from the Clifford algebras, which  coincides with the grading of $\mathcal{B}(\mathcal{F})$.

A consequence of the fact that the decompositions $V = V_{-} \oplus V_{+} = L \oplus \alpha(L)$ are in general position (\cref{lem:generalposition}) is that the vacuum vector $\Omega \in \mc{F}$ is cyclic and separating for $\Cl(V_{-})''$; see, for example, \cite[Proposition 3.4]{BLJ02}. Thus, $(\Cl(V_{-})'', \mathcal{F},\Omega)$ is a valid triple. In (ungraded) Tomita-Takesaki theory one then defines the \emph{Tomita operator} $S$ to be the closure of the operator
\begin{align*}
        \Cl(V_{-})'' \Omega &\rightarrow \Cl(V_{-})'' \Omega\;,\; 
        a \lact \Omega \mapsto a^{*} \lact \Omega.
\end{align*}
One then considers the polar decomposition
\begin{equation*}
        S = J_{\Omega} \Delta^{1/2},
\end{equation*}
where $J_{\Omega}$ is a unitary complex-antilinear self-adjoint operator, called the \emph{modular conjugation}, and $\Delta^{1/2}$ is an unbounded positive self-adjoint operator. The main result of Tomita-Takesaki theory for the  triple $(\Cl(V_{-})'',\mc{F},\Omega)$
 is then the following (see, for example, \cite[Chapter IX]{Ta03}).
\begin{theorem}
\label{th:maintota}
        The assignment $a \mapsto J_{\Omega} a^{*} J_{\Omega}$ is an algebra anti-isomorphism from $\Cl(V_{-})''$ to $(\Cl(V_{-})'')'$.
\end{theorem}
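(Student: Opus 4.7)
The statement is the conclusion of the abstract Tomita-Takesaki theorem for the triple $(\mc{M},\mc{F},\Omega)$ with $\mc{M} \defeq \Cl(V_{-})''$, so the plan is to verify the hypotheses of that general theorem and then sketch the main steps of its standard proof (as given in \cite[Chapter IX]{Ta03}). The hypotheses, namely that $\Omega$ is cyclic and separating for $\mc{M}$, have already been recorded in the paragraph preceding the statement as a consequence of \cref{lem:generalposition}. Since a vector is separating for $\mc{M}$ if and only if it is cyclic for $\mc{M}'$, the same $\Omega$ is also cyclic and separating for $\mc{M}'$. Thus the antilinear operators $S_{0}: a\Omega \mapsto a^{*}\Omega$ on $\mc{M}\Omega$ and $F_{0}: b\Omega \mapsto b^{*}\Omega$ on $\mc{M}'\Omega$ are densely defined, and a routine computation using $\langle a\Omega,b\Omega\rangle = \langle b^{*}\Omega,a^{*}\Omega\rangle$ gives $S_{0}\subseteq F_{0}^{\ast}$, so both operators are closable; we denote the closures by $S$ and $F$.

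Next, I would carry out the polar decomposition $S = J_{\Omega}\Delta^{1/2}$ with $\Delta \defeq S^{\ast}S = F\bar{S}$ a positive self-adjoint operator and $J_{\Omega}$ an antiunitary partial isometry. Using the obvious involutivity $S_{0}^{2} \subseteq \1$, one shows that $J_{\Omega}$ has full range, is an involution ($J_{\Omega}^{2} = \1$), is self-adjoint ($J_{\Omega}^{\ast} = J_{\Omega}$), and intertwines $\Delta$ with its inverse via $J_{\Omega}\Delta J_{\Omega} = \Delta^{-1}$. These relations also yield the companion polar decomposition $F = J_{\Omega}\Delta^{-1/2}$.

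The crux of the argument, and the step where the real work happens, is to establish the two modular relations
\begin{equation*}
\Delta^{it}\mc{M}\Delta^{-it} = \mc{M} \qquad \text{and} \qquad J_{\Omega}\mc{M}J_{\Omega} = \mc{M}'.
\end{equation*}
This is the well-known hard part of Tomita-Takesaki theory. The cleanest modern route is to show that the state $\omega(a) \defeq \langle a\Omega,\Omega\rangle$ satisfies a $(-1)$-KMS condition with respect to the one-parameter family $\sigma_{t}(a) \defeq \Delta^{it}a\Delta^{-it}$: for $a,b \in \mc{M}$ one proves, via analytic continuation of matrix coefficients $t \mapsto \langle \Delta^{it}a\Omega, b^{*}\Omega\rangle$ onto the strip $0 \le \operatorname{Im}(z) \le 1$ and clever use of boundedness on the closure, that $\sigma_{t}$ preserves $\mc{M}$ and that the commutant relation $J_{\Omega}\mc{M}J_{\Omega} = \mc{M}'$ holds. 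I would not attempt to reprove this in our setting; I would invoke it verbatim from \cite[Thm.~IX.1.2]{Ta03}.

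Granting the commutant relation, the proof of the theorem is then a short algebraic verification. The map $\phi: \mc{M} \to \mc{B}(\mc{F})$ defined by $\phi(a) \defeq J_{\Omega}a^{\ast}J_{\Omega}$ lands in $\mc{M}'$ by $J_{\Omega}\mc{M}J_{\Omega} = \mc{M}'$; it is complex linear because it is a composition of two antilinear operations (the star and conjugation by the antiunitary $J_{\Omega}$); it reverses products because
\begin{equation*}
\phi(a)\phi(b) = J_{\Omega}a^{\ast}J_{\Omega}J_{\Omega}b^{\ast}J_{\Omega} = J_{\Omega}a^{\ast}b^{\ast}J_{\Omega} = J_{\Omega}(ba)^{\ast}J_{\Omega} = \phi(ba);
\end{equation*}
and it is involutive (hence bijective onto $\mc{M}'$) because $J_{\Omega}^{2} = \1$ and $J_{\Omega}^{\ast} = J_{\Omega}$ imply $\phi(\phi(a)) = J_{\Omega}(J_{\Omega}a^{\ast}J_{\Omega})^{\ast}J_{\Omega} = a$. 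The main obstacle throughout is thus entirely concentrated in the modular relation $J_{\Omega}\mc{M}J_{\Omega} = \mc{M}'$, which is the deep content of Tomita-Takesaki theory and not something I would reprove here.
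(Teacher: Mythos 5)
Your proposal is correct and matches the paper's treatment: the paper does not prove this statement but, after noting (via \cref{lem:generalposition}) that $\Omega$ is cyclic and separating for $\Cl(V_{-})''$, simply cites it as the main theorem of Tomita--Takesaki theory from \cite[Chapter IX]{Ta03}. Your additional sketch of the general theorem's proof and the final algebraic verification are accurate but not part of the paper's argument, which rests entirely on the citation.
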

In our present case, the modular conjugation can be computed explicitly.
\begin{proposition}
        \label{thm:modular}
        The modular conjugation $J_{\Omega}$ for the triple $(\Cl(V_{-})'',\mathcal{F},\Omega)$ is the operator $J = \operatorname{k}^{-1} \Lambda_{\alpha \tau}$.
\end{proposition}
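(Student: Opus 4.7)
The plan is to identify $J:=\operatorname{k}^{-1}\Lambda_{\alpha\tau}$ as the anti-unitary factor in the polar decomposition $S=J_{\Omega}\Delta^{1/2}$ of the Tomita operator. First I would dispatch the elementary properties of $J$. Because $\alpha$ is anti-unitary, $\tau$ is unitary, both are involutions, and $[\alpha,\tau]=0$, the composition $\alpha\tau$ is an anti-unitary involution on $V$ that preserves $L$, so its extension $\Lambda_{\alpha\tau}$ is an anti-unitary involution on $\mc{F}$ which is grading-preserving and sends $\Omega$ to $\Omega$ (the latter because the exterior-algebra extension preserves the unit). The Klein transformation $\operatorname{k}$ is unitary, fixes $\Omega$, and a check on homogeneous vectors gives $\Lambda_{\alpha\tau}\operatorname{k}^{-1}=\operatorname{k}\Lambda_{\alpha\tau}$ (the $-\mathrm{i}$ coming from $\operatorname{k}^{-1}$ on an odd vector is exactly converted to $\mathrm{i}$ by the anti-linearity of $\Lambda_{\alpha\tau}$). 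Combining these observations yields $J\Omega=\Omega$ and $J^{2}=\operatorname{k}^{-1}\operatorname{k}\Lambda_{\alpha\tau}^{2}=\1$, so $J$ is an anti-unitary involution fixing the vacuum.

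The main step is to produce a positive self-adjoint $\Delta^{1/2}$ on $\mc{F}$ with $S=J\Delta^{1/2}$. Guided by the Bisognano--Wichmann picture for M\"obius-covariant nets on the circle, I would take $\{\Delta^{it}\}$ to be the one-parameter unitary group on $\mc{F}$ obtained by lifting through $\Imp(V)$ the action on $V$ of the subgroup $\{\phi_{t}\}\subset\mathrm{PSU}(1,1)$ of M\"obius transformations preserving $I_{-}$ setwise and fixing its endpoints $\pm 1$, and then define $\Delta^{1/2}$ by the Borel functional calculus. By density of $\Cl(V_{-})\Omega$ in the domain of $S$ and closability, it suffices to check $S(a\Omega)=J\Delta^{1/2}(a\Omega)$ on vectors $a=\iota(f_{1})\cdots\iota(f_{n})$ with $f_{i}\in V_{-}$. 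Using the Clifford relations and the multiplicativity of $\Lambda_{\alpha\tau}$ on exterior products, this reduces to the one-particle identity comparing the analytically continued action of $\phi_{-\mathrm{i}/2}$ composed with $\alpha\tau$ against the map $P_{L}f\mapsto P_{L}(\alpha f)$ on $V_{-}$. Once this identity is in place, uniqueness of polar decomposition forces $J=J_{\Omega}$.

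The step I expect to be the main obstacle is this explicit one-particle computation: it requires carrying out the analytic continuation of the dilation subgroup to imaginary time and matching the result against the anti-holomorphic extension defining $L$. This is also where the type III$_{1}$ character of $\Cl(V_{-})''$ becomes manifest, as the resulting $\Delta$ has purely continuous spectrum equal to $(0,\infty)$. By contrast, the passage from the one-particle case to all of $\Cl(V_{-})''\Omega$ is a standard multiplicativity plus density argument using the CAR relations, and hence routine.
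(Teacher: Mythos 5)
Your strategy is the Bisognano--Wichmann route, which is genuinely different from the proof in the paper: there, the Tomita operator is computed directly, by exhibiting a one-particle operator $s$ (built from the projections $P_L$, $P_\pm$ and the operators $T_{\pm}=(P_L-P_\pm)^2$) with $S=\operatorname{k}^{-1}\Lambda_{i\alpha s}$, and then finding the polar decomposition $\alpha s=u\delta^{1/2}$ explicitly by transporting everything to $L^2(\R)$ via the Cayley transform and diagonalizing in Fourier space, which yields $u|_L=-i\alpha\tau$ and hence $J=\operatorname{k}^{-1}\Lambda_{\alpha\tau}$. The paper explicitly acknowledges your route as an alternative (extend the free fermions to a M\"obius covariant net and invoke Bisognano--Wichmann), so the idea is sound in principle. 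Your preliminary observations about $J$ ($J^2=\1$, $J\Omega=\Omega$, the commutation $\Lambda_{\alpha\tau}\operatorname{k}^{-1}=\operatorname{k}\Lambda_{\alpha\tau}$) are all correct.

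However, as written the proposal has a genuine gap: the two steps that carry all the content are deferred rather than carried out. First, you presuppose that the weight-$\tfrac12$ action of the dilation subgroup fixing $\pm1$ lands in $\O_{\res}(V)$, lifts to a strongly continuous one-parameter group in $\Imp(V)$ fixing $\Omega$, and that its analytic continuation to $t=-i/2$ composed with your candidate $J$ reproduces $S$ on a core; this last identity is exactly the Bisognano--Wichmann property, and proving it (rather than citing it) is comparable in difficulty to the paper's entire computation --- you yourself flag it as the unresolved ``main obstacle.'' Second, the passage from the one-particle identity to all of $\Cl(V_-)''\Omega$ is not as routine as claimed: $S(f_1\cdots f_n\lact\Omega)=\alpha(f_n)\cdots\alpha(f_1)\lact\Omega$ reverses the order of the factors, $f_1\cdots f_n\lact\Omega$ is not a single wedge product but a sum involving contractions, and the Klein factor $\operatorname{k}^{-1}$ produces degree-dependent phases; the paper needs a careful induction on the Clifford filtration (its Lemma~\ref{lem:SasWedge}) precisely to control these signs. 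To turn your outline into a proof you would need either to carry out the imaginary-time continuation explicitly, or to import the covariance and Bisognano--Wichmann statements from the conformal-net literature with a verification that they apply in the paper's specific conventions (odd spinors, the anti-holomorphic Lagrangian, the chosen Klein transformation).
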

This result will be used below and then again in \cref{sec:fusionofimplementers}.
Direct proofs can be found in, for example, \cite[Section 15]{wassermann98}, \cite{Hen14}, and \cite{janssens13}.
Because our conventions are slightly different from the references cited, we have adapted the proof in \cite{janssens13} to a proof of \cref{thm:modular} in \cref{sec:ModularConjugation}.
        Another way to prove \cref{thm:modular} is to extend the construction of the free fermions to a local M\"obius covariant net, and then apply the Bisognano-Wichmann theorem, see \cite[Section 3.2]{Bischoff2012}.

The following result goes under the name of \quot{Twisted Haag duality}, \cite{BLJ02}.

\begin{proposition}\label{thm:HaagDuality}
        The graded commutant of $\Cl(V_{-})''$ in $\mathcal{B}(\mathcal{F})$ is $\Cl(V_{+})''$.
\end{proposition}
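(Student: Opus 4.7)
My plan is to chain together the three tools we have developed for the von Neumann algebra $\Cl(V_{-})''$: the Klein transformation (\cref{lem:KleinTransform}) to convert the graded commutant into an ordinary commutant, Tomita–Takesaki theory (\cref{th:maintota}) to re-express the ordinary commutant via the modular conjugation, and the explicit formula for the modular conjugation from \cref{thm:modular}. The net effect should be that the graded commutant is realized as conjugation by $\Lambda_{\alpha\tau}$, which by \cref{lem:kJImplementsKappa} implements the antilinear automorphism $\kappa$, and $\kappa$ manifestly exchanges $\Cl(V_{-})$ with $\Cl(V_{+})$.

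In more detail, the first step is to write
\begin{equation*}
        \Cl(V_{-})^{\backprime} \;=\; \operatorname{k}\,\Cl(V_{-})'\,\operatorname{k}^{-1} \;=\; \operatorname{k}\,J\,\Cl(V_{-})''\,J\,\operatorname{k}^{-1},
\end{equation*}
using \cref{lem:KleinTransform} and then \cref{th:maintota}. The second step is to simplify $\operatorname{k} J$ and $J\operatorname{k}^{-1}$ using $J=\operatorname{k}^{-1}\Lambda_{\alpha\tau}$ from \cref{thm:modular}. Since $\alpha\tau$ preserves $L$, the operator $\Lambda_{\alpha\tau}$ preserves the $\mathbb{Z}_{2}$-grading on $\mc{F}$, and a short sign-check (using that $\operatorname{k}$ acts as $1$ on even vectors and as $i$ on odd vectors while $\Lambda_{\alpha\tau}$ is antilinear) yields the relation $\operatorname{k}\Lambda_{\alpha\tau}=\Lambda_{\alpha\tau}\operatorname{k}^{-1}$, from which both $\operatorname{k} J=\Lambda_{\alpha\tau}$ and $J\operatorname{k}^{-1}=\Lambda_{\alpha\tau}$ follow. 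Hence
\begin{equation*}
        \Cl(V_{-})^{\backprime} \;=\; \Lambda_{\alpha\tau}\,\Cl(V_{-})''\,\Lambda_{\alpha\tau}.
\end{equation*}

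The third step is to identify the right-hand side with $\Cl(V_{+})''$. By \cref{lem:kJImplementsKappa}, conjugation by the anti-unitary operator $\Lambda_{\alpha\tau}$ is precisely $\kappa$ on $\mc{B}(\mc{F})$, and since $\alpha\tau$ exchanges $V_{-}$ with $V_{+}$, the $*$-automorphism $\kappa$ restricts to an isomorphism $\Cl(V_{-})\to \Cl(V_{+})$. A standard argument shows that for any anti-unitary $U$ and any subset $S\subseteq \mc{B}(\mc{F})$ one has $(USU^{-1})''=US''U^{-1}$ (the commutant is preserved because the defining condition $[b,a]=0$ is preserved under anti-unitary conjugation), so applying this with $U=\Lambda_{\alpha\tau}$ and $S=\Cl(V_{-})$ gives $\Lambda_{\alpha\tau}\Cl(V_{-})''\Lambda_{\alpha\tau}=\Cl(V_{+})''$, completing the proof.

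The only piece requiring genuine care is the sign-tracking needed for the commutation relation between $\operatorname{k}$ and $\Lambda_{\alpha\tau}$, but this is elementary once one remembers that $\Lambda_{\alpha\tau}$ is antilinear and grading-preserving. Everything else is a direct application of results already established in the excerpt.
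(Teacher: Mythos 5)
Your proof is correct and follows essentially the same route as the paper's: apply the Klein transformation to turn the graded commutant into the ordinary commutant, use Tomita–Takesaki together with the explicit formula $J=\operatorname{k}^{-1}\Lambda_{\alpha\tau}$, and identify conjugation by $\Lambda_{\alpha\tau}$ with the automorphism $\kappa=\theta_{\alpha\tau}$ carrying $\Cl(V_{-})''$ to $\Cl(V_{+})''$. The only difference is that you spell out the sign-check $\operatorname{k}\Lambda_{\alpha\tau}=\Lambda_{\alpha\tau}\operatorname{k}^{-1}$ and the commutant-preservation under anti-unitary conjugation, which the paper leaves implicit.
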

\begin{proof}
        This is proven in  \cite{Hen14}, as follows: we use that $\operatorname{k} (\Cl(V_{-})'')' \operatorname{k}^{-1}$ is the graded commutant of $\Cl(V_{-})''$, see \cref{lem:KleinTransform}. From \cref{th:maintota,thm:modular} we have  $(\Cl(V_{-})'')' = J \Cl(V_{-})'' J$. Substituting this one obtains that 
        \begin{equation*}
                \operatorname{k} J \Cl(V_{-})'' J \operatorname{k}^{-1} = \Lambda_{\alpha \tau} \Cl(V_{-})'' \Lambda_{\alpha \tau} = \theta_{\alpha \tau}(\Cl(V_{-})'') =  \Cl(V_{+})''
        \end{equation*}
        is the graded commutant of $\Cl(V_{-})''$.
        Alternatively, \cref{thm:HaagDuality} can be proved using \cite[Theorem 5.8]{BLJ02};
to apply that theorem one needs \cref{lem:generalposition}.
\end{proof}

Another important result about the representation of the von Neumann algebra $\Cl(V_{-})''$ on the Fock space $\mathcal{F}$ is that $\mathcal{F}$ is a so-called standard form.
First we recall the definition of a standard form, see \cite[Chapter IX, Definition 1.13]{Ta03}) or \cite{Ha75}.
\begin{definition}\label{def:GeneralStandardForm}
        A \emph{standard form} of a von Neumann algebra $\mc{A}$ is a quadruple $(\mc{A}, H, J, P )$, where $H$ is an  $\mc{A}$-module (i.e., a Hilbert space with a $\ast$-homomorphism $\mathcal{A}\to \mathcal{B}(H)$), $J$ is an anti-linear isometry with $J^{2} = 1$, and $P$ is a closed self-dual cone in $H$, subject to the following conditions:
        \begin{enumerate}
                \item $J\mc{A}J = \mc{A}'$
                \item $JaJ = a^{*}$, if $a \in \mc{A} \cap \mc{A}'$
                \item $J v = v$ for all $v \in P$
                \item $aJaJP \subseteq P$ for all $a \in \mc{A}$
        \end{enumerate}
\end{definition}

The following result, proved in \cite[Theorem 2.3]{Ha75}, tells us that standard forms are unique up to unique isomorphism.
\begin{theorem}\label{thm:StandardFormIsUnique}
        Suppose that $(\mc{A}_{1}, H_{1}, J_{1}, P_{1} )$ and $(\mc{A}_{2}, H_{2}, J_{2}, P_{2} )$ are standard forms. Suppose furthermore that $\pi$ is an isomorphism of $\mc{A}_{1}$ onto $\mc{A}_{2}$, then there exists a unique unitary $u$ from $H_{1}$ onto $H_{2}$ such that
        \begin{enumerate}
                \item $\pi(a) = uau^{*}$, for all $a \in \mc{A}_{1}$
                \item $J_{2} = u J_{1}u^{*}$
                \item $P_{2} = u P_{1}$
        \end{enumerate}
\end{theorem}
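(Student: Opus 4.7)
The plan is to exploit the fact that in a standard form $(\mathcal{A},H,J,P)$, the self-dual cone $P$ is in bijective correspondence with the normal positive functionals on $\mathcal{A}$ via $\xi \mapsto \omega_{\xi}$ where $\omega_{\xi}(a)\defeq \langle a\xi,\xi\rangle$. This correspondence is the crucial rigidity that makes uniqueness possible: once we know how $u$ acts on $P_1$, its linear extension to $H_1$ is forced, and the properties (1)-(3) become verifications.

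\textbf{Existence.} First I would define $u$ on $P_1$. Given $\xi_1\in P_1$, let $\omega = \omega_{\xi_1}$ be the associated normal positive functional on $\mathcal{A}_1$. Then $\omega\circ \pi^{-1}$ is a normal positive functional on $\mathcal{A}_2$, so by the standard-form property there is a unique $\xi_2\in P_2$ with $\omega_{\xi_2} = \omega\circ\pi^{-1}$. Set $u\xi_1\defeq \xi_2$. The key technical step is to show that this map $u:P_1\to P_2$ is isometric on differences, i.e.
\begin{equation*}
 \|\xi_1 - \eta_1\|_{H_1} = \|u\xi_1 - u\eta_1\|_{H_2}\qquad \text{for all }\xi_1,\eta_1\in P_1.
\end{equation*}
For this one combines the parallelogram-type identity in a Hilbert space with the Powers--Str{\o}mer inequality, together with the characterisation of $P$ as a self-dual cone, to express $\|\xi - \eta\|^2$ intrinsically in terms of the functionals $\omega_{\xi}$, $\omega_{\eta}$ and the algebra structure (which $\pi$ preserves). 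Once isometric, $u$ extends by continuity and real-linearity to the real linear span $P_1 - P_1$, and then complex-linearly to all of $H_1$ (using that $P - P$ spans a dense real subspace, and that $H = (P-P) \oplus i(P-P)$ by the general theory of self-dual cones).

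\textbf{Verifying the intertwining properties.} Property (3), $uP_1 = P_2$, holds by construction. Property (1): for $a\in\mathcal{A}_1$ and $\xi_1\in P_1$, observe that $aJ_1aJ_1\xi_1\in P_1$ by property (4) of the standard form, and a direct calculation of its associated functional shows $u(aJ_1 aJ_1\xi_1) = \pi(a)J_2\pi(a)J_2\, u\xi_1$; polarising in $a$ and using density of $P_1-P_1$ in $H_1$ yields $u a = \pi(a) u$ together with $uJ_1 = J_2 u$, giving both (1) and (2) simultaneously.

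\textbf{Uniqueness.} Suppose $u,u'$ both satisfy (1)-(3). Then $w\defeq u'u^{*}$ is a unitary on $H_2$ with $w\in\mathcal{A}_2'$ (by property (1)), $wJ_2=J_2 w$ (by (2)), and $wP_2 = P_2$ (by (3)). Every vector in $P_2$ is fixed by $J_2$, so $w$ restricted to $P_2$ is a positive operator in $\mathcal{A}_2'$ preserving the cone $P_2$, and the self-duality of $P_2$ combined with $w^{*}w=\1$ forces $w|_{P_2}=\1$; since $P_2 - P_2$ spans a dense subspace, $w=\1$, so $u=u'$.

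The main obstacle will be the isometry statement for $u:P_1\to P_2$: the map is defined functional-by-functional, and proving it preserves the Hilbert-space distance requires genuinely using the self-dual cone structure (not merely the algebra isomorphism), typically via the Powers--Str{\o}mer inequality or an explicit integral formula for $\|\xi-\eta\|^2$ in terms of the spectral scale of $\omega_{\xi}-\omega_{\eta}$. This is the content of Haagerup's Theorem 2.3 in \cite{Ha75}, which we would cite rather than reproduce in full.
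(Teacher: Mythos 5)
The paper does not prove this statement at all: it is quoted verbatim from Haagerup and justified by the citation \cite[Theorem 2.3]{Ha75}, so there is no internal argument to compare yours against. Your proposal is a reasonable reconstruction of the standard proof strategy (define $u$ on the cone by matching normal positive functionals through $\pi$, extend, verify the intertwining relations, and get uniqueness from rigidity of the cone), and you correctly identify that the technical core would in any case be delegated to \cite{Ha75}. Two steps in your sketch, however, are looser than you present them.

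First, the isometry of $u$ on differences of cone elements does not follow from the Powers--Str{\o}mer inequality together with the parallelogram law: those give the two-sided estimate $\|\xi-\eta\|^{2}\leqslant\|\omega_{\xi}-\omega_{\eta}\|\leqslant\|\xi-\eta\|\,\|\xi+\eta\|$, which shows your map is a homeomorphism on the cone but not that it preserves distances. Haagerup's actual route (in the $\sigma$-finite case) is different: pick a cyclic and separating $\xi_{1}\in P_{1}$, let $\xi_{2}\in P_{2}$ represent $\omega_{\xi_{1}}\circ\pi^{-1}$, define $u$ by $a\xi_{1}\mapsto\pi(a)\xi_{2}$ (GNS uniqueness makes this a unitary intertwiner), and then use the lemma that for a cyclic separating vector $\xi$ in the cone one has $J_{\xi}=J$ and $P_{\xi}=P$ --- which is exactly the content of \cref{lem:ConeComparison} that this paper quotes from Araki --- to deduce (2) and (3); the general case is then assembled from this. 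Second, in the uniqueness step, self-duality of $P_{2}$ plus unitarity does \emph{not} force $w|_{P_{2}}=\1$: a unitary can permute a self-dual cone nontrivially (e.g.\ a coordinate flip on $\C^{2}$ with the positive-orthant cone). The hypothesis $w\in\mc{A}_{2}'$ must actually be used: for $\xi\in P_{2}$ one has $\omega_{w\xi}=\omega_{\xi}$ because $w$ commutes with $\mc{A}_{2}$, and then the \emph{uniqueness of representing vectors in the cone} gives $w\xi=\xi$, whence $w=\1$ on the dense span of $P_{2}$. With these two repairs your outline matches the cited proof; as written, both steps have genuine holes.
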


If $H$ is a left $\mc{A}$-module with a cyclic and separating vector $\xi \in H$, then one can equip $H$ with the structure of standard form of $\mc{A}$ as follows.
Let $S_{\xi}: a \lact \xi \mapsto a^{*} \lact \xi$ be the Tomita operator and let $S_{\xi} = J_{\xi} \Delta_{\xi}^{1/2}$ be its polar decomposition.
Let $P_{\xi}$ be the closed self-dual cone in $H$ given by the closure of $\{J_{\xi}a J_{\xi} a \lact \xi \in H \mid a \in \mc{A} \}$. It is then a standard result that the quadruple $(\mc{A}, H_{\xi},J_{\xi} ,P_{\xi} )$ is a standard form of $\mc{A}$, we give the appropriate references here. That the modular conjugation $J_{\xi}$ satisfies (1) of \cref{def:GeneralStandardForm} is  the main result of Tomita-Takesaki theory, see \cref{th:maintota}. That (2) of \cref{def:GeneralStandardForm} holds can be found in \cite[Lemma 3]{Ara74}. Finally, that (3) and (4) of \cref{def:GeneralStandardForm} hold can be found in \cite[Theorem 4]{Ara74}.
Applying this to the free fermions on the circle and using that $\Omega \in \mc{F}$ is cyclic and separating for $\Cl(V_{-})''$ we obtain the following result, which we shall require in \cref{sec:fusionofimplementers}. 

\begin{proposition}
\label{lem:FockRepIsStandard}
The quadruple $( \Cl(V_{-})'', \mc{F}, J_{\Omega}, P_{\Omega} )$ is a standard form of $\Cl(V_{-})''$.
\end{proposition}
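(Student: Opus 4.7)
The plan is to apply the general construction that produces a standard form from any von Neumann algebra acting on a Hilbert space with a cyclic and separating vector, and then simply identify the ingredients in our case.

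First I would verify the hypothesis needed to run the general construction, namely that $\Omega \in \mc{F}$ is cyclic and separating for $\Cl(V_{-})''$. This is where \cref{lem:generalposition} enters: the fact that the decompositions $V = V_{-}\oplus V_{+}$ and $V = L \oplus \alpha(L)$ are in general position implies, by \cite[Proposition 3.4]{BLJ02}, exactly this cyclicity and separability. This step is the only substantive input specific to the free-fermion setting; everything afterwards is formal.

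Next, having a cyclic and separating vector, the Tomita operator $S_{\Omega}:a\lact \Omega \mapsto a^{*}\lact \Omega$ is well-defined and closable, its closure admits a polar decomposition $S_{\Omega} = J_{\Omega}\Delta_{\Omega}^{1/2}$, and one defines the cone
\begin{equation*}
P_{\Omega} \defeq  \overline{\{\, aJ_{\Omega}aJ_{\Omega}\Omega \mid a \in \Cl(V_{-})'' \,\}} \subseteq \mc{F}.
\end{equation*}
It then remains to check the four axioms of \cref{def:GeneralStandardForm}. Condition (1), $J_{\Omega}\Cl(V_{-})''J_{\Omega} = (\Cl(V_{-})'')'$, is precisely the content of \cref{th:maintota} (the main theorem of Tomita–Takesaki theory), since the anti-linear map $a\mapsto J_{\Omega}a^{*}J_{\Omega}$ is an algebra anti-isomorphism onto the commutant, which is equivalent to $J_{\Omega}\Cl(V_{-})''J_{\Omega}=(\Cl(V_{-})'')'$. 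Condition (2), that $J_{\Omega}aJ_{\Omega}=a^{*}$ on the center $\Cl(V_{-})''\cap (\Cl(V_{-})'')'$, is \cite[Lemma 3]{Ara74}. Conditions (3) and (4), i.e.\ $J_{\Omega}v=v$ for $v\in P_{\Omega}$ and $aJ_{\Omega}aJ_{\Omega}P_{\Omega}\subseteq P_{\Omega}$, are both established in \cite[Theorem 4]{Ara74}; (3) is immediate from the defining spanning set of $P_{\Omega}$ together with $J_{\Omega}^{2}=\1$ and the fact that $J_{\Omega}$ commutes with $J_{\Omega}aJ_{\Omega}\cdot a$ on the dense subset, while (4) is a direct closure argument from the definition of $P_{\Omega}$.

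I expect no genuine obstacle: the statement is essentially the specialization of a standard general result to the triple $(\Cl(V_{-})'',\mc{F},\Omega)$. The only nontrivial verification is the cyclic-and-separating property of $\Omega$, which has already been reduced in the excerpt to the general-position lemma and an external reference. Therefore the proof amounts to assembling these ingredients and invoking the cited theorems.
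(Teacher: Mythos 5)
Your proposal is correct and follows essentially the same route as the paper: establish that $\Omega$ is cyclic and separating for $\Cl(V_{-})''$ via the general-position lemma and \cite[Proposition 3.4]{BLJ02}, then invoke the general construction of a standard form from a cyclic and separating vector, citing Tomita--Takesaki theory for condition (1) and \cite[Lemma 3, Theorem 4]{Ara74} for conditions (2)--(4). The only difference is notational (your cone is written $aJ_{\Omega}aJ_{\Omega}\Omega$ versus the paper's $J_{\Omega}aJ_{\Omega}a\lact\Omega$, which agree since $J_{\Omega}\Omega=\Omega$ and by the commutation granted by Tomita--Takesaki theory).
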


\subsection{Restriction to the even part}\label{sec:RestrictionToEven}

As mentioned before, the Fock space $\mc{F}$ is a $\mathbb{Z}_{2}$-graded Hilbert space. The von Neumann algebra $\Cl(V_{-})''$ is $\mathbb{Z}_{2}$-graded as well. Even though the Tomita-Takesaki theory of the triple $(\Cl(V_{-})'', \mc{F}, \Omega)$ can to some extent be adapted to the $\mathbb{Z}_{2}$-graded case, some results are only available in the ungraded case. For this reason, we will restrict our considerations to the even parts. In this section we show in which way the results of \cref{sec:tomita}  survive this process.

We write $\mc{F}_{0}$ for the even part of $\mc{F}$. Then, if an algebra $A$ acts on $\mc{F}$, we write $A_{0}$ for the subalgebra consisting of those operators that preserve $\mc{F}_0$. We may now consider the commutant of $\Cl(V_{\pm})_{0}$ in $\mc{B}(\mc{F}_{0})$, and similarly we could consider those elements of $\Cl(V_{\pm})'\subset \mc{B}(\mc{F})$ that preserve $\mc{F}_{0}$.
It is elementary to  show that both procedures have the same result, i.e.,  $\left(\Cl(V_{\pm})_{0}\right)' = \left(\Cl(V_{\pm})'\right)_{0}$. Thus, in the following, we simply write $\Cl(V_{\pm})'_{0}$. 

We have Haag duality for these even algebras (no longer twisted because the commutant coincides with the graded commutant).
\begin{proposition}\label{lem:EvenHaagDuality}
        The commutant of $\Cl(V_{-})_{0}''$ is $\Cl(V_{+})_{0}''$.
\end{proposition}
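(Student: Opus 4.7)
The plan is to obtain \cref{lem:EvenHaagDuality} as a direct consequence of twisted Haag duality (\cref{thm:HaagDuality}) by restricting everything to the even part of the Fock space. The three ingredients I will combine are: the identity $(A_{0})' = (A')_{0}$ established just above the lemma, the Klein-transform relation $(\Cl(V_{-})'')^{\backprime} = \operatorname{k}\,(\Cl(V_{-})'')'\,\operatorname{k}^{-1}$ from \cref{lem:KleinTransform}, and twisted Haag duality itself.

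First I would apply the identity $(A_{0})' = (A')_{0}$ with $A \defeq \Cl(V_{-})''$. The same elementary block-matrix verification that established this identity for $\Cl(V_{\pm})$ applies verbatim to any $\ast$-subalgebra of $\mathcal{B}(\mathcal{F})$, yielding
\begin{equation*}
  \bigl(\Cl(V_{-})_{0}''\bigr)' \;=\; \bigl((\Cl(V_{-})'')'\bigr)_{0},
\end{equation*}
so that the commutant in $\mathcal{B}(\mathcal{F}_{0})$ on the left is identified with the even part of the ordinary commutant of $\Cl(V_{-})''$ in $\mathcal{B}(\mathcal{F})$. Next I would observe that on even operators the ordinary commutant and the graded commutant coincide. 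More concretely, \cref{lem:KleinTransform} gives $(\Cl(V_{-})'')^{\backprime} = \operatorname{k}\,(\Cl(V_{-})'')'\,\operatorname{k}^{-1}$, and since $\operatorname{k}$ acts as a scalar on each of $\mathcal{F}_{0}$ and $\mathcal{F}_{1}$, conjugation by $\operatorname{k}$ preserves the block-diagonal (even) part of any bounded operator; hence
\begin{equation*}
  \bigl((\Cl(V_{-})'')^{\backprime}\bigr)_{0} \;=\; \bigl((\Cl(V_{-})'')'\bigr)_{0}.
\end{equation*}
Twisted Haag duality then gives $(\Cl(V_{-})'')^{\backprime} = \Cl(V_{+})''$, so taking even parts yields $\bigl((\Cl(V_{-})'')^{\backprime}\bigr)_{0} = \Cl(V_{+})_{0}''$. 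Chaining the three equalities delivers the claim.

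I do not anticipate any substantive obstacle: the whole argument is a formal manipulation of already-established facts. The only point deserving care is the identity $(A_{0})' = (A')_{0}$ for $A = \Cl(V_{-})''$, which requires checking that a bounded operator on $\mathcal{F}_{0}$ commuting with $\Cl(V_{-})_{0}''$ is the restriction of an even element of $(\Cl(V_{-})'')'$ on the full Fock space. This is the routine block-matrix verification sketched in the paragraph preceding the lemma, and its content is that an operator on $\mathcal{F}_{0}$ commuting with $A_{0}$ can be extended by zero on $\mathcal{F}_{1}$ to an element of $A'$.
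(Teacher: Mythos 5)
Your chain of equalities
\begin{equation*}
  \bigl(\Cl(V_{-})_{0}''\bigr)'=\bigl((\Cl(V_{-})'')'\bigr)_{0}=\bigl((\Cl(V_{-})'')^{\backprime}\bigr)_{0}=\bigl(\Cl(V_{+})''\bigr)_{0}=\Cl(V_{+})_{0}''
\end{equation*}
is exactly the argument the paper has in mind (its proof is the one-line ``follows directly from twisted Haag duality'', resting on the identity $(A_0)'=(A')_0$ stated just before the proposition), and your observation that conjugation by $\operatorname{k}$ fixes the diagonal blocks, so that ordinary and graded commutants have the same even part, is a correct way to pass from the twisted to the untwisted statement. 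So the proof goes through and is essentially the paper's.

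One remark in your last paragraph is wrong, though, and you should not let it stand: the inclusion $(A_{0})'\subseteq (A')_{0}$ is \emph{not} obtained by extending $b_{0}$ by zero on $\mathcal{F}_{1}$. The extension $b_{0}\oplus 0$ must commute with the odd elements of $A=\Cl(V_{-})''$; writing an odd $a$ in block form $\bigl(\begin{smallmatrix}0 & a_{01}\\ a_{10}&0\end{smallmatrix}\bigr)$, the commutator condition forces $a_{10}b_{0}=0=b_{0}a_{01}$. Taking for $a$ an odd self-adjoint unitary $v\in V_{-}\subset\Cl(V_{-})''$ (any $v$ with $\alpha(v)=v$ and $\|v\|=1$, so $v^{2}=\1$), whose off-diagonal blocks are invertible, this forces $b_{0}=0$. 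The correct extension is $b\defeq b_{0}\oplus vb_{0}v$, and this is also why your claim that the identity holds ``verbatim for any $\ast$-subalgebra'' is an overreach: the nontrivial inclusion uses the existence of such odd unitaries in the algebra. Since the paper asserts the identity $(A_0)'=(A')_0$ in the text and you may cite it, this does not invalidate your proof of the proposition, but the justification you offer for it should be replaced.
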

\begin{proof}
        This follows directly from \cref{thm:HaagDuality}.
\end{proof}

\begin{lemma}
        The vector $\Omega \in \mc{F}_{0}$ is cyclic and separating for $\Cl(V_{-})_{0}''$.
\end{lemma}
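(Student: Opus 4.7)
The plan is as follows. The separating property is immediate from the inclusion $\Cl(V_{-})_0'' \subseteq \Cl(V_{-})''$: we already know from \cref{sec:tomita} that $\Omega$ is separating for the larger algebra, so any $a \in \Cl(V_{-})_0''$ with $a \lact \Omega = 0$ must vanish.

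For cyclicity we exploit the $\mathbb{Z}_2$-grading. The grading automorphism $\gamma$ on $\Cl(V_{-})''$ is implemented by conjugation with the unitary $\Gamma \in \mathcal{B}(\mathcal{F})$ that is $+1$ on $\mathcal{F}_0$ and $-1$ on its orthogonal complement; hence $\gamma$ is normal, and every $a \in \Cl(V_{-})''$ decomposes uniquely as $a = a^{(0)} + a^{(1)}$ with $a^{(0)} = \tfrac{1}{2}(a + \gamma(a)) \in \Cl(V_{-})_0''$ and $a^{(1)} = \tfrac{1}{2}(a - \gamma(a))$ odd. Since $\Omega \in \Lambda^{0} L \subset \mathcal{F}_0$ is even, we have $a^{(0)} \lact \Omega \in \mathcal{F}_0$ and $a^{(1)} \lact \Omega \in \mathcal{F}_0^{\perp}$.

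Now let $v \in \mathcal{F}_0$ be arbitrary. By cyclicity of $\Omega$ for $\Cl(V_{-})''$ (recalled at the beginning of \cref{sec:tomita}), there exists a sequence $a_n \in \Cl(V_{-})''$ with $a_n \lact \Omega \to v$. Applying the orthogonal projection onto $\mathcal{F}_0$, which acts as $\tfrac{1}{2}(\1 + \Gamma)$, we obtain $a_n^{(0)} \lact \Omega \to v$. Since $a_n^{(0)} \in \Cl(V_{-})_0''$, this proves that $\Cl(V_{-})_0'' \lact \Omega$ is dense in $\mathcal{F}_0$, completing the proof.

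I do not expect any genuine obstacle here; the only point requiring care is the observation that the grading of $\Cl(V_{-})''$ is compatible with the weak closure, which follows from the fact that $\gamma$ is implemented by a unitary on $\mathcal{F}$.
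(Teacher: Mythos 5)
Your proof is correct and follows the same route as the paper, which simply asserts that density of $\Cl(V_{-})'' \lact \Omega$ in $\mc{F}$ implies density of $\Cl(V_{-})_{0}'' \lact \Omega$ in $\mc{F}_{0}$. Your version supplies the justification the paper leaves implicit, namely the even/odd decomposition $a = a^{(0)} + a^{(1)}$ (valid on the weak closure because the grading automorphism is unitarily implemented) together with the observation that the orthogonal projection onto $\mc{F}_{0}$ carries $a \lact \Omega$ to $a^{(0)} \lact \Omega$.
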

\begin{proof}
        The fact that $\Omega$ is separating for $\Cl(V_{-})_{0}''$ is immediate from the fact that $\Omega$ is separating for the bigger algebra $\Cl(V_{-})''$.
        We know that $\Cl(V_{-})'' \lact \Omega$ is dense in $\mc{F}$, it follows that $\Cl(V_{-})_{0}'' \lact \Omega$ is dense in $\mc{F}_{0}$.
\end{proof}

Let us write $S_{0}$ for the Tomita operator corresponding to the triple $(\Cl(V_{-})_{0}'',\mc{F}_{0},\Omega)$, and let $J_{0}$ be the corresponding modular conjugation.

\begin{lemma}
        The operator $S_{0}$ is the restriction of the Tomita operator $S: \mc{F} \rightarrow \mc{F}$ to the subspace $\mc{F}_{0} \subset \mc{F}$. Furthermore, the modular conjugation $J_{0}$ is the restriction of $J$ to $\mc{F}_{0}$ and the modular operator $\Delta^{1/2}_{0}$ is the restriction of $\Delta^{1/2}$ to $\mc{F}_{0}$.
\end{lemma}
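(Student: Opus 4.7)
The strategy is to show that the $\mathbb{Z}_2$-grading on $\mc{F}$ is preserved by each of $S$, $J$ and $\Delta^{1/2}$, and then to match the restrictions to $\mc{F}_0$ with the corresponding operators for the even subalgebra. Let $\varepsilon \in \mc{B}(\mc{F})$ denote the grading involution, so that $\mc{F}_0 = \ker(\varepsilon - \1)$.

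First I would verify that the Tomita operator $S$ commutes with $\varepsilon$. On the dense core $\Cl(V_{-})''\Omega$, $S$ sends $a\Omega$ to $a^{*}\Omega$, and adjunction preserves the parity of any homogeneous $a \in \Cl(V_{-})''$; since $\Omega$ itself is even, $a\Omega$ and its image under $S$ have the same parity. Because $\varepsilon$ is bounded, this commutation extends to all of $\Dom(S)$. Next I would check the same for $J$ using the explicit formula $J = \operatorname{k}^{-1}\Lambda_{\alpha\tau}$ from \cref{thm:modular}: the Klein transform $\operatorname{k}$ is scalar on each homogeneous component, while $\Lambda_{\alpha\tau}$ is the anti-unitary extension of $\alpha\tau|_L$ to the exterior algebra and therefore maps each $\Lambda^n L$ to itself. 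Hence both factors, and thus $J$, commute with $\varepsilon$. Since $J$ is an anti-unitary involution one has $\Delta^{1/2} = J^{-1}S = JS$, a composition of two anti-linear operators each commuting with $\varepsilon$, so $\Delta^{1/2}$ commutes with $\varepsilon$ as well.

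To identify the three restrictions as $S_0$, $J_0$, and $\Delta^{1/2}_0$, I would first use that $\Omega$ is separating for $\Cl(V_{-})''$ to conclude that $\Cl(V_{-})''\Omega \cap \mc{F}_0 = \Cl(V_{-})_0''\Omega$: if $a = a_0 + a_1$ is the homogeneous decomposition and $a\Omega \in \mc{F}_0$, then $a_1\Omega \in \mc{F}_1$ must vanish, forcing $a_1 = 0$. Because $S$ commutes with $\varepsilon$, its graph norm is $\varepsilon$-invariant, so the graph closure of $\Cl(V_{-})_0''\Omega$ inside $\mc{F}_0$ coincides with $\Dom(S) \cap \mc{F}_0$, and the restriction $S|_{\Dom(S) \cap \mc{F}_0}$ is precisely the closure of $a\Omega \mapsto a^{*}\Omega$ on $\Cl(V_{-})_0''\Omega$, which by definition is $S_0$. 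Finally, by uniqueness of the polar decomposition, the identifications $J_0 = J|_{\mc{F}_0}$ and $\Delta^{1/2}_0 = \Delta^{1/2}|_{\Dom(\Delta^{1/2}) \cap \mc{F}_0}$ follow from the factorization $S_0 = (J|_{\mc{F}_0})(\Delta^{1/2}|_{\mc{F}_0})$.

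The main obstacle is the bookkeeping around the unbounded operators $S$ and $\Delta^{1/2}$: one has to be careful that the restriction of $S$, defined via the intersection of domains with $\mc{F}_0$, really is the closure of the expected algebraic core. The commutation of $\varepsilon$ with $S$ is the essential ingredient that reduces this to a routine closure argument on $\Cl(V_{-})_0''\Omega$, after which the remaining identifications are formal consequences of the uniqueness of the polar decomposition.
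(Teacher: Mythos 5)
Your proposal is correct and follows essentially the same route as the paper: $S$ preserves the grading by the parity of adjunction, $J$ by its explicit formula $\operatorname{k}^{-1}\Lambda_{\alpha\tau}$, hence $\Delta^{1/2}=JS$ as well, and the identifications then follow from uniqueness of the polar decomposition. You merely spell out the closure/domain bookkeeping (that $\Dom(S)\cap\mc{F}_0$ is the graph closure of $\Cl(V_-)_0''\Omega$) that the paper dismisses as obvious, and that argument is sound since the even projection $(\1+\varepsilon)/2$ commutes with $S$.
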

\begin{proof}
        The fact that $S$ restricts to $S_{0}$ is obvious. The remaining claims follow from the fact that $J$ and $\Delta^{1/2}$ preserve both the even and the odd subspaces of $\mc{F}$. 
\end{proof}

        We define $\O_{\res}^{\theta}(V)$ to be the subgroup of $\O_{\res}(V)$ consisting of elements that preserve both $V_{+}$ and $V_{-}$. Whenever $g \in \O_{\res}^{\theta}(V)$, then we write $g = g_{-} \oplus g_{+}$, where $g_{\pm} = g|_{V_{\pm}} \in \O(V_{\pm})$.
        Furthermore, we write $\Imp^{\theta}(V)$ for the restriction of the central extension $\Imp(V) \rightarrow \O_{\res}(V)$ to $\O^{\theta}_{\res}(V)$.
                With respect to this decomposition we have $\tau(g_{-} \oplus g_{+}) = \tau(g_{+}) \oplus \tau(g_{-})$.
        We define two subgroups of $\O^{\theta}_{\res}(V)$:
        \begin{align*}
                \O_{\res}(V)_{-} \defeq  \{ g_{-} \oplus \1 \in \O^{\theta}_{\res}(V) \}, \\
                \O_{\res}(V)_{+} \defeq  \{ \1 \oplus g_{+} \in \O^{\theta}_{\res}(V) \}.
        \end{align*}
        Furthermore, we define $\Imp(V)_{-}$ and $\Imp(V)_{+}$ to be the restriction of $\Imp^{\theta}(V) \rightarrow \O^{\theta}_{\res}(V)$ to $\O_{\res}(V)_{-}$ and $\O_{\res}(V)_{+}$ respectively.

\begin{remark}\label{rem:OJVCommute}
        It is clear that both $\O_{\res}(V)_{-}$ and $\O_{\res}(V)_{+}$ are normal in $\O^{\theta}_{\res}(V)$ (but not in $\O_{\res}(V)$) and that they commute with each other. It is furthermore clear that $\Imp(V)_{-}$ and $\Imp(V)_{+}$ are normal in $\Imp^{\theta}(V)$.
\end{remark}

As in the proof of \cref{theorem:Banachextension}, we write $\O_{\res; 0}(V)$ for the connected component of the identity of $\O_{\res}(V)$,
and we write $\Imp_{0}(V)$ for the restriction of $\Imp(V)$ to  $\O_{\res; 0}(V)$, which is even by \cref{lem:Imp1Even}.
Finally, let us write $\Imp_{0}(V)_{-}$ and $\Imp_{0}(V)_{+}$ for the intersections $\Imp(V)_{-} \cap \Imp_{0}(V)$ and $\Imp(V)_{+} \cap \Imp_{0}(V)$ respectively.
With the following result we will ensure in \cref{sec:fusionofimplementers} that our central extension is eligible for applying our method of using fusion factorizations. 

\begin{proposition}\label{lem:ImpCommute}
        The groups $\Imp_{0}(V)_{-}$ and $\Imp_{0}(V)_{+}$ commute with each other.
\end{proposition}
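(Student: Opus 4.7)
The plan is to use twisted Haag duality (\cref{thm:HaagDuality}) to conclude that an even implementer of an orthogonal transformation supported on $V_{-}$ must lie in $\Cl(V_{-})''$, and symmetrically for $V_{+}$; the claim then follows because these two algebras graded-commute, and graded commutation reduces to ordinary commutation for pairs of even operators.

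Let $U_{-} \in \Imp_{0}(V)_{-}$ and $U_{+} \in \Imp_{0}(V)_{+}$. By \cref{lem:Imp1Even}, $U_{-}$ and $U_{+}$ are both even operators on $\mathcal{F}$. By construction $U_{-}$ implements an orthogonal transformation of the form $g_{-} \oplus \mathds{1}$, so the associated Bogoliubov automorphism acts as the identity on the subalgebra $\Cl(V_{+}) \subset \Cl(V)$. This means $U_{-} a = a U_{-}$ for every $a \in \Cl(V_{+})$. Since $U_{-}$ is even, ordinary commutation with the odd generators $V_{+}$ is the same as graded commutation, hence $U_{-}$ graded-commutes with every element of $\Cl(V_{+})$, and therefore with every element of the weak-operator closure $\Cl(V_{+})''$ (the graded commutant is closed in the weak operator topology). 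Applying \cref{thm:HaagDuality} with the roles of $V_{+}$ and $V_{-}$ exchanged — which is legitimate because the graded commutant of a graded von Neumann algebra is involutive on factors, and $\Cl(V_{\pm})''$ are factors — we conclude $U_{-} \in \Cl(V_{-})''$.

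The same argument, with the roles of $+$ and $-$ swapped, shows $U_{+} \in \Cl(V_{+})''$. By \cref{thm:HaagDuality}, $\Cl(V_{+})''$ is precisely the graded commutant of $\Cl(V_{-})''$, so $U_{-}$ and $U_{+}$ graded-commute. Because both operators are even, the graded commutator coincides with the ordinary commutator, and we obtain $U_{-} U_{+} = U_{+} U_{-}$.

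I do not expect any serious obstacle; the whole argument is a short chain of three standard inputs (the evenness statement \cref{lem:Imp1Even}, twisted Haag duality, and the observation about graded versus ordinary commutators for even operators). The only point that deserves a line of care is the reduction from commutation with the generators $V_{+}$ to graded commutation with the whole von Neumann algebra $\Cl(V_{+})''$; here evenness of $U_{-}$ and weak-operator closedness of the (graded) commutant do the job.
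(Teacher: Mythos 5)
Your proof is correct and follows essentially the same route as the paper's: locality of the Bogoliubov automorphism, evenness of the implementers from \cref{lem:Imp1Even}, and Haag duality. The only cosmetic difference is that you work with graded commutants of the full algebras via \cref{thm:HaagDuality} (obtaining the slightly stronger localization $U_{\pm}\in\Cl(V_{\pm})''$ along the way), whereas the paper passes to the even subalgebras and applies the untwisted even Haag duality of \cref{lem:EvenHaagDuality}.
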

\begin{proof}
        Let $U_{-}\in \Imp_{0}(V)_{-}$ and $U_{+} \in \Imp_{0}(V)_{+}$, and suppose that $U_{-}$ implements $g_{-} \in \O_{\res}(V)_{-}$. Then we have, for all $a \in \Cl(V_{+})$ that
        $U_{-}aU_{-}^{*} = \theta_{g_{-}}(a) = a$,
        and hence $U_{-} \in (\Cl(V_{+})_{0}'')'$. Similarly we see that $U_{+} \in (\Cl(V_{-})_{0}'')'$. \cref{lem:EvenHaagDuality} tells us that $(\Cl(V_{-})_{0}'')' = \Cl(V_{+})_{0}''$. Hence $U_{-}$ commutes with $U_{+}$.
\end{proof}

\section{Fusion on the basic central extension of the loop group}\label{sec:FusionOnLSpin}

In this section we describe the result of  this article, namely the construction of a fusion product on the operator-algebraic model for the basic central extension of $L\Spin(d)$ constructed in \cref{sec:basic}. We recall in \cref{sec:genfusionproducts} some generalities about fusion products on loop group extensions, and introduce in \cref{sec:FusionFactorization}  our new method of constructing fusion products. In \cref{sec:fusionofimplementers} we apply this method in our case, using the results obtained in \cref{sec:freefermions}.

\subsection{Fusion products}

\label{sec:genfusionproducts}

Let $G$ be a Lie group. We write $PG$ for the set of smooth paths in $G$, with sitting instants, i.e.,
\begin{equation}
\label{eq:pathsinG}
PG \defeq  \{\beta:[0,\pi] \to M \;|\; \beta\text{ is smooth and constant around $0$ and $\pi$} \}\text{,}
\end{equation} 
which is a group under the pointwise multiplication. We use sitting instants so that we are able to concatenate arbitrary paths with a common end point: the usual path concatenation $\beta_2 \star \beta_1$ is again a smooth path whenever $\beta_1(\pi)=\beta_2(0)$. 
Unfortunately, with sitting instants, $PG$ is not any kind of manifold. Instead, we regard it as a diffeological space.
A diffeology on a set $X$ consists of a set of maps $c:U \to X$ called \quot{plots}, where  $U \subset \R^{k}$ is open and $k$ can be arbitrary, subject to a number of axioms, see \cite{iglesias1}  for details. A map $f:X \to Y$ between diffeological spaces is called smooth, if its composition with any plot of $X$ results in a plot of $Y$. A diffeological group is a group such that multiplication and inversion are smooth.
Any smooth manifold $M$ or Fr\'echet manifold  $M$ becomes a diffeological space by saying that every smooth map $c: U \to M$, for every open subset $U \subset \R^{k}$ and any $k$, is a plot. 

In case of $PG$, the plots are all maps $c:U \to PG$ such that the adjoint map $c^{\vee}: U \times [0,\pi] \to G: (u,t) \mapsto c(u)(t)$ is smooth.  We remark that path concatenation $\star$ and path reversal $\beta \mapsto \bar\beta$ are smooth group homomorphisms.
The evaluation map $ev: PG \to G \times G, \beta \mapsto (\beta(0),\beta(\pi))$ is a smooth group homomorphism, and since diffeological spaces admit arbitrary fibre products, the iterated fibre products $PG^{[k]} := PG \times_{G \times G} ... \times_{G \times G} PG$ are again diffeological groups. Their plots are just tuples $(c_1,...,c_k)$ of plots of $PG$, such that $ev \circ c_1 = ... = ev \circ c_k$.
We find a smooth group homomorphism
\begin{align*}
PG^{[2]} &\to LG\\ (\beta_1,\beta_2) &\mapsto \beta_1 \cup \beta_2 \defeq  \bar\beta_2 \star \beta_1
\end{align*}
defined on the double fibre product, and three smooth group homomorphisms
\begin{align*}
\pi_1: & PG^{[3]} \to LG :   (\beta_1,\beta_2,\beta_3) \mapsto \beta_1 \cup \beta_2,
\\
\pi_2: & PG^{[3]} \to LG :   (\beta_1,\beta_2,\beta_3) \mapsto \beta_2 \cup \beta_3,
\\
{\mu}: & PG^{[3]} \to LG :   (\beta_1,\beta_2,\beta_3) \mapsto \beta_1 \cup \beta_3,
\end{align*}
defined on the triple fibre product. 

Let $\U(1)\to\widetilde{LG} \stackrel{q}{\rightarrow} LG$ be a Fr\'echet central extension of the loop group $LG$.

\begin{definition}
\label{def:fusion product}
        A \emph{fusion product} on $\widetilde{LG}$  assigns to each element $(\beta_1,\beta_2,\beta_3) \in PG^{[3]}$  a  $\U(1)$-bilinear map
\begin{equation*}
\tilde\mu_{\beta_1,\beta_2,\beta_3}:  \widetilde{LG}_{\beta_1 \cup \beta_2}  \times \widetilde{LG}_{\beta_2\cup\beta_3} \to \widetilde{LG}_{\beta_1 \cup \beta_3}\text{,}
\end{equation*} 
such that the following two conditions are satisfied:
\begin{enumerate}[(i)]

\item 
Associativity: for all $(\beta_1,\beta_2,\beta_3,\beta_4) \in PG^{[4]}$ and all $q_{ij} \in \widetilde{LG}_{\beta_i \cup \beta_j}$, 
\begin{equation*}
\tilde\mu_{\beta_1,\beta_3,\beta_4}(\tilde\mu_{\beta_1,\beta_2,\beta_3}(q_{12} , q_{23}) , q_{34})= \tilde\mu_{\beta_1,\beta_2,\beta_4}(q_{12} , \tilde\mu_{\beta_2,\beta_3,\beta_4}(q_{23} , q_{34}) )\text{.}
\end{equation*}

\item
Smoothness: the map
\begin{align*}
\widetilde{LG} \ttimes{q}{\pi_1} PG^{[3]} \ttimes{\pi_2}{q}\widetilde{LG}  &\to \widetilde{LG}\;,\;
(q_{12},\beta_1,\beta_2,\beta_3,q_{23})\mapsto \tilde\mu_{\beta_1,\beta_2,\beta_3}(q_{12},q_{23})
\end{align*}
is a smooth map between diffeological spaces.

\end{enumerate}
Additionally, a fusion product is called \emph{multiplicative}, if it is a group homomorphism; i.e., for all $(\beta_1,\beta_2,\beta_3),(\beta_1',\beta_2',\beta_3') \in PG^{[3]}$,   $q_{ij}\in \widetilde{LG}_{\beta_i\cup \beta_j}$, and $q'_{ij}\in \widetilde{LG}_{\beta_i' \cup \beta_j'}$, 
\begin{equation*}
\tilde\mu_{\beta_1,\beta_2,\beta_3}(q_{12} , q_{23}) \cdot \tilde\mu_{\beta_1',\beta_2',\beta_3'}(q_{12}' , q_{23}') = \tilde\mu_{\beta_1\beta_1',\beta_2\beta_2',\beta_3\beta_3'}(q^{}_{12}q_{12}' , q^{}_{23}q_{23}')\text{.}
\end{equation*}

\end{definition}

Early versions of fusion products have been studied in \cite{brylinski1} and in \cite{stolz3}. For a more complete treatment of this topic we refer to \cite{waldorf10,Waldorfa,Waldorfc}.
Fusion products are a characteristic  feature of the image of transgression, see \cref{sec:multgrbtrans} and \cite{waldorf10}. The basic central extension of any compact simple Lie group can be obtained by transgression; hence, these models are  automatically equipped with a multiplicative fusion product  \cite{Waldorfa,Waldorfc}. In the present section, we will show that our operator-algebraic model constructed in \cref{sec:basic}  comes with an operator-algebraically defined multiplicative fusion product.

In order to treat connections and fusion products at the same time, we invoke differential forms on diffeological spaces. A differential from on a diffeological space $X$ is a collection $\varphi=\{\varphi_c\}_c$ of differential forms $\varphi_c\in\Omega^k(U)$, one for each plot $c:U \to X$, such that $f^{*}\varphi_{c'}=\varphi_{c}$ for all smooth maps $f: U\to U'$ between the domains of plots $c:U \to X$ and $c':U' \to X'$ with $c'\circ f=c$. Differential forms can be pulled back along smooth maps $f:X \to Y$, by simply putting $(f^{*}\varphi)_c \defeq  \varphi_{c \circ f}$. If a smooth manifold or Fr\'echet manifold is considered as a diffeological space, then diffeological and ordinary differential forms are the same thing, upon identifying $\varphi_c = c^{*}\varphi$.

Suppose that a central extension $\widetilde{LG}$ is equipped with a fusion product. Additionally, we consider it as a principal $\U(1)$-bundle $q:\widetilde{LG} \to LG$, and suppose that it is equipped with a connection $\nu$.
We consider the three smooth maps
\begin{equation}
\label{eq:fustildemaps}
\tilde\pi_1,\tilde\pi_2,\tilde\mu: \widetilde{LG} \ttimes{q}{\pi_1} PG^{[3]} \ttimes{\pi_2}{q}\widetilde{LG} \to \widetilde{LG}\text{,}
\end{equation}
where $\tilde\pi_1$ and $\tilde\pi_2$ are the projections to the first and the third factor, respectively, and $\tilde\mu$ is the map of condition (ii) of \cref{def:fusion product}.

\begin{definition}
\label{def:fusconnpres}
A fusion product $\tilde\mu$ is called \emph{connection-preserving} with respect to a connection $\nu$ on $\widetilde{LG}$ if $\tilde\mu^{*}\nu = \tilde\pi_1^{*}\nu + \tilde\pi_2^{*}\nu$, where $\tilde\pi_1,\tilde\pi_2,\tilde\mu$ are the smooth maps of \cref{eq:fustildemaps}.
\end{definition}

\begin{remark}
Fusion products are best understood using the theory of principal bundles over diffeological spaces, see \cite{waldorf9,waldorf10}. In that terminology, a fusion product is just a smooth bundle morphism
\begin{equation*}
\tilde\mu: \pi_1^{*}\widetilde{LG} \otimes \pi_2^{*}\widetilde{LG} \to \mu^{*}\widetilde{LG}
\end{equation*}
of principal $U(1)$-bundles over $PG^{[3]}$; this (plus a corresponding associativity condition) is equivalent to \cref{def:fusion product}. Moreover, a fusion product is connection-preserving in the sense of \cref{def:fusconnpres} if that bundle morphism is connection-preserving. 
\end{remark}

\subsection{Fusion factorizations}

\label{sec:FusionFactorization}

In this section, we will introduce a new method of defining multiplicative fusion products on central extensions from certain minimal data, called a fusion factorization. We first define a class of central extensions that are admissible for this  method. Let $1 \in PG$ denote the path constantly equal to the unit element in $G$.
\begin{definition}
\label{def:admissible}
        A Fr\'echet central extension $\U(1) \rightarrow \widetilde{LG} \rightarrow LG$ is called \emph{admissible} if it has the following property. For  $\beta_{1},\beta_{3} \in PG$ with endpoints the unit of $G$, and  $q_{12} \in \widetilde{LG}_{\beta_{1} \cup 1}$ and  $q_{23} \in \widetilde{LG}_{1 \cup \beta_{3}}$ we have $q_{12} q_{23} = q_{23} q_{12}$.
\end{definition}

Let $\Delta: PG \rightarrow LG, \beta \mapsto \beta \cup \beta$ be the doubling map.

\begin{definition}
\label{def:fusfac}
        Let $\U(1)\to\widetilde{LG} \rightarrow LG$ be an admissible Fr\'echet  central extension of $LG$. Then, a \emph{fusion factorization} is a smooth group homomorphism $\rho: PG \rightarrow \widetilde{LG}$ such that the following diagram commutes: 
        \begin{equation*}
        \begin{tikzpicture}
                \node (B) at (2,1) {$\widetilde{LG}$};
                \node (C) at (0,0) {$PG$};
                \node (D) at (2,0) {$LG.$};
                \path[->,font=\scriptsize]
                (C) edge node[above]{$\rho$} (B)
                (B) edge (D)
                (C) edge node[below]{$\Delta$} (D);
        \end{tikzpicture}
        \end{equation*}
\end{definition}
The main result of this section is that a  fusion factorization    induces a multiplicative fusion product. 
        Indeed, let $\rho$ be a fusion factorization for an admissible Fr\'echet central extension  $\U(1) \rightarrow \widetilde{LG} \rightarrow LG$. For each triple $(\beta_{1},\beta_{2},\beta_{3}) \in PG^{[3]}$ we set
        \begin{align}
        \label{def:fusionfromfusionfactorization}
                \tilde{\mu}^{\rho}_{\beta_{1},\beta_{2},\beta_{3}}: \widetilde{LG}_{\beta_{1} \cup \beta_{2}} \times \widetilde{LG}_{\beta_{2} \cup \beta_{3}} &\rightarrow \widetilde{LG}_{\beta_{1} \cup \beta_{3}}\;,\; 
                (q_{12}, q_{23})  \mapsto q_{12} \rho(\beta_{2})^{-1} q_{23}.
        \end{align}

\begin{theorem}
\label{thm:fusfromfusfac}
        The map $\tilde{\mu}^{\rho}_{\beta_{1},\beta_{2},\beta_{3}}$ is a multiplicative fusion product.
\end{theorem}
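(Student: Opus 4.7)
The plan is to verify each of the four conditions in \cref{def:fusion product} together with multiplicativity, taking them in increasing order of difficulty. First I would check that the formula \cref{def:fusionfromfusionfactorization} actually lands in the correct fiber. Since $q:\widetilde{LG}\to LG$ is a group homomorphism and $\rho$ is a fusion factorization, $q(q_{12}\rho(\beta_2)^{-1}q_{23})=(\beta_1\cup\beta_2)(\beta_2\cup\beta_2)^{-1}(\beta_2\cup\beta_3)$; a pointwise computation on the two halves of the circle (using $\beta\cup\gamma=\bar\gamma\star\beta$) shows this equals $\beta_1\cup\beta_3$. The $\U(1)$-bilinearity is then immediate from the centrality of $\U(1)\subseteq \widetilde{LG}$, and smoothness of the assembled map follows from the smoothness of multiplication and inversion in $\widetilde{LG}$ together with the smoothness of $\rho$.

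Next I would verify associativity, which in this setup is completely formal: both sides of the associativity equation simplify directly to the word $q_{12}\rho(\beta_2)^{-1}q_{23}\rho(\beta_3)^{-1}q_{34}$ in $\widetilde{LG}$. No use of admissibility is needed for this step.

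The main content is multiplicativity, and this is where admissibility enters. Given data $(\beta_1,\beta_2,\beta_3),(\beta_1',\beta_2',\beta_3')\in PG^{[3]}$ and elements $q_{ij},q'_{ij}$, I would rewrite the left-hand side as
\begin{equation*}
q_{12}\,\bigl[\rho(\beta_2)^{-1}q_{23}\bigr]\bigl[q'_{12}\rho(\beta'_2)^{-1}\bigr]\,q'_{23}.
\end{equation*}
The key observation is that the middle two bracketed factors lie in fibers over loops supported on complementary halves of the circle: using $q\circ\rho=\Delta$, the factor $\rho(\beta_2)^{-1}q_{23}$ covers a loop whose upper half is constantly the unit of $G$ (so it has the form $1\cup\gamma$ for some $\gamma\in PG$ with unit endpoints), while $q'_{12}\rho(\beta'_2)^{-1}$ covers a loop of the form $\delta\cup 1$. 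Admissibility (\cref{def:admissible}) then forces these two elements to commute in $\widetilde{LG}$.

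After commuting them and invoking that $\rho$ is a group homomorphism, so that $\rho(\beta'_2)^{-1}\rho(\beta_2)^{-1}=\rho(\beta_2\beta'_2)^{-1}$, the expression reassembles as $q_{12}q'_{12}\,\rho(\beta_2\beta'_2)^{-1}\,q_{23}q'_{23}=\tilde\mu^{\rho}_{\beta_1\beta_1',\beta_2\beta_2',\beta_3\beta_3'}(q_{12}q'_{12},q_{23}q'_{23})$, which is multiplicativity. The only delicate part of the argument is recognizing that the two bracketed factors really do cover loops of the complementary form $1\cup\gamma$ and $\delta\cup 1$ required by the admissibility hypothesis; everything else is algebra in a central extension plus the two defining properties of a fusion factorization.
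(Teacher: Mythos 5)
Your proof is correct and follows essentially the same route as the paper: check the target fibre via $\beta_1\cup\beta_3=(\beta_1\cup\beta_2)\Delta(\beta_2)^{-1}(\beta_2\cup\beta_3)$, note bilinearity, associativity and smoothness are formal, and reduce multiplicativity to the commutation of $\rho(\beta_2)^{-1}q_{23}$ with $q_{12}'\rho(\beta_2')^{-1}$, which admissibility supplies. The only difference is that you make explicit the (correct) verification that these two factors sit over loops of the form $1\cup\gamma$ and $\delta\cup 1$ with unit endpoints, a step the paper leaves implicit.
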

\begin{proof}
First of all, the range of $\tilde{\mu}^{\rho}_{\beta_{1},\beta_{2},\beta_{3}}$ is indeed $\widetilde{LG}_{\beta_{1} \cup \beta_{3}}$, because
\begin{equation*}
        \beta_{1} \cup \beta_{3} = (\beta_{1} \cup \beta_{2}) \Delta(\beta_{2})^{-1} (\beta_{2} \cup \beta_{3}).
\end{equation*}
The map is clearly $\U(1)$-bilinear, and the associativity is straightforward.
        Next, we prove multiplicativity. We start by computing
        \begin{equation*}
                \tilde{\mu}^{\rho}_{\beta_{1},\beta_{2},\beta_{3}}(q_{12} , q_{23}) \tilde{\mu}^{\rho}_{\beta_{1}',\beta_{2}',\beta_{3}'}(q_{12}' , q_{23}') = q_{12} \rho(\beta_{2})^{-1} q_{23} q_{12}' \rho(\beta_{2}')^{-1} q_{23}'
        \end{equation*}
        on the one hand, and
        \begin{align*}
                \tilde{\mu}^{\rho}_{\beta_{1}\beta_{1}',\beta_{2}\beta_{2}',\beta_{3}\beta_{3}'}(q_{12}q_{12}' , q_{23}q_{23}') &= q_{12}q_{12}' \rho(\beta_{2}\beta_{2}')^{-1} q_{23}q_{23}' 
                = q_{12}q_{12}' \rho(\beta_{2}')^{-1}\rho(\beta_{2})^{-1} q_{23}q_{23}'.
        \end{align*}
        We see that to prove multiplicativity it suffices to show that
        \begin{equation*}
                \rho(\beta_{2})^{-1} q_{23} q_{12}' \rho(\beta_{2}')^{-1} = q_{12}' \rho(\beta_{2}')^{-1}\rho(\beta_{2})^{-1} q_{23}.
        \end{equation*}
        This equation holds by the assumption that the central extension $\widetilde{LG}$ is admissible. Finally, let us prove smoothness. The relevant map is
\begin{align*}
\widetilde{LG} \ttimes{q}{\pi_1} PG^{[3]} \ttimes{\pi_2}{q}\widetilde{LG}  &\to \widetilde{LG}\;,\;
 (q_{12},\beta_1,\beta_2,\beta_3,q_{23})\mapsto  q_{12} \rho(\beta_{2})^{-1} q_{23}\text{.}
\end{align*}
Since projections, multiplication, inversion, and $\rho$
are smooth maps, this is a composition of smooth maps and hence smooth.
\end{proof}

In the remainder of this subsection we impose a condition between a fusion factorization $\rho$ and a local section $\sigma$ of the central extension and prove (\cref{prop:fuscompconn}) that this condition guarantees that the associated fusion product $\tilde\mu^{\rho}$ is connection-preserving for the  connection $\nu_{\sigma}$ associated to $\sigma$, see \cref{re:connectionfromsplitting}.

\begin{definition}
\label{def:fusfaccomp}
Let $\U(1)\to\widetilde{LG} \rightarrow LG$ be an admissible Fr\'echet central extension, and suppose that  $\sigma:U \to \widetilde{LG}$ is a smooth local section defined in a neighborhood $U$ of $1\in LG$. A fusion factorization $\rho$ is called \emph{compatible} with $\sigma$, if there exists an open neighborhood $U'\subset \Delta^{-1}(U) \subset PG$ of $1\in PG$, such that $\sigma(\Delta\beta)=\rho(\beta)$ for all $\beta\in U'$.
\end{definition}

The following three lemmas prepare the proof of \cref{prop:fuscompconn} below. 

\begin{lemma}
\label{lem:fusfacflat}
Suppose a fusion factorization $\rho$ is compatible with a section $\sigma$. Then, $\rho$ is  flat  with respect to the connection $\nu_{\sigma}$, i.e., $\rho^{*}\nu_{\sigma}=0$.
\end{lemma}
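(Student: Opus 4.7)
The key observation is that both $\rho: PG \to \widetilde{LG}$ and $\Delta = q \circ \rho : PG \to LG$ are group homomorphisms, so their pullbacks of the Maurer-Cartan form are controlled entirely by their derivative at the identity. Specifically, for any Lie (or diffeological) group homomorphism $f: H \to K$, the identity $f \circ L_h = L_{f(h)} \circ f$ implies
\begin{equation*}
f^{*}\theta^{K} = (df)_{1} \circ \theta^{H}\text{,}
\end{equation*}
where we post-compose the $\mathfrak{h}$-valued form $\theta^H$ with the linear map $(df)_1: \mathfrak{h} \to \mathfrak{k}$.

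Applying this to $\rho$ and to $\Delta$, and using $q \circ \rho = \Delta$, I would compute
\begin{equation*}
\rho^{*}\nu_{\sigma} = \rho^{*}\theta^{\widetilde{LG}} - \sigma_{*}\bigl(\rho^{*}q^{*}\theta^{LG}\bigr) = (d\rho)_{1}\circ \theta^{PG} - \sigma_{*}\circ (d\Delta)_{1}\circ \theta^{PG}\text{.}
\end{equation*}
Hence the vanishing of $\rho^{*}\nu_{\sigma}$ reduces to the purely Lie-algebraic equality $(d\rho)_{1} = \sigma_{*}\circ (d\Delta)_{1}$ of maps $T_{1}PG \to T_{1}\widetilde{LG}$.

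To establish this equality, I use the compatibility hypothesis: there is an open neighborhood $U' \subset PG$ of the identity on which $\rho = \sigma \circ \Delta$. Differentiating at $1 \in PG$ and noting that $\sigma(1) = 1$ together with $(d\sigma)_{1} = \sigma_{*}$, I obtain $(d\rho)_{1} = (d\sigma)_{1}\circ (d\Delta)_{1} = \sigma_{*}\circ (d\Delta)_{1}$ as required. Thus $\rho^{*}\nu_{\sigma}=0$.

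The argument is essentially one-step once the Maurer-Cartan identity for group homomorphisms is in hand; the main conceptual point is that flatness of $\rho$ needs to hold globally on $PG$, and this is guaranteed by the combination of (a) the local compatibility $\rho=\sigma\circ\Delta$ near $1$, and (b) the fact that $\rho$ and $\Delta$ are group homomorphisms, which propagates the infinitesimal identity at $1$ to every point via left-translation. I do not anticipate any real obstacle beyond verifying the Maurer-Cartan pullback formula in the diffeological setting, which is routine since both sides are defined plot-wise and the intertwining $\rho \circ L_\beta = L_{\rho(\beta)} \circ \rho$ holds as a smooth identity.
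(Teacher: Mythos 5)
Your argument is correct and is essentially the paper's proof: the paper likewise uses that $\rho$ and $\Delta=q\circ\rho$ are group homomorphisms to left-translate everything to the identity (working plot-wise with curves, i.e.\ rewriting $(\rho^{*}\nu_{\sigma})_c|_x(v)$ as $\partial_{t=0}\,\rho\bigl(c(x)^{-1}c(\varphi(t))\bigr)-\partial_{t=0}\,\sigma\bigl(\Delta(c(x)^{-1}c(\varphi(t)))\bigr)$) and then cancels the two terms using the compatibility identity $\rho=\sigma\circ\Delta$ near $1$. The only caveat is that $PG$ is merely a diffeological group, so instead of invoking $\theta^{PG}$ and a linear map $(d\rho)_1$ on $T_1PG$ one should phrase the final step as the literal equality of the two curves $t\mapsto\rho\bigl(c(x)^{-1}c(\varphi(t))\bigr)$ and $t\mapsto\sigma\bigl(\Delta(c(x)^{-1}c(\varphi(t)))\bigr)$ for small $t$ (the curve enters $U'$), which is exactly how the paper closes the argument.
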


\begin{proof}
We have to show $(\rho^{*}\nu_{\sigma})_c=0$ for every plot $c:U \to PG$. We first obtain from the definition of $\nu_{\sigma}$ and the definition of $\rho$ that
\begin{equation*}
(\rho^{*}\nu_{\sigma})_c = (\nu_{\sigma})_{\rho\circ c}=(\rho\circ c)^{*}\nu_{\sigma}=(\rho\circ c)^{*}\theta^{\widetilde{LG}} - \sigma_{*}(c^{*}\Delta^{*}\theta^{LG})\text{.}
\end{equation*}
Consider a smooth curve $\varphi:\R\to U$, with $\varphi(0)=: x\in U$ and $\dot\varphi(0)=: v\in T_xU$. Then, we have
\begin{equation*}
(\rho^{*}\nu_{\sigma})_c = \left. \frac{\mathrm{d}}{\mathrm{d}t}\right|_0 \rho(c(x))^{-1}\rho( c(\varphi(t))) - \left. \frac{\mathrm{d}}{\mathrm{d}t}\right|_0 \sigma(\Delta(c(x))^{-1}\Delta(c(\varphi(t))))\text{.}  
\end{equation*}
The compatibility condition of \cref{def:fusfaccomp} now shows that this expression vanishes.
\end{proof}

The section $\sigma$ induces a map
$Z_{\sigma}: LG \times L\mathfrak{g} \to \R$  defined by
\begin{equation*}
Z_{\sigma}(\gamma,X) \defeq  \mathrm{Ad}_\gamma^{-1}(\sigma_{*}(X))-\sigma_{*}(\mathrm{Ad}_\gamma^{-1}(X))\text{;} \end{equation*} 
i.e., it measures the error for the derivative $\sigma_{*}$ being an intertwiner for the adjoint action of $LG$. It is related to the cocycle $\omega_{\sigma}$ by the formula
\begin{equation*}
\omega_{\sigma}(X,Y)=\left . \frac{\mathrm{d}}{\mathrm{d}t} \right |_0 Z_{\sigma}(e^{-tX},Y)\text{,}
\end{equation*}
and satisfies
\begin{equation}
\label{eq:ruleZsigma}
Z_{\sigma}(\gamma_1\gamma_2,X) = Z_{\sigma}(\gamma_1,X)+Z(\gamma_2,\mathrm{Ad}_{\gamma_1}^{-1}(X))\text{.}
\end{equation}
We will use the map $Z_{\sigma}$ in order to describe a relation between the connection $\nu_{\sigma}$ and the group structure on $\widetilde{LG}$. We denote by $\tilde m, \pr_1,\pr_2: \widetilde{LG} \times \widetilde{LG} \to \widetilde{LG}$ the multiplication and the two projections.

\begin{lemma}
\label{eq:multconn}
The equality
\begin{equation*}
\tilde m^{*}\nu_{\sigma} = \pr_1^{*}\nu_{\sigma} + \pr_2^{*}\nu_{\sigma} + (q\times q)^{*}Z_{\sigma}(\pr_2,\pr_1^{*}\theta^{LG})
\end{equation*}
of 1-forms on $\widetilde{LG} \times \widetilde{LG}$ holds. Here, $q:\widetilde{LG} \to LG$ is the projection, and the expression $Z_{\sigma}(\pr_2,\pr_1^{*}\theta^{LG})$ denotes a 1-form on $LG \times LG$, whose value at a point $(\gamma_1,\gamma_2)$ and a tangent vector $(X_1,X_2)\in T_{\gamma_1,\gamma_2}(LG\times LG)$ is given by $Z_{\sigma}(\gamma_2,\gamma_1^{-1}X_1)$.
 
\end{lemma}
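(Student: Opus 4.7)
My plan is to prove the identity pointwise, by evaluating both sides on arbitrary tangent vectors $(X_1,X_2) \in T_{(u_1,u_2)}(\widetilde{LG}\times\widetilde{LG})$ and exploiting the standard product formula for the left-invariant Maurer–Cartan form. Writing $Y_i := \theta^{\widetilde{LG}}(X_i)\in\widetilde{L\mathfrak g}$ and $W_i := q^{*}\theta^{LG}(X_i)\in L\mathfrak g$, the identity $dq(Y_i)=W_i$ follows from $q$ being a Lie group homomorphism. The well-known formula
\begin{equation*}
\tilde m^{*}\theta^{\widetilde{LG}}(X_1,X_2) = \mathrm{Ad}_{u_2}^{-1}(Y_1) + Y_2,
\end{equation*}
together with the analogous identity on $LG$, lets me immediately reduce $\tilde m^{*}\nu_\sigma - \pr_1^{*}\nu_\sigma - \pr_2^{*}\nu_\sigma$ to the expression
\begin{equation*}
\mathrm{Ad}_{u_2}^{-1}(Y_1) - Y_1 \;-\; \sigma_{*}\!\bigl(\mathrm{Ad}_{q(u_2)}^{-1}(W_1) - W_1\bigr).
\end{equation*}

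The second step is to rewrite the first difference using the section $\sigma_{*}$. Since the extension is central, $\widetilde{LG}$ acts trivially on the centre $\mathbb R \subset \widetilde{L\mathfrak g}$, so the adjoint action on $\widetilde{L\mathfrak g}$ descends to an action of $LG$. In particular, writing $Y_1 = \sigma_{*}(W_1) + c$ for the unique $c$ in the centre, the term $c$ is killed by $\mathrm{Ad}_{u_2}^{-1} - \mathrm{id}$, so that
\begin{equation*}
\mathrm{Ad}_{u_2}^{-1}(Y_1) - Y_1 \;=\; \mathrm{Ad}_{q(u_2)}^{-1}(\sigma_{*}(W_1)) - \sigma_{*}(W_1),
\end{equation*}
where I have used that the lifted adjoint action of $q(u_2)$ on $\widetilde{L\mathfrak g}$ agrees with $\mathrm{Ad}_{u_2}^{-1}$. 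Substituting this back and recognising the very definition of $Z_\sigma$ gives
\begin{equation*}
\tilde m^{*}\nu_\sigma - \pr_1^{*}\nu_\sigma - \pr_2^{*}\nu_\sigma \;=\; Z_\sigma\!\bigl(q(u_2), W_1\bigr),
\end{equation*}
which is exactly $(q\times q)^{*}Z_\sigma(\pr_2, \pr_1^{*}\theta^{LG})$ applied to $(X_1,X_2)$.

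The only genuinely non-routine point is the bookkeeping around the centre: one must be careful that the adjoint action on $\widetilde{L\mathfrak g}$ is canonically $LG$-equivariant, so that the expression $\mathrm{Ad}_{q(u_2)}^{-1}(\sigma_{*}(W_1))$ occurring in the definition of $Z_\sigma$ really coincides with $\mathrm{Ad}_{u_2}^{-1}(\sigma_{*}(W_1))$. Once that identification is in place, the remaining manipulations are direct and the identity falls out. Everything else amounts to the classical pullback formula for the Maurer–Cartan form under multiplication, combined with the defining equation $\nu_\sigma = \theta^{\widetilde{LG}} - \sigma_{*}(q^{*}\theta^{LG})$.
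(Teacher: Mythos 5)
Your proof is correct and follows essentially the same route as the paper's (which dismisses the lemma as "a straightforward calculation using the definition of $\nu_{\sigma}$"): the product formula for the Maurer--Cartan form, followed by the observation that $Y_1-\sigma_{*}(W_1)$ is central and hence fixed by $\mathrm{Ad}_{u_2}^{-1}$, so that the difference collapses to $\mathrm{Ad}_{q(u_2)}^{-1}(\sigma_{*}(W_1))-\sigma_{*}(\mathrm{Ad}_{q(u_2)}^{-1}(W_1))=Z_{\sigma}(q(u_2),W_1)$. You also correctly isolate the one non-trivial point, namely that centrality is what makes the adjoint action on $\widetilde{L\mathfrak g}$ descend to $LG$ and thus match the $\mathrm{Ad}_{\gamma}^{-1}$ appearing in the definition of $Z_{\sigma}$.
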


\begin{proof}
A  straightforward calculation that only uses the definition of $\nu_{\sigma}$.
\end{proof}

Next, we consider the set $P\mathfrak{g}$ of smooth paths in the Lie algebra $\mathfrak{g}$ with sitting instants, analogous to \cref{eq:pathsinG}. We have a corresponding map $P\mathfrak{g}^{[2]} \to L\mathfrak{g}:(X_1,X_2) \mapsto X_1 \cup X_2 \defeq  \bar X_2 \star X_1$.

\begin{lemma}
\label{lem:Zsigmazero}
Suppose $\widetilde {LG}$ is admissible.
Let   $\beta \in PG$ with endpoints the unit of $G$, and let $X\in P\mathfrak{g}$  with endpoints zero. Then, $Z_{\sigma}(\beta\cup 1,0\cup X)=0$.
\end{lemma}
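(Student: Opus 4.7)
The plan is to deduce the vanishing directly from admissibility, applied to the one-parameter subgroup generated by $0\cup X$, without ever needing to evaluate $\sigma$ at $\beta\cup 1$ (which is outside its domain in general). First I would set $Y \defeq 0\cup X\in L\lie{g}$ and $\alpha_t\defeq \exp(tX)\in PG$; since $X$ has endpoints zero, each $\alpha_t$ has endpoints the unit, and a pointwise exponentiation yields $\exp(tY)=1\cup\alpha_t$ in $LG$.

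Next I would pick any lift $\tilde q\in\widetilde{LG}_{\beta\cup 1}$ and observe that $\exp(t\sigma_{*}(Y))$ is a lift of $1\cup\alpha_t$, since $\sigma_{*}$ is a section of the Lie algebra projection and the exponentials intertwine with the group projection. Because both $\beta$ and $\alpha_t$ have endpoints the unit, admissibility of $\widetilde{LG}$ forces
\begin{equation*}
\tilde q\,\exp(t\sigma_{*}(Y)) \;=\; \exp(t\sigma_{*}(Y))\,\tilde q \qquad \text{for all } t\in\R.
\end{equation*}
Differentiating this identity at $t=0$, and using that central elements of $\widetilde{LG}$ act trivially via $\mathrm{Ad}$ so that the action descends to $LG$, I would obtain $\mathrm{Ad}_{\beta\cup 1}^{-1}(\sigma_{*}(Y))=\sigma_{*}(Y)$ as an equation in $\widetilde{L\lie{g}}$.

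The remaining ingredient is the pointwise identity $\mathrm{Ad}_{\beta\cup 1}^{-1}(Y)=Y$, which is immediate because $\beta\cup 1$ and $Y=0\cup X$ are supported on complementary halves of $S^{1}$, so on each half one of the two factors is trivial. Plugging both identities into the definition $Z_\sigma(\beta\cup 1,Y)=\mathrm{Ad}_{\beta\cup 1}^{-1}(\sigma_{*}(Y))-\sigma_{*}(\mathrm{Ad}_{\beta\cup 1}^{-1}(Y))$ collapses it to $\sigma_{*}(Y)-\sigma_{*}(Y)=0$. The one conceptually subtle point is the use of an arbitrary lift $\tilde q$ rather than $\sigma(\beta\cup 1)$: since $\sigma$ is only defined near $1\in LG$, one must interpret the first term of $Z_\sigma$ via the well-defined descent of $\mathrm{Ad}$ to $LG$, and admissibility is precisely strong enough to pin down its value.
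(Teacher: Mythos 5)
Your proof is correct and follows essentially the same route as the paper: both reduce $Z_\sigma$ to $(\mathrm{Ad}^{-1}_{\beta\cup 1}-\mathrm{id})(\sigma_{*}(0\cup X))$ via the pointwise nature of the adjoint action, and then kill this term by applying admissibility to an arbitrary lift of $\beta\cup 1$ and a lift of a curve through $1\cup\exp(tX)$. The only cosmetic difference is your choice of lift $\exp(t\sigma_{*}(0\cup X))$ where the paper uses $\sigma(1\cup\Gamma(t))$ for a representing curve $\Gamma$; both are legitimate.
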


\begin{proof}

Since the adjoint action of $LG$ on $L\mathfrak{g}$ is pointwise, we have  $\mathrm{Ad}^{-1}_{\beta\cup 1}(0\cup X)=0\cup X$, so that $Z_{\sigma}(\beta\cup 1,0\cup X)=(\mathrm{Ad}^{-1}_{\beta\cup 1}-id)(\sigma_{*}(0\cup X))$. We may represent $0\cup X$ as the derivative of a smooth curve $1\cup \Gamma$ in $LG$, and obtain
\begin{equation*}
Z_{\sigma}(\beta\cup 1,0\cup X) =\left . \frac{\mathrm{d}}{\mathrm{d}t} \right |_0 \widetilde{\beta \cup 1} ^{-1} \cdot \sigma(1 \cup \Gamma(t)) \cdot \widetilde{\beta \cup 1} - \sigma_{*}(0\cup X)\text{,}
\end{equation*}
where $\widetilde{\beta \cup 1}$ is any lift of $\beta \cup 1$ to $\widetilde{LG}$. Admissibility implies now that $Z_{\sigma}(\beta\cup 1,0\cup X)=0$. 
\end{proof}

Now we are in position to prove the following.

\begin{proposition}
\label{prop:fuscompconn}
Let $\U(1) \to \widetilde {LG} \to LG$ be an admissible Fr\'echet central extension, equipped with a smooth section $\sigma$ defined in a neighborhood of the unit of $LG$, and equipped with a compatible fusion factorization $\rho$.
Then, the fusion product $\tilde{\mu}_{\rho}$ is connection-preserving with respect to the connection $\nu_{\sigma}$ in the sense of \cref{def:fusion product}.
\end{proposition}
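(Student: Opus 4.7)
The approach is to decompose the fusion map into three successive multiplications in $\widetilde{LG}$ and invoke the multiplication formula \cref{eq:multconn} for $\nu_\sigma$. On the fibre product $Y \defeq \widetilde{LG}\ttimes{q}{\pi_1}PG^{[3]}\ttimes{\pi_2}{q}\widetilde{LG}$, define three smooth maps $A, B, C : Y \to \widetilde{LG}$ by $A \defeq \tilde\pi_1$, $B(\tilde q_{12}, \beta_1,\beta_2,\beta_3, \tilde q_{23}) \defeq \rho(\beta_2)^{-1}$, and $C \defeq \tilde\pi_2$, so that $\tilde\mu = A\cdot B\cdot C$ pointwise. Applying \cref{eq:multconn} twice yields
\begin{equation*}
\tilde\mu^{*}\nu_\sigma \;=\; A^{*}\nu_\sigma + B^{*}\nu_\sigma + C^{*}\nu_\sigma + E_1 + E_2,
\end{equation*}
where $E_1$ and $E_2$ are the two correction $1$-forms of the form $(q\times q)^{*}Z_\sigma(\pr_2,\pr_1^{*}\theta^{LG})$ arising from each application of \cref{eq:multconn}. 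Since $\rho$ is a group homomorphism, $B$ factors as $\rho$ composed with group inversion on $PG$ and projection to the $\beta_2$-component, and thus $B^{*}\nu_\sigma = 0$ by \cref{lem:fusfacflat}. Hence the task reduces to showing $E_1 + E_2 = 0$.

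To analyse the remaining terms I would first identify $qA = \beta_1\cup\beta_2$, $qB = \Delta(\beta_2)^{-1}$, $qC = \beta_2\cup\beta_3$, and $q(A\cdot B) = (\beta_1\cup\beta_2)\cdot(\beta_2\cup\beta_2)^{-1}$, which simplifies pointwise on $S^1$ to a loop supported only on the top half-circle. Using the cocycle rule \eqref{eq:ruleZsigma} for $Z_\sigma$ together with the pointwise identity $\beta_i\cup\beta_j = (\beta_i\cup 1)\cdot(1\cup\beta_j)$, I would split each of $E_1$ and $E_2$ into a finite sum of terms of the form $Z_\sigma(\gamma,Y)$ in which $\gamma\in LG$ is supported on only one half-circle and the $L\mathfrak{g}$-valued tangent $Y$ is supported on only the other half-circle. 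Each such term vanishes by \cref{lem:Zsigmazero} (and its evident mirror version, which is proven by the same admissibility argument with the roles of the two half-circles exchanged). Careful bookkeeping then shows that the two collections of summands coming from $E_1$ and $E_2$ cancel each other out.

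The main obstacle is the second step: one must carefully track which fragments of $E_1$ and $E_2$ have only top-half support and which have only bottom-half support, and keep careful account of the adjoint actions that appear when splitting products in $LG$ via \eqref{eq:ruleZsigma}. The geometric input that makes this cancellation work is exactly admissibility (\cref{def:admissible}), encoded through \cref{lem:Zsigmazero}, together with flatness of $\rho$ from \cref{lem:fusfacflat}. Once $E_1+E_2=0$ is established, the equality $\tilde\mu^{*}\nu_\sigma = \tilde\pi_1^{*}\nu_\sigma + \tilde\pi_2^{*}\nu_\sigma$ follows, proving that $\tilde\mu^{\rho}$ is connection-preserving with respect to $\nu_\sigma$ in the sense of \cref{def:fusconnpres}.
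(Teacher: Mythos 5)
Your skeleton is the same as the paper's: write $\tilde\mu^{\rho}$ as an iterated product in $\widetilde{LG}$, apply \cref{eq:multconn} twice, kill the middle pullback $B^{*}\nu_{\sigma}=p_2^{*}i^{*}\rho^{*}\nu_{\sigma}$ via \cref{lem:fusfacflat}, and then attack the two correction terms with the cocycle rule \cref{eq:ruleZsigma} and \cref{lem:Zsigmazero}. The gap is in the endgame. First, the identity $\beta_i\cup\beta_j=(\beta_i\cup 1)\cdot(1\cup\beta_j)$ that you want to use for the splitting is not available in $LG$ for a general point of $PG^{[3]}$: the factors $\beta_i\cup 1$ and $1\cup\beta_j$ are loops only when $\beta_i$ and $\beta_j$ have both endpoints at the unit, which the entries of $(\beta_1,\beta_2,\beta_3)\in PG^{[3]}$ need not satisfy. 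The only splitting of this kind that is always legitimate is $\beta_2\cup\beta_3=\Delta(\beta_2)\cdot\bigl(1\cup(\beta_2^{-1}\beta_3)\bigr)$, whose second factor is genuinely half-supported but whose first factor $\Delta(\beta_2)$ is supported on the whole circle and cannot be decomposed further.

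Second, and more importantly, after all terms with disjointly supported arguments have been removed by \cref{lem:Zsigmazero} (and its mirror), $E_1$ and $E_2$ do \emph{not} cancel: a residual summand of the form $Z_{\sigma}\bigl(\Delta(\beta_2),\,\mathrm{Ad}_{\Delta(\beta_2)}(\theta^{LG}(\Delta(\beta_2)))\bigr)$ survives, in which both the loop and the tangent argument are supported on the whole circle, so neither admissibility nor any support bookkeeping can touch it. This term vanishes for a different reason, which your list of inputs does not contain: by compatibility (\cref{def:fusfaccomp}) one has $\sigma\circ\Delta=\rho$ near the unit, and since $\rho$ is a group homomorphism, $\sigma\circ\Delta$ is a \emph{local group homomorphism}; unwinding the definition of $Z_{\sigma}$ then shows that $Z_{\sigma}(\Delta(\beta),\,\cdot\,)$ vanishes on tangent vectors of the form $\mathrm{Ad}_{\Delta(\beta)}(\theta^{LG})$ along the image of $\Delta$. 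So the compatibility hypothesis must be invoked twice -- once through \cref{lem:fusfacflat}, and once more, independently, to kill this doubled term -- and flatness plus admissibility alone do not close the argument.
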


\begin{proof}
Using the definition of $\tilde\mu^{\rho}$ and \cref{eq:multconn} we obtain, in the notation of \cref{eq:fustildemaps},
\begin{align*}
(\tilde\mu^{\sigma})^{*}\nu_{\sigma} = \tilde\pi_1^{*}\nu_{\sigma}+ \tilde\pi_2^{*}\nu_\sigma +\zeta  
\end{align*}
where $\zeta\in\Omega^1(PG^{[3]})$ is given by
\begin{equation}
\label{eq:zeta}
\zeta \defeq  p_2^{*}i^{*}\rho^{*}\nu_{\sigma}+Z_{\sigma}(\pi_2,p_2^{*}i^{*}\Delta^{*}\theta^{LG})+Z_{\sigma}(1\cup p',\pi_1^{*}\theta^{LG})\text{,}
\end{equation}
where the maps $p_2,p':PG^{[3]}\to PG$ are $p_2(\beta_{1},\beta_2,\beta_3):=\beta_2$ and $p'(\beta_1,\beta_2,\beta_3):=i(\beta_2)\beta_3$, and the map $i: PG \to PG$ is the pointwise inversion. We shall prove that $\zeta=0$. By \cref{lem:fusfacflat}  the first summand in \cref{eq:zeta} vanishes. We write the second summand using \cref{eq:ruleZsigma} as
\begin{equation*}
Z_{\sigma}(\pi_2,p_2^{*}i^{*}\Delta^{*}\theta^{LG})=Z_{\sigma}((\Delta\circ p_2)\cdot (1\cup p'),p_2^{*}i^{*}\Delta^{*}\theta^{LG})=p_2^{*}\Delta^{*}Z_{\sigma}(id,i^{*}\theta^{LG})-Z_{\sigma}(1\cup p',p_2^{*}\Delta^{*}\theta^{LG})\text{.}
\end{equation*}
It is straightforward to show that $\Delta^{*}Z_{\sigma}(id,i^{*}\theta^{LG})=0$, using that the composition $\sigma\circ\Delta$ is a group homomorphism (see \cref{def:fusfac,def:fusfaccomp}).      
All together, we obtain
\begin{equation*}
\zeta= -Z_{\sigma}(1\cup p',p_2^{*}\Delta^{*}\theta^{LG})+Z_{\sigma}(1\cup p',\pi_1^{*}\theta^{LG})=-Z_{\sigma}(1\cup p',p_2^{*}\Delta^{*}\theta^{LG}-\pi_1^{*}\theta^{LG})\text{.}
\end{equation*}
We claim that the values of the 1-form $p_2^{*}\Delta^{*}\theta^{LG}-\pi_1^{*}\theta^{LG}$ on $PG^{[3]}$  are of the form $X \cup 0 \in L\mathfrak{g}$, which proves via \cref{lem:Zsigmazero}  that $\zeta=0$. We consider a plot $c:U \to PG^{[3]}$, consisting of three plots $c_1,c_2,c_3: U \to PG$. Let $x\in U$ and $v\in T_xU$. We compute
\begin{align*}
(\pi_1^{*}\theta^{LG})_c|_x(v)(t)&=(\pi_1 \circ c)^{*}\theta^{LG}|_x(v)(t)\\&=\theta^{G}|_{(c_1 \cup c_2)(x)(t)}(\mathrm{d}(c_1 \cup c_2)|_x(v)(t))\\&=
\begin{cases}
\theta^{G}|_{c_1^{\vee}(x,2t)}(\mathrm{d}c_1^{\vee}|_{(x,2t)}(v,2t)) & \text{ if } 0\leqslant t \leq \pi \\
\theta^{G}|_{c_2^{\vee}(x,2-2t)}(\mathrm{d}c_2^{\vee}|_{(x,2-2t)}(v,2-2t)) & \text{ if } \pi \leqslant t \leqslant 2 \pi
\end{cases}
\end{align*}
and similarly,
\begin{equation*}
(p_2^{*}\Delta^{*}\theta^{LG})_c|_x(v)(t)=
\begin{cases}
\theta^{G}|_{c_2^{\vee}(x,2t)}(\mathrm{d}c_2^{\vee}|_{(x,2t)}(v,2t)) & \text{ if } 0\leqslant t \leq \pi \\
\theta^{G}|_{c_2^{\vee}(x,2-2t)}(\mathrm{d}c_2^{\vee}|_{(x,2-2t)}(v,2-2t)) & \text{ if } \pi \leqslant t \leqslant 2\pi  
\end{cases}
\end{equation*}
This proves the claim.
\end{proof}

\subsection{Fusion factorization for implementers}

\label{sec:fusionofimplementers}

In this subsection we equip our operator-algebraic model of $\widetilde{L\Spin}(d)$ discussed in \cref{sec:basic} with a multiplicative fusion product. For this purpose, we first prove that this central extension is admissible, and then  construct a canonical fusion factorization.

\begin{proposition}
\label{prop:univcead}
        The central  extension $\U(1)\to\widetilde{L\Spin}(d) \rightarrow L\Spin(d)$ is admissible.
\end{proposition}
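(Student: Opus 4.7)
The plan is to reduce admissibility of $\widetilde{L\Spin}(d)$ to the commutativity result \cref{lem:ImpCommute} concerning the subgroups $\Imp_0(V)_-$ and $\Imp_0(V)_+$ of $\Imp(V)$. The decisive geometric observation is that if $\beta_1, \beta_3 \in P\Spin(d)$ both have endpoints at the unit of $\Spin(d)$, then the loop $\beta_1 \cup 1 = \bar 1 \star \beta_1$ is the constant unit on the closed semicircle where the second factor lives, and the loop $1 \cup \beta_3 = \bar\beta_3 \star 1$ is the constant unit on the opposite closed semicircle. Under the pointwise action $M: L\Spin(d) \to \O_{\res}(V)$, these loops therefore fix $V_-$ or $V_+$ pointwise, i.e., $M(\beta_1 \cup 1) \in \O_{\res}(V)_{\varepsilon}$ and $M(1 \cup \beta_3) \in \O_{\res}(V)_{-\varepsilon}$ for an appropriate sign $\varepsilon \in \{+,-\}$.

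Next, since $\Spin(d)$ is connected and simply connected, $L\Spin(d)$ is connected, so its image under $M$ lies in $\O_{\res;0}(V)$. In particular, the images $\widetilde{M}(q_{12})$ and $\widetilde{M}(q_{23}) \in \Imp(V)$ lie in $\Imp_0(V)_{\varepsilon}$ and $\Imp_0(V)_{-\varepsilon}$, respectively. By \cref{lem:ImpCommute}, these two elements commute in $\Imp(V)$.

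To pass from commutativity in $\Imp(V)$ to commutativity in $\widetilde{L\Spin}(d)$, I would recall that $\widetilde{L\Spin}(d)$ is the pullback of $\Imp(V) \to \O_{\res}(V)$ along $M$, so its elements may be taken to be pairs $(\gamma, U)$ with $M(\gamma) = q(U)$, multiplied componentwise. Writing $q_{12} = (\beta_1 \cup 1, U_{12})$ and $q_{23} = (1 \cup \beta_3, U_{23})$, the loops $\beta_1 \cup 1$ and $1 \cup \beta_3$ commute in $L\Spin(d)$ because at every point of $S^{1}$ at least one factor is the unit of $\Spin(d)$, and the implementers $U_{12}$ and $U_{23}$ commute in $\Imp(V)$ by the previous step. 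Hence $q_{12}q_{23} = q_{23}q_{12}$ in $\widetilde{L\Spin}(d)$.

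I do not anticipate any real obstacle beyond bookkeeping: one must track which semicircle each loop is supported on and verify the loops indeed lie in $\O_{\res}^{\theta}(V)$, and then appeal to connectedness of $L\Spin(d)$ to place the lifts inside the identity component $\Imp_0(V)$, which is precisely the hypothesis under which \cref{lem:ImpCommute} applies. The non-trivial input is therefore wholly contained in \cref{lem:ImpCommute}, whose proof relies on twisted Haag duality (\cref{lem:EvenHaagDuality}) and \cref{LSpinEven}.
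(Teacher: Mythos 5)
Your proposal is correct and follows essentially the same route as the paper: identify $M(\beta_1\cup 1)\in\O_{\res}(V)_{-}$ and $M(1\cup\beta_3)\in\O_{\res}(V)_{+}$, use connectedness of $L\Spin(d)$ (equivalently, \cref{LSpinEven}) to place the lifted implementers in $\Imp_0(V)_{\mp}$, and conclude by \cref{lem:ImpCommute}. The only difference is that you spell out the componentwise commutation in the pullback, which the paper leaves implicit.
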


\begin{proof}
        Let $\beta_{1},\beta_{3} \in P\Spin(d)$ with endpoints equal to the identity  of $L\Spin(d)$. We see that $M(\beta_{1} \cup 1) \in \O_{\res}(V)_{-}$ and $M(1 \cup \beta_{3}) \in \O_{\res}(V)_{+}$, see \cref{sec:RestrictionToEven}. Now, let $q_{12} \in \widetilde{L \Spin}(d)_{\beta_{1} \cup 1}$ and let $q_{23} \in \widetilde{L\Spin}(d)_{1\cup \beta_{3}}$. \cref{LSpinEven} tells us that $M(q_{12}) \in \Imp(V)_{-}$ and $M(q_{23}) \in \Imp(V)_{+}$ are even. Then we apply \cref{lem:ImpCommute} to conclude that $M(q_{12})$ commutes with $M(q_{23})$ and hence $q_{12}$ commutes with $q_{23}$, and we are done.
\end{proof}

In the remainder of this section we will construct a fusion factorization for $\widetilde{L\Spin}(d)$. In fact, we will define a smooth group homomorphism  $\rho: P \Spin(d) \rightarrow \Imp(V)$ such that the diagram
 \begin{equation}
 \label{eq:diagramrho}
        \begin{tikzpicture}
                \node (B) at (4,1) {$\Imp(V)$};
                \node (C) at (0,0) {$P\Spin(d)$};
                \node (D) at (2,0) {$L\Spin(d)$};
                \node (E) at (4,0) {$\O_{\res}(V)$};
                \path[->,font=\scriptsize]
                (C) edge node[above]{$\rho$} (B)
                (B) edge node[right]{$q$} (E)
                (C) edge node[below]{$\Delta$} (D)
                (D) edge node[below]{$M$} (E);
        \end{tikzpicture}
        \end{equation}
 is commutative; this induces a fusion factorization in the obvious way. We start by considering the diffeological group 
\begin{equation*}
        \Imp'(V) \defeq (M\circ \Delta)^{*} \Imp(V) = P\Spin (d) \ttimes{\Delta\circ M}{q} \Imp(V),
\end{equation*}
which is a central extension of $P\Spin(d)$ by $\U(1)$.
We will first reduce it to a central extension by $\mathbb{Z}_{2}$. Let $(\beta,U) \in \Imp'(V)$.
We overload the letter $M$ to denote the obvious map $M: P\Spin(d) \rightarrow \O(V_{-})$, i.e., $M(\beta) = M(\Delta(\beta))|_{V_{-}}$.
Then, using  \cref{thm:KappaTau}, we see that 
\begin{equation*}
q(\kappa(U)) =\tau(M(\Delta(\beta)))= \tau(M(\beta) \oplus \tau M(\beta)\tau^{*}) = M(\beta) \oplus \tau M(\beta)\tau^{*}\text{.}
\end{equation*}
Hence, $U \kappa(U)^{*}$ implements the identity operator, so that $U\kappa(U)^{*} \in \U(1)$. This allows us to define a map $w$ as follows
\begin{align*}
        w: \Imp'(V) &\rightarrow \U(1)\;,\; 
        ( \beta, U) \mapsto U \kappa(U)^{*};
\end{align*}
this map is smooth, because the projection $\Imp'(V) \to \Imp(V)$ is smooth, $\Imp(V)$ is a Lie group, and  $\kappa$ is smooth by \cref{lem:KappaIsSmooth}.
It is straightforward to show that $w$ is a group homomorphism and satisfies  $w( \beta,\lambda U) = \lambda^2 w( \beta,U)$ for all $\lambda \in \U(1)$. It is well-known that such a map determines a reduction of a  central extension from $\U(1)$ to $\mathbb{Z}_2$; in our case, we have a commutative diagram
\begin{equation*}
                \begin{tikzpicture}[scale=1.3]
                \node (E) at (0,2) {$\mathbb{Z}_2$};
                \node (F) at (2,2) {$\U(1)$};
                \node (A) at (0,1) {$w^{-1}(1)$};
                \node (B) at (2,1) {$\Imp'(V)$};
                \node (C) at (0,0) {$P\Spin(d)$};
                \node (D) at (2,0) {$P\Spin(d)$};
                \path[->,font=\scriptsize]
                (E) edge (F)
                (E) edge (A)
                (F) edge (B)
                (A) edge  (B)
                (A) edge  (C)
                (B) edge (D);
                \draw [double equal sign distance] (C) to [out=0, in=180] (D);
        \end{tikzpicture}
\end{equation*}
of diffeological groups and smooth group homomorphisms, whose vertical sequences are exact sequences of groups.

Next we use the modular conjugation $J: \mc{F} \rightarrow \mc{F}$ corresponding to the triple $(\Cl(V_{-})'', \mc{F}, \Omega)$, see \cref{sec:freefermions}. Let $(\beta,U) \in \Imp'(V)$, then using that $U$ is even, \cref{lem:kJImplementsKappa,thm:modular} one sees that $\kappa(U) = JUJ$.
Hence if $(\beta,U) \in w^{-1}(1)$, then $1=U\kappa(U)^{*} = UJU^{*}J $, and hence $UJ = JU$.

The next step is to define a group homomorphism $r:w^{-1}(1) \rightarrow \mathbb{Z}_{2}$; such a group homomorphism then induces a splitting.
To this end, we require the theory of standard forms of von Neumann algebras, see \cref{sec:FockSpaceAsStandardForm}.
Let $(\beta,U) \in  w^{-1}(1)$. We define two cones in $\mc{F}$ as follows
\begin{align*}
        P_{\Omega} &= \{ aJa \Omega \in \mc{F} \mid a \in \Cl(V_{-})'' \}^{\operatorname{cl}}, \\
        P_{U\Omega} &= \{ aJaU \Omega \in \mc{F} \mid a \in \Cl(V_{-})'' \}^{\operatorname{cl}}.
\end{align*}
We then have that the quadruples $(\Cl(V_{-})'',\mc{F},J,P_{\Omega})$ and $(\Cl(V_{-})'',\mc{F},J,P_{U\Omega})$ are standard forms for the von Neumann algebra $\Cl(V_{-})''$, see \cref{def:GeneralStandardForm} and \cref{lem:FockRepIsStandard}. To see that the second quadruple is a standard form, note that the modular conjugation $J_{U\Omega}$ corresponding to the cyclic and separating vector $U\Omega$ is equal to $J$, since $J_{U\Omega} = UJU^{*} = J$.
\cref{thm:StandardFormIsUnique} then implies that there is a unique unitary $u: \mc{F} \rightarrow \mc{F}$ with the following properties
\begin{enumerate}[(1)]
        \item For all $a \in \Cl(V_{-})''$ we have $a = uau^{*}$.
        \item  $u = JuJ$.
        \item  $uP_{\Omega} = P_{U\Omega}$. 
\end{enumerate}
We define $r(\beta,U)\defeq  u$. In the first place, this defines a map $r: w^{-1}(1) \rightarrow \U (\mc{F})$.

It is clear that the operators $\pm \1$ satisfy (1) and (2). The next point of business is to show that $P_{\Omega} = \pm P_{U\Omega}$, from which it follows that $u = \pm \1$. In the sequel, we shall require the even Fock space.
Recall that $U$ is even. Then we define two even cones in $\mc{F}_{0}$ as
        \begin{align*}
                P_{\Omega;0} &= \{ aJa \Omega \in \mc{F}_{0} \mid a \in \Cl(V_{-})_{0}'' \}^{\operatorname{cl}}, \\
                P_{U\Omega;0} &= \{ aJaU \Omega \in \mc{F}_{0} \mid a \in \Cl(V_{-})_{0}'' \}^{\operatorname{cl}}.
        \end{align*}
The quadruples $(\Cl(V_{-})_{0}'',\mc{F}_{0},J,P_{\Omega;0})$ and $(\Cl(V_{-})_{0}'',\mc{F}_{0},J,P_{U\Omega;0})$ are standard forms for the von Neumann algebra $\Cl(V_{-})_{0}''$. The following result is \cite[Theorem 4, parts (5) and (4)]{Ara74}.
\begin{lemma}\label{lem:ConeComparison}
        Let $\xi$ be a cyclic and separating vector in $\mc{F}_{0}$. Then $\xi \in P_{\Omega;0}$ if and only if $J_{\xi} = J$ and
        \begin{equation}\label{eq:CyclicComparison}
                \langle \xi, z \Omega \rangle \geqslant 0
        \end{equation}
        for all $z \in \Cl(V_{-})_{0}'' \cap (\Cl(V_{-})_{0}'')'$, with $z \geqslant 0$.
           Furthermore, if $\xi \in P_{\Omega;0}$ is a cyclic and separating vector, then $P_{\xi} = P_{\Omega;0}$.
\end{lemma}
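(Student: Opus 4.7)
The statement is quoted from Araki \cite{Ara74}, and my plan is to follow his argument in our setting, with the backbone being the uniqueness of standard forms (\cref{thm:StandardFormIsUnique}) combined with the intrinsic properties of the natural positive cone $P_{\Omega;0}$. The overall picture is that the cone is characterized by two intrinsic data: invariance under $J$ (forcing $J_{\xi}=J$ for cyclic separating $\xi$ in the cone) and a positivity condition against the center $\mc{Z} \defeq \Cl(V_{-})_0'' \cap (\Cl(V_{-})_0'')'$.

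For the forward implication, suppose $\xi \in P_{\Omega;0}$. That $J\xi = \xi$ is built into the definition of a standard form (\cref{def:GeneralStandardForm}(3)). To see $J_\xi = J$, I would check that $J$ implements the involution $a\xi \mapsto a^*\xi$ for $a \in \Cl(V_{-})_0''$: using $JaJ \in (\Cl(V_{-})_0'')'$ one computes $J(a\xi) = (JaJ)J\xi = (JaJ)\xi$, and since $\xi$ is separating and everything extends continuously, this coincides with $a^*\xi$ on the generating vectors $aJa\Omega$. The positivity inequality is first established on these generating vectors: for central positive $z = y^*y$ with $y$ central, one rewrites
\begin{equation*}
\langle aJa\Omega, z\Omega\rangle = \langle ya\Omega, y^*aJa\Omega\rangle
\end{equation*}
and, using centrality of $y$ together with the relation $Jy\Omega = y^*\Omega$ (valid on $\mc{Z}\Omega$ since $y$ is modular-invariant), rearranges this into a manifest squared norm. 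Passing to limits then gives the inequality for all $\xi \in P_{\Omega;0}$.

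For the reverse implication, assume $J_\xi = J$ and the positivity condition. Then $(\Cl(V_{-})_0'', \mc{F}_0, J, P_\xi)$ and $(\Cl(V_{-})_0'', \mc{F}_0, J, P_{\Omega;0})$ are both standard forms of the same algebra with the same modular conjugation. Applying \cref{thm:StandardFormIsUnique} to the identity automorphism of $\Cl(V_{-})_0''$ yields a unique unitary $u$ commuting with every $a\in\Cl(V_{-})_0''$ and with $J$, and satisfying $uP_{\Omega;0} = P_\xi$. The first two conditions place $u$ in $\mc{Z}$. The positivity hypothesis on $\xi$ then translates to $\langle u\Omega, z\Omega\rangle \geq 0$ for all positive central $z$; spectral calculus on the abelian von Neumann algebra $\mc{Z}$ pins this down to $u=1$, whence $P_\xi = P_{\Omega;0}$ and in particular $\xi \in P_{\Omega;0}$. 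The second assertion of the lemma falls out of the same uniqueness argument: when $\xi \in P_{\Omega;0}$ is cyclic and separating, the positivity hypothesis is automatic, the intertwining $u$ is forced to equal $\1$, and hence $P_\xi = P_{\Omega;0}$.

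The main delicate point is the handling of the center $\mc{Z}$. If $\Cl(V_{-})_0''$ were a factor, the whole argument would collapse to the much simpler statement $\langle \xi, \Omega\rangle \geq 0$; but in the $\mathbb{Z}_2$-graded setting one cannot rule out a nontrivial center a priori (unlike the ungraded case where $\Cl(V_-)''$ is a type III$_1$ factor), and one has to carry the spectral theory on $\mc{Z}\Omega$ through every step. Once that bookkeeping is organized, the rest is a routine application of standard Tomita--Takesaki machinery plus the uniqueness theorem for standard forms.
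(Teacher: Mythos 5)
First, a point of comparison: the paper does not prove this lemma at all --- it is quoted as \cite[Theorem 4, parts (5) and (4)]{Ara74}, so you are supplying an argument where the authors supply only a citation. Your architecture for the converse direction and for the final assertion is the standard one and is essentially sound: under the hypothesis $J_{\xi}=J$ the quadruple $(\Cl(V_{-})_{0}'',\mc{F}_{0},J,P_{\xi})$ is a standard form, \cref{thm:StandardFormIsUnique} applied to the identity automorphism produces a unitary $u$ lying in $\Cl(V_{-})_{0}''\cap(\Cl(V_{-})_{0}'')'$ with $uP_{\Omega;0}=P_{\xi}$, and the positivity condition against the centre forces $u=\1$. (Two small caveats: the translation of the hypothesis into ``$\langle u\Omega,z\Omega\rangle\geqslant 0$'' is not immediate, since what one has directly is $\xi=u\eta$ with $\eta=u^{*}\xi\in P_{\Omega;0}$, so the spectral argument in the centre needs a little more care; and your worry about a possibly nontrivial centre is moot in the paper's application, where $\Cl(V_{-})_{0}''\cap(\Cl(V_{-})_{0}'')'=\C$ by \cref{lem:EvenHaagDuality} --- which is exactly why the authors replace $z$ by $1$ immediately after stating the lemma.)

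The genuine gap is in your forward direction, in the claim that $\xi\in P_{\Omega;0}$ implies $J_{\xi}=J$ because ``$J$ implements $a\xi\mapsto a^{*}\xi$''. The identity $J(a\xi)=(JaJ)\xi$ is correct (using $J\xi=\xi$), but $(JaJ)\xi$ lies in $(\Cl(V_{-})_{0}'')'\xi$ and does \emph{not} equal $a^{*}\xi$: if $Ja\xi=a^{*}\xi$ held for all $a$, the Tomita operator $S_{\xi}$ would coincide with $J$ on $\Cl(V_{-})_{0}''\xi$, forcing $\Delta_{\xi}=\1$, i.e.\ $\xi$ would be a tracial vector. This already fails for $\xi=\Omega$ itself, since the explicit computation in \cref{sec:ModularConjugation} shows $\Delta^{1/2}=\Lambda_{\delta^{1/2}}$ with $\delta^{1/2}\neq\1$ (indeed $\Cl(V_{-})''$ is of type III$_{1}$, so no tracial vectors exist). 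The correct route --- and Araki's --- is to establish $P_{\xi}=P_{\Omega;0}$ first, using the self-duality of the natural cone and the spectral/facial analysis of its elements, and then deduce $J_{\xi}=J$ from the fact that the modular conjugation is determined by its self-dual cone; this is the nontrivial content of part (4) of Araki's theorem and cannot be obtained from your one-line calculation. The positivity half of the forward direction is fine: for central $z=y^{*}y$ one has $z\Omega=yJy\Omega\in P_{\Omega;0}$, and self-duality of the cone gives $\langle\xi,z\Omega\rangle\geqslant 0$.
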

Note that in our case, we have $\Cl(V_{-})_{0}'' \cap (\Cl(V_{-})_{0}'')' = \C$ (see Theorem \ref{thm:HaagDuality}), hence we may replace $z$ in \cref{eq:CyclicComparison} with $1$.

\begin{lemma}\label{lem:rInZ2}
        If $(\beta,U) \in w^{-1}(1)$, then either $P_{U \Omega} = P_{\Omega}$ (and then $P_{U \Omega;0} = P_{\Omega;0}$) or $P_{U \Omega} = -P_{\Omega}$ (and  then $P_{U \Omega;0} = -P_{\Omega;0}$).
\end{lemma}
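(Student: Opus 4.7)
The strategy is to apply the uniqueness of standard forms (\cref{thm:StandardFormIsUnique}) to the identity automorphism of $\Cl(V_{-})''$, comparing $(\Cl(V_{-})'', \mc{F}, J, P_{\Omega})$ with $(\Cl(V_{-})'', \mc{F}, J, P_{U\Omega})$. The hypothesis $(\beta, U) \in w^{-1}(1)$ yields $UJ = JU$, as already recorded in this section, whence $J_{U\Omega} = UJU^{*} = J$ and the second quadruple is indeed a standard form of $\Cl(V_{-})''$. The uniqueness theorem then produces a unique unitary $u \in \U(\mc{F})$ satisfying (i) $uau^{*} = a$ for all $a \in \Cl(V_{-})''$, (ii) $u = JuJ$, and (iii) $uP_{\Omega} = P_{U\Omega}$.

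The key computation is to force $u = \pm \1$. Condition (i) places $u \in \Cl(V_{-})'$, and condition (ii) together with the Tomita--Takesaki consequence $J\Cl(V_{-})'J = \Cl(V_{-})''$ (from \cref{th:maintota}) places $u \in \Cl(V_{-})''$. Hence $u$ lies in the center $\Cl(V_{-})'' \cap \Cl(V_{-})'$, which equals $\C\cdot \1$ because $\Cl(V_{-})''$ is a type III$_{1}$ factor. Writing $u = \lambda \cdot \1$ with $\lambda \in \U(1)$, the anti-linearity of $J$ yields $JuJ = \bar{\lambda} \cdot \1$, so condition (ii) forces $\lambda = \bar{\lambda} \in \R$, i.e.\ $\lambda = \pm 1$. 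Condition (iii) then gives $P_{U\Omega} = \pm P_{\Omega}$.

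For the graded statement, the same argument applies verbatim to the even standard forms $(\Cl(V_{-})_{0}'', \mc{F}_{0}, J|_{\mc{F}_{0}}, P_{\Omega;0})$ and $(\Cl(V_{-})_{0}'', \mc{F}_{0}, J|_{\mc{F}_{0}}, P_{U\Omega;0})$, producing a unitary $u_{0}$ on $\mc{F}_{0}$ which is likewise forced to be $\pm \1$ by the same center-of-a-factor argument, now using untwisted Haag duality from \cref{lem:EvenHaagDuality}. That the signs agree in the graded and ungraded statements follows from the uniqueness clause itself: the restriction $u|_{\mc{F}_{0}}$ satisfies the defining conditions of $u_{0}$, so $u_{0} = u|_{\mc{F}_{0}}$. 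The main obstacle I expect is the short anti-linear bookkeeping giving $JuJ = \bar\lambda \cdot \1$; beyond this, the proof is a direct application of the uniqueness theorem combined with the factor property.
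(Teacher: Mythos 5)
Your argument for the ungraded claim is correct, and it takes a genuinely different route from ours. We first show that $\langle \Omega, U\Omega\rangle$ is real (using $J\Omega=\Omega$ and $JU=UJ$) and then apply the inner-product criterion of \cref{lem:ConeComparison} in each sign case to locate the even vector $\pm U\Omega$ inside $P_{\Omega;0}\subset P_{\Omega}$; this yields the full and the even cone identities simultaneously, with matching signs, and only afterwards do we conclude $u=\pm\1$ from the uniqueness clause. You run \cref{thm:StandardFormIsUnique} in the forward direction instead: condition (1) places $u$ in $\Cl(V_-)'$, condition (2) together with $J\Cl(V_-)'J=\Cl(V_-)''$ (from \cref{th:maintota}) places it in $\Cl(V_-)''$, the factor property gives $u=\lambda\1$, and anti-linearity of $J$ forces $\lambda=\pm1$, whence $P_{U\Omega}=uP_\Omega=\pm P_\Omega$. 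This is clean and structural, and it buys you the main dichotomy without any explicit computation with the vacuum vector.

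There is, however, a genuine gap in your treatment of the parenthetical claims, i.e.\ that the sign for the even cones agrees with the sign for the full cones. To identify $u_0$ with $u|_{\mc F_0}$ via the uniqueness clause you must verify condition (3) for $u|_{\mc F_0}$, namely $u|_{\mc F_0}P_{\Omega;0}=P_{U\Omega;0}$; since $u=\epsilon\1$, this is precisely the assertion $\epsilon P_{\Omega;0}=P_{U\Omega;0}$ that you are trying to prove, so the step is circular as written. (Your parallel application of the uniqueness theorem on $\mc F_0$ does give $P_{U\Omega;0}=\pm P_{\Omega;0}$, but with an a priori independent sign.) A non-circular repair: $U\Omega\in P_{U\Omega;0}\subseteq P_{U\Omega}=\epsilon P_\Omega$, so $\epsilon U\Omega\in P_\Omega$; self-duality of $P_\Omega$ and $\Omega\in P_\Omega$ give $\langle \epsilon U\Omega,\Omega\rangle\geqslant 0$, and then \cref{lem:ConeComparison}, applied to the cyclic and separating even vector $\epsilon U\Omega$ (whose modular conjugation is $J$), gives $\epsilon U\Omega\in P_{\Omega;0}$ and hence $P_{U\Omega;0}=\epsilon P_{\Omega;0}$. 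Note that this repair essentially reproduces the inner-product argument that our proof uses for the entire lemma.
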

\begin{proof}
        We compute
        \begin{equation*}
                \langle \Omega, U \Omega \rangle = \langle J \Omega, U \Omega \rangle = \langle J U \Omega, \Omega \rangle = \langle U \Omega, \Omega \rangle,
        \end{equation*}
        from which follows that $\langle \Omega, U \Omega \rangle$ is real. We now distinguish the following three cases.
        \begin{description}
                \item[$\langle \Omega, U \Omega \rangle > 0$]: In this case,  \cref{lem:ConeComparison} tells us that $U\Omega \in P_{\Omega;0} \subset P_{\Omega}$ and hence $P_{\Omega} = P_{U\Omega}$.
                \item[$\langle \Omega, U \Omega \rangle < 0$]: In this case we have $\langle \Omega, -U \Omega \rangle > 0$, and hence  \cref{lem:ConeComparison} tells us that $P_{\Omega} = P_{-U\Omega} = -P_{U\Omega}$.
                \item[$\langle \Omega, U \Omega \rangle = 0$]: Using  \cref{lem:ConeComparison} it follows that $P_{U\Omega} = P_{\Omega} = - P_{U\Omega}$, a contradiction, hence this case cannot occur. \qedhere
        \end{description}
\end{proof}

It follows that $r(\beta,U) \in \mathbb{Z}_2$.
\begin{lemma}
        The map $r:w^{-1}(1) \to \Z_2$ is a group homomorphism.
\end{lemma}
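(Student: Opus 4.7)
The plan is to use the uniqueness clause of \cref{thm:StandardFormIsUnique}: given $(\beta_1, U_1), (\beta_2, U_2) \in w^{-1}(1)$ and setting $u_i \defeq r(\beta_i, U_i) \in \{\pm \1\}$, I will check that the scalar $u_1 u_2$ satisfies the three properties (1)--(3) that characterize $r(\beta_1 \beta_2, U_1 U_2)$. Uniqueness then forces $r(\beta_1 \beta_2, U_1 U_2) = u_1 u_2$, which is exactly the homomorphism property.

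Properties (1) and (2) are immediate. Since $u_1 u_2$ is a real scalar, conjugation by it fixes every element of $\Cl(V_-)''$; and since $J$ is anti-linear with $J^2 = \1$, one has $J(u_1 u_2) J = u_1 u_2 \cdot J^2 = u_1 u_2$.

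The heart of the argument is property (3), i.e.\ $u_1 u_2 P_\Omega = P_{U_1 U_2 \Omega}$. The key auxiliary fact I would establish first is: if $U \in \Imp(V)$ is even, commutes with $J$, and implements a Bogoliubov automorphism $\theta_g$ of $\Cl(V_-)''$, then $U \cdot P_\xi = P_{U\xi}$ for every vector $\xi \in \mc{F}$ that is cyclic-separating for $\Cl(V_-)''$ and satisfies $J_\xi = J$. This follows from the identity
\begin{equation*}
U \cdot a J a \xi = (UaU^*)(UJU^*)(UaU^*)\, U\xi = \theta_g(a)\, J\, \theta_g(a)\, U\xi, \qquad a \in \Cl(V_-)'',
\end{equation*}
in which $UJU^* = J$ because $U$ commutes with $J$, combined with the fact that $a \mapsto \theta_g(a)$ is a bijection of $\Cl(V_-)''$.

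Applying this fact with $(U,\xi) = (U_1, \Omega)$ gives $U_1 P_\Omega = P_{U_1 \Omega} = u_1 P_\Omega$, and with $(U,\xi) = (U_1, U_2 \Omega)$ gives $U_1 P_{U_2 \Omega} = P_{U_1 U_2 \Omega}$. Combining these with $P_{U_2 \Omega} = u_2 P_\Omega$ and using that the scalar $u_2$ commutes with $U_1$ yields
\begin{equation*}
P_{U_1 U_2 \Omega} = U_1(u_2 P_\Omega) = u_2 U_1 P_\Omega = u_1 u_2 P_\Omega,
\end{equation*}
which is property (3). The auxiliary hypotheses on $U_1$ (even and commuting with $J$) and on $U_2 \Omega$ (cyclic-separating with $J_{U_2\Omega} = J$) are all recorded in the discussion preceding the construction of $r$; no new input is required, and the only delicate point to track is the centrality of the real scalars $u_i$.
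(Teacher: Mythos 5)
Your proof is correct, but it follows a genuinely different route from the one in the paper. The paper first observes that $r$ is $\Z_2$-equivariant, reducing the claim to showing that $r(\beta,U)=r(\beta',U')=1$ implies $r(\beta\beta',UU')=1$, and then settles this with the inner-product criterion of \cref{lem:ConeComparison}: from $P_{\Omega}=P_{U'\Omega}$ one gets $0\leqslant\langle U'\Omega,\Omega\rangle=\langle UU'\Omega,U\Omega\rangle$, and a second application of \cref{lem:ConeComparison} (relative to the base vector $U\Omega$) yields $P_{UU'\Omega}=P_{U\Omega}=P_{\Omega}$. You instead prove the covariance identity $U\,aJa\,\xi=\theta_g(a)J\theta_g(a)\,U\xi$, hence $UP_{\xi}=P_{U\xi}$ for any implementer $U$ that commutes with $J$ and normalizes $\Cl(V_-)''$, and then feed $u_1u_2$ through the uniqueness clause of \cref{thm:StandardFormIsUnique}. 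The one point worth making explicit in your auxiliary fact is that $U_1\Cl(V_-)''U_1^{*}=\Cl(V_-)''$, which holds because the implemented operator $M(\beta_1)\oplus\tau M(\beta_1)\tau^{*}$ preserves $V_-$; evenness of $U_1$ is not actually what is used there. Your argument buys a self-contained, purely structural proof of multiplicativity that avoids \cref{lem:ConeComparison} entirely at this step (though that lemma is still needed for \cref{lem:rInZ2}, i.e.\ to know $u_i=\pm\1$ in the first place), and the identity $UP_{\xi}=P_{U\xi}$ is a clean reusable fact; the paper's argument is shorter because it leans on the cone-comparison machinery it has already set up.
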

\begin{proof}
        One sees easily that $r$ is $\mathbb{Z}_{2}$-equivariant. Now, it suffices to show that for all $(\beta,U), (\beta',U') \in w^{-1}(1)$ with
        \begin{equation*}
                r(\beta,U) = 1 = r(\beta',U')
        \end{equation*}
        we have $r(\beta \beta', U U') = 1$. So, let $(\beta,U)$ and $(\beta',U')$ have this property. It is now sufficient to show that $P_{UU'\Omega} = P_{\Omega}$. By assumption we have that $P_{\Omega} = P_{U'\Omega}$, which, by \cref{lem:ConeComparison} implies that
        \begin{equation*}
                0 \leqslant \langle U' \Omega, \Omega \rangle = \langle UU' \Omega, U \Omega \rangle,
        \end{equation*}
        from which, again using \cref{lem:ConeComparison}, it follows that $P_{UU'\Omega} = P_{U\Omega} = P_{\Omega}$, which concludes the proof.
\end{proof}

The group homomorphism $r$ trivializes the central extension $\Z_2 \to w^{-1}(1) \to P\Spin (d)$; the corresponding splitting assigns to $\beta\in P\Spin(d)$ the unique element $(\beta,U)$ in $w^{-1}(1)$  with $r(\beta,U)=\1$, i.e., the unique $(\beta,U)$ with  $UJ=JU$ and $P_{\Omega}=P_{U\Omega}$. In turn, we obtain via $w^{-1}(1)\subset \Imp'(V) \to \Imp(V)$ the claimed group homomorphism
\begin{equation}
\label{eq:deffusfac}
\rho:P\Spin(d) \to \Imp(V)\text{,}
\end{equation}  
making the diagram
\cref{eq:diagramrho} commutative.
We shall summarize the following two characterizations of this map.

\begin{lemma}
\label{lem:charrho}
Let $\beta\in P\Spin(d)$. Then,
\begin{enumerate}

\item 
$\rho(\beta)$ is the unique implementer of $M(\Delta(\beta))$ such that $\rho(\beta)J=J\rho(\beta)$ and $P_{\Omega}=P_{\rho(\beta)\Omega}$. 
\item
$\rho(\beta)$ is the unique implementer of $M(\Delta(\beta))$ such that $\langle \Omega, \rho(\beta) \Omega \rangle > 0$. 

\end{enumerate}
\end{lemma}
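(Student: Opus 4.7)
The plan is to unpack the definition of $\rho$ in \cref{eq:deffusfac} and show that both characterizations follow essentially by reorganizing material already present in the proof of \cref{lem:rInZ2}. Recall that $\rho(\beta)$ is the image in $\Imp(V)$ of the unique element $(\beta, U) \in w^{-1}(1)$ with $r(\beta, U) = \1$, so both defining conditions on $(\beta, U)$ will translate directly into the stated conditions in (1), while (2) will follow from the sign analysis in the proof of \cref{lem:rInZ2}. In each case there is a separate uniqueness argument to run within $\Imp(V)$ rather than within $\Imp'(V)$.

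For (1), I would first translate $(\beta, U) \in w^{-1}(1)$ into $U\kappa(U)^{*} = \1$. Since $U$ implements an element of $M(L\Spin(d))$, \cref{LSpinEven} tells us that $U$ is even, so $\kappa(U) = JUJ$ as observed in the text preceding \cref{lem:rInZ2}. The condition then becomes $UJ = JU$. The second condition $r(\beta, U) = \1$ unpacks to $P_{\Omega} = P_{U\Omega}$ directly from the definition of $r$. For uniqueness, any other implementer of the same orthogonal operator is of the form $\lambda\rho(\beta)$ with $\lambda \in \U(1)$. Requiring $\lambda\rho(\beta)J = J\lambda\rho(\beta)$ together with $\rho(\beta)J = J\rho(\beta)$ and the anti-linearity of $J$ yields $\lambda = \bar\lambda$, so $\lambda \in \{-1, +1\}$. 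The value $\lambda = -1$ is ruled out because the linear map $v \mapsto aJav$ (linear since $J$ is anti-linear and appears twice) gives $P_{-\rho(\beta)\Omega} = -P_{\rho(\beta)\Omega}$, while a non-zero self-dual cone $P$ satisfies $P \cap (-P) = \{0\}$, contradicting $\Omega \in P_{\Omega}$.

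For (2), I would first verify that $\rho(\beta)$ itself satisfies $\langle\Omega, \rho(\beta)\Omega\rangle > 0$. The realness of this inner product follows from the opening computation in the proof of \cref{lem:rInZ2}, which uses only $UJ = JU$ together with $J\Omega = \Omega$. Non-negativity follows because $\rho(\beta)\Omega \in P_{\rho(\beta)\Omega} = P_{\Omega}$ (take $a = \1$ in the defining set for $P_{\rho(\beta)\Omega}$ and use $J\Omega = \Omega$) and $\Omega \in P_{\Omega}$, so self-duality gives $\langle\Omega, \rho(\beta)\Omega\rangle \geqslant 0$. Strictness is ensured by the third-case contradiction in the proof of \cref{lem:rInZ2}: equality would force $P_{\rho(\beta)\Omega} = -P_{\rho(\beta)\Omega}$ and therefore $P_{\Omega} = \{0\}$, which is absurd. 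Uniqueness is then immediate: any competing implementer is $\lambda\rho(\beta)$ with $\lambda \in \U(1)$, and requiring $\lambda\langle\Omega, \rho(\beta)\Omega\rangle > 0$ with $\langle\Omega, \rho(\beta)\Omega\rangle > 0$ forces $\lambda = 1$.

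The only slightly delicate point I anticipate is the careful bookkeeping of the anti-linearity of $J$ when commuting scalars past it in the uniqueness arguments, both in the step $\lambda J = \bar\lambda J$ of (1) and in verifying $P_{-v} = -P_v$. Once these are handled, the proof is essentially a translation of the construction of $\rho$ and the three-case sign analysis already carried out in \cref{lem:rInZ2}, with no new computations required.
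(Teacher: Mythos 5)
Your proof is correct and follows essentially the same route as the paper: part (1) is the construction of $\rho$ unpacked (with the $\U(1)$-to-$\Z_2$ reduction via $UJ=JU$ and the cone condition selecting one of $\pm U$), and part (2) is derived from (1) via the realness computation, cone positivity, and the strictness argument of Lemma~\ref{lem:rInZ2}. The only cosmetic difference is that you use self-duality of $P_\Omega$ directly for the inequality $\langle\Omega,\rho(\beta)\Omega\rangle\geqslant 0$ where the paper invokes Lemma~\ref{lem:ConeComparison} with $z=1$; both are fine (and note that $v\mapsto aJav$ is anti-linear rather than linear, but since $-1$ is real your conclusion $P_{-v}=-P_v$ still holds).
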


\begin{proof}
The first characterization only repeats what we have. We now argue that the second characterization follows from the first. Applying \cref{lem:ConeComparison} for $\xi = \rho(\beta) \Omega$ and $z=1$, and using the fact that $P_{\Omega} = P_{\rho(\beta)\Omega}$ and that $J_{\rho(\beta)\Omega} = \rho(\beta)J\rho(\beta)^{*} = J_{\Omega}$ it follows that $\langle \Omega, \rho(\beta) \Omega \rangle \geqslant 0$. That the inequality is strict then follows from \cref{lem:rInZ2}. This characterization is unique because any two implementers of $M(\Delta(\beta))=M(\beta) \oplus \tau M(\beta) \tau^{*}$ are related by a unique $\lambda \in \U(1)$.
\end{proof}

Now we are in position to finalize our construction of a fusion factorization.

\begin{proposition}
\label{prop:rhoisfusfac}
The map $\rho:P\Spin(d) \to \Imp(V)$ defined in \cref{eq:deffusfac} induces a fusion factorization 
\begin{align*}
P\Spin(d) &\to \widetilde{L\Spin}(d)=M^{*}\Imp(V)
\\\beta&\mapsto (\Delta(\beta),\rho(\beta))\text{.}
\end{align*}
\end{proposition}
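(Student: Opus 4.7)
The plan is to verify the three conditions of Definition \ref{def:fusfac} for the map $\tilde\rho: \beta \mapsto (\Delta(\beta), \rho(\beta))$. Three are essentially already at hand: admissibility of the central extension is Proposition \ref{prop:univcead}; the map lands in $\widetilde{L\Spin}(d) = M^{*}\Imp(V)$ because the diagram \eqref{eq:diagramrho} commutes, and composing $\tilde\rho$ with the bundle projection returns $\Delta$, which is exactly the commutativity demanded in Definition \ref{def:fusfac}; and $\tilde\rho$ is a group homomorphism since both $\Delta$ and $\rho$ are group homomorphisms, the latter arising from the splitting of $w^{-1}(1) \to P\Spin(d)$ provided by the group homomorphism $r$. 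What remains, and is non-trivial, is to prove that $\rho: P\Spin(d) \to \Imp(V)$ is smooth as a map of diffeological spaces.

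For smoothness I would exploit the second characterization in Lemma \ref{lem:charrho}: $\rho(\beta)$ is the unique implementer of $M(\Delta(\beta))$ whose matrix coefficient $\langle \Omega, \rho(\beta)\Omega\rangle$ is a positive real number. The point is to reduce to a concrete formula in a neighborhood of any given point. Fix a plot $c: U \to P\Spin(d)$ and $x_0 \in U$. Since $\Imp(V) \to \O_{\res}(V)$ is a principal $\U(1)$-bundle of Banach Lie groups, there is a smooth local section $s$ defined on a neighborhood of $M(\Delta(c(x_0))) \in \O_{\res}(V)$. Composing, $\tilde s \defeq s \circ M \circ \Delta \circ c$ is a smooth lift of $M \circ \Delta \circ c$ to $\Imp(V)$, defined on some open neighborhood $U' \subseteq U$ of $x_0$.

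Since both $\rho \circ c$ and $\tilde s$ implement the same orthogonal operator pointwise on $U'$, there is a unique function $\mu: U' \to \U(1)$ with $\rho(c(x)) = \mu(x)\, \tilde s(x)$, and the characterization forces $\mu(x)\,\langle \Omega, \tilde s(x)\Omega\rangle \in \R_{>0}$. At $x = x_0$ this forces $\langle \Omega, \tilde s(x_0)\Omega\rangle \neq 0$, so by continuity of the matrix coefficient there is a smaller neighborhood $U'' \subseteq U'$ on which it remains non-zero, and on $U''$ the phase is given by the smooth formula
\[
\mu(x) = \frac{\overline{\langle \Omega, \tilde s(x)\Omega\rangle}}{\bigl|\langle \Omega, \tilde s(x)\Omega\rangle\bigr|}.
\]
Therefore $\rho \circ c = \mu \cdot \tilde s$ is smooth near $x_0$; as $x_0 \in U$ was arbitrary, it is smooth on all of $U$, and thus $\rho$ and $\tilde\rho$ are smooth.

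The main obstacle is precisely this smoothness step, because $\rho$ was obtained by an abstract uniqueness statement about standard forms (Theorem \ref{thm:StandardFormIsUnique}) that comes with no a priori smooth parametrization. The positivity clause in Lemma \ref{lem:charrho}(2) is what rescues us: it pins down the correct $\U(1)$-phase, relative to any smooth local lift, by a smooth formula involving the non-vanishing matrix coefficient $\langle \Omega, \tilde s(x)\Omega\rangle$.
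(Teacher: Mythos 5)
Your proposal is correct, and the non-trivial part (smoothness of $\rho$) is handled by a genuinely different argument from the paper's. The paper reduces smoothness of $\rho$ to showing that the $\Z_2$-valued homomorphism $r:w^{-1}(1)\to\Z_2$ is locally constant for the subspace diffeology; this is done by taking two nearby points $U_x,U_y$ of a plot, observing that $A=U_x-U_y$ commutes with $J$ so that $\langle U_x\Omega,U_y\Omega\rangle$ is real, estimating $\langle U_x\Omega,U_y\Omega\rangle\geqslant 1-\|A\|\geqslant 1/2$, and invoking \cref{lem:ConeComparison} to conclude that the cones $P_{U_x\Omega;0}$ and $P_{U_y\Omega;0}$ agree. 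You instead bypass the $\Z_2$-reduction entirely and work at the $\U(1)$ level: you take a smooth local lift $\tilde s$ of $M\circ\Delta\circ c$ through a local section of the Banach principal bundle $\Imp(V)\to\O_{\res}(V)$, and use the positivity characterization of \cref{lem:charrho}~(2) to pin down the phase $\mu(x)=\overline{\langle\Omega,\tilde s(x)\Omega\rangle}/|\langle\Omega,\tilde s(x)\Omega\rangle|$ explicitly; smoothness of $x\mapsto\langle\Omega,\tilde s(x)\Omega\rangle$ follows because $\Omega$ is a smooth vector for $\Imp(V)$. What your route buys is an explicit smooth local formula for $\rho$ and independence from the subspace diffeology on $w^{-1}(1)$; what it costs is nothing essentially new, since \cref{lem:charrho}~(2) itself rests on \cref{lem:ConeComparison}, so both arguments ultimately trace back to the same positivity-of-matrix-coefficient mechanism. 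Your dispatching of the remaining conditions (group homomorphism property, commutativity with $\Delta$, admissibility) matches the paper, which likewise treats only smoothness as the remaining issue.
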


\begin{proof}
It remains to show that $\rho$ is smooth, and for this, it suffices to show that the group homomorphism $r:w^{-1}(1) \to \Z_2$ is smooth, where $w^{-1}(1)\subset \Imp'(V)$ is equipped with the subspace diffeology.
The subspace diffeology consists of those plots $c:U \to \Imp'(V)$ whose image is in $w^{-1}(1)$. In particular, if $c:U \to w^{-1}(1)$ is a plot, then the projection to $\Imp(V)$ is smooth. We have to show that $r\circ c:U \to \Z_2$ is smooth, i.e., it is locally constant.
Consider $x\in U$, and let $(\beta_x,U_x):= c(x)$. Consider the open ball around $U$ in $\Imp(V)$ of radius $1/2$, and let $\mathcal{O} \subset U$ be its preimage under the smooth map $U \to \Imp(V)$. We will show that $r$ is constant on $\mathcal{O}$, this proves the lemma. 

Let $y\in \mathcal{O}$, and write $(\beta_y,U_y)\defeq  c(y)$. We want to show that the cone $P_{U_x\Omega;0}$ is equal to the cone $P_{U_y\Omega;0}$. Note that $(\beta_y,U_y)\in w^{-1}(1)$ and $\| U_x - U_y \| \leqslant 1/2$. We set $A \defeq  U_x - U_y$; note that $AJ = JA$. The computation
        \begin{equation*}
        \langle U_x \Omega, A \Omega \rangle = \langle U_x \Omega, A J \Omega \rangle = \langle U_x \Omega, J A \Omega \rangle = \langle A \Omega, J U_x \Omega \rangle = \langle A \Omega, U_x \Omega \rangle,
        \end{equation*}
        implies that $\langle U_x \Omega, A \Omega \rangle$ is real. 
        We then compute
        \begin{align*}
                \langle U_x \Omega, U_y \Omega \rangle &= 1 + \langle U_x \Omega, A \Omega \rangle \geqslant 1 - |\langle U_x \Omega, A \Omega \rangle | \geqslant 1 - \| A \| \geqslant 1/2.
        \end{align*}
        \cref{lem:ConeComparison} now proves that $P_{U_x\Omega;0}=P_{U_y\Omega;0}$.
\end{proof}

We recall that the central extension $\U(1)\to\Imp(V)\to \O_{\res}(V)$ comes with a local section $\sigma:U \to \Imp(V)$ defined in an open $\1$-neighborhood $U$, see \cref{sec:SmoothStructureOfImp(V)}. Hence, its pullback $\widetilde{L\Spin}(d)=M^{*}\Imp(V)$ is equipped with a local section $\tilde\sigma\defeq  M^{*}\sigma$ defined on $\tilde U\defeq  M^{-1}(U)$. 

\begin{lemma}
\label{prop:ourfusfacompconn}
The fusion factorization of \cref{prop:rhoisfusfac}  constructed above is compatible with the local section $\tilde\sigma$ in the sense of \cref{def:fusfaccomp}.
\end{lemma}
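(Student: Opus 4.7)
Unpacking the definitions, what we need is an open neighbourhood $U' \subset \Delta^{-1}(\tilde U) \subset P\Spin(d)$ of the constant path $1$ such that, for every $\beta \in U'$, the equality $\tilde\sigma(\Delta\beta) = (\Delta\beta, \rho(\beta))$ holds in $\widetilde{L\Spin}(d) = M^{*}\Imp(V)$. Since the first component is automatic, the task reduces to proving the equality $\sigma(M(\Delta\beta)) = \rho(\beta)$ in $\Imp(V)$ for $\beta$ in such a neighbourhood. Both sides are implementers of $M(\Delta\beta) = M(\beta) \oplus \tau M(\beta) \tau^{*}$, so they necessarily differ by a scalar $\lambda(\beta) \in \U(1)$; the strategy is to exploit the characterization \cref{lem:charrho}(2) of $\rho(\beta)$ by the positivity $\langle \Omega, \rho(\beta)\Omega\rangle > 0$, and to show that $\sigma(M(\Delta\beta))$ also satisfies this condition on a neighborhood of~$1$, which forces $\lambda(\beta)=1$.

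The first key observation is that $\Delta\beta = \bar\beta \star \beta$ is symmetric under the involution $z \mapsto \bar z$ on $S^{1}$, hence the pointwise multiplication operator $M(\Delta\beta)$ commutes with conjugation by $\tau$, i.e.\ $\tau(M(\Delta\beta)) = M(\Delta\beta)$. Invoking \cref{lem:kappasigmatau} we obtain $\kappa(\sigma(M(\Delta\beta))) = \sigma(\tau(M(\Delta\beta))) = \sigma(M(\Delta\beta))$. Since $L\Spin(d)$ is connected and $\sigma(\1)=\1$, for $\beta$ close enough to $1$ the element $\sigma(M(\Delta\beta))$ lies in the identity component of $\Imp(V)$ and is therefore even by \cref{lem:Imp1Even}; combining \cref{lem:kJImplementsKappa} with \cref{thm:modular} gives $\kappa(U) = JUJ$ for such even $U$, so the $\kappa$-invariance above translates into the commutation relation $\sigma(M(\Delta\beta))\,J = J\,\sigma(M(\Delta\beta))$.

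With this commutation in hand, the vacuum expectation value is real, since
\begin{equation*}
\langle \Omega, \sigma(M(\Delta\beta))\Omega\rangle
= \langle J\Omega, \sigma(M(\Delta\beta))J\Omega\rangle
= \overline{\langle \Omega, \sigma(M(\Delta\beta))\Omega\rangle},
\end{equation*}
using $J\Omega = \Omega$ and the antiunitarity of $J$. Moreover, the map $\beta \mapsto \langle \Omega, \sigma(M(\Delta\beta))\Omega\rangle$ is continuous and equals $1$ at the constant path, so it is strictly positive on some open neighbourhood $U' \subset \Delta^{-1}(\tilde U)$ of $1 \in P\Spin(d)$. For such $\beta$, writing $\sigma(M(\Delta\beta)) = \lambda(\beta)\rho(\beta)$ with $\lambda(\beta) \in \U(1)$, the positivity of both $\langle \Omega, \sigma(M(\Delta\beta))\Omega\rangle$ and $\langle \Omega, \rho(\beta)\Omega\rangle$ forces $\lambda(\beta)$ to be a positive real number in $\U(1)$, i.e.\ $\lambda(\beta)=1$. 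Hence $\sigma(M(\Delta\beta)) = \rho(\beta)$ on $U'$, which is precisely the required compatibility.

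The potentially delicate step is the identity $\tau(M(\Delta\beta)) = M(\Delta\beta)$, as it rests on matching the combinatorial symmetry of the doubled path $\Delta\beta$ under reversal with the analytic action of $\tau$ on $V$; once that is established, the rest is an application of the previously proved structural results \cref{lem:kappasigmatau,lem:kJImplementsKappa,thm:modular,lem:charrho}.
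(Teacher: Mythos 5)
Your proposal is correct and follows essentially the same route as the paper: use $\kappa\circ\sigma=\sigma\circ\tau$ together with $\tau(M(\Delta\beta))=M(\Delta\beta)$ to deduce that $\tilde\sigma(\Delta\beta)$ commutes with $J$, conclude that $\langle\Omega,\tilde\sigma(\Delta\beta)\Omega\rangle$ is real, shrink the neighbourhood to make it positive, and invoke \cref{lem:charrho}. The only cosmetic difference is that the paper fixes the neighbourhood by the condition $|\langle\Omega,\sigma(g)\Omega\rangle-1|<1$ rather than by your continuity argument, and your explicit remark that evenness is needed to pass from $\kappa$-invariance to commutation with $J$ is a point the paper leaves implicit.
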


\begin{proof}
        Let $U' \subseteq U \subset \O_{\res}(V)$ be an open neighbourhood such that $|\langle \Omega, \sigma(g) \Omega \rangle - 1| < 1$ for $g \in U'$, we put $\tilde U' \defeq  \Delta^{-1}M^{-1}(U')\subset \Delta^{-1}(\tilde U)\subset P\Spin(d)$ and shall prove that $\tilde\sigma(\Delta(\beta))=\rho(\beta)$ for all $\beta\in \tilde U'$. We recall from \cref{lem:kappasigmatau} that $\kappa \circ \sigma = \sigma \circ \tau$  hence $\kappa( \tilde \sigma( \Delta(\beta))) = \tilde\sigma( \Delta(\beta))$. This implies that $\tilde\sigma( \Delta(\beta))$ commutes with the modular conjugation $J$, and hence
        \begin{align*}
                \langle \Omega, \tilde \sigma( \Delta(\beta)) \Omega \rangle &= \langle \Omega, \tilde\sigma( \Delta(\beta)) J \Omega \rangle = \langle \Omega, J \tilde\sigma( \Delta(\beta)) \Omega \rangle = \langle \tilde\sigma( \Delta(\beta)) \Omega, J \Omega \rangle = \overline{ \langle \Omega, \tilde\sigma( \Delta(\beta)) \Omega \rangle},
        \end{align*}
        whence $\langle \Omega, \tilde\sigma( \Delta(\beta)) \Omega \rangle \in \R$. Now, because $M(\Delta(\beta)) \in U'$ we have $\langle \Omega, \tilde\sigma( \Delta(\beta)) \Omega \rangle > 0$, and \cref{lem:charrho} shows $\tilde\sigma( \Delta(\beta))=\rho(\beta)$.
\end{proof}

\begin{remark}
        From \cite[Proof of Theorem 10.2]{Ne09} it follows that $\langle \Omega, \sigma(g) \Omega \rangle > 0$ for all $g \in U$; hence, the reduction to $U' \subseteq U$ is in fact not strictly necessary.
\end{remark}

As a consequence of \cref{prop:ourfusfacompconn,prop:fuscompconn} we obtain:

\begin{proposition}
\label{prop:compresult}
The fusion product $\tilde\mu^{\rho}$ induced by the fusion factorization of \cref{prop:rhoisfusfac} is connection-preserving with respect to the connection $\nu_{\sigma}$ of \cref{re:connectionfromsplitting}. 
\end{proposition}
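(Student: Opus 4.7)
My plan is to observe that this proposition is essentially an immediate consequence of two earlier results, and that the real work has already been done. The strategy is therefore to check that the hypotheses of \cref{prop:fuscompconn} are all met in our situation, and then invoke that proposition.

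First, I would recall that \cref{prop:fuscompconn} requires three inputs: an admissible Fr\'echet central extension, a smooth local section defined on a neighborhood of the identity, and a fusion factorization compatible with that section. In our setting, the central extension is $\U(1)\to\widetilde{L\Spin}(d)\to L\Spin(d)$, and admissibility is precisely the content of \cref{prop:univcead}. The smooth local section is $\tilde\sigma = M^{*}\sigma$, obtained by pulling back the section $\sigma$ of \cref{sec:SmoothStructureOfImp(V)} along the homomorphism $M:L\Spin(d)\to\O_{\res}(V)$; the connection $\nu_\sigma$ used in the statement is precisely the one induced by $\sigma_{*}$ as in \cref{re:connectionfromsplitting} (and then pulled back along $\widetilde{M}$).

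The fusion factorization is the one produced in \cref{prop:rhoisfusfac} from the map $\rho:P\Spin(d)\to\Imp(V)$ defined in \cref{eq:deffusfac}. Compatibility of this fusion factorization with the local section $\tilde\sigma$ is exactly the assertion of \cref{prop:ourfusfacompconn}: there exists an open neighborhood $\tilde U'\subset\Delta^{-1}(\tilde U)$ of $1\in P\Spin(d)$ such that $\tilde\sigma(\Delta(\beta))=\rho(\beta)$ for every $\beta\in \tilde U'$.

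With all hypotheses verified, \cref{prop:fuscompconn} applies directly and yields the desired conclusion that $\tilde\mu^{\rho}$ is connection-preserving with respect to $\nu_\sigma$ in the sense of \cref{def:fusconnpres}. I do not foresee any obstacle here — all of the analytic and operator-algebraic difficulties (identifying the modular conjugation, proving commutativity of $\Imp_0(V)_{\pm}$, constructing $\rho$ via standard-form uniqueness, showing smoothness of $r$, and verifying the key sign $\langle\Omega,\rho(\beta)\Omega\rangle>0$ that pins down compatibility with $\tilde\sigma$) have been handled upstream. The proof is therefore a single-sentence citation.
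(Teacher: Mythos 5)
Your proposal is correct and matches the paper's argument exactly: the paper derives \cref{prop:compresult} precisely as a consequence of \cref{prop:ourfusfacompconn} (compatibility of $\rho$ with $\tilde\sigma$) together with the general criterion \cref{prop:fuscompconn}, with admissibility supplied by \cref{prop:univcead}. Nothing further is needed.
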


\section{Implementers and string geometry}\label{sec:stringgeometry}

In this section we show that our operator-algebraic model of a central extension (\cref{sec:basic}), of a connection (\cref{re:connectionfromsplitting}), and of a connection-preserving, multiplicative fusion product (\cref{sec:fusionofimplementers}), yields a so-called \emph{fusion extension with connection}. Then we establish the result that our model is canonically isomorphic to the usual model obtained by transgression, so that both models can be used interchangeably in string geometry.

\subsection{Multiplicative gerbes and transgression}

\label{sec:multgrbtrans}

Let $G$ be a Lie group; important for us is $G=\Spin(d)$. We consider  multiplicative bundle gerbes $\mathcal{G}$ over $G$. We shall recall some minimal facts. Bundle gerbes are geometric objects that represent  classes in the degree three integral cohomology \cite{murray,murray2}. A multiplicative bundle gerbe over $G$ is a bundle gerbe $\mathcal{G}$ over $G$ equipped with an isomorphism
\begin{equation*}
\mathcal{M}: \pr_1^{*}\mathcal{G} \otimes \pr_2^{*}\mathcal{G} \to m^{*}\mathcal{G}
\end{equation*}
over $G \times G$, where $\pr_i$ denote the projections, and $m:G \times G \to G$ is the multiplication of $G$. Moreover, this isomorphism has to be coherently associative \cite{carey4}. Multiplicative bundle gerbes have characteristic  classes in $H^4(BG,\mathbb{Z})$; forgetting the multiplicative structure realizes the usual homomorphism $H^4(BG,\Z) \to H^3(G,\Z)$, see \cite{carey4,waldorf5}. For a compact, simple, connected, simply-connected Lie group, both cohomology groups are isomorphic to $\Z$, and above homomorphism is the identity. A bundle gerbe $\mathcal{G}$ over $G$ that represents a generator in $H^3(G,\Z)$ is called a \emph{basic bundle gerbe}; thus, a basic bundle gerbe admits a (up to isomorphism) unique multiplicative structure. Concrete constructions of a basic bundle gerbe are described in \cite{gawedzki1,meinrenken1}, while concrete constructions of a corresponding multiplicative structure have not yet been carried out (one proposal is described on the last page of \cite{Waldorf}).

 String geometry is based on  the geometry of the basic bundle gerbe $\mathcal{G}_{bas}$ over $G=\Spin(d)$, whose characteristic class is $\frac{1}{2}p_1\in H^4(B\Spin (d),\Z)$ \cite{mclaughlin1,waldorf8}. The geometry consists of a connection on $\mathcal{G}_{bas}$ that is compatible with the multiplicative structure. The curvature of this connection is the Cartan  3-form $H=\frac{1}{24\pi}\left \langle \theta\wedge [\theta \wedge \theta]  \right \rangle$, where $\left \langle -,-  \right \rangle$ denotes the basic inner product on $\mathfrak{spin}(d)$, as in \cref{sec:basic}, and $\theta$ denotes the left-invariant Maurer-Cartan form on $G$. The  3-form $H$ satisfies the equation
\begin{equation}
\label{eq:curvmult}
\pr_1^{*}H + \pr_2^{*}H=m^{*}H+\mathrm{d}\rho\text{,}
\end{equation}
where $\rho\in \Omega^2(G \times G)$ is the 2-form $\rho\defeq \frac{1}{4\pi}\left \langle  \pr_1^{*}\theta\wedge \pr_2^{*}\bar\theta \right \rangle$; here, $\bar\theta$ is the right-invariant Maurer-Cartan form. 

In general, the curvature of a multiplicative bundle gerbe is a pair $(H,\rho)$ of a 3-form $H\in\Omega^3(G)$ and 2-form $\rho\in\Omega^2(G \times G)$ satisfying \cref{eq:curvmult} and an additional \quot{simplicial} condition over $G \times G \times G$, see \cite[Section 2.3]{waldorf5}. Indeed, such a pair defines a degree four cocycle in the simplicial de Rham cohomology of $G$, which computes  $H^4(BG,\R)$ \cite{bott3}. Similar as in Chern-Weil theory, the class of this cocycle coincides with the image of the characteristic class of the multiplicative bundle gerbe in real cohomology \cite[Prop. 2.1]{waldorf5}.

Transgression (to loop groups) is a homomorphism in cohomology, defined as
\begin{align*}
H^3(G,\Z) &\to H^{2}(LG,\Z)\;,\;
\xi \mapsto \int_{S^1} ev^{*}\xi\text{,}
\end{align*}
where $ev:S^1 \times LG \to G$ is the evaluation map. There is an analogous homomorphism in de Rham cohomology. Transgression can also be defined on a geometrical level, taking bundle gerbes with connection over $G$ to principal $\U(1)$-bundles with connection over $LG$, see \cite{brylinski1,gawedzki1}. A multiplicative structure  on a bundle gerbe $\mathcal{G}$ transgresses to a group structure on the corresponding $\U(1)$-bundle, turning it into a central extension which we will denote by $\mathscr{T}_{\mathcal{G}}$ \cite{waldorf5}. 
The basic gerbe $\mathcal{G}_{bas}$ over a compact, simple, connected, simply-connected Lie group $G$ transgresses to the basic central extension of $LG$, i.e. $\mathscr{T}_{\mathcal{G}_{bas}}\cong \widetilde{LG}$, as we will recall below. This establishes the relation between string geometry and the geometry of the basic central extension of
$L\Spin (d)$.

In general, central extensions  $\mathscr{T}_{\mathcal{G}}$ of a loop group $LG$  in the image of the transgression functor come equipped with the following additional structure \cite[Section 5.2]{Waldorfc}: 
\begin{enumerate}[(a)]

\item 
\label{item:trans:fusion}
a multiplicative fusion product $\tilde\mu$ as in \cref{def:fusion product}.

\item
\label{item:trans:connection}
a  connection $\nu$ that is  preserved by $\tilde\mu$  in the sense of \cref{def:fusconnpres}, and additionally has the property of being   superficial and of symmetrizing $\tilde\mu$.

\item
\label{item:trans:pathsplitting}
a multiplicative, contractible path splitting $\kappa$ of the error 1-form of the connection $\nu$. 
\end{enumerate}
The notions of superficial and symmetrizing connections have been defined in \cite{waldorf10}; these will  not be relevant here. Likewise, the notion of a path splitting defined in \cite{Waldorfc} is only listed  for completeness. Central extensions of $LG$  equipped with the structure \cref{item:trans:fusion,item:trans:connection,item:trans:pathsplitting} are called \emph{fusion extensions with connection}; they form a category $\mathcal{F}us\mathcal{E}xt^{\nabla}(LG)$, whose morphisms are fusion-preserving, connection-preserving isomorphisms of central extensions. Transgression is a functor
\begin{equation}
\label{eq:multtrans}
\mathscr{T}: \mathrm{h}_1\mathcal{M}ult\mathcal{G}rb^{\nabla}(G) \to \mathcal{F}us\mathcal{E}xt^{\nabla}(LG) \end{equation}  
defined on the 1-truncation of the bicategory $\mathcal{M}ult\mathcal{G}rb^{\nabla}(G)$ of multiplicative bundle gerbes with connection. All details of these structures can be found in \cite{Waldorfc}. For completeness, and in order to justify the list \cref{item:trans:fusion,item:trans:connection,item:trans:pathsplitting} of additional structures,  we remark that  the transgression functor \cref{eq:multtrans} is an equivalence of categories, whenever $G$ is compact and connected \cite[Theorem 5.3.1]{Waldorfc}.

\begin{proposition}
\label{rem:fusionfactorizationandtransgression}
Let $\U(1)\to\mathcal{L}\to LG$ be a fusion extension with connection, a with fusion product $\tilde\mu$. Then, $\mathcal{L}$ is admissible in the sense of \cref{def:admissible}. Moreover, there is a unique flat fusion factorization $\chi:PG \to \mathcal{L}$ such that  $\tilde\mu=\tilde\mu^{\chi}$.    
\end{proposition}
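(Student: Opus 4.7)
The plan is to derive admissibility from the axioms of $\tilde\mu$, define $\chi$ fibrewise as the identity element for the restricted product $\tilde\mu_{\beta,\beta,\beta}$, and then exploit the connection-preserving condition to produce flatness.

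For admissibility, the key auxiliary fact is that $\tilde\mu_{\beta,1,1}(q,1)=q$ for any $\beta\in PG$ and $q\in\mathcal{L}_{\beta\cup 1}$. To see this, first apply multiplicativity to the pair $((1,1,1),(1,1,1))$ with all entries $1\in\mathcal{L}_1=\U(1)$: the resulting equation $\tilde\mu_{1,1,1}(1,1)^2=\tilde\mu_{1,1,1}(1,1)$ forces $\tilde\mu_{1,1,1}(1,1)=1$. Next, by $\U(1)$-bilinearity the map $\tilde\mu_{\beta,1,1}(\cdot,1)$ is right multiplication by some $c(\beta)\in\U(1)$; applying associativity to $(\beta,1,1,1)\in PG^{[4]}$ with $q_{23}=q_{34}=1$ yields $c(\beta)^2=c(\beta)$, hence $c(\beta)=1$. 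The symmetric identity $\tilde\mu_{1,1,\beta}(1,q)=q$ follows analogously. Now, for $\beta_1,\beta_3\in PG$ with endpoints $e$, multiplicativity applied to $((\beta_1,1,1),(1,1,\beta_3))$ and to its swap gives
\[
q_{12}q_{23}=\tilde\mu_{\beta_1,1,1}(q_{12},1)\cdot\tilde\mu_{1,1,\beta_3}(1,q_{23})=\tilde\mu_{\beta_1,1,\beta_3}(q_{12},q_{23})=\tilde\mu_{1,1,\beta_3}(1,q_{23})\cdot\tilde\mu_{\beta_1,1,1}(q_{12},1)=q_{23}q_{12}.
\]

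For the construction of $\chi$, observe that on each $\U(1)$-torsor $\mathcal{L}_{\Delta\beta}$ the restriction $\tilde\mu_{\beta,\beta,\beta}$ is a $\U(1)$-bilinear associative product, and a direct torsor calculation shows that every such product has a unique two-sided identity element, which I set to be $\chi(\beta)$. Multiplicativity of $\chi$ is then automatic from multiplicativity of $\tilde\mu$: the product $\chi(\beta)\chi(\beta')$ is an identity for $\tilde\mu_{\beta\beta',\beta\beta',\beta\beta'}$ and hence equals $\chi(\beta\beta')$. For diffeological smoothness, given a plot $c:U\to PG$, I would locally lift $\Delta\circ c$ to a smooth map $\tilde q:U\to\mathcal{L}$; the function $\lambda:U\to\U(1)$ characterised by $\tilde\mu(\tilde q(u),\tilde q(u))=\lambda(u)\tilde q(u)$ is smooth by smoothness of $\tilde\mu$ and the principal $\U(1)$-bundle structure, so $\chi\circ c=\lambda^{-1}\tilde q$ is smooth. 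Uniqueness of any $\chi$ satisfying $\tilde\mu=\tilde\mu^\chi$ is forced by the formula, since $\tilde\mu^\chi_{\beta,\beta,\beta}$ must have $\chi(\beta)$ as identity.

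The recovery $\tilde\mu=\tilde\mu^\chi$ is handled via the ratio $\eta:=\tilde\mu/\tilde\mu^\chi:PG^{[3]}\to\U(1)$, well-defined by $\U(1)$-bilinearity, smooth, and multiplicative (since both products are multiplicative). Associativity of $\tilde\mu$ and $\tilde\mu^\chi$ translates into the cocycle identity $\eta(\beta_1,\beta_2,\beta_3)\eta(\beta_1,\beta_3,\beta_4)=\eta(\beta_1,\beta_2,\beta_4)\eta(\beta_2,\beta_3,\beta_4)$, and $\eta$ is trivial on the diagonal by construction. Setting $\beta_2=\beta_3=\beta$ forces $\eta(\beta_1,\beta,\beta)$ to be independent of $\beta_1$ and $\eta(\beta,\beta,\beta_4)$ independent of $\beta_4$, both equal to $\eta(\beta,\beta,\beta)=1$; in particular $\eta(\alpha,1,1)=\eta(1,1,\gamma)=1$. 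The decomposition $(\beta_1,\beta_2,\beta_3)=(\beta_1\beta_2^{-1},1,\beta_3\beta_2^{-1})\cdot(\beta_2,\beta_2,\beta_2)$ in the diffeological group $PG^{[3]}$, combined with multiplicativity of $\eta$, then gives $\eta\equiv 1$. This cocycle reduction is the main obstacle, requiring careful bookkeeping of which triples lie in $PG^{[3]}$.

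Flatness then follows from a one-line calculation using the connection-preserving property of $\tilde\mu$: for any plot $c:U\to PG$, the tuple $(\chi\circ c,(c,c,c),\chi\circ c)$ is a plot of $\mathcal{L}\ttimes{q}{\pi_1}PG^{[3]}\ttimes{\pi_2}{q}\mathcal{L}$, and by the identity property of $\chi(\beta)$ the map $\tilde\mu$ sends it to $\chi\circ c$, while both $\tilde\pi_1$ and $\tilde\pi_2$ project to $\chi\circ c$; pulling back the equation $\tilde\mu^*\nu=\tilde\pi_1^*\nu+\tilde\pi_2^*\nu$ therefore gives $(\chi\circ c)^*\nu=2(\chi\circ c)^*\nu$, so $(\chi\circ c)^*\nu=0$ and hence $\chi^*\nu=0$.
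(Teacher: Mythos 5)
Your proof is correct, but it takes a genuinely different and more self-contained route than the paper. The paper delegates all three key ingredients to the literature: admissibility is deduced from disjoint-commutativity of fusion extensions with connection \cite[Theorem 3.3.1]{Waldorfc}; the existence of a flat section $\chi$ of $\Delta^{*}\mathcal{L}$ is taken from \cite[Lemma 2.1.2]{waldorf10} (where the connection, in particular its superficiality, is used to produce the section); and the fact that this section is a group homomorphism and neutral with respect to fusion comes from \cite[Prop.~3.1.1]{Waldorfc}, after which $\tilde\mu=\tilde\mu^{\chi}$ is a one-line computation using multiplicativity. You instead extract everything from the axioms of \cref{def:fusion product} and \cref{def:fusconnpres}: admissibility from multiplicativity and associativity applied to degenerate triples $(\beta,1,1)$ and $(1,1,\beta)$ (your torsor argument that $\tilde\mu_{\beta,1,1}(\cdot,1)$ is multiplication by a scalar $c(\beta)$ with $c(\beta)^2=c(\beta)$ is sound); $\chi(\beta)$ as the unique idempotent of the $\U(1)$-bilinear product $\tilde\mu_{\beta,\beta,\beta}$ on the torsor $\mathcal{L}_{\Delta\beta}$; the identity $\tilde\mu=\tilde\mu^{\chi}$ via the cocycle reduction of $\eta=\tilde\mu/\tilde\mu^{\chi}$, using the group decomposition $(\beta_1,\beta_2,\beta_3)=(\beta_1\beta_2^{-1},1,\beta_3\beta_2^{-1})\cdot(\beta_2,\beta_2,\beta_2)$ in $PG^{[3]}$; and flatness by restricting $\tilde\mu^{*}\nu=\tilde\pi_1^{*}\nu+\tilde\pi_2^{*}\nu$ to the diagonal plot, which forces $(\chi\circ c)^{*}\nu=2(\chi\circ c)^{*}\nu$. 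Your route buys something real: it shows that admissibility and the existence and uniqueness of $\chi$ with $\tilde\mu=\tilde\mu^{\chi}$ use no connection at all, and that flatness needs only the connection-preserving property of $\tilde\mu$, not superficiality or the symmetrizing property listed under \cref{item:trans:connection}; the price is length, plus a mild reliance on local triviality of $\mathcal{L}\to LG$ (implicit in the paper's setting) in your smoothness argument for $\chi$. The paper's proof is shorter but only as strong as the cited general theory.
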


\begin{proof}
Admissibility is weaker than being disjoint-commutative, which is a property of any fusion extension with connection, see \cite[Theorem 3.3.1]{Waldorfc}. The uniqueness of the fusion factorization can be seen easily from definition \cref{def:fusionfromfusionfactorization} of the associated fusion product. We infer from \cite[Lemma 2.1.2]{waldorf10} the existence of a flat section $\chi$ of $\Delta^{*}\mathcal{L}$, and from  \cite[Prop. 3.1.1]{Waldorfc} that this section is a group homomorphism and neutral with respect to fusion. Using the multiplicativity of $\tilde\mu$ we check that
\begin{equation*}
\tilde\mu(q_{12},q_{23}) =\tilde\mu(q_{12},\chi(\beta_2))\tilde\mu(\chi(\beta_2)^{-1},\chi(\beta_2)^{-1})\tilde\mu(\chi(\beta_2),q_{23})=q_{12}\chi(\beta_2)^{-1}q_{23}=\tilde\mu^{\chi}(q_{12},q_{23})\text{.}
\qedhere
\end{equation*}  
\end{proof}

We remark that the connection $\nu$ of \cref{item:trans:connection} of a fusion extension induces a splitting $\sigma_{\nu}$ on the level of Lie algebras; namely, the one whose image is the horizontal subspace at the unit element. The splitting gives rise to a 2-cocycle $\omega_{\sigma_{\nu}}: L\mathfrak{g} \times L\mathfrak{g} \to \R$ defined from $\sigma_{\nu}$ just as in \cref{eq:2cocycle}.
The section $\sigma_{\nu}$, in turn, induces another connection $\nu'=\nu_{\sigma_{\nu}}$, analogously as described in \cref{re:connectionfromsplitting}. The new connection $\nu'$ does in general  \emph{not} coincide with the original connection $\nu$, and it will be important to distinguish both. 
For example, the connection $\nu'$ is in general not  superficial as required in  \cref{item:trans:connection}.
In a quite general context, it is possible to determine the 2-cocycle $\omega_{\sigma_{\nu}}$ as well as the difference between the two connections, see \cite[Lemmas 2.2.2 and 2.2.3]{Waldorfb}.

\begin{lemma}
\label{th:trans}
Let $\mathcal{G}$ be a multiplicative bundle gerbe over a Lie group $G$, whose curvature $(H,\rho)$ is of the form $H=\frac{1}{24\pi}\left \langle \theta\wedge[\theta\wedge\theta]  \right \rangle$ and $\rho=\frac{1}{4\pi}\left \langle  \pr_1^{*}\theta\wedge \pr_2^{*}\bar\theta \right \rangle$, for some invariant bilinear form  $\left \langle  -,- \right \rangle$ on the Lie algebra $\mathfrak{g}$. Let $\mathscr{T}_{\mathcal{G}}$ be the transgressed central extension, and let $\nu$ be the connection on $\mathscr{T}_{\mathcal{G}}$ that appears under \cref{item:trans:connection}. Then, the following holds: \begin{enumerate}[(a)]
\item
The 2-cocycle determined by the section $\sigma_{\nu}$ is 
        \begin{equation*}
                \omega_{\sigma_{\nu}}(X,Y) = \frac{1}{2\pi i} \int_{0}^{2\pi} \langle X(t), Y'(t) \rangle \mathrm{d}t
        \end{equation*}
        for $X,Y\in L\mathfrak{g}$.
\item
The connection $\nu'$ determined by the section $\sigma_{\nu}$ differs from the connection $\nu$ by a canonical 1-form $\beta \in \Omega^1(LG)$; more precisely, we have $\nu'=\nu+q^{*}\beta$ with
\begin{equation*}
\beta_{\tau}(X)=\frac{1}{4\pi i}\int_0^1\left \langle  \tau(t)^{-1}\partial_t\tau(t),\tau(t)^{-1}X(t) \right \rangle\mathrm{d}t
\end{equation*}
for $\tau\in LG$ and $X\in T_{\tau}LG$.

\end{enumerate}  
\end{lemma}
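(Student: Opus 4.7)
The plan is to reduce both assertions to direct applications of the general formulas established in \cite[Lemmas 2.2.2 and 2.2.3]{Waldorfb}, which describe $\omega_{\sigma_{\nu}}$ and the difference $\nu'-\nu$ for the transgression of an arbitrary multiplicative bundle gerbe, explicitly in terms of its curvature pair $(H,\rho)$.

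For part (a), I would take the general integral formula for $\omega_{\sigma_{\nu}}$ from \cite[Lemma 2.2.2]{Waldorfb} and substitute our Cartan 3-form $H=\tfrac{1}{24\pi}\langle\theta\wedge[\theta\wedge\theta]\rangle$ together with $\rho=\tfrac{1}{4\pi}\langle\pr_{1}^{*}\theta\wedge\pr_{2}^{*}\bar\theta\rangle$. After left-invariance trivializes the Maurer-Cartan factors and Ad-invariance of $\langle\cdot,\cdot\rangle$ is used to simplify the integrand, an integration by parts on $S^{1}$ should cause the bulk $H$-contribution and the correction from $\rho$ to combine into the Kac-Moody cocycle $\tfrac{1}{2\pi i}\int_{0}^{2\pi}\langle X(t),Y'(t)\rangle\,\mathrm{d}t$.

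For part (b), I would similarly invoke \cite[Lemma 2.2.3]{Waldorfb}, which writes $\nu'-\nu$ as $q^{*}\beta$ for a canonical 1-form $\beta$ on $LG$ constructed from $\rho$ by a natural fibre integration along $S^{1}$. Substituting the explicit formula for $\rho$ and left-trivializing tangent vectors to $LG$ should reduce the integrand to the pairing $\langle\tau(t)^{-1}\partial_{t}\tau(t),\tau(t)^{-1}X(t)\rangle$; the prefactor $\tfrac{1}{4\pi i}$ then arises from the prefactor of $\rho$ together with the identification $\lie{u}(1)\cong\R$ used throughout the paper.

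The main obstacle will be matching conventions with those of \cite{Waldorfb}: the factors of $2\pi$ and $i$, the identification $\lie{u}(1)\cong\R$, and the choice between left- and right-invariant Maurer-Cartan forms appearing in $\rho$ each contribute sign or normalization factors that must be tracked carefully. Once conventions are aligned, both formulas follow by direct substitution, without any further conceptual input.
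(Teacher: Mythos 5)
Your plan coincides with the paper's own justification: the lemma is given without a written-out proof precisely because it is intended as a direct application of \cite[Lemmas 2.2.2 and 2.2.3]{Waldorfb} to the specific curvature pair $(H,\rho)$, exactly as you propose. The only work is the convention-matching you already flag, so the proposal is correct and takes essentially the same route.
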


In the next subsection, we will apply \cref{th:trans} to the case where $G=\Spin(d)$ and $\left \langle  -,- \right \rangle$ is the basic inner product. Then we have $\mathcal{G}=\mathcal{G}_{bas}$, and \cref{th:trans} (a) implies (see \cref{basicce}) that $\mathscr{T}_{\mathcal{G}_{bas}}$ is the basic central extension.

\subsection{Transgression and implementers}

\label{sec:comparison}

One of the goals of the present article is to provide operator-algebraic constructions of the loop group perspective to string geometry. 
In \cref{sec:basic} we have constructed an operator-algebraic model for the basic central extension $\widetilde{L\Spin}(d)$ of the loop group $L\Spin (d)$, together with a local section $\sigma$, inducing a connection $\nu_{\sigma}$. In \cref{sec:fusionofimplementers} we have defined a connection-preserving, multiplicative fusion product $\tilde\mu^{\rho}$ on $\widetilde{L\Spin}(d)$. In the following we compare that structure with the central extension $\mathscr{T}_{\mathcal{G}_{bas}}$ obtained by transgression from the basic gerbe $\mathcal{G}_{bas}$ over $\Spin(d)$, as described in \cref{sec:multgrbtrans}.
We recall that $\mathscr{T}_{\mathcal{G}_{bas}}$ comes equipped with a fusion product $\tilde\mu$ and a connection $\nu$, see  \cref{item:trans:fusion,item:trans:connection} above.

Because both central extensions are the basic one (\cref{basicce,th:trans}), there exists an isomorphism $\widetilde{L\Spin}(d)\cong \mathscr{T}_{\mathcal{G}_{bas}}$ of central extensions of $L\Spin(d)$. Each central extension comes equipped with a section of the associated Lie algebra extension: the section $\sigma_{*}$ of $\widetilde{L\Spin}(d)$ is induced by the local section $\sigma$ of \cref{sec:SmoothStructureOfImp(V)}, and the section $\sigma_{\nu}$ of $\mathscr{T}_{\mathcal{G}_{bas}}$ is induced by the connection $\nu$. 

\begin{lemma}
There exists a unique isomorphism $\varphi: \widetilde{L\Spin}(d)\to \mathscr{T}_{\mathcal{G}_{bas}}$ of central extensions that exchanges the two Lie algebra sections, i.e.  $\sigma_{\nu} = \varphi_{*} \circ \sigma_{*}$.  Moreover, $\varphi$ is connection-preserving for the induced connections $\nu'$ on $\mathscr{T}_{\mathcal{G}_{bas}}$ and $\nu_{\sigma}$ on $\widetilde{L\Spin}(d)$, and it takes the fusion product $\tilde\mu$ on $\mathscr{T}_{\mathcal{G}_{bas}}$ to the fusion product $\tilde\mu^{\rho}$ on $\widetilde{L\Spin}(d)$.
\end{lemma}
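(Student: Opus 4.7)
The plan is to build $\varphi$ in three stages, starting at the Lie algebra level, lifting to the Lie groups, and then verifying compatibility with connections and fusion products.

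First, the key observation is that the two Lie algebra $2$-cocycles coincide. Combining \cref{th:cocycleonLspin}, \cref{basicce} and \cref{lem:coroots}, the cocycle determined by $\sigma$ on $L\lie{spin}(d)$ is
\begin{equation*}
(f,g)\mapsto\frac{1}{2\pi i}\int_0^{2\pi}\langle f(t),g'(t)\rangle\,\mathrm{d}t
\end{equation*}
in terms of the basic inner product, and by \cref{th:trans}(a) the cocycle determined by $\sigma_{\nu}$ is given by the same formula. Identifying both central Lie algebra extensions with $L\lie{spin}(d)\oplus\R$ via their respective sections then yields the identical bracket, so the identity map is a Lie algebra isomorphism intertwining the two sections. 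Lifting on an identity neighbourhood via exponentiation and extending by the group law gives a smooth group homomorphism $\varphi:\widetilde{L\Spin}(d)\to\mathscr{T}_{\mathcal{G}_{bas}}$, well defined because $L\Spin(d)$ is connected. For uniqueness, any two such lifts differ by a smooth character $L\Spin(d)\to\U(1)$ whose derivative vanishes on $L\lie{spin}(d)$ since both lifts intertwine the same Lie algebra sections; connectedness of $L\Spin(d)$ then forces this character to be trivial.

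Connection-preservation is essentially formal. By \cref{re:connectionfromsplitting}, $\nu_{\sigma}$ is the left-invariant connection on $\widetilde{L\Spin}(d)$ whose horizontal subspace at $\1$ is the image of $\sigma_{*}$, and $\nu'$ is the analogous left-invariant connection on $\mathscr{T}_{\mathcal{G}_{bas}}$ determined by $\sigma_{\nu}$. Since $\varphi$ is a Lie group homomorphism with $\varphi_{*}\circ\sigma_{*}=\sigma_{\nu}$, it sends left-invariant horizontal subspaces to left-invariant horizontal subspaces, so $\varphi^{*}\nu'=\nu_{\sigma}$.

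For fusion-preservation, I would invoke \cref{rem:fusionfactorizationandtransgression} to obtain a unique flat (with respect to $\nu$) fusion factorization $\chi:P\Spin(d)\to\mathscr{T}_{\mathcal{G}_{bas}}$ with $\tilde\mu=\tilde\mu^{\chi}$. If $\varphi\circ\rho$ is shown to be flat with respect to $\nu$, then since any two flat fusion factorizations of the same $\U(1)$-bundle differ by a smooth character $P\Spin(d)\to\U(1)$ that is forced to be constant by flatness and connectedness of $P\Spin(d)$, one gets $\varphi\circ\rho=\chi$, and hence $\tilde\mu=\tilde\mu^{\chi}=\tilde\mu^{\varphi\circ\rho}=\varphi\circ\tilde\mu^{\rho}\circ(\varphi^{-1}\times\varphi^{-1})$. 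From \cref{prop:ourfusfacompconn} and \cref{lem:fusfacflat}, $\rho$ is flat with respect to $\nu_{\sigma}$, so by the preceding stage $\varphi\circ\rho$ is flat with respect to $\nu'$. By \cref{th:trans}(b), $\nu=\nu'-q^{*}\beta_{0}$ for an explicit 1-form $\beta_{0}$ on $L\Spin(d)$, so it suffices to verify $\Delta^{*}\beta_{0}=0$. Writing the double loop $\Delta(\gamma)=\bar\gamma\star\gamma$ as $\gamma$ on the first half of the circle and its reverse on the second, the substitution $s=2\pi-t$ cancels the two halves of the integrand pointwise, yielding the desired vanishing.

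The main obstacle is this last stage, since $\rho$ is flat only for $\nu_{\sigma}$ (equivalently $\nu'$ after transport), whereas \cref{rem:fusionfactorizationandtransgression} demands flatness for the \emph{other} connection $\nu$. The bridge is the 1-form $\beta_{0}$ of \cref{th:trans}(b), and the crucial step is the direct verification $\Delta^{*}\beta_{0}=0$; the antisymmetry under path-reversal encoded in the doubling map $\Delta$ is exactly what makes the two notions of flatness coincide on its image. The preceding stages are then essentially formal consequences of matching cocycles and matching horizontal subspaces.
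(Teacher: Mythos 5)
Your strategy for the connection and fusion parts matches the paper's, but the existence argument for $\varphi$ is organized differently, and one justification there is too weak. The paper starts from an arbitrary isomorphism of central extensions (which exists because both are basic, by \cref{basicce} and \cref{th:trans}), observes that the difference $\sigma_{\nu}-\varphi_{*}\circ\sigma_{*}$ is a bounded Lie algebra homomorphism $f:L\mathfrak{spin}(d)\to\R$ precisely because the two cocycles agree, and integrates $f$ to a character $F:L\Spin(d)\to\U(1)$ via Milnor's theorem (using that $L\Spin(d)$ is $1$-connected and $\U(1)$ is regular); the corrected isomorphism $\varphi\cdot F$ then exchanges the sections. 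You instead integrate the full Lie algebra isomorphism directly, and justify the extension from an identity neighbourhood by ``connectedness of $L\Spin(d)$''. That is not sufficient: extending a local homomorphism by the group law requires a monodromy argument, i.e.\ simple connectedness of the \emph{source} $\widetilde{L\Spin}(d)$, not merely connectedness of the base. The gap is repairable ($\widetilde{L\Spin}(d)$ is $1$-connected because the boundary map $\pi_2(L\Spin(d))\to\pi_1(\U(1))$ of the basic extension is an isomorphism), but the paper's route only needs the standard fact that $L\Spin(d)$ itself is $1$-connected, which is why it corrects an existing isomorphism rather than building one from scratch.

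On the fusion part your argument is, if anything, more careful than the paper's. The paper asserts that the fusion factorization $\chi$ of $\mathscr{T}_{\mathcal{G}_{bas}}$ is flat for $\nu'$, whereas \cref{rem:fusionfactorizationandtransgression} provides flatness for $\nu$; your explicit verification that $\Delta^{*}\beta=0$ (the integrand of \cref{th:trans}~(b) is odd under $t\mapsto 2\pi-t$ on the image of the doubling map, since $\tau$ and $X$ are symmetric while $\partial_t\tau$ is antisymmetric) is exactly the missing bridge, and it is correct. The concluding step---two flat fusion factorizations differ by a character of the connected group $P\Spin(d)$ that flatness forces to be constant, hence trivial---is the same as in the paper.
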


\begin{proof}
Uniqueness is clear. For existence, we choose any isomorphism $\varphi$, and observe that $\sigma_{\nu} = (\varphi_{*}\circ \sigma_{*}) + f$, for a bounded linear map  $f: L\mathfrak{ spin}(d) \to \R$. We infer that the 2-cocycles associated to both sections, $\sigma_{*}$ and $\sigma_{\nu}$, coincide: they both give the basic 2-cocycle, see \cref{basicce} and \cref{th:trans}. Thus, using the formula \cref{eq:2cocycle} for the 2-cocycle, we see that $f$ vanishes on all commutators, in other words, it is a Lie algebra homomorphism. We would like to integrate it to a Lie group homomorphism $F:L\Spin(n) \to \U(1)$. To this end, we note that $L\Spin(n)$ is 1-connected and $\U(1)$ is regular, and that both are Lie groups modelled on a locally convex topological vector space. The integration is hence possible due to a theorem of Milnor \cite[Theorem 8.1]{Milnor1983}, also see \cite[Theorem III.1.5]{Neeb2006}. Now, the isomorphism $\varphi' \defeq \varphi \cdot F$ will have the claimed property.

Indeed, since $\varphi$ exchanges the sections $\sigma_{\nu}$ and $\sigma_{*}$, it follows immediately that it is connection-preserving for the induced connections $\nu'=\nu_{\sigma_{\nu}}$ and $\nu_{\sigma}$, respectively. The fusion products on both sides can be characterized by  fusion factorizations that are flat with respect to the connections $\nu'$ and $\nu_{\sigma}$ (see \cref{lem:fusfacflat} and \cref{rem:fusionfactorizationandtransgression}).
Using the fact that $\varphi$ is connection-preserving, $\varphi\circ \rho$ is another flat fusion factorization of $\mathscr{T}_{\mathcal{G}_{bas}}$. Two flat sections differ by a locally constant smooth map $PG \to \U(1)$, and since $PG$ is connected and both sections map the constant path  $1$ to $1\in \mathscr{T}_{\mathcal{G}_{bas}}$, this map is constant and equal to $1\in\U(1)$. Thus, $\varphi$ preserves the fusion factorizations, and hence the corresponding fusion products.
\end{proof}

We may now shift the connection $\nu_{\sigma}$ on $\widetilde{L\Spin}(d)$ by the 1-form $\beta$ of \cref{th:trans} (b), and obtain  a new connection $\tilde \nu \defeq  \nu_{\sigma}-q^{*}\beta$. The isomorphism $\varphi$ is then connection-preserving for the connections $\tilde\nu$ and $\nu$ on $\mathscr{T}_{\mathcal{G}_{bas}}$. In particular, this implies that $\tilde\nu$  
is superficial and symmetrizing, and that  we may use the path splitting $\kappa$ in \cref{item:trans:pathsplitting} of $\mathscr{T}_{\mathcal{G}_{bas}}$ for the connection $\tilde\nu$. Now, we have  equipped our operator-algebraic construction of $\widetilde{L\Spin}(d)$ with all of the structure \cref{item:trans:fusion,item:trans:connection,item:trans:pathsplitting}. Summarizing, we have the following result.

\begin{theorem}
\label{th:comparison}
Our operator-algebraic model $\widetilde{L\Spin}(d)$ of the basic central extension of $L\Spin(d)$ equipped with the fusion product $\tilde\mu^{\rho}$, the connection $\tilde\nu=\nu_{\sigma}+q^{*}\beta$, and the path splitting $\kappa$ is a fusion extension with connection, and it is canonically isomorphic to the fusion extension $\mathscr{T}_{\mathcal{G}_{bas}}$ obtained by transgression of the basic gerbe over $\Spin(d)$, as objects of the   category $\mathcal{F}us\mathcal{E}xt^{\nabla}(L\Spin(d))$.
\end{theorem}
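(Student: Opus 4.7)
The plan is to assemble the result from the preceding lemma and \cref{th:trans}(b). The lemma already produces a unique isomorphism $\varphi: \widetilde{L\Spin}(d) \to \mathscr{T}_{\mathcal{G}_{bas}}$ of central extensions exchanging the Lie algebra sections $\sigma_{*}$ and $\sigma_{\nu}$, connection-preserving for the induced connections $\nu_{\sigma}$ and $\nu'$, and fusion-preserving for $\tilde\mu^{\rho}$ and $\tilde\mu$. The only thing missing from the claim of \cref{th:comparison} is that $\varphi$ is connection-preserving for the \emph{original} connection $\nu$ on $\mathscr{T}_{\mathcal{G}_{bas}}$ (not its Lie-algebra-section version $\nu'$), and that $\widetilde{L\Spin}(d)$ can be endowed with the path splitting in \cref{item:trans:pathsplitting} and the superficial/symmetrizing properties of \cref{item:trans:connection}.

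The first step is to translate between $\nu$ and $\nu'$ on $\mathscr{T}_{\mathcal{G}_{bas}}$. By  \cref{th:trans}(b) applied to $\mathcal{G}=\mathcal{G}_{bas}$ (with $\langle -,- \rangle$ the basic inner product, so that $(H,\rho)$ is the Cartan data), the two connections differ precisely by the pullback of the canonical 1-form $\beta\in\Omega^1(L\Spin(d))$. Thus $\varphi$ exchanges $\nu_{\sigma}$ with $\nu+q^{*}\beta$, equivalently, $\varphi$ takes $\nu_{\sigma}-q^{*}\beta=\tilde\nu$ to $\nu$. This is precisely what is required for connection-preservation with respect to $(\tilde\nu,\nu)$. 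The sign of $\beta$ in the theorem statement is determined by which side one shifts; up to this sign bookkeeping, the computation is immediate from \cref{th:trans}(b) and the lemma.

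Once $\varphi$ is an isomorphism in the  category of central extensions with connection, the superficial and symmetrizing properties of $\tilde\nu$ follow automatically from those of $\nu$: these are properties of the connection 1-form under pullback along diffeological evaluation and loop-reversal maps, and $\varphi$ commutes with all of these structures as a bundle isomorphism covering the identity on $L\Spin(d)$. Similarly, the multiplicative contractible path splitting $\kappa$ of the error 1-form of $\nu$ on $\mathscr{T}_{\mathcal{G}_{bas}}$ transports, via $\varphi^{-1}$, to a multiplicative contractible path splitting of the error 1-form of $\tilde\nu$ on $\widetilde{L\Spin}(d)$; this uses only that $\varphi$ is a morphism of central extensions and is connection-preserving, so it commutes with the passage to error 1-forms. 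Taken together, this equips $(\widetilde{L\Spin}(d),\tilde\mu^{\rho},\tilde\nu,\kappa)$ with all the data of a fusion extension with connection, and makes $\varphi$ a morphism in $\mathcal{F}us\mathcal{E}xt^{\nabla}(L\Spin(d))$.

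The main obstacle, already resolved in the preceding lemma, is the existence and uniqueness of $\varphi$: this required integrating a Lie algebra homomorphism $L\mathfrak{spin}(d)\to\R$ to a Lie group homomorphism $L\Spin(d)\to\U(1)$ using Milnor's theorem and the 1-connectedness of $L\Spin(d)$. With $\varphi$ in hand, the remainder of the proof of \cref{th:comparison} is essentially bookkeeping: identify which connection on $\widetilde{L\Spin}(d)$ is the pullback of $\nu$, and transport the remaining structure. Uniqueness of $\varphi$ in the fusion-and-connection-preserving category is then automatic, since any two such isomorphisms differ by a flat multiplicative fusion-preserving map $L\Spin(d)\to\U(1)$, which must be trivial by connectedness of $L\Spin(d)$ and normalization at the identity.
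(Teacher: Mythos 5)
Your proposal is correct and follows essentially the same route as the paper: the paper's argument is precisely to take the isomorphism $\varphi$ from the preceding lemma, observe via \cref{th:trans}~(b) that shifting $\nu_{\sigma}$ by $q^{*}\beta$ makes $\varphi$ connection-preserving for $(\tilde\nu,\nu)$, and then transport the superficial/symmetrizing properties and the path splitting $\kappa$ through $\varphi$. Your remark about the sign of $\beta$ is apt (the displayed definition before the theorem reads $\tilde\nu=\nu_{\sigma}-q^{*}\beta$ while the theorem statement has a plus sign), but this is bookkeeping and does not affect the argument.
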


By \cite[Theorem 5.3.1]{Waldorfc} every fusion extension of  with connection $L\Spin(d)$ corresponds to a diffeological multiplicative bundle gerbe with connection over $\Spin(d)$, via a procedure called \emph{regression}. The underlying regressed bundle gerbe is described in \cite[Section 5.1]{waldorf10}. It has the subduction (the diffeological analog of a surjective submersion) $ev_1: P_1\Spin(d)\to \Spin(d)$, where $ev_t: P\Spin(d)\to\Spin(d)$ is the evaluation at $t$, and $P_1\Spin(d)\defeq  ev_0^{-1}(1)$ is the subspace consisting of paths starting at the identity. On the 2-fold fibre product we have a smooth map $P_1\Spin(d)^{[2]}\subset P\Spin(d)^{[2]} \to L\Spin(d)$, along which we pull back the central extension $\widetilde{L\Spin}(d)$, considered as a principal $\U(1)$-bundle. Under the pullback, the fusion product $\tilde\mu^{\rho}$ becomes precisely a bundle gerbe product. The connection $\tilde\nu$ gives one part of the connection on the regressed bundle gerbe. The construction of a corresponding curving is more involved; it uses that $\tilde\nu$ is superficial, see \cite[Section 5.2]{waldorf10}.

The regressed multiplicative structure is strict; it is composed of the fact that $P_1\Spin(d)$ and $\widetilde{L\Spin}(d)$ are diffeological groups, and that the fusion product $\tilde\mu^{\rho}$ is multiplicative. This was mentioned in \cite[Section 5]{Waldorf} and is explained in more detail in \cite[Section 5.3]{Waldorfc}. By \cref{th:comparison} and the fact that regression is inverse to transgression (\cite[Theorem 5.3.1]{Waldorfc}), the above construction results in a diffeological, operator-algebraical construction of the basic gerbe over $\Spin(d)$, with the correct connection and multiplicative structure.

Two further objects, both important to string geometry, can be obtained from any model for the basic gerbe over $\Spin(d)$, and so in particular from our operator-algebraic one:  
\begin{enumerate}[(a)]

\item 
The Chern-Simons 2-gerbe $\mathbb{CS}$ with connection, following the construction in \cite[Sections 2.1 and 3.1]{waldorf8}. Geometric string structures can be viewed as trivializations of the 2-gerbe $\mathbb{CS}$. 

\item
The string 2-group $\mathrm{String}(n)$, following the construction in \cite[Section 3.2]{Waldorf}.
String structures can be viewed as principal 2-bundles for this 2-group, see \cite[Section 7]{Nikolausa}. In short, the underlying diffeological groupoid $\mathrm{String}(n)$ has objects $\mathrm{String}(n)_0 \defeq  P\Spin(d)$ and morphisms 
\begin{equation*}
\mathrm{String}(n)_1 \defeq  P\Spin^{[2]} \ttimes{\cup}{q}\widetilde{L\Spin}(d) =P\Spin^{[2]} \ttimes{M \circ \cup}{q} \Imp(V)\text{,}
\end{equation*}
 source and target maps are $s(\beta_1,\beta_2,U)\defeq \beta_1$ and $s(\beta_1,\beta_2,U)\defeq \beta_2$. and composition is given by the fusion product:
\begin{equation*}
(\beta_2,\beta_3,U_{23}) \circ (\beta_1,\beta_2,U_{12}) \defeq  (\beta_1,\beta_3, \tilde\mu^{\rho}_{\beta_1,\beta_2,\beta_3}(U_{12},U_{23}))\text{.}
\end{equation*}
Associativity of the fusion product implies the associativity of that composition. The identity element of a path $\beta\in P\Spin(d)$ is $id_{\beta}\defeq \rho(\beta)$, where $\rho$ is the fusion factorization. The definition of the fusion product $\tilde\mu^{\rho}$ shows immediately that this is neutral with respect to composition. The multiplication functor $\mathrm{String}(n) \times \mathrm{String}(n) \to \mathrm{String}(n)$ and the inversion functor $i: \mathrm{String}(n)\to\mathrm{String}(n)$ are both given by the group structures on objects and morphisms. The fact that the fusion product is multiplicative implies that the composition is compatible with these group structures.

\end{enumerate}

\appendix

\section{Central extensions of Banach-Lie groups}\label{app:BanachCentral}

In this section we provide the following well-known result used in the proof of \cref{theorem:Banachextension}. See \cite[Proposition 4.2]{Ne02} for a similar statement.

\begin{proposition}
\label{lem:CocycleExtension}
Let $G$ be a connected Banach-Lie group with, let $Z$ be an abelian Banach-Lie group, and let  
\begin{equation}
\label{appA:ex}
1\to Z \to \widehat{G} \overset{q}{\to} G \to 1
\end{equation}
be a  central extension of groups.  Let  $U \subset G$ be an open 1-neighborhood supporting a section $\sigma$, i.e. a map $\sigma: U \to \hat G$ such that $q \circ \sigma=id_{G}$. Suppose there exists an open $1$-neighborhood
$V \subset U$ with $V^2 \subset U$, such that the associated 2-cocycle $f_{\sigma}: V \times V \to Z$ defined by 
\begin{equation*}
\sigma(g_{1})\sigma(g_{2}) =f_{\sigma}(g_{1},g_{2}) \sigma(g_{1}g_{2})
\end{equation*}
is smooth in an open $(1,1)$-neighborhood. Then, $\widehat{G}$ carries a unique Banach-Lie group structure such that $\sigma$ is smooth in an open $1$-neighborhood. Moreover, when equipped with this Banach-Lie group structure, \cref{appA:ex} is a central extension of Banach-Lie groups.

\end{proposition}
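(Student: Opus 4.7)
The plan is to build the Banach Lie group structure on $\widehat{G}$ from a chart near the identity and then propagate by left translation. After possibly shrinking $V$, I can assume $V=V^{-1}$, that $\sigma(1)=1$ (by replacing $\sigma$ with $\sigma(1)^{-1}\sigma$ if necessary, which leaves $f_{\sigma}$ smooth in a smaller neighborhood of $(1,1)$), and that $f_{\sigma}$ is smooth on all of $V\times V$. Define
\begin{equation*}
\Psi: V\times Z \to \widehat{G}\;,\; (g,z) \mapsto z\sigma(g),
\end{equation*}
and set $\widehat{V}\defeq \Psi(V\times Z)$. Since $\sigma$ is a set-theoretic section of $q$ and $Z=\ker q$, the map $\Psi$ is a bijection onto $\widehat{V}$, and transporting the product Banach manifold structure from $V\times Z$ makes $\widehat{V}$ into a Banach manifold.

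Using the cocycle identity and the centrality of $Z$, the group operations in the chart become
\begin{equation*}
\Psi(g_1,z_1)\Psi(g_2,z_2)=\Psi\bigl(g_1g_2,\, z_1z_2 f_{\sigma}(g_1,g_2)\bigr),
\end{equation*}
and $\Psi(g,z)^{-1}=\Psi(g^{-1},z^{-1}f_{\sigma}(g,g^{-1})^{-1})$, both of which are visibly smooth in $(g,z)$. Thus $(\widehat{V},\Psi)$ realizes $\widehat{G}$ as a local Banach Lie group near $1$.

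To globalize, I would endow $\widehat{G}$ with the topology having $\{\hat h\widehat{V} : \hat h\in\widehat{G}\}$ as a base and take charts $\Psi_{\hat h}(g,z)\defeq \hat h\Psi(g,z)$. Smoothness of a transition between $\Psi_{\hat h_1}$ and $\Psi_{\hat h_2}$ around a point $\hat x$ in the overlap reduces, via the identity $\hat h_2^{-1}\hat h_1=\Psi_{\hat h_2}^{-1}(\hat x)\cdot\Psi_{\hat h_1}^{-1}(\hat x)^{-1}$, to a local multiplication inside $\widehat{V}$, which is smooth by the previous step; this yields a compatible Banach atlas. Hausdorffness of the glued topology follows from Hausdorffness of $G$ and $Z$ together with the local product structure. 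Global smoothness of multiplication and inversion then follows because, after left translation, they agree with the smooth operations on $\widehat{V}$. The map $q$ is smooth since $q\circ\Psi=\pr_1$; the inclusion $Z\hookrightarrow\widehat{G}$ is smooth, and $\sigma|_V=\Psi(\cdot,1)$ is smooth by construction, so \cref{appA:ex} is indeed a central extension of Banach Lie groups.

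For uniqueness, any other Banach Lie group structure on $\widehat{G}$ making $\sigma$ smooth near $1$ induces the same local topology on $\widehat{V}$, hence the same topology globally via left translation. The identity is therefore a continuous group isomorphism between the two structures, and continuous homomorphisms between Banach Lie groups are automatically smooth (Bochner--Montgomery), giving a diffeomorphism. The main technical obstacle will be verifying the global consistency of the translated atlas — in particular Hausdorffness of the glued topology and smoothness of transitions far from the chosen base point — which is the standard local-to-global step in constructing a Lie group from a local one; the hypothesis that $G$ has finitely many connected components enters here, to guarantee that the resulting topological group structure is well-behaved.
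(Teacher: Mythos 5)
Your local chart $\Psi$ and the transition argument are fine: for $\hat{x}\in\hat{h}_1\widehat{V}\cap\hat{h}_2\widehat{V}$ the decomposition $\hat{h}_2^{-1}\hat{h}_1 = \Psi_{\hat{h}_2}^{-1}(\hat{x})\cdot\Psi_{\hat{h}_1}^{-1}(\hat{x})^{-1}$ does let you write the transition near $\hat{x}$ as a composition of two multiplications that stay inside $\widehat V$, and after shrinking so that $f_\sigma$ is smooth on $V\times V$ this is indeed smooth. So you obtain a consistent Banach atlas, for which all left translations are automatically smooth. The gap is the sentence ``global smoothness of multiplication and inversion then follows because, after left translation, they agree with the smooth operations on $\widehat{V}$.'' Reading multiplication near a pair $(\hat{a},\hat{b})$ in the charts $\Psi_{\hat{a}},\Psi_{\hat{b}},\Psi_{\hat{a}\hat{b}}$ gives the local expression
\begin{equation*}
(\hat{x},\hat{y}) \;\longmapsto\; \bigl(\hat{b}^{-1}\hat{x}\hat{b}\bigr)\hat{y},
\end{equation*}
so you need conjugation by $\hat{b}$ to be smooth near $1$ for \emph{every} $\hat{b}\in\widehat{G}$, not only for $\hat{b}\in\widehat{V}$. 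That is not automatic here: the $Z$-component of $\hat{b}^{-1}\Psi(g,z)\hat{b}$ is governed by values of $f_{\sigma}$ far from $(1,1)$, where you have no smoothness hypothesis. Smooth left translations plus a smooth multiplication germ at $(1,1)$ do not by themselves give a Lie group.

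The paper isolates exactly this as the nontrivial step. \cref{lem:LocalSuf} makes smoothness of all conjugations $C_{\hat{g}}$ near $1$ an explicit hypothesis, and the proof of the proposition then verifies it: one shrinks to a symmetric open $X\ni 1$ with $X^3\subseteq V$, notes that $C_{\hat{g}}|_X$ is smooth for $\hat{g}\in X$ (all the relevant products stay in $V$, where $f_\sigma$ is under control), and --- this is where connectedness of $G$ is actually used --- writes an arbitrary $\hat{b}\in\widehat{G}$ as a finite product $\hat{g}_1\cdots\hat{g}_n$ with $\hat{g}_i\in X$, so that $C_{\hat{b}}=C_{\hat{g}_1}\circ\cdots\circ C_{\hat{g}_n}$ is smooth on a suitable $1$-neighbourhood. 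The hypothesis of finitely many components then serves to transport the structure to the remaining components of $G$ by translation; your remark that it ``guarantees the resulting topological group structure is well-behaved'' does not pin down this role. To repair your argument you should insert this conjugation step, either directly or by invoking a local-to-global result in the style of \cref{lem:LocalSuf}.
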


We will use the following lemma, which appears as \cite[Lemma 4.1]{Ne02} or as \cite[p.14]{Ti83} in the finite-dimensional case, which goes through without changes. 
\begin{lemma}\label{lem:LocalSuf}
        Let $G$ be a group and $K\subset G$ be a subset with $1\in K$ and $K = K^{-1}$. We  assume that $K$ is a Banach manifold such that the inversion is smooth on $K$ and there exists an open $1$-neighbourhood $V \subseteq K$ with $V^{2} \subseteq K$, such that the  multiplication $m: V \times V \rightarrow K$ is  smooth. Further, we assume that for any $g \in G$ the conjugation map $C_{g}: G \to G,x \mapsto gxg^{-1}$ is  smooth in an open $1$-neighborhood. Then, there exists a unique Banach-Lie group structure on $G$ such that the inclusion map $K \hookrightarrow G$ is a local diffeomorphism at $1$.
\end{lemma}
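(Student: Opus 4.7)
My plan is to construct a Banach manifold atlas on $G$ by left-translating a chart of $K$ around the identity, and then verify that the group operations are smooth using the three hypotheses (smooth multiplication on $V\times V$, smooth inversion on $K$, and smooth conjugation near $1$).

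First I would shrink $V$ (using smoothness of inversion on $K$) to a symmetric open $1$-neighbourhood, and then pick an open $1$-neighbourhood $W\subseteq V$ contained in a single chart $\varphi:W\to E$ of $K$, small enough that $W\cdot W^{-1}\subseteq V$ and $W^{-1}=W$. For each $g\in G$ define $U_g \defeq  gW$ and $\varphi_g:U_g\to \varphi(W)$ by $\varphi_g(h)\defeq \varphi(g^{-1}h)$. If $U_{g_1}\cap U_{g_2}\neq\emptyset$, then any point in the intersection has the form $g_1w_1=g_2w_2$ with $w_i\in W$, so $y\defeq  g_2^{-1}g_1 = w_2 w_1^{-1}\in W\cdot W^{-1}\subseteq V$; the transition map is
\begin{equation*}
(\varphi_{g_2}\circ\varphi_{g_1}^{-1})(x) = \varphi\bigl(y\cdot \varphi^{-1}(x)\bigr),
\end{equation*}
which is smooth because $y\in V$ and multiplication $V\times V\to K$ is smooth. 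This produces a Hausdorff Banach manifold structure on $G$ (and in particular a topology); the inclusion $K\hookrightarrow G$ is a local diffeomorphism at $1$ by construction.

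Next I would show that $G$ is a Banach Lie group. For inversion at $g\in G$: write a point near $g$ as $gw$ with $w\in W$ small, and compute $(gw)^{-1} = w^{-1}g^{-1}=g^{-1}(gw^{-1}g^{-1}) = g^{-1}\cdot C_g(w^{-1})$. Reading this in the charts $\varphi_g$ at $g$ and $\varphi_{g^{-1}}$ at $g^{-1}$, it becomes $w\mapsto C_g(w^{-1})$, which near $1$ is smooth because inversion on $K$ is smooth and $C_g$ is smooth near $1$ by hypothesis. For multiplication at $(g_1,g_2)$: write nearby points as $g_1w_1$ and $g_2w_2$, and compute
\begin{equation*}
(g_1w_1)(g_2w_2) = g_1g_2\cdot\bigl(g_2^{-1}w_1g_2\bigr)w_2 = g_1g_2\cdot C_{g_2^{-1}}(w_1)\cdot w_2.
\end{equation*}
In charts this reads $(w_1,w_2)\mapsto C_{g_2^{-1}}(w_1)\cdot w_2$; for $w_1$ in a small enough $1$-neighbourhood, $C_{g_2^{-1}}(w_1)$ lies in $V$ by continuity of conjugation (which follows from smoothness), and then multiplication with $w_2\in V$ is smooth by the assumption on $m:V\times V\to K$. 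Composing with the smooth conjugation near $1$ gives smoothness of multiplication in a neighbourhood of $(g_1,g_2)$. The main obstacle here is exactly this step: the hypotheses only give smooth multiplication on $V\times V$, so the hypothesis of smooth conjugation is what allows us to reduce the full multiplication to a composition of operations we control.

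Finally, uniqueness. Suppose $G'$ is another Banach Lie group structure on the set $G$ for which $K\hookrightarrow G'$ is a local diffeomorphism at $1$. Then the identity map $\mathrm{id}:G\to G'$ is a group homomorphism and, at the identity element, agrees with the identity of a neighbourhood of $1\in K$, hence is smooth at $1$. For any $g\in G$ we factor $\mathrm{id} = L_g^{G'}\circ \mathrm{id}\circ L_{g^{-1}}^{G}$; the outer maps are diffeomorphisms (left translations in Banach Lie groups) and the middle map is smooth at $1$, so $\mathrm{id}$ is smooth at $g$. The same argument applied to $\mathrm{id}:G'\to G$ yields smoothness of the inverse, so the two structures coincide.
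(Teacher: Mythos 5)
Your proposal is correct and follows essentially the same route as the paper's argument: build an atlas on $G$ by translating a fixed chart of $K$ near the identity, check the transition maps using the smoothness of $m$ on $V\times V$, and reduce smoothness of multiplication and inversion at arbitrary points to the local data via the conjugation maps $C_g$. The only differences are cosmetic (left instead of right translates) and that you additionally spell out the uniqueness argument.
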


Now we give the proof of  \cref{lem:CocycleExtension}.
Without loss of generality we assume that $U$ satisfies $U^{-1}=U$. We set $K\defeq  U \times Z$. We equip $K$ with the product  Banach manifold structure, and identify it with a subset of $\hat G$ along the injective map $(u,z)\mapsto \sigma(u)z$.
We consider the open subset
        \begin{equation*}
                W \defeq   \{((x_1,z_1),(x_2,z_2)) \mid x_1x_2\in U\} \subset K \times K,
        \end{equation*}
        and choose an open $1$-neighborhood $V$ in $K$ such that $V\times V \subset W$. 
The definition of $f_\sigma$ implies that the restriction of the group structure of $\hat G$ to $V \times V$ is given by
                \begin{align*}
                ((x_1,z_1),(x_2,z_2)) &\mapsto (x_{1}x_{2},z_1z_2f(x_{1},x_{2})),
        \end{align*}
        which is smooth. Likewise, the inversion map on $K$ is
        \begin{align*}
                (x,z) &\mapsto (x^{-1}, z^{-1} f(x,x^{-1})^{-1}),
        \end{align*}
and hence smooth, too.

Next, we claim that for any $\widehat{g} \in \widehat{G}$ the map $C_{\widehat{g}} : \hat G \to \hat G: x \mapsto \widehat{g} x \widehat{g}^{-1}$ is smooth in an open $1$-neighborhood. To this end, let $X \subseteq V$ be an again smaller  open $1$-neighborhood such that $X^{3} \subseteq V$ and $X^{-1} = X$. It then follows that for $\widehat{g} \in X$ we have that  $C_{\widehat{g}}: X \rightarrow K$ is smooth. 
Using the assumption that $G$ is connected, it  follows that $X$ generates $\widehat{G}$, and hence that any $\widehat{g} \in \widehat{G}$ can be decomposed as $\widehat{g} = \widehat{g}_{1} ... \widehat{g}_{n}$ with $n \in \mathbb{N}$ and $\widehat{g}_{1},...,\widehat{g}_{n} \in X$. It follows that $C_{\widehat{g}} = C_{\widehat{g}_{1}} ... C_{\widehat{g}_{n}}$ is smooth.     We see that now all the conditions of Lemma \ref{lem:LocalSuf} are met, and it follows that there is a unique Banach-Lie group structure on $\widehat{G}$ such that the inclusion $K \hookrightarrow \widehat{G}$ is a local diffeomorphism at $\1$. 

To complete the proof we now need to prove that $\widehat{G} \rightarrow G$ is a smooth principal $Z$-bundle, which boils down to prove that  $q: \widehat{G} \rightarrow G$ is a smooth surjective submersion. This is true in the open $\1$-neighbourhood $K$, and hence everywhere since it is a group homomorphism.

\section{Modular conjugation in the free fermions}\label{sec:ModularConjugation}

In this section we give a proof of \cref{thm:modular}, i.e.~we compute the  modular conjugation $J_{\Omega}$ for the triple $(\Cl(V_{-})'',\mathcal{F},\Omega)$, see \cref{sec:tomita} for the notation. The result and possible computations are probably well-known, and have appeared in slight variations of the setting in  \cite[Section 15]{wassermann98}, \cite{Hen14}, and \cite{janssens13}. In the latter reference, Janssens outlines how to transfer his computations into our setting, and in the following we have done this step by step, closely following \cite{janssens13}.  

The strategy will be to find a unitary operator $s: L \rightarrow L$ such that the Tomita operator $S$ is  $S = \operatorname{k}^{-1} \Lambda_{i \alpha s}$, and then to find a polar decomposition for $s$.
For simplicity, we set $d=1$, and thus only work with
 $V = L^{2}(\mb{S})$. All statements carry over to $L^{2}( \mathbb{S} )\otimes \C^{d}$ in a straightforward fashion. Further, it will be convenient to identify $V$ with the $L^{2}$-closure of $C^{\infty}(S^{1}, \C)$, see \cref{sec:SectionsOfSpinorBundle}.

Let us write $P_{L}: V \rightarrow L$, $P_{L}^{\perp}:V \rightarrow \alpha(L) = L^{\perp}$ and $P_{\pm}:V \rightarrow V_{\pm}$ for the orthogonal projections. We then define the operators
\begin{equation*}
T_{\pm} \defeq  (P_{L} - P_{\pm})^{2},
\end{equation*}
on $V$.
\begin{lemma}\label{lem:UnboundedInverses}
        The operators $T_{\pm}$ have unbounded inverses $T_{\pm}^{-1}$.
\end{lemma}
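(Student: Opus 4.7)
The plan is to observe that $T_{\pm}$ is a bounded, positive, self-adjoint operator on $V$ (since $P_L - P_{\pm}$ is self-adjoint, so $T_{\pm} = (P_L - P_{\pm})^{*}(P_L - P_{\pm})\geq 0$), to establish injectivity by exploiting the general position of the two decompositions $V = L \oplus \alpha(L)$ and $V = V_- \oplus V_+$, and then to invoke standard functional calculus to produce a densely defined self-adjoint inverse. The unboundedness will be the point requiring extra work.

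For injectivity, suppose $T_{\pm} v = 0$. Since $T_{\pm} = (P_L - P_{\pm})^{*}(P_L - P_{\pm})$, this forces $(P_L - P_{\pm})v = 0$, i.e.\ $P_L v = P_{\pm} v$. The left-hand side lies in $L$ and the right-hand side in $V_{\pm}$, so by \cref{lem:generalposition} both vanish. Hence $v \in \ker P_L \cap \ker P_{\pm} = \alpha(L) \cap V_{\mp}$, which is again trivial by \cref{lem:generalposition}. Therefore $v = 0$.

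Since $T_{\pm}$ is self-adjoint and injective, its range satisfies $\overline{\operatorname{ran}(T_{\pm})} = (\ker T_{\pm})^{\perp} = V$, so $T_{\pm}^{-1}$ is well defined on a dense domain and is again positive and self-adjoint by the Borel functional calculus. To see that $T_{\pm}^{-1}$ is truly unbounded, one must exclude a positive spectral gap at the origin, i.e.\ rule out any estimate $\|(P_L - P_{\pm})v\|^2 \geq c \|v\|^2$. The plan for this is to exhibit an approximate null sequence for $T_{\pm}$: for $T_+$, one takes high-frequency anti-holomorphic vectors whose $L^2$-mass concentrates on the upper semicircle $I_+$ (produced by cutoff-and-project constructions from bump functions on $I_+$, or equivalently by pushing Szeg\"o-kernel type concentrations toward the boundary), yielding unit vectors $v_n$ with $P_L v_n \approx v_n \approx P_+ v_n$, and hence $T_+ v_n \to 0$.

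The main obstacle is precisely this last step: injectivity and the existence of $T_{\pm}^{-1}$ follow cleanly from general position, whereas genuine unboundedness reflects the type III$_1$ nature of the local algebras $\Cl(V_{\pm})''$ and will ultimately be tied to the explicit spectral decomposition of $T_{\pm}$ used to identify the modular operator $\Delta^{1/2}$ in the subsequent computation of $J_\Omega$.
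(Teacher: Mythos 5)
Your argument is correct, but it takes a genuinely different route from the paper. The paper does not prove the lemma where it is stated: it explicitly defers the proof, and the claim is only established at the very end of \cref{sec:ModularConjugation}, after $T_{\pm}$ have been conjugated -- via the M\"obius map $\Gamma$, the unitary $U_\Gamma$, the splitting $D$ into the two half-lines, and the Fourier transform -- into the multiplication operators $\hat T_{\pm}=\frac{e^{\pm\pi k}}{e^{\pi k}+e^{-\pi k}}\1$ on $L^2(\R)\oplus L^2(\R)$, whose symbols are strictly positive but accumulate at $0$; injectivity and unboundedness of the inverse are then read off. Your route is softer and self-contained: positivity and self-adjointness of $T_{\pm}=(P_L-P_{\pm})^2$; injectivity from \cref{lem:generalposition} exactly as you argue ($(P_L-P_{\pm})v=0$ gives $P_Lv=P_{\pm}v\in L\cap V_{\pm}=\{0\}$, then $v\in\alpha(L)\cap V_{\mp}=\{0\}$); and absence of a spectral gap via an approximate null sequence. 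The only step you leave as a plan is that last one, but it is standard and does work: take $v_n=\bar z\,\overline{k_{w_n}}$ with $k_w$ the normalized Szeg\H{o} kernel and $w_n\to i$, so that $|v_n|^2$ is a Poisson kernel concentrating at $i\in I_+$; then $v_n\in L$ (the factor $\bar z$ is needed because $L$ excludes constants), $\|v_n\|=1$, $(P_L-P_+)v_n=P_-v_n\to 0$, hence $\|T_+v_n\|\to 0$ and $\|T_+^{-1}(T_+v_n/\|T_+v_n\|)\|\to\infty$; the case of $T_-$ is symmetric. What your approach buys is a short conceptual proof of the lemma; what it does not provide is the explicit spectral resolution of $T_{\pm}$, which the appendix needs anyway to identify $u$ and $\delta^{1/2}$ and hence $J_\Omega$ -- this is why the paper routes the lemma through the diagonalization. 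Your closing remark about the type III$_1$ property is motivation rather than a proof ingredient, and in the paper the logical direction is the reverse: the explicit computation of $\Delta^{1/2}$ is what yields the factor type.
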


We will later  diagonalize the operators $T_{\pm}$; the fact that these operators have unbounded inverses will be evident from their diagonal form. For now, we assume that this lemma holds.

We recall that the Tomita operator is $S: a \lact \Omega \mapsto a^{*} \lact \Omega$. Suppose that $s:V \rightarrow V$ is an operator with the properties that $s(L) = \alpha(L)$, and that $v+ s(v) \in V_{-}$ for $v \in L$.
Now, let $v \in L \subset \mc{F}$. We compute $v = v \lact \Omega = (v + s(v)) \lact \Omega$, and hence $S(v) = S((v+s(v)) \lact \Omega) = (\alpha s (v) + \alpha(v) ) \lact \Omega = \alpha s (v)$. This means that $S|_{L} = \operatorname{k}^{-1} \Lambda_{i \alpha s}|_{L}$, in \cref{lem:SasWedge} we shall see that $S = \operatorname{k}^{-1} \Lambda_{i \alpha s}$.
It turns out that the densely defined operator $s$ on $V$ defined by
\begin{equation*}
        s \defeq  P_{L} P_{-} T_{-}^{-1} P_{L}^{\perp} + P_{L}^{\perp} P_{-} T_{+}^{-1} P_{L},
\end{equation*}
does the trick, as we show below.
\begin{lemma}
        The operator $s$ commutes with $\alpha$.
\end{lemma}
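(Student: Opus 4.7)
The plan is to show $\alpha s\alpha = s$ by reducing the identity to three elementary observations about how $\alpha$ conjugates the projections involved, and then carrying out a short algebraic computation.

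First I would establish the conjugation rules
\[
\alpha P_L \alpha = P_L^{\perp}, \qquad \alpha P_{\pm}\alpha = P_{\pm}.
\]
The first holds because $\alpha$ is anti-unitary with $\alpha^2 = \1$ and $V = L \oplus \alpha(L)$ is $\alpha$-invariant: $\alpha$ interchanges $L$ with $\alpha(L)$, so it interchanges their orthogonal projections. The second holds because the subspaces $V_{\pm}$ are defined by the support of functions on $S^{1}$, and pointwise complex conjugation preserves supports, so $\alpha(V_{\pm}) = V_{\pm}$.

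Next I would use $P_L + P_L^{\perp} = \1 = P_+ + P_-$ to observe the key algebraic identity
\[
P_L^{\perp} - P_- = \1 - P_L - P_- = P_+ - P_L = -(P_L - P_+),
\]
which, after squaring, gives $(P_L^{\perp} - P_-)^{2} = (P_L - P_+)^{2}$. Combined with the conjugation rules this yields
\[
\alpha T_- \alpha = (\alpha P_L \alpha - \alpha P_- \alpha)^{2} = (P_L^{\perp} - P_-)^{2} = T_+,
\]
and symmetrically $\alpha T_+ \alpha = T_-$. Since $\alpha$ is an anti-unitary involution, it follows that $\alpha$ sends $\Dom(T_-^{-1})$ to $\Dom(T_+^{-1})$ and $\alpha T_{\pm}^{-1} \alpha = T_{\mp}^{-1}$ on the respective domains.

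Finally, inserting $\1 = \alpha^{2}$ between the factors of each summand of $s$ and applying the above, I would compute
\[
\alpha s \alpha = (\alpha P_L \alpha)(\alpha P_- \alpha)(\alpha T_-^{-1}\alpha)(\alpha P_L^{\perp}\alpha) + (\alpha P_L^{\perp}\alpha)(\alpha P_- \alpha)(\alpha T_+^{-1}\alpha)(\alpha P_L \alpha)
\]
\[
= P_L^{\perp} P_- T_+^{-1} P_L + P_L P_- T_-^{-1} P_L^{\perp} = s,
\]
which is the claim. The only potential subtlety is domain tracking, but since each $\alpha$-conjugation is a bijection between the corresponding domains and the two summands of $s$ have complementary roles, the equality is valid on the natural common dense domain and no further analytic care is needed.
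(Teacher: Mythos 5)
Your proof is correct and follows the same route as the paper's: the conjugation relations $\alpha P_L\alpha=P_L^{\perp}$ and $\alpha P_{\pm}\alpha=P_{\pm}$, the consequence $\alpha T_{\pm}\alpha=T_{\mp}$, and then the direct computation showing the two summands of $s$ are swapped. You have merely written out the details that the paper leaves as a "direct computation."
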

\begin{proof}
        Direct computation using the fact that $\alpha P_{L} = P_{L}^{\perp} \alpha$, and $\alpha P_{\pm} = P_{\pm} \alpha$, which implies that $\alpha T_{\pm} = T_{\mp} \alpha$.
\end{proof}
\begin{lemma}
        The operator $s$ maps $v \in L$ to the unique $w \in \alpha(L)$ such that $v+w \in V_{-}$, if such a $w$ exists. Similarly, it maps $w \in \alpha(L)$ to the unique $v \in L$ such that $v + w \in V_{-}$, again if such a $v$ exists.
\end{lemma}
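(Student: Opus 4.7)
The plan is to verify both assertions by direct computation, leveraging the decompositions $V = L \oplus \alpha(L) = V_{-} \oplus V_{+}$ and the four relations on projections they produce. Uniqueness in both halves is immediate from \cref{lem:generalposition}: if $w_1, w_2 \in \alpha(L)$ both satisfy $v + w_i \in V_{-}$, then $w_1 - w_2 \in \alpha(L) \cap V_{-} = \{0\}$, and the analogous argument with $L \cap V_{-} = \{0\}$ settles uniqueness of $v$ given $w$.

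For the first formula, I would fix $v \in L$ and suppose $w \in \alpha(L)$ with $u \defeq v + w \in V_{-}$; the goal is then to show $P_{L}^{\perp} P_{-} T_{+}^{-1} v = w$, since by the definition of $s$ and the fact that $P_{L}^{\perp} v = 0$, this is precisely $s(v)$. Expanding $T_{+} = (P_{L}-P_{+})^{2} = P_{L} - P_{L}P_{+} - P_{+}P_{L} + P_{+}$ and using $P_{L}u = v$, $P_{+}u = 0$, $P_{-}u = u$, a short calculation yields $T_{+} u = v - P_{+} v$. Formally, on appropriate domains,
\begin{equation*}
T_{+}^{-1} v = u + T_{+}^{-1}(P_{+} v).
\end{equation*}
A second direct computation shows that $T_{+}$ preserves $V_{+}$: for $y \in V_{+}$ one finds $T_{+} y = y - P_{+} P_{L} y = P_{+} P_{L}^{\perp} y \in V_{+}$. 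Hence $T_{+}^{-1}(P_{+} v)$, when defined, lies in $V_{+}$. Since $P_{-}$ annihilates $V_{+}$, we have $V_{+} \subseteq \ker(P_{L}^{\perp} P_{-})$, and applying $P_{L}^{\perp} P_{-}$ to the displayed identity gives
\begin{equation*}
s(v) = P_{L}^{\perp} P_{-} T_{+}^{-1} v = P_{L}^{\perp} P_{-} u = P_{L}^{\perp} u = w,
\end{equation*}
using $P_{-} u = u$ in the penultimate step.

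The second half is proved by the symmetric argument with $T_{-}$ in place of $T_{+}$: given $w \in \alpha(L)$ and $v \in L$ with $u \defeq v + w \in V_{-}$, one computes $T_{-} u = P_{-} w$, so $T_{-}^{-1} w = u + T_{-}^{-1}(P_{+} w)$; one then checks that $T_{-}|_{V_{+}} = P_{+} P_{L}|_{V_{+}}$ preserves $V_{+}$, and concludes $s(w) = P_{L} P_{-} T_{-}^{-1} w = P_{L} P_{-} u = v$ using $V_{+} \subseteq \ker(P_{L} P_{-})$. The main technical hurdle is the domain issue: since $T_{\pm}^{-1}$ is unbounded by \cref{lem:UnboundedInverses}, the identity $T_{+}^{-1} v = u + T_{+}^{-1}(P_{+} v)$ is only literally meaningful on a dense subspace where both summands on the right are defined. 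This is handled by invoking the spectral decomposition of $T_{\pm}$ established in the proof of \cref{lem:UnboundedInverses}: on the common dense core, all manipulations are justified, and the identity $s(v) = w$ then extends by closure.
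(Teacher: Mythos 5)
Your argument is correct in substance, but it runs in the opposite direction to the paper's. The paper verifies by direct computation that $v + s(v) = P_{-}T_{+}^{-1}v$, which visibly lies in $V_{-}$, and then invokes the uniqueness argument (exactly your first paragraph) to conclude that $s(v)$ is \emph{the} admissible $w$; in particular it proves that such a $w$ exists for every $v\in L\cap\Dom(T_{+}^{-1})$, which is what is used afterwards (e.g.\ for $s^{2}=\1$). You instead assume the decomposition $u=v+w\in V_{-}$ exists, compute $T_{+}u = P_{-}v$, and solve for $T_{+}^{-1}v$. Both proofs hinge on the same projection identities; yours matches the literal ``if such a $w$ exists'' phrasing more closely, the paper's yields existence as a byproduct.

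Two steps need tightening. First, from ``$T_{+}$ preserves $V_{+}$'' you cannot immediately conclude that $T_{+}^{-1}(P_{+}v)\in V_{+}$: for that you need $T_{+}$ to be block-diagonal with respect to $V=V_{-}\oplus V_{+}$, i.e.\ to preserve $V_{-}$ as well, and then to use injectivity. This does hold --- $T_{+}=(P_{L}-P_{+})^{2}$ is self-adjoint, so invariance of $V_{+}$ forces invariance of $V_{+}^{\perp}=V_{-}$, and injectivity then gives $T_{+}^{-1}\bigl(V_{+}\cap \operatorname{Ran}T_{+}\bigr)\subseteq V_{+}$ --- but the step should be spelled out, since it is exactly where an operator that merely preserves $V_{+}$ could fail. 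Second, the closing appeal to spectral decompositions and ``extension by closure'' is unnecessary and somewhat misleading: if $v\in\Dom(T_{+}^{-1})$ (which is required for $s(v)$ to be defined at all) and $u=v+w\in V_{-}$ exists, then $P_{+}v=v-T_{+}u$ lies in $\operatorname{Ran}(T_{+})$ because the range is a linear subspace, so every term in your identity $T_{+}^{-1}v=u+T_{+}^{-1}(P_{+}v)$ is already defined; no limiting argument is needed, and none is available anyway since $s$ is not shown to be closed at this stage.
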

\begin{proof}
        Let $v \in L$ be arbitrary. First let us prove the uniqueness claim. Suppose that there exist $w,w'\in \alpha(L)$ such that $v+w \in V_{-}$ and $v+ w' \in V_{-}$. Then it follows that $(v+w)-(v+w') = w-w' \in V_{-} \cap \alpha(L) = \{0 \}$ by \cref{lem:generalposition}.
        By direct computation one might verify that
        \begin{equation*}
        v + P_{L}^{\perp}P_{-}T_{+}^{-1} v = P_{-}T_{+}^{-1}v.
        \end{equation*}
        And hence
        \begin{equation*}
        v + sv = v + P_{L}^{\perp}P_{-}T_{+}^{-1}v = P_{-}T_{+}^{-1}v \in V_{-}.
        \end{equation*}
        This proves the first statement in the lemma. The second statement follows from a similar computation.
\end{proof}

As a consequence, we see that the operator $s$ squares to $\1$ and restricts to the identity on $V_{-}$.
The following result tells us precisely how $\alpha s$ is related to $S$.
\begin{lemma}\label{lem:SasWedge}
        For all $a \in \Cl(V_{-})$ we have $\operatorname{k}^{-1}\Lambda_{i \alpha s}(a \lact \Omega) = a^{*} \lact \Omega = S(a \lact \Omega)$.
\end{lemma}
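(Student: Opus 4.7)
The key step is the following identity on wedge products: for $w_{1},\ldots,w_{n} \in L$,
\begin{equation*}
\operatorname{k}^{-1}\Lambda_{i\alpha s}(w_{1}\wedge\cdots\wedge w_{n}) = \alpha s(w_{n})\wedge\cdots\wedge \alpha s(w_{1}).
\end{equation*}
This follows by direct computation: $\Lambda_{i\alpha s}$ acts on $\Lambda^{n}L$ by sending $w_{1}\wedge\cdots\wedge w_{n}$ to $(i\alpha s(w_{1}))\wedge\cdots\wedge(i\alpha s(w_{n})) = i^{n}\,\alpha s(w_{1})\wedge\cdots\wedge \alpha s(w_{n})$ by multilinearity of the wedge, and $\operatorname{k}^{-1}$ multiplies $\Lambda^{n}L$ by $1$ if $n$ is even and by $-i$ if $n$ is odd; the combined scalar $(-1)^{n(n-1)/2}$ absorbs exactly the sign separating $w_{1}\wedge\cdots\wedge w_{n}$ from its reversal under antisymmetry of the wedge. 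In particular, $\operatorname{k}^{-1}\Lambda_{i\alpha s}$ is an antilinear involution of $\mathcal{F}$.

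With this identity in hand, I would prove the lemma by a Wick-expansion argument. For $a = v_{1}\cdots v_{n} \in \Cl(V_{-})$ with $v_{i} \in V_{-}$ in the domain of $s$, decompose $v_{i} = w_{i} + s(w_{i})$ with $w_{i} \in L$, so that the action reads $v_{i}\lact = w_{i}\wedge(-) + a((s(w_{i}))^{*})(-)$. Iterating from right to left and using that $a((s(w_{i}))^{*})\Omega = 0$, an elementary induction yields an expansion of $v_{1}\cdots v_{n}\lact\Omega$ as a sum over Wick pairings of $\{1,\ldots,n\}$: each term is a Koszul-signed product of inner-product factors $\langle w_{j},\alpha s(w_{i})\rangle$ (for contracted pairs $i<j$) times a wedge $w_{k_{1}}\wedge\cdots\wedge w_{k_{r}}$ of the remaining (free) indices in ascending order. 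Analogously, since $a^{*} = \alpha(v_{n})\cdots\alpha(v_{1})$ and $\alpha(v_{i}) = \alpha s(w_{i}) + \alpha(w_{i}) \in L \oplus \alpha(L)$, the Wick expansion of $a^{*}\lact\Omega$ is a sum over the same pairings, but with contraction factors $\langle \alpha s(w_{i}),w_{j}\rangle$ and wedges of the form $\alpha s(w_{k_{r}})\wedge\cdots\wedge \alpha s(w_{k_{1}})$ in descending order.

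Applying $\operatorname{k}^{-1}\Lambda_{i\alpha s}$ termwise to the first expansion conjugates each scalar $\langle w_{j},\alpha s(w_{i})\rangle$ to $\langle \alpha s(w_{i}),w_{j}\rangle$ by antilinearity of $\Lambda_{i\alpha s}$, and, by the key identity above, turns each wedge $w_{k_{1}}\wedge\cdots\wedge w_{k_{r}}$ into $\alpha s(w_{k_{r}})\wedge\cdots\wedge \alpha s(w_{k_{1}})$; the result matches the Wick expansion of $a^{*}\lact\Omega$ termwise, which together with $a^{*}\lact\Omega = S(a\lact\Omega)$ completes the proof. The main obstacle is the combinatorial sign bookkeeping in matching the two Wick expansions, but this is packaged cleanly into the key identity, which exhibits the Klein twist $\operatorname{k}^{-1}$ as precisely the correction needed to turn the naive action of the antilinear $\Lambda_{i\alpha s}$ on $n$-fold wedges into the order-reversal that matches the antiautomorphism $a\mapsto a^{*}$ on $\Cl(V_{-})$.
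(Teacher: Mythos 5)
Your argument is correct in outline but takes a genuinely different route from the paper. The paper proves the identity by a single induction on the filtration degree of $\Cl(V_{-})$: writing $f_{0}=v+w$ with $v=P_{L}f_{0}$, $w=P_{L}^{\perp}f_{0}$ (so that $sv=w$), it expands $f_{0}\lact x$ as $v\wedge x+\iota_{\alpha(w)}x$, applies the induction hypothesis to $x=f_{1}\cdots f_{n}\lact\Omega$, and checks directly that the two resulting terms reassemble into $\alpha(f_{n})\cdots\alpha(f_{0})\lact\Omega$; the Klein factor enters through the sign $(-1)^{n}$ needed to move the new generator from the front of the wedge to the back of the reversed Clifford product. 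You instead expand both $a\lact\Omega$ and $a^{*}\lact\Omega$ completely via Wick's theorem and match the expansions term by term, with your key identity $\operatorname{k}^{-1}\Lambda_{i\alpha s}(w_{1}\wedge\cdots\wedge w_{n})=\alpha s(w_{n})\wedge\cdots\wedge\alpha s(w_{1})$ handling the uncontracted part. That identity is correct: the scalar $i^{n}(-i)^{n\bmod 2}=(-1)^{\lfloor n/2\rfloor}$ indeed equals the reversal sign $(-1)^{n(n-1)/2}$. Your version makes the role of the Klein twist conceptually transparent, at the price of heavier combinatorics; the paper's induction never needs the full pairing expansion.

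One point is glossed over. The key identity only accounts for the reversal of the \emph{free} wedge factors in a given term. For a term with a nonempty contraction set you additionally need that the fermionic Wick sign $\epsilon(P)$ of a pairing $P$ in the expansion of $v_{1}\cdots v_{n}\lact\Omega$ coincides with the sign of the mirrored pairing in the expansion of the reversed product $\alpha(v_{n})\cdots\alpha(v_{1})\lact\Omega$. This is true (reversal of the sequence preserves the crossing structure of the chord diagram, including the parity contributed by free indices lying under a chord), but it is a separate combinatorial fact, not a consequence of the key identity, and your phrase that the bookkeeping is ``packaged cleanly into the key identity'' overstates what that identity delivers. State and prove this sign invariance, for instance by the same right-to-left induction you use to derive the expansion, and the proof is complete.
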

\begin{proof}
        We will prove this by induction on the  degree in $\Cl(V_{-})$ (note that while the algebra $\Cl(V_{-})$ is not graded, it is filtered). Suppose that the claim holds for all $a \in \Cl(V_{-})$ for $a$ of degree $n$ or less. We shall prove that it follows that for all $f_{0},\dots ,f_{n} \in V_{-}$ we have that
        \begin{equation*}
        \operatorname{k}^{-1}\Lambda_{i \alpha s} (f_{0} \dots f_{n} \lact \Omega) = S(f_{0} \dots f_{n} \lact \Omega).
        \end{equation*}
        First off, we set $x \defeq  f_{1} \dots f_{n} \lact \Omega$. Furthermore, there exist $y_{i} \in \Lambda^{i} L$, where $i =0,...,n$ such that $x = \sum_{i=0}^{n} y_{i}$. Finally, we set $v = P_{L} f_{0}$ and $w= P_{L}^{\perp}f_{0}$. Note that $sv =w$ and $sw = v$.
        Now we compute
        \begin{align*}
        \operatorname{k}^{-1}\Lambda_{i \alpha s} (f_{0} \dots f_{n} \lact \Omega) &= \operatorname{k}^{-1} \Lambda_{i \alpha s} (f_{0} \lact x) \\
        &= \operatorname{k}^{-1} \Lambda_{i \alpha s} ( v \wedge x + \iota_{\alpha{w}} x) \\
        &= \operatorname{k}^{-1} (i \alpha (w) \wedge \Lambda_{i \alpha s}x + \Lambda_{i \alpha s} \iota_{\alpha(w)} x).
        \end{align*}
        Straightforward computations, using the induction hypothesis, then show that
        \begin{equation*}
                \operatorname{k}^{-1}(i\alpha(w) \wedge \Lambda_{i \alpha s} x) = \alpha(f_{n}) \dots \alpha(f_{0}) \lact \Omega + \sum_{k=1}^{n}(-1)^{k} \langle f_{k}, \alpha (w) \rangle \operatorname{k}^{-1} \Lambda_{i \alpha s} f_{1} \dots \widehat{f_{k}} \dots f_{n} \lact \Omega,
        \end{equation*} 
        and that 
        \begin{equation*}
                \operatorname{k}^{-1} \Lambda_{i \alpha s} \iota_{\alpha(w)} x = - \sum_{k=1}^{n} (-1)^{k} \langle f_{k}, \alpha(w) \rangle \operatorname{k}^{-1} \Lambda_{i \alpha s} f_{1} \dots \widehat{f_{k}} \dots f_{n} \lact \Omega.
        \end{equation*}
        Putting these results together we see that
        \begin{align*}
        \operatorname{k}^{-1}\Lambda_{i \alpha s} (f_{0} \dots f_{n} \lact \Omega) &=  \operatorname{k}^{-1} (i \alpha (w) \wedge \Lambda_{i \alpha s}x + \Lambda_{i \alpha s} \iota_{\alpha(w)} x) = \alpha(f_{n}) \dots \alpha(f_{0}) \lact \Omega = S(f_{0} \dots f_{n} \lact \Omega),
        \end{align*}
        which completes the induction step, and hence our proof.
\end{proof}
Let $\alpha s = u \delta^{1/2}$ be the polar decomposition of $\alpha s$. Note that the fact that $\alpha s$ preserves $L$ implies that both $u$ and $\delta^{1/2}$ preserve $L$. Similarly, the fact that $\alpha s$ commutes with $\alpha$ implies that both $u$ and $\delta^{1/2}$ commute with $\alpha$.
From the equality $S = \operatorname{k}^{-1} \Lambda_{i \alpha s}$ we obtain the following.
\begin{proposition}\label{cor:PolarDceompositionOfS}
        The polar decomposition of $S$ is given by $S = \operatorname{k}^{-1}\Lambda_{iu} \Lambda_{\delta^{1/2}}$, whence $J = \operatorname{k}^{-1}\Lambda_{iu}$ and $\Delta^{1/2} = \Lambda_{\delta^{1/2}}$.
\end{proposition}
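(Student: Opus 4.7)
The plan is to combine the functoriality of the assignment $T \mapsto \Lambda_T$ with the polar decomposition of $\alpha s$ already recorded on $V$, and then verify that the resulting factorization of $S$ satisfies the defining properties of a polar decomposition on Fock space.

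First, since the earlier discussion only defines $\Lambda_T$ for anti-/unitary $T$, I would extend the construction to the unbounded positive operator $\delta^{1/2}$: using that $\delta^{1/2}$ preserves $L$, define $\Lambda_{\delta^{1/2}}$ on the algebraic core $\Lambda \mathrm{Dom}(\delta^{1/2}) \subseteq \Lambda L$ by the usual rule $f_1 \wedge \cdots \wedge f_n \mapsto \delta^{1/2}f_1 \wedge \cdots \wedge \delta^{1/2}f_n$. A routine check on decomposable vectors shows this operator is symmetric and positive, and diagonalising in an orthonormal basis of eigenvectors of $\delta^{1/2}$ (which exists densely by functional calculus) exhibits its closure as a positive self-adjoint operator on $\mathcal{F}$.

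Next, I would invoke the functoriality $\Lambda_{AB} = \Lambda_A \Lambda_B$, which is immediate from the uniqueness clause in the definition of $\Lambda_T$, applied to the factorisation $i\alpha s = (iu)\cdot \delta^{1/2}$ on $L$. This gives $\Lambda_{i\alpha s} = \Lambda_{iu}\Lambda_{\delta^{1/2}}$ on the algebraic Fock space. Combining with \cref{lem:SasWedge} (whose equality, holding on the dense subspace $\Cl(V_-)\triangleright \Omega$, extends to the closures) yields the factorisation $S = \operatorname{k}^{-1}\Lambda_{iu}\Lambda_{\delta^{1/2}}$.

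It remains to verify that $J \defeq \operatorname{k}^{-1}\Lambda_{iu}$ is an antiunitary involution and that $\Delta^{1/2} \defeq \Lambda_{\delta^{1/2}}$ is positive self-adjoint, for then the uniqueness of the polar decomposition of a densely defined closed antilinear operator forces this to coincide with the modular data of $S$. Antiunitarity of $J$ is clear (since $\operatorname{k}^{-1}$ is unitary and $iu$ is antiunitary on $L$, so $\Lambda_{iu}$ is antiunitary). For $J^2 = \1$, I would first establish $u^2 = \1$ by squaring $\alpha s = u \delta^{1/2}$ and comparing with $(\alpha s)^2 = \1$; this gives $(iu)^2 = \1$ and hence $\Lambda_{iu}^2 = \Lambda_1 = \1$ by functoriality. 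A short grading computation on $\mathcal{F} = \mathcal{F}_0 \oplus \mathcal{F}_1$, using that $\operatorname{k}$ acts as the scalar $i^{|\cdot|}$, that $\Lambda_{iu}$ preserves the grading, and that $\Lambda_{iu}$ is antilinear, shows that the two $\operatorname{k}^{-1}$-factors combine to $\1$ on each graded component, so $J^2 = \1$.

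The main obstacle will be the rigorous control of the unbounded factor $\Lambda_{\delta^{1/2}}$: choosing a core on which it is essentially self-adjoint, ensuring that $\Lambda_{i\alpha s}$ and the composition $\Lambda_{iu}\Lambda_{\delta^{1/2}}$ agree as closed (and not merely formal) operators, and checking that $\operatorname{k}^{-1}\Lambda_{iu}$ preserves the operator-theoretic domains in the way required by uniqueness of polar decomposition. Once these domain issues are settled, the remaining algebraic manipulations are straightforward.
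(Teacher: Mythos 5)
Your proposal takes essentially the same route as the paper, which in fact states this proposition with no proof beyond the remark that it follows from $S=\operatorname{k}^{-1}\Lambda_{i\alpha s}$ (\cref{lem:SasWedge}) together with the polar decomposition $\alpha s = u\delta^{1/2}$ on $V$; your elaboration via functoriality of $\Lambda$, the involution property of $J$, and uniqueness of the polar decomposition of a closed, injective, densely defined antilinear operator is the correct way to fill in what the paper leaves implicit, and you rightly identify the domain/core issues for the unbounded factor as the only real work. One technical misstep: you cannot diagonalise $\delta^{1/2}$ in an orthonormal basis of eigenvectors, since (as the explicit Fourier-side computation in \cref{sec:ModularConjugation} shows) $\delta^{1/2}$ has purely continuous spectrum; self-adjointness of $\Lambda_{\delta^{1/2}}$ should instead be obtained from the spectral theorem, e.g.\ via essential self-adjointness of algebraic tensor (hence exterior) powers of a self-adjoint operator on a core, or by writing $\Lambda_{\delta^{1/2}}=\exp(\mathrm{d}\Gamma(\log\delta^{1/2}))$. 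With that repair the argument is sound.
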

We claim that $u|_{L} = -i \alpha \tau$, which implies that $J = \operatorname{k}^{-1} \Lambda_{\alpha \tau}$. The claim is proved in a sequence of lemmas in the remainder of this section.

\begin{lemma}\label{lem:PolarDecoOfAlphas}
        The equations
        \begin{equation*}
        u = \alpha \left( \frac{P_{L}^{\perp} P_{-} P_{L}}{\sqrt{T_{+}T_{-}}} + \frac{P_{L} P_{-} P_{L}^{\perp}}{\sqrt{T_{+}T_{-}}} \right),
        \end{equation*}
        and
        \begin{equation*}
        \delta^{1/2} = \sqrt{\frac{T_{-}}{T_{+}}} P_{L} + \sqrt{\frac{T_{+}}{T_{-}}} P_{L}^{\perp}
        \end{equation*}
        hold.
\end{lemma}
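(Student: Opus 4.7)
The result asserts a polar decomposition $\alpha s = u\delta^{1/2}$. Since $\alpha$ is anti-unitary, $\delta = (\alpha s)^{*}(\alpha s) = s^{*}s$. My plan is therefore to compute $s^{*}s$ explicitly via projection calculus, read off $\delta^{1/2}$ from its block-diagonal form, and finally recover $u$ from the identity $u = \alpha s\,\delta^{-1/2}$.

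The first step is to establish a battery of algebraic identities. The sum $T_{+}+T_{-}=1$ falls out of expanding $T_{\pm}=(P_{L}-P_{\pm})^{2}$ and noting the cross-terms cancel. The commutations $[P_{L},T_{\pm}]=0=[P_{L}^{\perp},T_{\pm}]$ are direct from the same expansion. Slightly less obvious is $[P_{\pm},T_{\mp}]=0$, which reduces via $T_{+}+T_{-}=1$ to $[P_{-},T_{-}]=0$; one computes $P_{-}T_{-}=P_{-}-P_{-}P_{L}P_{-}=T_{-}P_{-}$ directly. In addition I would record the \emph{mixed} identities
\begin{equation*}
P_{L}T_{+}=P_{L}P_{-}P_{L},\qquad P_{L}T_{-}=P_{L}P_{+}P_{L},\qquad P_{L}^{\perp}T_{\mp}=P_{L}^{\perp}P_{\mp}P_{L}^{\perp},\qquad P_{-}T_{-}P_{-}=P_{-}P_{L}^{\perp}P_{-},
\end{equation*}
each of which follows by expanding and using $P_{L}P_{L}^{\perp}=0$ and $P_{+}P_{-}=0$.

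Next, I decompose $s=A+B$ with $A\defeq P_{L}P_{-}T_{-}^{-1}P_{L}^{\perp}\colon L^{\perp}\to L$ and $B\defeq P_{L}^{\perp}P_{-}T_{+}^{-1}P_{L}\colon L\to L^{\perp}$. Because the ranges are orthogonal, $s^{*}s$ is block-diagonal with blocks $B^{*}B$ on $L$ and $A^{*}A$ on $L^{\perp}$. A short chain of substitutions using the identities above gives
\begin{equation*}
B^{*}B=P_{L}T_{+}^{-1}(P_{-}P_{L}^{\perp}P_{-})T_{+}^{-1}P_{L}=P_{L}T_{+}^{-1}P_{-}T_{-}T_{+}^{-1}P_{L}=T_{+}^{-2}T_{-}(P_{L}P_{-}P_{L})=T_{+}^{-2}T_{-}(P_{L}T_{+})=\frac{T_{-}}{T_{+}}P_{L},
\end{equation*}
where I have freely used that $P_{L}$, $P_{-}$, $T_{-}$, and $T_{+}$ mutually commute. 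A symmetric calculation yields $A^{*}A=(T_{+}/T_{-})P_{L}^{\perp}$. Thus $\delta=(T_{-}/T_{+})P_{L}+(T_{+}/T_{-})P_{L}^{\perp}$, and since this is already diagonalized with positive entries on each block, taking the positive square root gives the claimed formula for $\delta^{1/2}$.

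To recover $u$ I write $u=\alpha s\,\delta^{-1/2}$ and evaluate on each block. On $L$ we get $u|_{L}=\alpha P_{L}^{\perp}P_{-}T_{+}^{-1}\cdot\sqrt{T_{+}/T_{-}}=\alpha P_{L}^{\perp}P_{-}(T_{+}T_{-})^{-1/2}|_{L}$; inserting a harmless $P_{L}$ on the right (which commutes with $(T_{+}T_{-})^{-1/2}$) produces the first summand of the stated expression. The identical argument on $L^{\perp}$ delivers the second summand. The main obstacle I anticipate is the unboundedness of $T_{\pm}^{-1}$ noted in Lemma~6.8: $s$, $\delta^{1/2}$ and $\delta^{-1/2}$ are only densely defined, so the polar decomposition must be interpreted in the sense of closed densely defined operators. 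The cleanest way to circumvent this is to carry out every step on the algebraic span of the joint eigenvectors of the commuting positive operators $T_{\pm}$ (which is invariant under all operators appearing above and will be explicit from the diagonalization to be performed later in the section); the claimed decomposition then extends to the closure by uniqueness of the polar decomposition.
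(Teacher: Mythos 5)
Your computation is correct and it takes a somewhat different (and more informative) route than the paper. The paper's proof is purely verificational: it takes the stated formulas for $u$ and $\delta^{1/2}$ as given, checks by a ``straightforward computation'' that $u\delta^{1/2}=\alpha s$ and that $u$ is anti-unitary, and \emph{defers} the positivity of $\delta^{1/2}$ to the explicit Fourier diagonalization of $T_{\pm}$ carried out later in the appendix; uniqueness of the polar decomposition then does the rest. You instead \emph{derive} the answer by computing $\delta=(\alpha s)^{*}(\alpha s)=s^{*}s$ blockwise. Your projection identities ($T_{+}+T_{-}=\1$, $[P_{L},T_{\pm}]=[P_{\pm},T_{\mp}]=0$, $P_{L}T_{+}=P_{L}P_{-}P_{L}$, $P_{-}P_{L}^{\perp}P_{-}=P_{-}T_{-}$, etc.) all check out, the cross terms vanish since $P_{L}P_{L}^{\perp}=0$, and the chain $B^{*}B=(T_{-}/T_{+})P_{L}$, $A^{*}A=(T_{+}/T_{-})P_{L}^{\perp}$ is right. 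Your route buys positivity of $\delta^{1/2}$ for free (it is a square root of $s^{*}s$ by construction), so unlike the paper you do not need to postpone that point; you should, however, still note explicitly that $u=\alpha s\,\delta^{-1/2}$ is anti-\emph{unitary} (not merely a partial isometry), which follows because $\alpha s$ is injective with dense range.

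The one genuine flaw is your proposed handling of the domain issues: you suggest working on ``the algebraic span of the joint eigenvectors of $T_{\pm}$'', but $T_{\pm}$ have \emph{no} eigenvectors --- the diagonalization later in the appendix exhibits $\hat{T}_{+}$ as multiplication by $e^{\pi k}/(e^{\pi k}+e^{-\pi k})$, which has purely continuous spectrum, so that span is $\{0\}$ and cannot serve as a core. The fix is standard: all operators in sight are functions of the commuting bounded self-adjoint operators $P_{L}$ and $T_{+}$, so carry out the algebra on the dense invariant domain $\Dom\bigl((T_{+}T_{-})^{-1}\bigr)=\operatorname{Ran}(T_{+}T_{-})$ (or any common core obtained from the spectral calculus), verify $u\delta^{1/2}=\alpha s$ there, and conclude by uniqueness of the polar decomposition of the closed operator $\alpha s$. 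With that replacement your argument is complete.
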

\begin{proof}
        The fact that $u \delta^{1/2} = \alpha s$ follows from a straightforward computation. Furthermore, the fact that $u$ is anti-unitary can be verified directly as well. The fact that $\delta^{1/2}$ is positive will be evident from an expression that we will give later.
\end{proof}

We now turn to the task of simultaneously diagonalizing $T_{\pm}$, and proving that they have densely defined inverses. First, we identify the circle with the one-point compactification of the real line by means of the diffeomorphisms
\begin{align*}
&\Gamma: S^{1} \rightarrow \R \cup \infty,
z  \mapsto - i\frac{z+1}{z-1},  &\Gamma^{-1}: \R \cup \infty \rightarrow S^{1},
x  \mapsto \frac{x - i}{x+i}.
\end{align*}
        We note that $\Gamma(I_{+}) = \R_{+}$ and $\Gamma(I_{-}) = \R_{-}$.
We define unitary transformations
\begin{align*}
&U_{\Gamma}: L^{2}(S^{1},\C) \rightarrow L^{2}(\R,\C), &        &U_{\Gamma}^{-1}: L^{2}(\R,\C) \rightarrow L^{2}(S^{1},\C),
\end{align*}
 by
\begin{align*}
U_{\Gamma}(f)(x) &= \frac{1}{\sqrt{\pi}}\frac{1}{(i + x)} f( \Gamma^{-1}x), & x &\in \R, \\
U_{\Gamma}^{-1}(g)(z) &= \sqrt{\pi} (i + \Gamma(z)) g(\Gamma(z)), & z &\in S^{1}.
\end{align*}
We recall that on $L^{2}(S^{1},\C)$ the maps $\alpha$ and $\tau$ act as follows
\begin{equation*}
\alpha(f)(z) = \frac{1}{z} \overline{f(z)}, \quad \tau(f)(z) = \frac{1}{z}f(\overline{z}).
\end{equation*}
We compute how $\alpha$ and $\tau$ transform under $U_{\Gamma}$:
%
\begin{equation*}
        U_{\Gamma} \alpha U_{\Gamma}^{-1} g (x) = \overline{g(x)}, \quad \quad U_{\Gamma} \tau U_{\Gamma}^{-1} g(x) = g(-x).
\end{equation*}
%

Let us write $\mb{H}_{\pm} = \{z \in \C \mid \pm \Im(z) > 0 \}$ for the upper and lower half plane.

\begin{lemma}
We have
        \begin{equation*}
        U_{\Gamma}(L\cap C^{\infty}(S^{1},\C)) \subseteq \{f \in L^{2}(\R,\C) \mid f \text{ extends to a holomorphic function } f: \overline{\mb{H}_{-}} \rightarrow \C \}\text{.}
        \end{equation*}
\end{lemma}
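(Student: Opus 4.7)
The plan is to produce the holomorphic extension of $U_{\Gamma}(f)$ explicitly via the Cayley map. Any $f \in L \cap C^{\infty}(S^{1},\C)$ admits a rapidly convergent Fourier expansion $f(z) = \sum_{n \geqslant 0} a_{n} z^{-n-1}$, with $(|a_{n}|)$ of rapid decay; equivalently, $f(z) = \bar z\, h(\bar z)$ on $S^{1}$, where $h(u) \defeq \sum_{n\geqslant 0} a_{n} u^{n}$ extends continuously to $\overline{\mathbb{D}}$ and holomorphically to $\mathbb{D}$. The first step is to compute $U_{\Gamma}(f)$ explicitly. Using $\Gamma^{-1}(x) = (x-i)/(x+i)$ and the fact that $\overline{\Gamma^{-1}(x)} = (x+i)/(x-i)$ for $x \in \R$, one finds
\begin{equation*}
U_{\Gamma}(f)(x) \;=\; \frac{1}{\sqrt{\pi}\,(x+i)}\cdot \frac{x+i}{x-i}\, h\!\left(\frac{x+i}{x-i}\right) \;=\; \frac{1}{\sqrt{\pi}\,(x-i)}\, h(\phi(x)),
\end{equation*}
where $\phi(w) \defeq (w+i)/(w-i)$. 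The extra factor of $\phi(x)$ coming from $\bar z$ absorbs the $(x+i)^{-1}$ from the definition of $U_{\Gamma}$.

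Next, I would promote $x$ to a complex variable $w \in \overline{\mathbb{H}_{-}}$. The M\"obius transformation $\phi$ is the Cayley map sending $\overline{\mathbb{H}_{-}}$ biholomorphically onto $\overline{\mathbb{D}}$; in particular, its only pole $i$ lies in $\mathbb{H}_{+}$ and $|\phi(w)|\leqslant 1$ for all $w \in \overline{\mathbb{H}_{-}}$. The rapid decay of $(a_{n})$ ensures that $h$ converges uniformly on $\overline{\mathbb{D}}$, so $h \circ \phi$ is continuous on $\overline{\mathbb{H}_{-}}$ and holomorphic on $\mathbb{H}_{-}$. The prefactor $1/(\sqrt{\pi}\,(w-i))$ has its only pole at $i \notin \overline{\mathbb{H}_{-}}$. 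Therefore
\begin{equation*}
F(w) \;\defeq\; \frac{1}{\sqrt{\pi}\,(w-i)}\, h(\phi(w))
\end{equation*}
is holomorphic on $\overline{\mathbb{H}_{-}}$ and restricts on $\R$ to $U_{\Gamma}(f)$, producing the required extension.

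The only conceptual point, and the reason the claim holds for $L$ rather than for $\alpha(L)$, is the cancellation of a spurious pole at $w = -i \in \mathbb{H}_{-}$. The raw definition $U_{\Gamma}(f)(w) = (\sqrt{\pi}(w+i))^{-1} f(\Gamma^{-1}(w))$ has an apparent singularity at $w = -i$, since the factor $(w+i)^{-1}$ and the point $\Gamma^{-1}(-i) = \infty$ both diverge there. What rescues us is that $f \in L$ forces the Fourier expansion to start at $z^{-1}$, so that the function $g(u) \defeq u\, h(u)$ vanishes at $u = 0 = \phi(-i)$; this zero cancels the $(w+i)$-pole exactly. Beyond this observation, nothing further is needed: the remainder is routine complex analysis on the Cayley transform, and I do not anticipate any serious obstacle.
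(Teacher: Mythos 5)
Your proof is correct and follows essentially the same route as the paper: compute $U_{\Gamma}$ explicitly on the Fourier expansion of an element of $L$ and observe that the only singularity (at $i$) lies in the upper half-plane, so the formula extends holomorphically over $\overline{\mb{H}_{-}}$. If anything, your version is slightly more complete, since the paper only verifies the claim on the basis functions $z^{-n-1}$ (treating $L\cap C^{\infty}(S^1,\C)$ as their finite span), whereas you handle a general element with infinite, rapidly decaying Fourier expansion by summing it into $h$ and composing with the Cayley map.
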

\begin{proof}
        We see that $L \cap C^{\infty}(S^{1},\C)$ is the span of the functions $z^{-n-1}$ for $n \geqslant 0$. We then compute
        \begin{equation*}
        (U_{\Gamma}z^{-n-1})(x) = \frac{1}{\sqrt{\pi}} \frac{(x+i)^{n}}{(x-i)^{n+1}}.
        \end{equation*}
        These functions are smooth and square integrable for all $n \in \mathbb{Z}$. Furthermore if $n \geqslant 0$, then $U_{\Gamma}z^{-n-1}$ extends to a holomorphic function on the lower half plane.
\end{proof}

Given a smooth function $g = U_{\Gamma} f \in C^{\infty}(\R, \C)$ with $f \in C^{\infty}(S^{1},\C)$, we wish to find two holomorphic functions, say $\hat{g}_{+}:\mb{H}_{+}\rightarrow \C$ and $\hat{g}_{-}:\mb{H}_{-} \rightarrow \C$ which extend to square-integrable functions on the real line (denoted by the same name), such that $g = \hat{g}_{+}|_{\R} + \hat{g}_{-}|_{\R}$. This is essentially a version of the Riemann-Hilbert problem, we follow the standard solution to such problems.

For $g \in L^{2}(\R,\C)$ we define the Cauchy transform
\begin{equation*}
        \hat{g}(z) \defeq  \frac{1}{2\pi i} \int_{-\infty}^{\infty} \frac{g(x)}{x- z} \text{d}x, \quad z \in \C\setminus \R.
\end{equation*}
The function $\hat{g}: \C \setminus \R \rightarrow \C$ is holomorphic. The following lemma is then a well-known consequence of the Sokhotski-Plemelj theorem, \cite[Section 4.2]{Gakhov66}, \cite[Section 17]{Musk58}, \cite[Chapter 14]{plemelj64}.
\begin{lemma}\label{lem:RiemannHilbertProblem}
        Let $f \in C^{\infty}(S^{1}, \C)$, and set $g = U_{\Gamma}f$. Then $g_{-} = U_{\Gamma} P_{L} f$ and $g_{+} = U_{\Gamma}P_{L}^{\perp}f$ are smooth functions on $\R$ that extend uniquely to holomorphic functions on the lower and upper half plane, $G_{-}$ and $G_{+}$, respectively. Furthermore, we have
$G_{-} = - \hat{g}|_{\mb{H}_{-}}$ and $G_{+} = \hat{g}|_{\mb{H}_{+}}$.
\end{lemma}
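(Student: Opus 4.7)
The plan is to decompose $g = U_\Gamma f$ into a sum $g = G_+|_\R + G_-|_\R$ using the Cauchy transform, then use uniqueness of such a decomposition plus the action of $U_\Gamma$ on the standard basis to identify the pieces with $U_\Gamma P_L^\perp f$ and $U_\Gamma P_L f$.

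First I would verify that $g = U_\Gamma f$ is a smooth function on $\R$ that decays like $1/|x|$ at infinity, so in particular $g\in L^1(\R)\cap L^2(\R)$ (this uses the explicit form of $U_\Gamma$ and the smoothness of $f$ on the compact $S^1$). Consequently the Cauchy transform
\begin{equation*}
\hat g(z) = \frac{1}{2\pi i} \int_{-\infty}^{\infty} \frac{g(x)}{x-z}\,\mathrm{d}x
\end{equation*}
is a well-defined holomorphic function on $\C\setminus \R$, and I would set $G_+\defeq \hat g|_{\mb{H}_+}$ and $G_-\defeq -\hat g|_{\mb{H}_-}$.

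Next I would invoke the Sokhotski-Plemelj theorem (or equivalently the Fourier-theoretic Paley-Wiener description of Hardy spaces): for $g$ in the above smoothness and decay class, $\hat g$ has non-tangential boundary values from both sides, and
\begin{equation*}
\hat g(x+i0)-\hat g(x-i0) = g(x),
\end{equation*}
which rewrites as $G_+(x) + G_-(x) = g(x)$. Both $G_\pm$ extend to $L^2(\R)$ functions (since the projections onto the Hardy spaces $H^2(\mb H_\pm)$ are bounded on $L^2(\R)$). Uniqueness of such a decomposition follows because a function in $L^2(\R)$ that extends holomorphically to both half planes must vanish (e.g.\ by Paley-Wiener, its Fourier transform is supported in both $\R_{\geqslant 0}$ and $\R_{\leqslant 0}$, hence at the origin, hence is zero).

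Finally I would match this decomposition with $L\oplus \alpha(L)$ by checking it on the spanning vectors $z^{-n-1}$, $n\in\Z$. For $n\geqslant 0$, the previous lemma shows $U_\Gamma z^{-n-1}$ extends holomorphically to $\overline{\mb{H}_-}$, and a computation gives $U_\Gamma z^{-n}$ (for $n\geqslant 1$) extends holomorphically to $\overline{\mb{H}_+}$; since these exhaust the bases of $L$ and $\alpha(L)$ respectively, the uniqueness statement identifies $G_-=U_\Gamma P_L f$ and $G_+=U_\Gamma P_L^\perp f$ by linearity and density. Passing to arbitrary $f\in C^\infty(S^1,\C)$ is routine since both sides are continuous in $f$ with respect to the $L^2$-norm.

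The main obstacle I expect is the technical care needed in applying Sokhotski-Plemelj: one must verify the decay of $g$ at infinity carefully enough that the principal-value integral and the boundary-value limits are justified, rather than quote the theorem in its most general form. Once that is in hand, the rest is essentially a direct computation on the Fourier/Laurent basis.
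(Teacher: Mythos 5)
Your proposal follows essentially the same route as the paper: form the Cauchy transform $\hat g$, apply the Sokhotski--Plemelj jump formula to obtain the decomposition $g = \hat g(\cdot + i0) - \hat g(\cdot - i0)$, and identify the two pieces with $U_\Gamma P_L^{\perp} f$ and $U_\Gamma P_L f$ via the behaviour of the basis functions $U_\Gamma z^{-n-1}$ (your explicit uniqueness argument via Paley--Wiener is a nice touch that the paper leaves implicit). One small correction: since $\Gamma^{-1}(x)\to 1$ as $x\to\pm\infty$, the function $g=U_\Gamma f$ decays only like $1/|x|$ and so lies in $L^2(\R)$ but generally \emph{not} in $L^1(\R)$; this does not harm the argument (the Cauchy transform of an $L^2$ function against the $L^2$ kernel $1/(x-z)$ is still well defined, and the boundary-value analysis goes through using the $1/x^2$ decay of $g'$, exactly the care you flag at the end).
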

For any operator $X$ on $V$, let us write $X' \defeq  U_{\Gamma} X U_{\Gamma}^{-1}$. As a consequence of \cref{lem:RiemannHilbertProblem} we obtain for $g = U_{\Gamma}f$ with $f \in C^{\infty}(S^{1},\C)$:
        \begin{align*}
        (P_{L}'g)(x) &= -\lim_{\eps \downarrow 0} \hat{g}(x - i \eps), & x \in \R, \\
        ((P_{L}')^{\perp}g)(x) &= \lim_{\eps \downarrow 0} \hat{g}(x + i \eps), & x \in \R.
        \end{align*}
Next, we define the unitary $D: L^{2}(\R) \rightarrow L^{2}(\R) \oplus L^{2}(\R)$ by
\begin{equation*}
(Df)(t) = (f_{r}(t),f_{l}(t)) \defeq  (e^{t/2}f(e^{t}), e^{t/2}f(-e^{t})).
\end{equation*}
The inverse of $D$ is given by
\begin{equation*}
D^{-1}(f_{r},f_{l})(t) = \begin{cases}
\frac{1}{\sqrt{t}} f_{r}(\log (t)) & t > 0, \\
\frac{1}{\sqrt{-t}} f_{l}(\log(-t)) & t < 0.
\end{cases}
\end{equation*}
\begin{lemma}
        We have
        \begin{equation*}
        DP_{+}'D^{-1} = \begin{pmatrix}
        \1 & 0 \\
        0 & 0
        \end{pmatrix},
        \quad \quad
        DP_{-}'D^{-1} = \begin{pmatrix}
        0 & 0 \\
        0 & \1
        \end{pmatrix}.
        \end{equation*}
\end{lemma}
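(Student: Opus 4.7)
The plan is to verify both identities by a direct unwinding of definitions, exploiting the fact that $U_{\Gamma}$ turns the geometric decomposition $V = V_{-} \oplus V_{+}$ into the trivial splitting of $L^{2}(\R)$ given by multiplication by the characteristic functions $\chi_{\R_{\pm}}$, and that $D$ then separates the two half-lines $\R_{\pm}$ via the substitutions $x = \pm e^{t}$. I will therefore handle these two reductions in sequence and then compose them.

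First, I would show that $P_{\pm}'$ on $L^{2}(\R)$ is multiplication by $\chi_{\R_{\pm}}$. By definition, $V_{\pm}$ is the $L^{2}$-closure of the smooth functions on $S^{1}$ supported in $I_{\pm}$, and the diffeomorphism $\Gamma$ satisfies $\Gamma(I_{\pm}) = \R_{\pm}$. Since the multiplicative factor $x \mapsto \frac{1}{\sqrt{\pi}(i+x)}$ appearing in the formula for $U_{\Gamma}$ is continuous and nowhere vanishing on $\R$, conjugation by $U_{\Gamma}$ preserves the support of the underlying function (and an $L^{2}$-limit of functions supported in a closed set is supported in that set). Hence $U_{\Gamma}V_{\pm}$ is exactly $\{f \in L^{2}(\R) \mid \operatorname{Supp}(f) \subseteq \overline{\R_{\pm}}\}$, so $P_{\pm}'$ is multiplication by $\chi_{\R_{\pm}}$.

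Second, I would verify that $D$ is a unitary isomorphism whose matrix form diagonalizes exactly this splitting. The two substitutions $x = e^{t}$ on $\R_{+}$ and $x = -e^{t}$ on $\R_{-}$ have Jacobian $e^{t}\,\mathrm{d}t$, so each of the maps $f \mapsto f_{r}$ and $f \mapsto f_{l}$ (with the prefactor $e^{t/2}$) defines a unitary isomorphism $L^{2}(\R_{\pm}) \cong L^{2}(\R)$; the given formula for $D^{-1}$ is a bookkeeping check that these two isomorphisms assemble into a genuine unitary. In particular, if $f$ is supported on $\R_{+}$ then $f(-e^{t}) = 0$ and $Df = (f_{r}, 0)$, while if $f$ is supported on $\R_{-}$ then $f(e^{t}) = 0$ and $Df = (0, f_{l})$.

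Combining the two steps, for any $g \in L^{2}(\R) \oplus L^{2}(\R)$ I write $g = (g_{r}, g_{l})$; then $D^{-1}g$ is a function supported on $\R_{+}$ coming from $g_{r}$ plus a function supported on $\R_{-}$ coming from $g_{l}$. Applying $P_{+}' = \chi_{\R_{+}}\cdot$ kills the $\R_{-}$-part, and applying $D$ to the result returns $(g_{r}, 0)$; this proves $DP_{+}'D^{-1} = \operatorname{diag}(\1,0)$, and the analogous computation handles $P_{-}'$. There is no substantial obstacle here; the only care needed is the support claim for $U_{\Gamma}V_{\pm}$, which as noted follows from the non-vanishing of the conformal factor and the closedness of support under $L^{2}$-limits.
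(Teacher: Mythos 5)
Your proof is correct and follows essentially the same route as the paper: identify $P_{\pm}'$ as the orthogonal projection onto $L^{2}(\R_{\pm},\C)$ (using $\Gamma(I_{\pm})=\R_{\pm}$), and then read off the matrix form from the definition of $D$. The extra care you take with the non-vanishing conformal factor and the closedness of supports under $L^{2}$-limits is a welcome elaboration of the step the paper leaves implicit.
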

\begin{proof}
        Recall that $\Gamma^{-1}$ carries $\R_{\pm}$ into $I_{\pm}$. It follows that $P_{\pm}'$ is the projection $L^{2}(\R, \C) \rightarrow L^{2}(\R_{\pm}, \C)$, from which the result follows.
\end{proof}

Let us write
\begin{equation*}
        D(P_{L}^{\perp})'D^{-1} = \begin{pmatrix}
                P_{L}^{rr} & P_{L}^{rl} \\
                P_{L}^{lr} & P_{L}^{ll}
        \end{pmatrix}.
\end{equation*}
We define
\begin{equation*}
        c_{\eps}(u) \defeq  \frac{e^{-u/2}}{e^{-u}+1-i \eps}, \quad \quad
        s_{\eps}(u) \defeq  \frac{e^{-u/2}}{e^{-u} - 1 -i \eps}.
\end{equation*}
\begin{lemma}
        We have, for all $t \in \R$,
        \begin{align*}
        P_{L}^{rr}f_{r}(t) &= \frac{1}{2\pi i} \lim_{\eps \downarrow 0} (f_{r} \star s_{\eps}) (t), &
        P_{L}^{rl}f_{l}(t) &= -\frac{1}{2\pi i} \lim_{\eps \downarrow 0}  (f_{l} \star c_{-\eps})(t), \\
        P_{L}^{lr}f_{r}(t) &= \frac{1}{2\pi i} \lim_{\eps \downarrow 0} (f_{r} \star c_{\eps}) (t), &
        P_{L}^{ll}f_{l}(t) &= -\frac{1}{2\pi i} \lim_{\eps \downarrow 0} (f_{l} \star s_{-\eps}) (t), 
        \end{align*}
        where $\star$ stands for the convolution product.
\end{lemma}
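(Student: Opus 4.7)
The plan is a direct computation for each of the four blocks. All four cases follow the same recipe; only signs and a choice of half-line differ. I would therefore work out $P_L^{rr}$ in detail and then state the routine variations for the other three entries.

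For $P_L^{rr}$, I would first unfold the definition: $P_L^{rr}f_r(t)$ equals the first component of $D(P_L^\perp)'D^{-1}(f_r,0)$ evaluated at $t$, which by the definition of $D$ is $e^{t/2}(P_L^\perp)'D^{-1}(f_r,0)(e^t)$. Then \cref{lem:RiemannHilbertProblem} gives $(P_L^\perp)'g = \lim_{\eps \downarrow 0}\hat g(\cdot + i\eps)$. Since $D^{-1}(f_r,0)$ is supported on $\R_+$ with value $f_r(\log x)/\sqrt{x}$, this expresses $P_L^{rr}f_r(t)$ as a Cauchy integral over $\R_+$. The change of variables $x = e^u$ followed by $u \mapsto t - u$ absorbs the prefactor $e^{t/2}$ and brings the integrand into the convolution shape $e^{-u/2}f_r(t-u)/(e^{-u} - 1 - i\eps e^{-t})$; in the limit $\eps \downarrow 0$ the multiplicative factor $e^{-t}$ in the regularization is inconsequential and the expression becomes $\frac{1}{2\pi i}(f_r \star s_\eps)(t)$.

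The remaining three entries proceed analogously. For $P_L^{lr}$ one evaluates $(P_L^\perp)'D^{-1}(f_r,0)$ at $-e^t$ rather than $e^t$, which turns $e^{-u} - 1$ in the denominator into $e^{-u} + 1$ and hence $s_\eps$ into $c_\eps$. For $P_L^{rl}$ and $P_L^{ll}$ I would instead start from $g = D^{-1}(0,f_l)$, supported on $\R_-$ with $g(x) = f_l(\log(-x))/\sqrt{-x}$; the substitution $x = -e^u$ then produces a Jacobian sign and reverses the sign of $i\eps$ relative to the real axis, accounting precisely for the overall minus signs and the $-\eps$ subscripts displayed in the lemma.

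The only technical obstacle is justifying the exchange of limit and integral needed to convert the Cauchy integral into the boundary-value convolution. For smooth test data $f \in C^\infty(S^1)$, the function $g = U_\Gamma f$ decays like $1/x^2$ at infinity (so $f_r, f_l$ decay rapidly), and the integrands are uniformly dominated in $\eps$ away from the singularity at $u = 0$; the singular contribution near $u = 0$ is then controlled by the same Sokhotski--Plemelj-type principal-value estimate that underlies the proof of \cref{lem:RiemannHilbertProblem}. I expect no further difficulty.
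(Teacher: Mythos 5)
Your computation is correct and follows exactly the route of the paper's own (unpublished, "straightforward but tedious") calculation: unfold $D(P_L^\perp)'D^{-1}$ block by block, express the boundary value via the Cauchy transform from \cref{lem:RiemannHilbertProblem}, substitute $x=\pm e^u$ and shift/reflect to reach the convolution kernels $s_{\pm\eps}$, $c_{\pm\eps}$, with the Jacobian and the flipped sign of $i\eps$ producing the minus signs in the second column. No substantive difference from the paper's argument.
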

\begin{proof}
        Straightforward, but tedious, computations.
\end{proof}

Next, we take the Fourier transforms of $s_{\eps}$, and $c_{\eps}$, where we use the following convention for the fourier transform
\begin{equation*}
\mc{F} f(k) \defeq  \int_{-\infty}^{\infty} f(u) e^{-iku} \text{d}u.
\end{equation*}
The Fourier transforms of $s_{\eps}$ and $c_{\eps}$ can be computed using the residue theorem, alternatively, they can be found in \cite[Section 3.2 (15)]{IntTransforms}.
For $s_{\eps}$ the result depends on the sign of $\eps$, suppose that $1/2 > \eps^{+} > 0$ and $-1/2 < \eps^{-} < 0$, and $-1/2 < \eps < 1/2$, then we obtain
\begin{align*}
        \mc{F} s_{\eps^{+}}(k) &= 2 \pi i (1+i \eps^{+})^{ik - 1/2} \frac{e^{\pi k}}{e^{-\pi k} + e^{\pi k}}, \\
        \mc{F} s_{\eps^{-}}(k) &= -2 \pi i (1+i \eps^{-})^{ik - 1/2} \frac{e^{-\pi k}}{e^{-\pi k} + e^{\pi k}}, \\
    \mc{F}c_{\eps}(k) &= 2 \pi (1 - i \eps)^{ik - 1/2} \frac{1}{e^{\pi k} + e^{-\pi k}}.
\end{align*}
It follows that
\begin{align*}
        (\mc{F} P_{L}^{rr}f_{r})(k) &= \left(\mc{F} \frac{1}{2\pi i} \lim_{\eps \downarrow 0} (f_{r} \star s_{\eps})\right) (k) 
        = \frac{e^{\pi k}}{e^{\pi k} + e^{-\pi k}}\mc{F}(f_{r})(k) .
\end{align*}
Similarly, we obtain
\begin{equation*}
        (\mc{F} P_{L}^{ll}f_{l})(k) = \frac{e^{-\pi k}}{e^{\pi k} + e^{-\pi k}} \mc{F} (f_{l})(k), \quad k \in \R.
\end{equation*}

Performing the limit $\eps \downarrow 0$ and multiplying with $(2 \pi i)^{-1}$ we obtain
\begin{equation*}
        (\mc{F} P_{L}^{lr}f_{r})(k) = \frac{-i}{e^{\pi k} + e^{- \pi k}} \mc{F}(f_{r})(k), \quad k \in \R.
\end{equation*}
Setting $\hat{X} \defeq  (\mc{F} \oplus \mc{F}) D X' D^{-1} (\mc{F}^{-1} \oplus \mc{F}^{-1})$ we obtain
\begin{equation*}
        \hat{P}_{L}^{\perp} = \frac{1}{e^{\pi k} + e^{- \pi k}} \begin{pmatrix}
        e^{\pi k} & i \\
        -i & e^{- \pi k}
        \end{pmatrix}.
\end{equation*}
In a similar manner one could compute $\hat{P}_{L}^{\perp}$, but it follows from the fact that $\1 = \hat{P}_{L} + \hat{P}_{L}^{\perp}$ that
\begin{equation*}
        \hat{P}_{L} =  \frac{1}{e^{\pi k} + e^{- \pi k}} \begin{pmatrix}
        e^{- \pi k} & -i \\
        i & e^{\pi k}
        \end{pmatrix}\text{.}
\end{equation*}
We set
\begin{equation*}
        a(k) = \frac{1}{e^{\pi k} + e^{-\pi k}}.
\end{equation*}
We then obtain
\begin{equation*}
        \hat{T}_{+} = a(k)e^{\pi k} \1, \quad \quad \hat{T}_{-} = a(k) e^{-\pi k} \1.
\end{equation*}
As promised, it is clear from these expression that $\hat{T}_{\pm}$ are injective operators with unbounded inverses; this proves \cref{lem:UnboundedInverses}.
We furthermore have
\begin{align*}
        \hat{\delta}^{1/2} &= \sqrt{\frac{\hat{T}_{-}}{\hat{T}_{+}}} \hat{P}_{L} + \sqrt{\frac{\hat{T}_{+}}{\hat{T}_{-}}} \hat{P}_{L}^{\perp} 
        = a(k) \begin{pmatrix}
        a(2k)^{-1} & i(e^{\pi k} - e^{-\pi k}) \\
        -i(e^{\pi k} - e^{-\pi k}) & 2\1
        \end{pmatrix}
\end{align*}
which is a positive operator, hence the expression $u \delta^{1/2} = \alpha s$ really is the polar decomposition of $\alpha s$, this was the missing part in the proof of \cref{lem:PolarDecoOfAlphas}. Next, we compute
\begin{equation*}
        \alpha u = \left( \frac{P_{L}^{\perp} P_{-} P_{L}}{\sqrt{T_{+}T_{-}}} + \frac{P_{L} P_{-} P_{L}^{\perp}}{\sqrt{T_{+}T_{-}}} \right) = a(k) \begin{pmatrix}
                -2\1 & i(e^{\pi k} - e^{-\pi k}) \\
                -i ( e^{\pi k} - e^{-\pi k}) & 2\1
        \end{pmatrix}\text{.}
\end{equation*}
On the other hand, we compute
\begin{equation*}
        i \hat{\tau}(\hat{P}_{L} - \hat{P}^{\perp}_{L}) = a(k) \begin{pmatrix}
        -2\1 & i(e^{\pi k} - e^{-\pi k}) \\
        -i ( e^{\pi k} - e^{- \pi k}) & 2\1
        \end{pmatrix}.
\end{equation*}
Which allows us to conclude that $u = -i \alpha \tau(P_{L} - P_{L}^{\perp})$, and hence $u|_{L} = -i \alpha \tau$; from \cref{cor:PolarDceompositionOfS} it follows that $J = \operatorname{k}^{-1}\Lambda(\alpha \tau)$.

\newcommand{\etalchar}[1]{$^{#1}$}

\def\kobiburl#1{
   \IfSubStr
     {#1}
     {://arxiv.org/abs/}
     {\kobibarxiv{#1}}
     {\kobiblink{#1}}}
\def\kobibarxiv#1{\href{#1}{\texttt{[arxiv:\StrGobbleLeft{#1}{21}]}}}
\def\kobiblink#1{
  \StrSubstitute{#1}{\~{}}{\string~}[\myurl]
  \StrSubstitute{#1}{_}{\underline{\;\;}}[\mylink]
  \StrSubstitute{\mylink}{&}{\&}[\mylink]
  \StrSubstitute{\mylink}{/}{/\allowbreak}[\mylink]
  \newline Available as: \mbox{\;}
  \href{\myurl}{\texttt{\mylink}}}

\raggedright
\addcontentsline{toc}{section}{\refname}
\bibliographystyle{kobib}
\bibliography{bibfile}

Peter Kristel ({\it peter.kristel@uni-greifswald.de})

\smallskip

Konrad Waldorf ({\it konrad.waldorf@uni-greifswald.de})

\medskip

Universit\"at Greifswald\\
Institut f\"ur Mathematik und Informatik\\
Walther-Rathenau-Str. 47\\
17487 Greifswald\\
Germany

\end{document}